\newtheorem{theorem}{Theorem}[section]
\newtheorem{lemma}[theorem]{Lemma}
\newtheorem{corollary}[theorem]{Corollary}
\newtheorem{proposition}[theorem]{Proposition}
\newtheorem{property}{Property}[section]
\newtheorem{definition}{Definition}[section]
\newtheorem{remark}{Remark}[section]
\newcommand{\qed}{\nobreak \ifvmode \relax \else
      \ifdim\lastskip<1.5em \hskip-\lastskip
      \hskip1.5em plus0em minus0.5em \fi \nobreak
      \vrule height0.75em width0.5em depth0.25em\fi}
\newenvironment{proof_without_of_suffix}[1][Proof]{\begin{trivlist}
\item[\hskip \labelsep {\bfseries #1}]}{\hfill $\blacksquare$ \end{trivlist}}
\newenvironment{proof}[1][Proof of]{\begin{trivlist}
\item[\hskip \labelsep {\bfseries #1}]}{\hfill $\blacksquare$ \end{trivlist}}
\DeclareMathOperator*{\argmin}{arg\,min}
\DeclareMathOperator*{\argmax}{arg\,max}
\DeclarePairedDelimiter\abs{\lvert}{\rvert}
\DeclarePairedDelimiter\norm{\lVert}{\rVert}
\newcommand{\lr}[1]{ \left( #1 \right)}
\newcommand{\tr}[1]{ \mathrm{trace}\left( #1 \right)}
\DeclarePairedDelimiter\floor{\lfloor}{\rfloor}
\newcommand{\upperRomanNum}[1]{\textup{\uppercase\expandafter{\romannumeral#1}}}
\newcommand{\lowerRomanNum}[1]{\textup{\lowercase\expandafter{\romannumeral#1}}}
\numberwithin{table}{section}
\numberwithin{figure}{section}
\crefname{theorem}{theorem}{theorems}
\Crefname{theorem}{Theorem}{Theorems}
\crefname{lemma}{lemma}{lemmas}
\Crefname{lemma}{Lemma}{Lemmas}
\crefname{proposition}{proposition}{propositions}
\Crefname{proposition}{Proposition}{Propositions}
\crefname{corollary}{corollary}{corollaries}
\Crefname{corollary}{Corollary}{Corollaries}
\crefname{fact}{fact}{facts}
\Crefname{fact}{Fact}{Facts}
\crefname{property}{property}{properties}
\Crefname{property}{Property}{Properties}
\crefname{definition}{definition}{definitions}
\Crefname{definition}{Definition}{Definitions}
\crefname{equation}{}{}
\Crefname{equation}{Eq.}{Eqs.}
\crefname{figure}{figure}{figures}
\Crefname{figure}{Figure}{Figures}
\begin{document}

\title{Batch greedy maximization of non-submodular functions: Guarantees and applications to experimental design}

\author{\name Jayanth Jagalur-Mohan \email jagalur@mit.edu \\
        \name Youssef Marzouk  \email ymarz@mit.edu\\
        \addr Massachusetts Institute of Technology\\
              Cambridge, MA 02139 USA}

\maketitle

 \begingroup
 \leftskip6em
 \rightskip\leftskip
 \noindent\textit{{\small Dedicated to Thara Jagalur, a magnificent spirit who brought tremendous strength to her parents and showed them the transcendental ways of love. She is a shining star and will reverberate in the hearts of all those that knew her.}}
 \par
 \endgroup

\vspace{.3in}

\begin{abstract}
We propose and analyze batch greedy heuristics for cardinality constrained maximization of non-submodular non-decreasing set functions. 
We consider the standard greedy paradigm, along with its distributed greedy and stochastic greedy variants.
Our theoretical guarantees are characterized by the combination of submodularity and supermodularity ratios. We argue how these parameters define tight modular bounds based on incremental gains, and provide a novel reinterpretation of the classical greedy algorithm using the minorize--maximize (MM) principle. Based on that analogy, we propose a new class of methods exploiting any plausible modular bound.
In the context of optimal experimental design for linear Bayesian inverse problems, we bound the submodularity and supermodularity ratios when the underlying objective is based on mutual information. We also develop novel modular bounds for the mutual information in this setting, and describe certain connections to polyhedral combinatorics.
We discuss how algorithms using these modular bounds relate to established statistical notions such as leverage scores and to more recent efforts such as volume sampling. We demonstrate our theoretical findings on synthetic problems and on a real-world climate monitoring example.
\end{abstract}

\vskip 0.2in

\begin{keywords}
Greedy methods, submodularity, non-submodular functions, optimal experimental design, inverse problems, mutual information, uncertainty quantification, Bayesian statistics
\end{keywords}

	
\tableofcontents

\section{Introduction}

Many design problems in engineering and science are inherently combinatorial. A typical goal in these problems is to select a subset of indices from a larger candidate set, by maximizing or minimizing an objective subject to certain constraints. In resource allocation, for instance, a goal may be to assign tasks to different actors to ensure their expedited and efficient completion. 
In the climate sciences, a goal might be to create the most informative network of monitoring stations by identifying a set of locations from a larger list of plausible candidate sites.

Problems such as these can be formulated as the optimization of set functions. They have a rich mathematical structure and have been studied extensively \cite{Wolsey_Nemhauser_book,lovasz_book,Schrijver_book,Papadimitriou_Steiglitz_book}. Often the set functions exhibit properties such as \emph{submodularity} (\Cref{def:submodularity_using_firt_order_diff}), which allow for efficient optimization solutions \cite{fujishige2005}. The optimization of submodular functions has been a topic of intense research in the past decade owing to these functions' significance for many problems in  machine learning. The maximization and minimization of such functions under very general matroid constraints has been a focus of several studies \cite{Vondrak_2008,Vondrak_Chekuri_Zenklusen_2011,Jegelka_Bilmes_2011,Horel_Singer_2016,Sviridenko_etal_2017,Khanna_etal_2017_a}. For a catalog of historical efforts, as well as more recent investigations on these topics, we refer to the monographs \cite{Bach2013,Krause_Golovin_2014}.

While submodularity naturally arises in many problems, there are important problems where the objective lacks this property. Our interest is one such case, where the core task is to maximize a non-submodular but non-decreasing objective. Such objectives are ubiquitous in \emph{optimal experimental design} (OED), a fundamental problem in statistics which involves the specification of all aspects of an experiment. The mathematical foundations of experimental design have a history spanning almost a century \cite{Fisher_1936,Wald_1943}. Methods developed in this field are relevant to engineering, the social and medical sciences, and econometrics. In broad terms, a typical goal is to seek a design that provides the best ``return'' (suitably defined) for the least amount of experimental effort.
More specifically, we will consider optimal experimental design under cardinality constraints in the Bayesian setting, with an objective that reflects an end goal of parameter inference.
For a classical overview of experimental design, we refer the reader to \cite{Pukelsheim_oed_book,Fedorov_oed_book}, and for a perspective on Bayesian optimal experimental design, we highlight the review in \cite{Chaloner_etal_1995}.

When maximizing a submodular function under cardinality constraints, the greedy heuristic of successively picking the best candidate performs remarkably well despite its simplicity \cite{Nemhauser1978,Nemhauser1978b}.
The same technique works well in many cases with a non-submodular objective \cite{Das_Kempe_2011,Bian_etal_2017,Khanna_etal_2017_b,Elenberg_etal_2018,Qian_Singer_2019}.
Yet the repeated function evaluations necessary to update the incremental gains (see \Cref{def:incremental_gain}) at every step
may comprise
a staggering computational cost if each function evaluation is computationally intensive.
It is natural, then, to ask if there are favorable properties of the function that circumvent the need to repeatedly update the incremental gains. To what extent do such properties hold for a given function, and how can we exploit them? And how does the worst case bound suffer if the incremental gains are only periodically updated?

Motivated by these questions, we propose and analyze a variant of the greedy heuristic for non-submodular functions which we refer to as \emph{batch greedy}. We also analyze distributed and stochastic versions of this batch greedy approach.
Every step in these algorithms selects a batch of indices, thus reducing the overall number of function evaluations. In many practical problems---for instance Bayesian optimal experimental design---this heuristic still yields excellent solutions, as we shall demonstrate. 

The solution index set of the greedy algorithm results from a sequence of locally optimal choices.  The choice in every step is determined  by the incremental gains associated with each index, or---in the batch greedy setting---by the \emph{sum} of incremental gains corresponding to a fixed-size subset of indices. The sum of incremental gains is simply a modular function (\Cref{def:modular_function}) and the individual incremental gains define a subgradient  (\Cref{def:sub_grad_diff_SM}).
Analogous to results in the convex analysis for the optimization of continuous functions, the performance of the greedy algorithm can be tied to properties of the subgradient.
The choice of subgradient is not unique for any real valued continuous function, and the same is true for set functions. This observation motivates us to recast sequential greedy algorithms in a more abstract framework using minorize--maximize (MM) optimization principles. Using this framework, we propose several alternative modular bounds 
and use them to develop novel techniques for linear Bayesian optimal experimental design.

\subsection{Outline and summary of key contributions}

Much of the manuscript is written with Bayesian optimal experimental design as a motivation.
Yet our theoretical results are rather general: applicable to any monotone set function.
In describing these results (\Cref{sec:batch_greedy} and \Cref{subsec:greedy_as_mm}), we have favored a more abstract notation to help reach a broader readership. These sections can be read more or less independently of the rest of the paper.  With the same intention, we disperse much of the relevant literature survey into the sections that follow.
\begin{itemize}
\item In \Cref{sec:preliminaries_notation} we introduce some notation and background on set functions, and give an overview of the Bayesian experimental design problem. We formulate the design objectives we study, and comment on some of their set theoretic properties.

\item In \Cref{sec:batch_greedy} we discuss our primary results. We provide approximation guarantees for the batch greedy heuristic in the context of maximizing monotone but non-submodular objectives.
We analyze the standard batch greedy paradigm (\Cref{thm:batch_std_greedy}), along with distributed batch greedy (\Cref{thm:parallel_batch_std_greedy}) and stochastic batch greedy (\Cref{thm:stochastic_batch_std_greedy}) variants.
Our results portray the joint expressive power of the submodularity and supermodularity ratios. These two parameters quantify how much a function deviates from submodularity and supermodularity, respectively. We then bound these parameters for the information theoretic objective in linear Bayesian experimental design.

\item  Viewing the sub/super-modularity ratios as parameters that define tight modular bounds, we argue in \Cref{sec:mm_alg} that the classical greedy heuristic can be viewed as one instance of optimization based on the MM principle. Building on that insight, we put forth a general framework for optimizing monotone set functions using any modular bound (\Cref{thm:greedy_ascent}). 

\item In \Cref{sec:mm_alg_lbip}, for the problem of linear Bayesian experimental design, we explicitly discuss one such MM-based optimization approach exploiting modular bounds on the mutual information objectives. These modular bounds are constructed using standard inequalities but, as we show, they have surprising connections to notions in polyhedral combinatorics. 

\item In \Cref{sec:num_results} we investigate the performance of our algorithms on random instances of structured inverse problems, and on the problem of designing sensor networks for improving climate models. We end with a broader discussion of future directions in \Cref{sec:conclusions}. 

\item \Cref{appendix:def} contains some background to help with the reading, and all the technical results are collected in \Cref{appendix:techResults}.
In \Cref{appendix:extra_discussion_and_num_results} we provide some supplementary numerical results and more discussion that helps contextualize the core ideas explored in this work.

\end{itemize}


\section{Preliminaries and notation} \label{sec:preliminaries_notation}

\subsection{Set function properties and subset selection} \label{subsec:indexSubsetSelection}

Consider an index set $\mathscr{V} \coloneqq \lbrace 1, \ldots, m \rbrace, \ m \in\mathbb{Z}_{>0}$, and let its power set (i.e., set of all subsets) be denoted by $2^{\mathscr{V}}$. We will refer to  $\mathscr{V}$ as the candidate set. Any real-valued set-function $F : 2^{\mathscr{V}} \rightarrow \mathbb{R}$ such that $F(\emptyset)=0 $  is submodular \cite{fujishige2005,Bach2013} if and only if, for all subsets $\mathscr{A},\mathscr{B} \subseteq \mathscr{V}$, we have:
\begin{equation} \label{eqn:submodularity}
F(\mathscr{A}) + F(\mathscr{B}) \geq  F\lr{\mathscr{A} \cup \mathscr{B}} +  F\lr{\mathscr{A} \cap \mathscr{B}}.
\end{equation}
A function is \emph{supermodular} if its negation is submodular, and it is \emph{modular} (see also \Cref{def:modular_function}) if it is both supermodular and submodular.

An alternative but equivalent  \cite[Proposition~2.2]{Bach2013} definition of submodularity  highlights the \emph{diminishing returns} property 
and is often easier to demonstrate in practice:
\begin{definition}[Submodular set function defined using first-order differences \cite{fujishige2005,Bach2013}] \label{def:submodularity_using_firt_order_diff} The set function $F$ is submodular if and only if, for all $\mathscr{A},\mathscr{B} \subseteq  \mathscr{V}$ and $\nu \in \mathscr{V}$
such that $\mathscr{A} \subseteq \mathscr{B}$ and $\nu \notin \mathscr{B}$, we have
\begin{displaymath}
  F(\mathscr{A} \cup \lbrace \nu \rbrace) - F(\mathscr{A}) \geq F(\mathscr{B} \cup \lbrace \nu \rbrace) - F(\mathscr{B}).
\end{displaymath}
\end{definition}

In the experimental design problems we investigate, the indices of the candidate set will correspond to individual components of a multivariate random variable $Y \in \mathbb{R}^{m}$. For the purpose of selecting $k<m$ components we introduce the notion of a selection operator.
\begin{definition}[Selection operator] \label{def:selectionOperator} We refer to $\mathcal{P} \in \mathbb{R}^{m \times k},\  k < m$, as a \emph{selection operator} with index set $\mathscr{I}(\mathcal{P}) \equiv \mathscr{I}_{\mathcal{P}} = \{i_1,\ldots,i_{k}\} \subset \mathscr{V}$, when $\mathcal{P} = \begin{bmatrix} e_{i_1},  \ldots, e_{i_{k}}\end{bmatrix}$. The $e_{i_j}$ are distinct canonical unit vectors from the $m$-dimensional identity matrix $I_{m}$.
\end{definition}
Two selection operators that are unique up to permutation of their columns will have identical index sets, and are equivalent for our purposes. We will denote by $\mathcal{S}(k)$ the set of all permutation invariant selection operators $\mathcal{P}$ with $\abs{\mathscr{I}(\mathcal{P})} = k$. It is clear that $\abs{\mathcal{S}(k)} = \binom{m}{k}$. As an abstract operator, $\mathcal{P}$ can be understood as a full-column rank matrix, and an isometry on a $k$-dimensional subspace of $\mathbb{R}^{m}$.  Applying $\mathcal{P}$ on $Y$ allows us to select $k$ components out of $m$,
\begin{equation} \label{eqn:subset_of_Y}
 Y_{\mathcal{P}} \coloneqq \mathcal{P}^{\top}Y = \left[ Y_{i_1},\ldots,Y_{i_{k}} \right] ^{\top} \in \mathbb{R}^{k}.
\end{equation}

\subsection{Bayesian inference and optimal experimental design} \label{subsec:BIP_and_exp_design}
In Bayesian parameter inference, we seek to characterize the distribution of some parameters of interest, $X \in \mathbb{R}^{n}$, given a realization of some (related) observations $Y \in \mathbb{R}^{m}$. More specifically, having endowed $X$ with a prior distribution (whose density we denote by $\pi_{X}$) and knowing the conditional density of the data $\pi_{Y|X}$, we wish to characterize the posterior distribution $\pi_{X \vert Y = y^*}$ for some realization $y^*$ of $Y$.\footnote{To simplify this exposition, we assume all random variables to have densities with respect to a suitable base measure.}
%
%
The associated Bayesian optimal experimental design problem is to find the subset of observations $ Y_{\mathcal{P}}$ that is most informative about the parameters of interest $X$. Solving the inference problem with $Y_{\mathcal{P}}$, or equivalently those observations specified by the index set $\mathscr{I}_{\mathcal{P}}$, will result in a posterior  $\pi_{X | Y_{\mathcal{P}}}$ that in general differs from $\pi_{X | Y}$. We refer to the latter as the ``full posterior'' to indicate it is obtained by conditioning on all candidate observations.


To assess the quality of selected observation subset, we will evaluate the \emph{mutual information} between the inference parameter $X$ and the selected observations $Y_{\mathcal{P}}$. Of course, many other experimental design criteria could be considered and have been employed in the literature. Here we focus on mutual information as it is broadly applicable, and well-founded in Bayesian decision theory as measure of the utility of an experiment \cite{Lindley_1956} in fully non-Gaussian/nonlinear settings \cite{Chaloner_etal_1995}.
\begin{definition}[Mutual information \cite{cover_thomas_2005}] \label{def:mutual_info} Let two random variables $X$ and $Y$ have joint density $\pi_{X,Y}$ and let $\pi_{X}$ and $\pi_{Y}$ denote the densities of their respective marginals. The mutual information $\mathcal{I}\lr{X;Y}$ is the relative entropy or Kullback--Leibler divergence (\Cref{def:KL_divergence}) between the joint density and the product density $\pi_{X}\pi_{Y}$.
\begin{displaymath}
	\mathcal{I}\lr{X;Y}\coloneqq D_{KL}\lr{ \pi_{X,Y} \| \pi_{X} \pi_{Y} } = \mathbb{E}_{\pi_{X,Y}} \log \lr{ \frac{\pi_{X,Y}}{\pi_{X} \pi_{Y}} }.
\end{displaymath}
\end{definition}
Bayes' rule allows us to rewrite mutual information as an \emph{expected information gain} from prior to posterior, i.e.,  $\mathcal{I}\lr{X;Y} = \mathbb{E}_{\pi_{Y}}  D_{KL}\lr{ \pi_{X|Y} \| \pi_{X} }$.
This interpretation is quite intuitive: a larger mutual information or expected information gain means that the posterior differs more strongly from the prior, on average.

Formally, we can now state the problem of interest as follows: Given a desired number of observations $k < m$, we seek a selection operator $\mathcal{P}_{\text{opt}} \in \mathcal{S}(k)$ (\Cref{def:selectionOperator}) such that the mutual information between the inference parameter $X$ and the selected observations $Y_{\mathcal{P}_{\text{opt}}}\coloneqq\mathcal{P}_{\text{opt}}^{\top}Y$ is maximized:
\begin{equation} \label{eqn:exp_design_prob_statement_max} \tag{\texttt{Prob-Max}}
	\mathcal{P}_{\text{opt}} = \argmax_{\mathcal{P} \in \mathcal{S}(k) \subset \mathbb{R}^{m \times k}}  \mathcal{I} \lr{ X;\mathcal{P}^{\top} Y }\, . 
\end{equation}

Alternatively, we could indirectly determine the selection operator by first finding a complementary set of observations that do not significantly inform the inference parameters. While this approach may seem convoluted, its value is easier to appreciate when the goal is to remove a small fraction of observations from the parent set while retaining the bulk.
To the best of our knowledge this has not  been studied in the context of Bayesian inference, but the underlying \emph{reverse/backward} principle can be found in earlier investigations concerning subset and feature selection \cite{Couvreur_Bresler_2000_backward,Zhang_2011_backward}, graph cut approximation \cite{Bian_etal_2015_backward}, optimization of a certain class of set functions \cite{Ilev_2001_backward}, and classical experimental design using elementary symmetric polynomials \cite{Zelda_Sra_2017}. 

For any selection operator $\mathcal{P} \in \mathcal{S}(k)$, let  $\mathcal{P}^{c} \in \mathcal{S}(m-k)$ denote its complement, meaning their corresponding index sets are such that $\mathscr{I}_{\mathcal{P}} \cup \mathscr{I}_{\mathcal{P}^c} = \mathscr{V}$ and $\mathscr{I}_{\mathcal{P}} \cap \mathscr{I}_{\mathcal{P}^c} = \emptyset.$
Our objective now is to minimize the loss of mutual information by discarding an optimal subset of observations $Y_{\mathcal{P}^c_{\text{opt}}}\coloneqq\mathcal{P}^c_{\text{opt}} {}^{\top}Y$. The loss is measured with respect to mutual information as determined by the set of all candidate observations.
\begin{equation} \label{eqn:exp_design_prob_statement_min} \tag{\texttt{Prob-Min}}
	\mathcal{P}^c_{\text{opt}} = \argmin_{\mathcal{P}^c \in \mathcal{S}(m - k) \subset \mathbb{R}^{m \times m- k}}  \mathcal{I} \lr{ X; Y } - \mathcal{I} \lr{ X;Y \setminus {\mathcal{P}^c}^{\top} Y }.
\end{equation}
The optimal index sets pertinent to \cref{eqn:exp_design_prob_statement_max} and \cref{eqn:exp_design_prob_statement_min} are identical; hence we do not distinguish between them notationally.
Note, however, that the objectives differ in how they vary as functions of $\mathscr{I}_{\mathcal{P}}$.
The objective in \cref{eqn:exp_design_prob_statement_max} is a non-decreasing function with respect to cardinality of $\mathscr{I}_{\mathcal{P}}$, and consequently  non-increasing with respect to $\mathscr{I}_{\mathcal{P}^{c}}$. In contrast, the objective in \cref{eqn:exp_design_prob_statement_min} has opposite relationships with respect to the same cardinalities.
More importantly, the two approaches have philosophical and practical differences.

We show in \Cref{prop:MI_SM_uncorrelatedCase} that the objective in \cref{eqn:exp_design_prob_statement_max}, 
is a submodular function when the observations are conditionally independent. 
\begin{proposition}\label{prop:MI_SM_uncorrelatedCase} 
Given random variables $X \in \mathbb{R}^{n}$ and $Y \in \mathbb{R}^{m}$, let $\mathcal{P} \in \mathbb{R}^{m \times k }$ be a selection operator such that $\mathcal{P}^{\top}Y = \left[ Y_{i_1},\ldots,Y_{i_{k}} \right] ^{\top}$, with $\mathscr{I}_{\mathcal{P}}\subset \mathscr{V}$. The mutual information $\mathcal{I}(X;\mathcal{P}^{\top}Y)$,  between $X$ and $\mathcal{P}^{\top}Y$ is submodular if $Y_{i_j} | X$ are independent.
\end{proposition}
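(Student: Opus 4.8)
The plan is to verify the diminishing-returns form of submodularity (\Cref{def:submodularity_using_firt_order_diff}) directly for the set function $F(\mathscr{A}) \coloneqq \mathcal{I}(X; Y_{\mathscr{A}})$, $\mathscr{A} \subseteq \mathscr{V}$, where $Y_{\mathscr{A}}$ denotes the subvector of $Y$ with components indexed by $\mathscr{A}$ (so $F(\emptyset) = 0$). The first step is to express the incremental gain of adjoining an index $\nu \notin \mathscr{A}$ via the chain rule for mutual information: $F(\mathscr{A} \cup \{\nu\}) - F(\mathscr{A}) = \mathcal{I}(X; Y_\nu \mid Y_{\mathscr{A}})$, a conditional mutual information. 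Establishing submodularity then amounts to proving $\mathcal{I}(X; Y_\nu \mid Y_{\mathscr{A}}) \geq \mathcal{I}(X; Y_\nu \mid Y_{\mathscr{B}})$ whenever $\mathscr{A} \subseteq \mathscr{B}$ and $\nu \notin \mathscr{B}$.

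The second step uses the conditional-independence hypothesis. Writing the conditional mutual information as a difference of entropies, $\mathcal{I}(X; Y_\nu \mid Y_{\mathscr{A}}) = H(Y_\nu \mid Y_{\mathscr{A}}) - H(Y_\nu \mid Y_{\mathscr{A}}, X)$, and using that the components $Y_{i_j} \mid X$ are independent---equivalently $\pi_{Y \mid X} = \prod_{j} \pi_{Y_{i_j} \mid X}$, so that $Y_\nu$ is conditionally independent of $Y_{\mathscr{A}}$ given $X$---we obtain $H(Y_\nu \mid Y_{\mathscr{A}}, X) = H(Y_\nu \mid X)$, a quantity that does not depend on $\mathscr{A}$. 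The analogous identity holds with $\mathscr{B}$ in place of $\mathscr{A}$, so the desired inequality reduces to $H(Y_\nu \mid Y_{\mathscr{A}}) \geq H(Y_\nu \mid Y_{\mathscr{B}})$.

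The final step is the elementary ``conditioning cannot increase entropy'' fact: since $\mathscr{A} \subseteq \mathscr{B}$ we may write $Y_{\mathscr{B}} = (Y_{\mathscr{A}}, Y_{\mathscr{B} \setminus \mathscr{A}})$, whence $H(Y_\nu \mid Y_{\mathscr{A}}) - H(Y_\nu \mid Y_{\mathscr{B}}) = \mathcal{I}(Y_\nu; Y_{\mathscr{B} \setminus \mathscr{A}} \mid Y_{\mathscr{A}}) \geq 0$ by nonnegativity of conditional mutual information. Chaining the three steps yields $F(\mathscr{A} \cup \{\nu\}) - F(\mathscr{A}) \geq F(\mathscr{B} \cup \{\nu\}) - F(\mathscr{B})$, which is exactly submodularity.

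I do not anticipate an essential obstacle; the only care required is technical. Since the paper works with random variables admitting densities with respect to a base measure, one should either assume the relevant differential entropies are finite, or---cleaner---phrase every identity above purely in terms of mutual informations, which are always well-defined in $[0,\infty]$ and for which the chain rule and nonnegativity hold without integrability caveats. It is also worth stating explicitly at the outset what ``$Y_{i_j} \mid X$ independent'' means, since it is precisely that factorization of $\pi_{Y\mid X}$ which removes the $\mathscr{A}$-dependence of $H(Y_\nu \mid Y_{\mathscr{A}}, X)$ and drives the argument.
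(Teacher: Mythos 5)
Your proposal is correct and follows essentially the same route as the paper's proof: both reduce the incremental gain to $\mathcal{H}(Y_\nu \mid Y_{\mathscr{A}}) - \mathcal{H}(Y_\nu \mid X)$, using conditional independence to strip the dependence of the second term on the conditioning set, and then conclude with the fact that conditioning cannot increase entropy. The only cosmetic difference is that the paper reaches this expression by expanding $\mathcal{I}(X;\mathcal{P}^{\top}Y)$ as $\mathcal{H}(\mathcal{P}^{\top}Y) - \sum_j \mathcal{H}(Y_{i_j}\mid X)$ up front, whereas you pass through the conditional mutual information $\mathcal{I}(X;Y_\nu \mid Y_{\mathscr{A}})$; these are the same computation.
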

\hyperref[proof:prop_MI_SM_uncorrelatedCase]{The proof is given in \Cref{appendix:techResults}.}\medskip

It is worth emphasizing that the submodularity of the mutual information $\mathcal{I}(X;\mathcal{P}^{\top}Y)$ for conditionally independent observations holds even when the underlying joint distribution is non-Gaussian.
As a simple corollary of \Cref{prop:MI_SM_uncorrelatedCase}, the objective in \cref{eqn:exp_design_prob_statement_min} can be shown to be supermodular  under the same assumptions. 
More precisely, it is the \emph{supermodular dual}, but we will defer more discussion on the topic of {duals} to \Cref{sec:mm_alg_lbip}.
	
\section{Batch greedy algorithms for maximizing monotone set functions} \label{sec:batch_greedy}

We now  focus solely on cardinality-constrained maximization of monotone set functions. We will not restrict these functions to be submodular. We begin with a summary of various existing greedy heuristics, to contextualize the results that follow.

\subsection{Greedy algorithms: a brief history} \label{subsec:batch_greedy_intro}

In the case of cardinality-constrained maximization of non-decreasing submodular functions, the greedy heuristic of successively picking the candidate corresponding to the highest incremental gain (\Cref{def:incremental_gain}) performs well despite its simplicity. It has a constant factor ($1-1/e$) approximation guarantee \cite{Nemhauser1978}, which cannot be improved in general by any other polynomial time algorithm \cite{Nemhauser1978b}. If the function can be shown to have small curvature $c \in [0,1]$ (\Cref{def:total_curvature}), then the greedy algorithm possesses a more refined guarantee, $\frac{1}{c}(1-e^{-c})$ \cite{Conforti_Cornuejols_1984}. If the function is not submodular, then one can still provide an approximation guarantee by incorporating a submodularity ratio $\gamma \in [0,1]$ (see \Cref{def:sm_ratio_das_kempe}) \cite{Das_Kempe_2011} and a generalized curvature $\alpha \in [0,1]$ (\Cref{def:gen_curvature}) to obtain a similar factor, $\frac{1}{\alpha}(1-e^{-\alpha \gamma})$ \cite{Bian_etal_2017}.
 
\begin{definition}[Incremental gain] \label{def:incremental_gain} We denote the incremental or marginal gain of a set $ \mathscr{A} \subset \mathscr{V}$ given a set $\mathscr{B} \subset \mathscr{V}$ as $\rho_{\mathscr{A}}(\mathscr{B}) \coloneqq  F(\mathscr{A}\cup\mathscr{B}) -F(\mathscr{B})$. For $\nu  \in \mathscr{V}$, we use
the shorthand $\rho_{\nu}(\mathscr{B})$ for $\rho_{\{\nu\}}(\mathscr{B})$.
\end{definition}
Closely related variants of the greedy heuristic can be better choices depending on the context and needs.
\begin{itemize}
	\item In \cite{Robertazzi_Schwartz_1989}, an accelerated version was explored wherein the computed incremental gains are stored and exploited in the successive step, possibly reducing the overall number of function evaluations.
	\item In \cite{Mirzasoleiman_etal_2013}, the authors propose a two-stage parallelized version which reduces the number of function evaluations per parallel process.  The approximation guarantee for the algorithm, however, in general depends on the size of the candidate set and cardinality constraint. That dependence can only be overcome in special cases.
	\item In \cite{lazier_than_lazy_greedy} the authors analyze a randomized version of the greedy heuristic, termed stochastic greedy. This algorithm achieves, in expectation, a ($1-1/e -\epsilon$) approximation guarantee relative to the optimum solution. The number of function evaluations does not depend on the cardinality constraint, but linearly on the size of the candidate set, thus reducing the complexity substantially.
	\item In \cite{Liu_etal_2016} the authors analyzed the greedy heuristic wherein the locally optimal decision involved selecting the best possible set of $q>1$ indices.
	This necessarily requires evaluating incremental gains associated with all combinatorial possibilities, a potentially severe overhead but one which offers better guarantees. The algorithm was referred to as batch greedy in \cite{Liu_etal_2016}, where the batch size is the cardinality of the \emph{locally combinatorially optimal} set chosen in each step.
\end{itemize}

\subsection{Batch greedy algorithm and its analysis} \label{subsec:alg_analysis_batch_std_greedy}

The variants of greedy discussed in \Cref{subsec:batch_greedy_intro} were mostly analyzed only in the context of non-decreasing submodular functions. Our approach  (\Cref{alg:batch_std_greedy,alg:parallel_batch_std_greedy,alg:stochastic_batch_std_greedy}) can be understood to be yet another distinct variant of the greedy heuristic, but one which we analyze for the more general case of monotone non-submodular objectives. While we label our approach as batch greedy, it is unlike the algorithm in \cite{Liu_etal_2016} and in some sense its polar opposite.

In particular, we investigate the greedy strategy of picking multiple candidates in each step but relying solely on the incremental gains associated with individual candidates. This naturally reduces the computational overhead by avoiding combinatorial combinations, but at the expense of inferior approximation guarantees. It is in this sense the exact opposite of the algorithm in \cite{Liu_etal_2016}, since we are at the other end of the trade-off spectrum. 

\begin{algorithm}
\begin{algorithmic}[1]
\caption{Standard batch greedy algorithm}  \label{alg:batch_std_greedy}
\State {Input  $F,\mathscr{V},l,\{q_1,\ldots,q_l\}$}
\State {Initialize $\mathscr{A}=\emptyset$}
 \For {$i = 1 \text{ to }  l$}
  \State {Determine $\rho_{a}(\mathscr{A}) \quad \forall a \in \mathscr{V} \setminus \mathscr{A}$.}
  \State {Find $\mathscr{Q} \subseteq \mathscr{V} \setminus \mathscr{A}, \ \abs{\mathscr{Q}} = q_i$, comprising the indices with the highest incremental gains.}
  \State {$\mathscr{A} \leftarrow \mathscr{A} \cup \mathscr{Q}$}
\EndFor

\Return {Index set $\mathscr{A}$}
\end{algorithmic}
\end{algorithm}

\Cref{alg:batch_std_greedy} describes the standard batch greedy algorithm. 
The total cardinality constraint $k$ is necessarily the sum of batch sizes across all steps, $k = \sum_i q_i$.
We seek approximation guarantees for \Cref{alg:batch_std_greedy}  pertinent to the maximization of any  non-decreasing set function. To aid our arguments we introduce the \emph{supermodularity ratio}, which has very recently been used in other contexts too \cite{Tzoumas_etal_2017_SupModRatio,Bogunovic_etal_2018_SupModRatio,Karaca_etal_2018_SupModRatio}.
\begin{definition} [Supermodularity ratio] \label{def:supermodularity_ratio}  The supermodularity ratio of a non-negative set function $F$ with respect to a set $\mathscr{V}$ and a parameter $k \geq 1$ is
\begin{displaymath}
\eta_{\mathscr{V},k}(F)  =  \min_{\mathscr{B} \subseteq \mathscr{V}, \mathscr{A}: \abs{\mathscr{A}} \leq k, \mathscr{A} \cap \mathscr{B} = \emptyset} \frac{\rho_{\mathscr{A}}(\mathscr{B})}{ \sum_{\nu \in \mathscr{A}} \rho_{\nu}(\mathscr{B})}.
\end{displaymath}
\end{definition}
The supermodularity ratio is inspired by and related to the submodularity ratio (\Cref{def:sm_ratio_das_kempe}), originally introduced in \cite{Das_Kempe_2011}. It is a lower bound on the ratio of the incremental gain associated with any set compared against the sum of incremental gains associated with its elements. In contrast, the reciprocal of submodularity ratio is an upper bound on the same quantity.
In \cite{Bian_etal_2017} the authors define the submodularity ratio without the cardinality parameter $k$ (\Cref{def:sm_ratio_bian}) by taking a minimum across all possibilities. 
In the same way, we can define the supermodularity ratio without the cardinality parameter as the largest scalar $\eta$ such that
\begin{displaymath}
\frac{\rho_{\mathscr{A}} (\mathscr{B})}{\sum_{\nu \in \mathscr{A} \setminus \mathscr{B}} \rho_{\nu}(\mathscr{B})} \geq \eta ,
\qquad  \forall \mathscr{A}, \mathscr{B} \subseteq \mathscr{V}.
\end{displaymath}
We will refer to both $\eta_{\mathscr{V},k}$ and $\eta$ as supermodularity ratio, preferring one over the other depending on the context. Informally,  the supermodularity ratio quantifies how close a set function is to being supermodular, while the submodularity ratio performs the same task for  submodularity. More formally, we can prove that a function $F$  is supermodular \emph{iff} the supermodularity ratio $\eta(F) = 1$
(\hyperref[proof:prop_supermodularity_ratio]{see proof in \Cref{appendix_subsec:greedy_stuff}}).
For all set functions which are not supermodular (but which may or may not be submodular) we have the condition $\eta < 1$ as a direct corollary. 

Now we state the result corresponding to \Cref{alg:batch_std_greedy}.
\begin{theorem} \label{thm:batch_std_greedy}
Let $F$ be a non-decreasing  function with $F(\emptyset)=0$.  The batch greedy algorithm for maximizing $F(\mathscr{A})$ subject to $\abs{\mathscr{A}} \leq k$ outputs a set $\mathscr{A}$ such that
\begin{displaymath}
	F(\mathscr{A}) \geq \lr{ 1 - \prod_{i=1}^{l} \lr{ 1 - \frac{q_i  \eta_{\mathscr{V},q_i} \gamma_{\mathscr{V},k}}{k}} } \max_{\mathscr{B} \subset \mathscr{V}, \abs{\mathscr{B}} \leq k} F(\mathscr{B})
\end{displaymath}
where $\gamma_{\mathscr{V},k}$ is the submodularity ratio and $\eta_{\mathscr{V},q_i}$ is the supermodularity ratio.
\end{theorem}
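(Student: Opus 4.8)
The plan is to run the standard ``per-step progress'' argument familiar from greedy submodular maximization, inserting the ratios $\gamma_{\mathscr{V},k}$ and $\eta_{\mathscr{V},q_i}$ at exactly the two places where submodularity would otherwise be invoked. Write $\mathscr{A}_0=\emptyset$, and let $\mathscr{A}_i=\mathscr{A}_{i-1}\cup\mathscr{Q}_i$ be the set held by \Cref{alg:batch_std_greedy} after step $i$, where $\mathscr{Q}_i\subseteq\mathscr{V}\setminus\mathscr{A}_{i-1}$, $\abs{\mathscr{Q}_i}=q_i$, collects the $q_i$ indices of largest incremental gain. Fix $\mathscr{B}^\star$ attaining the maximum in the statement and set $\mathrm{OPT}=F(\mathscr{B}^\star)$; note $F\ge 0$ (monotone, $F(\emptyset)=0$) so all incremental gains are non-negative, and $\abs{\mathscr{V}\setminus\mathscr{A}_{i-1}}\ge q_i$ at every step since $\sum_j q_j=k<m$. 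The target is the one-step recursion
\[
\mathrm{OPT}-F(\mathscr{A}_i)\;\le\;\Bigl(1-\tfrac{q_i\,\eta_{\mathscr{V},q_i}\,\gamma_{\mathscr{V},k}}{k}\Bigr)\bigl(\mathrm{OPT}-F(\mathscr{A}_{i-1})\bigr),
\]
after which unrolling from $i=l$ down to $i=1$, using $F(\mathscr{A}_0)=0$ and $\sum_i q_i=k$, yields precisely the claimed bound (the factors $1-q_i\eta_{\mathscr{V},q_i}\gamma_{\mathscr{V},k}/k$ lie in $[0,1]$ because $q_i\le k$ and both ratios are at most $1$, so the product keeps the inequality direction).

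To establish the recursion I would chain three inequalities. \textbf{(i)} By monotonicity $\mathrm{OPT}\le F(\mathscr{B}^\star\cup\mathscr{A}_{i-1})$, so $\mathrm{OPT}-F(\mathscr{A}_{i-1})\le\rho_{\mathscr{B}^\star\setminus\mathscr{A}_{i-1}}(\mathscr{A}_{i-1})$; since $\abs{\mathscr{B}^\star\setminus\mathscr{A}_{i-1}}\le k$ and this set is disjoint from $\mathscr{A}_{i-1}$, the submodularity ratio (\Cref{def:sm_ratio_das_kempe}) gives $\rho_{\mathscr{B}^\star\setminus\mathscr{A}_{i-1}}(\mathscr{A}_{i-1})\le\gamma_{\mathscr{V},k}^{-1}\sum_{\nu\in\mathscr{B}^\star\setminus\mathscr{A}_{i-1}}\rho_\nu(\mathscr{A}_{i-1})$. \textbf{(ii)} A sorting/averaging argument shows the top-$q_i$ sum dominates a $q_i/k$ fraction of this quantity: ordering the (non-negative) gains of $\mathscr{B}^\star\setminus\mathscr{A}_{i-1}$, the sum of its $q_i$ largest entries is at least a $q_i/\abs{\mathscr{B}^\star\setminus\mathscr{A}_{i-1}}\ge q_i/k$ fraction of its total, and by optimality of the batch $\sum_{\nu\in\mathscr{Q}_i}\rho_\nu(\mathscr{A}_{i-1})$ dominates that sum of $q_i$ largest entries (treating separately $\abs{\mathscr{B}^\star\setminus\mathscr{A}_{i-1}}\le q_i$ and $>q_i$); hence $\sum_{\nu\in\mathscr{Q}_i}\rho_\nu(\mathscr{A}_{i-1})\ge\tfrac{q_i}{k}\sum_{\nu\in\mathscr{B}^\star\setminus\mathscr{A}_{i-1}}\rho_\nu(\mathscr{A}_{i-1})$. \textbf{(iii)} Since $\mathscr{Q}_i$ is disjoint from $\mathscr{A}_{i-1}$ with $\abs{\mathscr{Q}_i}=q_i$, the supermodularity ratio (\Cref{def:supermodularity_ratio}) gives $\rho_{\mathscr{Q}_i}(\mathscr{A}_{i-1})\ge\eta_{\mathscr{V},q_i}\sum_{\nu\in\mathscr{Q}_i}\rho_\nu(\mathscr{A}_{i-1})$, and the left side equals $F(\mathscr{A}_i)-F(\mathscr{A}_{i-1})$. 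Combining (i)--(iii) yields $F(\mathscr{A}_i)-F(\mathscr{A}_{i-1})\ge\tfrac{q_i\,\eta_{\mathscr{V},q_i}\,\gamma_{\mathscr{V},k}}{k}\bigl(\mathrm{OPT}-F(\mathscr{A}_{i-1})\bigr)$, which rearranges to the recursion.

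The one genuinely delicate point, and the step I expect to be the main obstacle, is the averaging inequality (ii): because $\mathscr{B}^\star\setminus\mathscr{A}_{i-1}$ need not consist of globally largest gains, the argument must go through the sorted gains of $\mathscr{B}^\star\setminus\mathscr{A}_{i-1}$ itself before invoking the optimality of $\mathscr{Q}_i$, and this is exactly where non-negativity of incremental gains is essential. A minor technicality is the degenerate case where all incremental gains vanish at some step: then either $\gamma_{\mathscr{V},k}>0$, which forces $\rho_{\mathscr{B}^\star\setminus\mathscr{A}_{i-1}}(\mathscr{A}_{i-1})=0$ and hence $F(\mathscr{A}_{i-1})\ge\mathrm{OPT}$ so the conclusion already holds by monotonicity of $F$ along $\mathscr{A}_{i-1}\subseteq\mathscr{A}_l$, or $\gamma_{\mathscr{V},k}=0$ and the stated bound is vacuous; in both cases the recursion and its consequence are unaffected. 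Everything else is bookkeeping --- verifying that the cardinality subscripts on the two ratios match the definitions ($k$ for the submodularity ratio applied to $\mathscr{B}^\star\setminus\mathscr{A}_{i-1}$, $q_i$ for the supermodularity ratio applied to $\mathscr{Q}_i$) and that $\sum_i q_i=k$ makes the telescoped product reproduce the statement.
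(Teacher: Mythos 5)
Your proposal is correct and follows essentially the same route as the paper's proof: monotonicity to compare against $F(\mathscr{B}^\star\cup\mathscr{A}_{i-1})$, the submodularity ratio to pass to a sum of singleton gains, the averaging/greedy-optimality step to relate that sum to the selected batch, and the supermodularity ratio to convert the batch's singleton gains back into its joint incremental gain, followed by unrolling the resulting recursion. The only cosmetic difference is that the paper first invokes $\gamma_{\mathscr{V},m}$ with $m=\abs{\mathscr{A}^*\setminus\mathscr{A}_{i-1}}$ and relaxes to $\gamma_{\mathscr{V},k}$ at the end, whereas you apply $\gamma_{\mathscr{V},k}$ directly.
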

\hyperref[proof:theorem_batch_std_greedy]{The proof is given in \Cref{appendix_subsec:greedy_stuff}.}\medskip

If we chose $q$ indices during each step with the total number of indices $k=q l$, the approximation guarantee in  \Cref{thm:batch_std_greedy} can be simplified using a standard logarithmic inequality:
\begin{equation} \label{eqn:batch_std_greedy_approx_guarantee}
	F(\mathscr{A}) \geq  \lr{1 - e^{- \eta_{\mathscr{V},q} \gamma_{\mathscr{V},k} }} \max_{\mathscr{B} \subset \mathscr{V}, \abs{\mathscr{B}} \leq k} F(\mathscr{B}).
\end{equation}
Alternatively $q$ can also be the maximum batch size across all steps, $q = \max q_i$, $i=1,\ldots,l$.
The approximation guarantee as given in \cref{eqn:batch_std_greedy_approx_guarantee} can be viewed as a straightforward generalization of that in \cite{Das_Kempe_2011} pertinent to the variant of greedy with batch selection.
Letting the function be submodular, $\gamma_{\mathscr{V},k} \geq 1$, and choosing one index in every step, $q=1$, reduces the guarantee to the classical result by \cite{Nemhauser1978} since $ \eta_{\mathscr{V},1} = 1$ for any set function. Selecting more than one index during each step worsens the guarantee since $\forall k_1,k_2$ with $k_1 \geq k_2 \geq 1$ we have $\eta_{\mathscr{V},k_1} \leq \eta_{\mathscr{V},k_2}$; this claim holds by definition because the constraint set of the latter is contained in the former.  

In \Cref{fig:approx_ratio} we visualize the approximation factor $1 - e^{- \eta_{\mathscr{V},q} \gamma_{\mathscr{V},k} }$ in \cref{eqn:batch_std_greedy_approx_guarantee} for the range of submodularity and supermodularity ratios.
\begin{figure}[!ht]
  	\centering \includegraphics[width=.75\textwidth,angle=0]{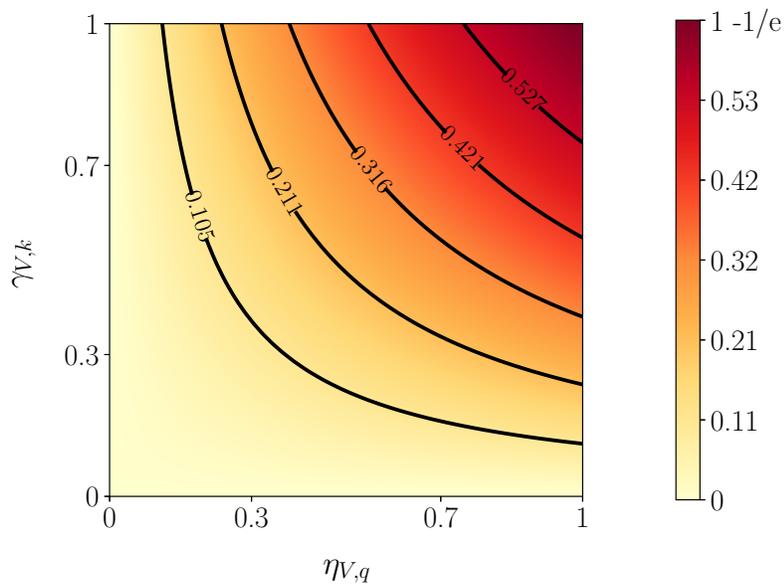}
  	 \caption{Visualization of the approximation factor $1 - e^{- \eta_{\mathscr{V},q} \gamma_{\mathscr{V},k} }$ in \cref{eqn:batch_std_greedy_approx_guarantee}.}
  	\label{fig:approx_ratio}
\end{figure}
If the function is modular, the approximation factor $1 - e^{- \eta_{\mathscr{V},q} \gamma_{\mathscr{V},k}}$ reduces to $1-1/e$, highlighting a gap  in our analysis similar to \cite{Das_Kempe_2011}. The supermodularity ratio $\eta_{\mathscr{V},q}$ is necessary to characterize the batch greedy heuristic, while the submodularity ratio $\gamma_{\mathscr{V},k}$ characterizes the non-submodularity of the function. Together, as they appear in \cref{eqn:batch_std_greedy_approx_guarantee} they are not sufficient to refine the worst case bound accounting for modularity. The notions of curvature (\Cref{def:total_curvature}) \cite{Conforti_Cornuejols_1984} for submodular functions and generalized curvature (\Cref{def:gen_curvature}) \cite{Bian_etal_2017} for non-submodular functions have been used in non-batch settings to give refined approximation guarantees avoiding such gaps. It should be possible to incorporate curvature in an analysis of the batch greedy heuristic to generalize the result in \Cref{thm:batch_std_greedy}, but we defer this investigation to the future.

If we consider the result in \Cref{thm:batch_std_greedy} directly and simplify it for the case when we choose all the indices in one step, we get a tight bound: 
\begin{equation} \label{eqn:batch_std_greedy_approx_guarantee_special_case}
	F(\mathscr{A}) \geq   \eta_{\mathscr{V},k} \gamma_{\mathscr{V},k} \max_{\mathscr{B} \subset \mathscr{V}, \abs{\mathscr{B}} \leq k} F(\mathscr{B}).
\end{equation}
The product $\eta_{\mathscr{V},k} \gamma_{\mathscr{V},k} \leq 1$ by definition. The algorithm returns an optimal index set if $\eta_{\mathscr{V},k} \gamma_{\mathscr{V},k}= 1$. 
One instance of this scenario is when the function is modular, in which case it is both supermodular and submodular, meaning $\gamma_{\mathscr{V},k}=\eta_{\mathscr{V},k}=1, \forall k$ and hence  $\eta_{\mathscr{V},k} \gamma_{\mathscr{V},k} = 1$,  $\forall k$. Of course,  we can always optimize a modular function exactly, and hence this observation is not entirely useful on its own. If we view the product of supermodularity and submodularity ratios as a measure of deviation from modularity, however, then they together prescribe favorable circumstances for a function that is not necessarily modular to be \emph{almost} maximized exactly. 


\subsection{Distributed batch greedy algorithm and its analysis} \label{subsec:alg_analysis_parallel_batch_std_greedy}

The distributed greedy algorithm for submodular maximization was first proposed in \cite{Mirzasoleiman_etal_2013}. 
Our distributed \emph{batch} greedy approach is described in  \Cref{alg:parallel_batch_std_greedy}.
The parameters $\widehat{l}$ and $\widetilde{l}$ are the number of iterations for which the batch greedy algorithm is run in each round. We define $\widehat{k}\coloneqq q \widehat{l}$ and $\widetilde{k}  \coloneqq q  \widetilde{l}$ as the cumulative number of indices returned at the end round. Similar to the setup in \cite{Mirzasoleiman_etal_2013}, we allow for  index sets larger than the cardinality constraint to be returned, meaning $k \leq \min ( \widehat{k},\widetilde{k} )$.
\begin{algorithm}
\begin{algorithmic}[1]
\caption{Distributed batch greedy algorithm}  \label{alg:parallel_batch_std_greedy}
\State {Input  $F,\mathscr{V}$, $n_p$, $\widehat{l},\widetilde{l},q$}
\State {Partition the set $\mathscr{V}$ into $n_p$ sets $\mathscr{V}_1,\ldots,\mathscr{V}_{n_p}$}
\State {Run the standard batch greedy algorithm, with batch size $q$,
for $\widehat{l}$ iterations within each set $\mathscr{V}_i$ to yield the corresponding solution set $\mathscr{A}_{i,[n_p,\widehat{k}]}^{\text{bg}}$}
\State {Merge the result sets: $\bigcup_i \mathscr{A}_{i,[n_p,\widehat{k}]}^{\text{bg}} \eqqcolon  \mathscr{M}$}
\State {Run the standard batch greedy algorithm, with batch size $q$,
for $\widetilde{l}$ iterations on $\mathscr{M}$ to yield the solution set $\mathscr{A}_{[n_p,\widetilde{k}]}^{\text{d-bg}}$}

\Return {Index set $\mathscr{A}_{[n_p,\widetilde{k}]}^{\text{d-bg}}$}
\end{algorithmic}
\end{algorithm}
 
In the most general case, the input to \Cref{alg:parallel_batch_std_greedy} could include different batch sizes in each round, across parallel processes, and across different iterations. For the purpose of exposition and analysis, we have favored a simplified version of the algorithm with a fixed/uniform batch size $q$.
Our theoretical analysis, summarized as \Cref{thm:parallel_batch_std_greedy} below, extends the work of  \cite{Mirzasoleiman_etal_2013} not only to batch settings but also to non-submodular functions.
\begin{theorem} \label{thm:parallel_batch_std_greedy}
Let $F$ be a non-decreasing  function with $F(\emptyset)=0$.  The distributed batch greedy algorithm for maximizing $F(\mathscr{A})$ subject to $\abs{\mathscr{A}} \leq k=ql$ outputs a set $\mathscr{A}_{[n_p,\widetilde{k}]}^{\text{d-bg}}$ such that
\begin{displaymath}
	F\left (\mathscr{A}_{[n_p,\widetilde{k}]}^{\text{d-bg}} \right ) \geq 
\lr{1 - e^{- \eta_{\mathscr{V},q} \, \gamma_{\mathscr{V},\widehat{k}} \, ( \widetilde{l} / \widehat{l} ) }}  \lr{1 - e^{-  \eta_{\mathscr{V},q} \, \gamma_{\mathscr{V},k} \, (\widehat{l} / l ) }} \frac{\gamma_{\mathscr{V},k}}{k}  \max_{\mathscr{B} \subset \mathscr{V}, \abs{\mathscr{B}} \leq k} F(\mathscr{B}),
\end{displaymath}
where $\gamma_{\mathscr{V},k}$, $\gamma_{\mathscr{V},\widehat{k}}$ are submodularity ratios and $\eta_{\mathscr{V},q_i}$ is the supermodularity ratio.
\end{theorem}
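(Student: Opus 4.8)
The plan is to analyze the two phases of Algorithm~\ref{alg:parallel_batch_std_greedy} separately, leaning on the single-machine guarantee of \Cref{thm:batch_std_greedy} as a black box, and then to stitch the two bounds together in the style of \cite{Mirzasoleiman_etal_2013}. First I would fix an optimal set $\mathscr{B}^{\ast} \in \argmax_{\abs{\mathscr{B}} \leq k} F(\mathscr{B})$ and, for each partition block $\mathscr{V}_i$, write $\mathscr{B}^{\ast}_i \coloneqq \mathscr{B}^{\ast} \cap \mathscr{V}_i$, so that the $\mathscr{B}^{\ast}_i$ form a disjoint cover of $\mathscr{B}^{\ast}$. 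The key observation is that each $\mathscr{B}^{\ast}_i$ is a feasible candidate for the batch greedy run inside $\mathscr{V}_i$ (it has cardinality at most $k \leq \widehat{k}$), so \Cref{thm:batch_std_greedy} applied within block $i$—with $\widehat{l}$ iterations of batch size $q$, hence $\widehat{k} = q\widehat{l}$ selected indices—gives
\begin{displaymath}
F\!\left(\mathscr{A}^{\mathrm{bg}}_{i,[n_p,\widehat{k}]}\right) \geq \lr{1 - e^{- \eta_{\mathscr{V},q}\, \gamma_{\mathscr{V},\widehat{k}}\, (\widehat{l}/\widehat{l})}} F(\mathscr{B}^{\ast}_i)
\end{displaymath}
—more precisely, I would not compare against $F(\mathscr{B}^{\ast}_i)$ directly, since the supermodularity/submodularity ratios are defined on the whole ground set $\mathscr{V}$, but against $\max_{\mathscr{B} \subseteq \mathscr{V}_i} F(\mathscr{B})$, which dominates $F(\mathscr{B}^{\ast}_i)$.

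Second, I would turn to the merge step. Since $\mathscr{M} = \bigcup_i \mathscr{A}^{\mathrm{bg}}_{i,[n_p,\widehat{k}]}$ contains each per-block greedy solution, monotonicity gives $F(\mathscr{M}) \geq \max_i F(\mathscr{A}^{\mathrm{bg}}_{i,[n_p,\widehat{k}]})$. Combining with the per-block bound and the fact that $\sum_i F(\mathscr{B}^{\ast}_i) \geq \gamma_{\mathscr{V},k}\, F(\mathscr{B}^{\ast})$ (this is exactly where the submodularity ratio $\gamma_{\mathscr{V},k}$ enters: the incremental gains of the disjoint pieces, summed, underestimate the gain of the union only up to the factor $\gamma_{\mathscr{V},k}$, applied here with $\mathscr{B} = \emptyset$), together with an averaging argument $\max_i F(\mathscr{B}^{\ast}_i) \geq \frac{1}{n_p}\sum_i F(\mathscr{B}^{\ast}_i)$—or rather the sharper observation that the maximum block solution is worth at least a $1/k$ fraction once we account for how $\mathscr{B}^\ast$ splits, which is what produces the $\gamma_{\mathscr{V},k}/k$ factor in the statement—I would obtain a lower bound of the form $F(\mathscr{M}) \geq \bigl(1 - e^{-\eta_{\mathscr{V},q}\gamma_{\mathscr{V},\widehat{k}}(\widehat{l}/\widehat{l})}\bigr)\,(\gamma_{\mathscr{V},k}/k)\, F(\mathscr{B}^{\ast})$. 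Running this carefully—matching the exact exponents $(\widehat{l}/l)$ and the $\gamma_{\mathscr{V},k}/k$ prefactor that appear in the theorem—is the delicate bookkeeping, since the per-block batch greedy only runs $\widehat{l}$ of the notional $l$ iterations, so the relevant partial guarantee is $1 - e^{-\eta_{\mathscr{V},q}\gamma_{\mathscr{V},k}(\widehat{l}/l)}$, not the full $1 - e^{-\eta_{\mathscr{V},q}\gamma_{\mathscr{V},k}}$.

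Third, the final phase is a straightforward application of \Cref{thm:batch_std_greedy} once more, this time with $\mathscr{M}$ playing the role of the ground set and $\widetilde{l}$ iterations of batch size $q$: this yields
\begin{displaymath}
F\!\left(\mathscr{A}^{\mathrm{d\text{-}bg}}_{[n_p,\widetilde{k}]}\right) \geq \lr{1 - e^{-\eta_{\mathscr{V},q}\,\gamma_{\mathscr{V},\widehat{k}}\,(\widetilde{l}/\widehat{l})}}\, F(\mathscr{M}),
\end{displaymath}
where the exponent is scaled by $\widetilde{l}/\widehat{l}$ because the cardinality of $\mathscr{M}$ is (at most) $\widehat{k} = q\widehat{l}$ while we only run $\widetilde{l}$ iterations. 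Substituting the bound on $F(\mathscr{M})$ from the previous paragraph gives the claimed product form. The main obstacle, as usual with distributed greedy arguments, is the passage through the merge step: one must argue that the optimal set, when split across the partition blocks, is not "too fragmented"—i.e., that at least one block retains a sufficiently large share of the optimum value—and here, because $F$ is non-submodular, the usual submodularity-based averaging of \cite{Mirzasoleiman_etal_2013} must be replaced by the weaker estimate governed by $\gamma_{\mathscr{V},k}$, which is precisely what degrades the guarantee by the extra factor $\gamma_{\mathscr{V},k}/k$. I would also need to double-check that the ratios $\eta$, $\gamma$ defined globally on $\mathscr{V}$ are valid (i.e., give correct bounds) when the batch greedy subroutine operates on the subsets $\mathscr{V}_i$ and on $\mathscr{M}$; this follows because restricting the ground set only shrinks the feasible pairs $(\mathscr{A},\mathscr{B})$ over which the ratios are minimized, so the global quantities remain valid lower/upper bounds.
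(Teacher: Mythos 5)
Your proposal follows essentially the same route as the paper: a partial-run guarantee for the per-block batch greedy (the paper's \Cref{lemma:parallel_greedy_helpful_lemma_2}), a merge-step estimate showing that the best block retains a $\gamma_{\mathscr{V},k}/k$ fraction of the optimum (the paper's \Cref{lemma:parallel_greedy_helpful_lemma_1}, proved precisely via the best-singleton argument you end up invoking), and a second application of the partial-run guarantee on $\mathscr{M}$. Two imprecisions should be repaired. First, the intermediate claim $\sum_i F(\mathscr{B}^{\ast}_i) \geq \gamma_{\mathscr{V},k}\, F(\mathscr{B}^{\ast})$ does not follow from the definition of the submodularity ratio, which decomposes a set into \emph{singletons} rather than into the blocks $\mathscr{B}^{\ast}_i$; you abandon that route anyway in favor of $\max_i F(\mathscr{B}^{\ast}_i) \geq F(\nu^{\ast}) \geq \frac{1}{k}\sum_{\nu \in \mathscr{B}^{\ast}}F(\nu) \geq \frac{\gamma_{\mathscr{V},k}}{k}F(\mathscr{B}^{\ast})$, which is the correct argument and exactly what the paper's lemma does. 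Second, the final-stage bound should be stated against $\max_{\mathscr{B}\subseteq\mathscr{M},\,\abs{\mathscr{B}}\leq\widehat{k}}F(\mathscr{B})$ rather than against $F(\mathscr{M})$ itself (whose cardinality can be as large as $n_p\widehat{k}$, so the greedy guarantee does not compare to it); the chain still closes because your lower bound on $F(\mathscr{M})$ is obtained through a $\widehat{k}$-element subset of $\mathscr{M}$ and hence is really a lower bound on that constrained maximum.
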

\hyperref[proof:theorem_parallel_batch_std_greedy]{The proof is given in \Cref{appendix_subsec:greedy_stuff}.} 
\medskip

If $F$ is submodular ($\gamma=1$), and further if $\widehat{l} = \widetilde{l} = l$, then \Cref{thm:parallel_batch_std_greedy} can be simplified as follows:
\begin{equation} \label{eqn:parallel_batch_std_greedy_submodular_case}
	F(\mathscr{A}^{\text{d-bg}}) \geq 
\frac { \lr{1 - e^{- \eta_{\mathscr{V},q}}}^2}{\min(n_p,k)}   \max_{\mathscr{B} \subset \mathscr{V}, \abs{\mathscr{B}} \leq k} F(\mathscr{B}).
\end{equation}
Note that the term appearing in the denominator of \cref{eqn:parallel_batch_std_greedy_submodular_case} is $\min(n_p,k)$ and not $k$, the natural expected term as per \Cref{thm:parallel_batch_std_greedy}. In this case of $F$ being submodular, we rely on \cite[Theorem~4.1]{Mirzasoleiman_etal_2013} as opposed to \Cref{lemma:parallel_greedy_helpful_lemma_1} to prove the slightly improved result in  \cref{eqn:parallel_batch_std_greedy_submodular_case}. 
The dependence of the distributed solution on $\min(n_p,k)$ is in general unavoidable, but as shown in \cite{Mirzasoleiman_etal_2013}, the ground set $\mathscr{V}$ and function $F$ can exhibit rich geometrical structure that can be used to prove stronger results.

\subsection{Stochastic batch greedy algorithm and its analysis} \label{subsec:alg_analysis_stochastic_batch_std_greedy}

The stochastic greedy algorithm for submodular functions was first proposed in \cite{lazier_than_lazy_greedy}.
Here we present a batch variant of the stochastic greedy algorithm, in \Cref{alg:stochastic_batch_std_greedy} below. 
\begin{algorithm}
\begin{algorithmic}[1]
\caption{Stochastic batch greedy algorithm}  \label{alg:stochastic_batch_std_greedy}
\State {Input  $F,\mathscr{V},l,\{q_1,\ldots,q_l\},s$}
\State {Initialize $\mathscr{A}=\emptyset$}
 \For {$i = 1 \text{ to }  l$}
  \State {Construct a randomly sampled set $\mathscr{R}$ by sampling $s$ random elements from $ \mathscr{V} \setminus \mathscr{A}$.}
  \State {Determine $\rho_{a}(\mathscr{A}), \ \forall a \in \mathscr{R}$.}
  \State {Find $\mathscr{Q} \subseteq \mathscr{R},\ \abs{\mathscr{Q}} = q_i$, comprising the indices with the highest incremental gains.}
  \State {$\mathscr{A} \leftarrow \mathscr{A} \cup \mathscr{Q}$}
\EndFor

\Return {Index set $\mathscr{A}$}
\end{algorithmic}
\end{algorithm}

In the most general case, the input to \Cref{alg:stochastic_batch_std_greedy} includes batch sizes $q_i,\  i=1,\ldots,l$, indicating the number of indices selected at each step. We will analyze the algorithm (wlog) for the case of fixed batch size $q$ with the cardinality constraint $k = q l$. Furthermore we will restrict our analysis to the regime when the cardinality constraint is at most $\mathcal{O}(\sqrt{m})$;  this is very much reflective of practical scenarios when stochastic greedy algorithms are warranted. 
Our theoretical analysis extends the work of \cite{lazier_than_lazy_greedy} not only to batch settings but also to non-submodular functions; the main result is stated in \Cref{thm:stochastic_batch_std_greedy}.
\begin{theorem} \label{thm:stochastic_batch_std_greedy}
Let $F$ be a non-decreasing  function with $F(\emptyset)=0$, and $0<\epsilon<1$ be a tolerance parameter. The stochastic batch greedy algorithm for maximizing $F(\mathscr{A})$ with $s =\frac{m}{k} \log\frac{q}{\epsilon}$, $\abs{\mathscr{A}} \leq k \leq \floor{\sqrt{m/e} -1/e}$, and $\frac{m -2k}{2 e k^2} \geq  \frac{q-1}{\log^2 \frac{q}{\epsilon}}$ outputs a set $\mathscr{A}$ such that
\begin{displaymath}
	\mathbb{E}[F(\mathscr{A})]	
 \geq \lr{1- e^{- \lr{1 - \epsilon} \gamma_{\mathscr{V},k} \eta_{\mathscr{V},q}}} \max_{\mathscr{B} \subset \mathscr{V}, \abs{\mathscr{B}} \leq k} F(\mathscr{B}),
\end{displaymath}
where  $\gamma_{\mathscr{V},k}$ is the submodularity ratio and $\eta_{\mathscr{V},q}$ is the supermodularity ratio.
\end{theorem}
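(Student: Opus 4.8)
The plan is to follow the template of the deterministic batch greedy analysis (\Cref{thm:batch_std_greedy}) and to replace its per-step progress bound by an \emph{expected} one, borrowing the sampling machinery behind stochastic greedy \cite{lazier_than_lazy_greedy}. Write $l = k/q$, let $\mathscr{A}_0 = \emptyset \subseteq \mathscr{A}_1 \subseteq \cdots \subseteq \mathscr{A}_l = \mathscr{A}$ be the iterates, let $\mathscr{R}_i$ be the random sample and $\mathscr{Q}_i \subseteq \mathscr{R}_i$ the batch selected at step $i$, and fix an optimal set $\mathscr{B}^\star$ with $\abs{\mathscr{B}^\star} \le k$. The deterministic ingredients are exactly as in the proof of \Cref{thm:batch_std_greedy}: monotonicity together with the submodularity ratio gives $F(\mathscr{B}^\star) - F(\mathscr{A}_{i-1}) \le \frac{1}{\gamma_{\mathscr{V},k}} \sum_{\nu \in \mathscr{B}^\star \setminus \mathscr{A}_{i-1}} \rho_\nu(\mathscr{A}_{i-1})$, and the supermodularity ratio gives $F(\mathscr{A}_i) - F(\mathscr{A}_{i-1}) = \rho_{\mathscr{Q}_i}(\mathscr{A}_{i-1}) \ge \eta_{\mathscr{V},q} \sum_{\nu \in \mathscr{Q}_i} \rho_\nu(\mathscr{A}_{i-1})$. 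So the whole argument reduces to the per-step estimate
\[
\mathbb{E}\!\left[\textstyle\sum_{\nu \in \mathscr{Q}_i} \rho_\nu(\mathscr{A}_{i-1}) \,\middle|\, \mathscr{A}_{i-1}\right] \;\ge\; \frac{(1-\epsilon)\,q}{k} \sum_{\nu \in \mathscr{B}^\star \setminus \mathscr{A}_{i-1}} \rho_\nu(\mathscr{A}_{i-1}).
\]

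To prove this I would condition on $\mathscr{A}_{i-1}$ and abbreviate $\mathscr{T} = \mathscr{B}^\star \setminus \mathscr{A}_{i-1}$, $a = \abs{\mathscr{T}} \le k$. Since $\mathscr{Q}_i$ consists of the $q$ members of $\mathscr{R}_i$ with the largest marginal gains, a layer-cake identity gives $\sum_{\nu \in \mathscr{Q}_i} \rho_\nu(\mathscr{A}_{i-1}) = \int_0^\infty \min\!\big(q,\,\abs{\{\nu \in \mathscr{R}_i : \rho_\nu(\mathscr{A}_{i-1}) > t\}}\big)\,dt \ge \int_0^\infty \min(q, J_t)\,dt$, where $J_t = \abs{\mathscr{R}_i \cap \mathscr{T}^{>t}}$ with $\mathscr{T}^{>t} = \{\nu \in \mathscr{T} : \rho_\nu(\mathscr{A}_{i-1}) > t\}$. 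Taking expectations and using the companion identity $\sum_{\nu \in \mathscr{T}} \rho_\nu(\mathscr{A}_{i-1}) = \int_0^\infty \abs{\mathscr{T}^{>t}}\,dt$, it suffices to establish the pointwise bound $\mathbb{E}[\min(q, J_t)] \ge \frac{(1-\epsilon)q}{k}\abs{\mathscr{T}^{>t}}$; that is, for $J \sim \mathrm{Hypergeom}(m - \abs{\mathscr{A}_{i-1}}, s, r)$ with $1 \le r \le k$,
\[
\mathbb{E}[\min(q, J)] \;\ge\; \frac{(1-\epsilon)\,q}{k}\,r .
\]
This is where the regime hypotheses enter: the bound $k \le \lfloor\sqrt{m/e} - 1/e\rfloor$ forces $m - \abs{\mathscr{A}_{i-1}} = m - q(i-1) \in [m-k, m]$ to be essentially $m$, so that $s/(m - \abs{\mathscr{A}_{i-1}}) \ge s/m = \tfrac1k\log\tfrac q\epsilon$; and $\tfrac{m-2k}{2ek^2} \ge \tfrac{q-1}{\log^2(q/\epsilon)}$ is used to control the correction $\mathbb{E}[(J-q)^+] = \mathbb{E}[J] - \mathbb{E}[\min(q,J)]$ produced by the $\min$-truncation, which is the one feature absent in the non-batch case $q = 1$. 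The remaining inequality I would then close with the Mirzasoleiman-style concavity estimate $1 - \epsilon^x \ge (1-\epsilon)x$ for $x \in [0,1]$, applied with $x = r/k$ (for $q = 1$ this recovers \cite{lazier_than_lazy_greedy} verbatim), together with a standard hypergeometric tail bound in the range of $r$ where the truncation actually bites. Given the per-step estimate, the rest is routine: chaining it with the two ratio inequalities yields $\mathbb{E}[F(\mathscr{B}^\star) - F(\mathscr{A}_i) \mid \mathscr{A}_{i-1}] \le \big(1 - \tfrac{(1-\epsilon)\gamma_{\mathscr{V},k}\eta_{\mathscr{V},q}}{l}\big)\big(F(\mathscr{B}^\star) - F(\mathscr{A}_{i-1})\big)$, and taking total expectations and iterating over $i = 1, \dots, l$ gives $\mathbb{E}[F(\mathscr{B}^\star) - F(\mathscr{A}_l)] \le \big(1 - \tfrac{(1-\epsilon)\gamma_{\mathscr{V},k}\eta_{\mathscr{V},q}}{l}\big)^l F(\mathscr{B}^\star) \le e^{-(1-\epsilon)\gamma_{\mathscr{V},k}\eta_{\mathscr{V},q}} F(\mathscr{B}^\star)$, which rearranges to the claimed bound.

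The hard part is the hypergeometric estimate $\mathbb{E}[\min(q,J)] \ge \tfrac{(1-\epsilon)q}{k}\,r$. In the non-batch setting one only needs $\Pr[J \ge 1]$ to be close to $1$; with a batch of size $q$ one must instead argue that $\mathscr{R}_i$ reliably supplies $q$ high-gain candidates, or — when it cannot — that the symmetry-induced averaging over $\mathscr{R}_i \cap \mathscr{T}$ compensates, and it is precisely this trade-off that forces the cardinality restriction and the delicate constraint tying $q$, $k$, $m$, and $\epsilon$ together. A secondary bookkeeping point is that $a = \abs{\mathscr{B}^\star \setminus \mathscr{A}_{i-1}}$ varies with the (random) history, so the per-step bound must be shown to hold uniformly over every reachable $\mathscr{A}_{i-1}$ before the iterated-expectation telescoping is legitimate.
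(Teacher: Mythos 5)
Your skeleton is the paper's: fix an optimizer, use the submodularity ratio to pass from $F(\mathscr{B}^\star)-F(\mathscr{A}_{i-1})$ to $\sum_{\nu\in\mathscr{B}^\star\setminus\mathscr{A}_{i-1}}\rho_\nu(\mathscr{A}_{i-1})$, the supermodularity ratio to pass from $\sum_{\nu\in\mathscr{Q}_i}\rho_\nu(\mathscr{A}_{i-1})$ to $\rho_{\mathscr{Q}_i}(\mathscr{A}_{i-1})$, sandwich these around a per-step expected-gain bound, and iterate with $(1-x/l)^l\le e^{-x}$. Where you differ is only in how the per-step bound is extracted from the sampling: the paper (\Cref{lemma:stochastic_batch_std_greedy_exp_gain}) conditions on the event $\abs{\mathscr{R}\cap\mathscr{A}^{*}\setminus\mathscr{A}_{i}}\ge q$, notes that the greedy top-$q$ of $\mathscr{R}$ dominates a uniformly random $q$-subset of $\mathscr{R}\cap\mathscr{A}^{*}\setminus\mathscr{A}_{i}$, which by exchangeability averages to $\frac{q}{\abs{\mathscr{A}^{*}\setminus\mathscr{A}_{i}}}\sum_{\nu}\rho_\nu$, and multiplies by the probability bound of \Cref{lemma:stochastic_batch_std_greedy_prob_bound}; your layer-cake decomposition with the truncated-mean bound $\mathbb{E}[\min(q,J_t)]\ge\frac{(1-\epsilon)q}{k}\abs{\mathscr{T}^{>t}}$ is an alternative packaging of the same content, since $\mathbb{E}[\min(q,J_t)]\ge q\,\mathbb{P}(J_t\ge q)$ and the needed inequality is precisely the paper's probability lemma applied at every level $t$.

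The gap is that this truncated-mean estimate --- which you correctly identify as the hard part and which is where every one of the hypotheses $s=\frac{m}{k}\log\frac{q}{\epsilon}$, $k\le\floor{\sqrt{m/e}-1/e}$, $\frac{m-2k}{2ek^2}\ge\frac{q-1}{\log^2(q/\epsilon)}$ is consumed --- is left entirely to ``a standard hypergeometric tail bound,'' and no tail bound can deliver it. The quantity is capped by its first moment: $\mathbb{E}[\min(q,J)]\le\mathbb{E}[J]=\frac{sr}{m'}$ with $m'\le m$ the residual population size, so a necessary condition for $\mathbb{E}[\min(q,J)]\ge\frac{(1-\epsilon)qr}{k}$ is $s\ge\frac{(1-\epsilon)q\,m'}{k}$, i.e.\ roughly $\log\frac{q}{\epsilon}\ge(1-\epsilon)q$. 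This is violated by admissible parameters: $m=10^6$, $k=100$, $q=10$, $\epsilon=0.1$ satisfies every hypothesis of the theorem, yet gives $s=10^4\log 100$ and $\mathbb{E}[J]=\frac{sr}{m'}\approx 4.6\,\frac{r}{k}$, well below the target $9\,\frac{r}{k}$. Since the inequality already fails at the level of the mean, no concentration or concavity argument closes this step as sketched; the sample size $\frac{m}{k}\log\frac{q}{\epsilon}$ simply does not put $q$ elements of $\mathscr{T}$ into $\mathscr{R}$ in expectation unless $q\lesssim\log\frac{q}{\epsilon}$. To repair your plan you would need either a sample size scaling linearly in $q$ rather than logarithmically, or a per-step inequality that does not route through $\mathbb{E}[\min(q,J_t)]$; and you should be aware that the same first-moment obstruction bears on the probability bound in \Cref{lemma:stochastic_batch_std_greedy_prob_bound} itself (whose conclusion $\mathbb{P}(J\ge q)\ge(1-\epsilon)\frac{r}{k}$ likewise forces $\mathbb{E}[J]\ge(1-\epsilon)\frac{qr}{k}$), so a careful re-derivation of that lemma is the place to start. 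A secondary but fixable omission: the claimed dominance of the greedy batch over a random $q$-subset of $\mathscr{R}\cap\mathscr{T}$, and the uniformity of the per-step bound over all reachable histories $\mathscr{A}_{i-1}$, both need to be stated conditionally on $\mathscr{A}_{i-1}$ before the telescoping, which you flag but do not carry out.
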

\hyperref[proof:theorem_stochastic_batch_std_greedy]{The proof is given in \Cref{appendix_subsec:greedy_stuff}.}\medskip

If the function $F$ is submodular ($\gamma=1$) and if the batch size is equal to one, then our result reduces to that shown in \cite{lazier_than_lazy_greedy}. 
It is worth highlighting that the assumptions of \Cref{thm:stochastic_batch_std_greedy}---specifically the last inequality relating $m$, $k$, $q$, and $\epsilon$---stem primarily from the complexity of a required probability bound, detailed as \Cref{lemma:stochastic_batch_std_greedy_prob_bound}. 
The complexities arise specifically when the batch size is greater than one. The corresponding constraints, however, should be easily satisfied in practice. 
For instance, suppose that $k \leq \frac{1}{\kappa} \sqrt{m/e}$ for some $\kappa > 1$. Then it is easy to see that $\frac{m -2k}{2 e k^2} \geq  \frac{q-1}{\log^2 \frac{q}{\epsilon}}$ holds when $\epsilon \leq q e^{-\frac{\sqrt{2(q-1)}}{\kappa}}$. When $q=1$, this constraint on the tolerance parameter is trivial since we know $0<\epsilon<1$. When $q>1$, a sufficiently large $\kappa$ will ensure that the condition can be satisfied.

\subsection{Complexity of the batch greedy algorithms} \label{subsec:batch_greedy_complexity}

Recall that we denote the size of the candidate set $\mathscr{V}$ as $m$ and the desired cardinality as $k$.  The number of function evaluations needed to find the standard greedy solution is $\mathcal{O}(m k)$.

\begin{itemize}
\item In the batch heuristic (\Cref{alg:batch_std_greedy}) with a uniform batch size $q$, the number of function evaluations reduces to $\mathcal{O}( \frac{m k}{q})$. Thus a suitably chosen batch size reduces the computational overhead of a large cardinality constraint, but the linear dependence on the size of the candidate set remains the same. 
\item For the distributed batch greedy heuristic (\Cref{alg:parallel_batch_std_greedy}), if $\widehat{k} = \widetilde{k} = k$, then 
 the number of function evaluations per parallel process
to construct the set $\mathscr{M}$ is $\mathcal{O}(\frac{m k}{q n_p})$. To determine the distributed batch greedy solution using $\mathscr{M}$ we will further need $\mathcal{O}(\frac{k^2 n_p}{q})$ function evaluations.
\item For the stochastic batch greedy heuristic (\Cref{alg:stochastic_batch_std_greedy}),  the number of function evaluations needed to find the solution is $\widetilde{\mathcal{O}}(\frac{m}{q})$; here $\widetilde{\mathcal{O}}$ absorbs the $\log q$ dependency that arises since the random set $\mathscr{R}$ drawn at each step scales logarithmically with the batch size. The complexity does not depend on the cardinality constraint due to the probabilistic nature of the algorithm.
\end{itemize}

In all three instances, the batch variant of the greedy heuristic reduces the complexity by a factor $1/q$.

\subsection{Theoretical guarantees for linear Bayesian optimal experimental design} \label{subsec:theoretical_guarantees_LBIP_std_greedy}

The Bayesian linear--Gaussian model arises in numerous practical applications, and is a building block for countless others. It is particularly important in \emph{inverse problems} \cite{stuart2010inverse,Kaipio2005}, where the observations may depend indirectly on the parameters through the action of a smoothing forward operator. Examples of such problems include computerized tomography and electromagnetic source inversion. 
%
%
Here we discuss theoretical guarantees for the performance of \Cref{alg:batch_std_greedy,alg:parallel_batch_std_greedy,alg:stochastic_batch_std_greedy} when used in optimal experimental design for these problems. First, we set up the relevant notation and discuss some important features of linear Bayesian experimental design.

\subsubsection{Linear Bayesian inverse problem} \label{subsubsec:lbip_notation_setup}

Without loss of generality, we model the parameters $X$ and observation noise $\epsilon$ as zero-mean, normally distributed random variables with covariance matrices $\Gamma_{X}$ and $\Gamma_{Y|X}$, respectively. Here $Y$ denotes the observed data. We assume that $X$ and $\epsilon$ are independent of each other. The linear forward model that maps parameters to data is represented by $G \in \mathbb{R}^{m \times n}$. Hence
\begin{equation}\label{eqn:linearFwdModel}
	 Y = G X + \epsilon
\end{equation}
serves as our statistical model for the data and specifies the likelihood, i.e., $Y \vert x \sim \mathcal{N}( Gx, \Gamma_{Y \vert X})$.  The data $Y$ thus have a marginal distribution $\mathcal{N}\lr{0,\Gamma_Y}$, where the covariance $\Gamma_{Y}$ is
\begin{equation} \label{eqn:dataMargCovariance}
	\Gamma_{Y}  \coloneqq  G \Gamma_{X} G^{\top} + \Gamma_{Y|X}.
\end{equation}
The linearity of the forward model, along with Gaussianity of the prior and observation noise, allows us to characterize the posterior in closed form: $X | Y \sim \mathcal{N}(\mu_{X|Y} ,\Gamma_{X|Y})$. Here $\Gamma_{X|Y}$ is the posterior covariance matrix and $\mu_{X|Y}$ is the posterior mean, which is a function of the actual realization of the data $y$:
\begin{subequations}\label{eqn:posterior_moments}\begin{align}
\label{eqn:posterior_cov}	\Gamma_{X|Y} & \coloneqq   \lr{ \Gamma_{X}^{-1} + G^{\top}	\Gamma_{Y|X}^{-1} G }^{-1} ,\\
\label{eqn:posterior_mean}	\mu_{X|Y}(y)       & \coloneqq   \Gamma_{X|Y} G^{\top} \Gamma_{Y|X}^{-1} y.
\end{align} \end{subequations}
In \cref{eqn:posterior_cov}, the term $G^{\top}	\Gamma_{Y|X}^{-1} G $ is the Hessian of the negative log-likelihood (i.e., the Fisher information matrix).

The task of optimal experimental design involves selecting a \emph{subset} of observations $Y_{\mathcal{P}} = \mathcal{P}^{\top} Y \in \mathbb{R}^{k}$, corresponding to some selection operator $\mathcal{P} \in \mathbb{R}^{m\times k}$, such that $X | Y_{\mathcal{P}} \sim \mathcal{N}(\mu_{X|Y_{\mathcal{P}}} ,\Gamma_{X|Y_{\mathcal{P}}})$ has minimal uncertainty (given constraints on $k$). We make this goal precise by using the mutual information between $X$ and $Y_{\mathcal{P}}$, $\mathcal{I}\lr{X ; Y_{\mathcal{P}}}$, as our design objective. Analogous to \cref{eqn:posterior_moments} we can write the statistical moments of the posterior $X|Y_{\mathcal{P}}$ as
\begin{subequations}  \label{eqn:reduced_data_posterior_moments} \begin{align}
\label{eqn:reduced_data_posterior_cov}\Gamma_{X|Y_{\mathcal{P}}} & \coloneqq   \lr{ \Gamma_{X}^{-1} + G_{\mathcal{P}}^{\top}	\Gamma_{Y_{\mathcal{P}}|X}^{-1} G_{\mathcal{P}}}^{-1} ,\\
\label{eqn:reduced_data_posterior_mean}	\mu_{X|Y_{\mathcal{P}}}(y) & \coloneqq   \Gamma_{X|Y_{\mathcal{P}}} G_{\mathcal{P}}^{\top} \Gamma_{Y_{\mathcal{P}}|X}^{-1} \mathcal{P}^{\top}y.
\end{align} \end{subequations}
In \cref{eqn:reduced_data_posterior_moments} $G_{\mathcal{P}} : =\mathcal{P}^{\top}G \in \mathbb{R}^{k \times n}$
and $\Gamma_{Y_{\mathcal{P}}|X}  \coloneqq \mathcal{P}^{\top} \Gamma_{Y|X} \mathcal{P} : \mathbb{R}^{k \times k}$ is the compression of $\Gamma_{Y|X}$ by $\mathcal{P}$ to $\mathbb{R}^{k}$. In  linear algebraic terms, $\Gamma_{Y_{\mathcal{P}}|X}$ is a principal submatrix of $\Gamma_{Y|X}$. In a similar manner, the marginal covariance of $Y_{\mathcal{P}}$ is
\begin{equation} \label{eqn:ReducedDataMarginal}
\Gamma_{Y_{\mathcal{P}}}  \coloneqq  G_{\mathcal{P}} \Gamma_{X} G_{\mathcal{P}}^{\top} + \Gamma_{Y_{\mathcal{P}}|X} = \mathcal{P}^{\top} \Gamma_{Y} \mathcal{P} : \mathbb{R}^{k \times k}.
\end{equation}
From \cref{eqn:reduced_data_posterior_cov} we can identify $G_{\mathcal{P}}^{\top}	\Gamma_{Y_{\mathcal{P}}|X}^{-1} G_{\mathcal{P}}$ as the relevant Hessian term when the likelihood is specified using any subset of observations $Y_{\mathcal{P}}$. It is clear that this term is \emph{not} a compression of the full Hessian $G^{\top}	\Gamma_{Y|X}^{-1} G$ unless $\Gamma_{Y|X}$ is diagonal, meaning that the observation errors are uncorrelated. 

\subsubsection{Spectral properties of the mutual information} \label{subsubsec:lbip_exp_design_prop}

Since the inference parameters $X$ and the data $Y$ are jointly Gaussian random variables, the mutual information between them, $\mathcal{I}\lr{X;Y}$, can be written as
\begin{equation} \label{eqn:mutualInfoGaussians_using_logDet}
	\mathcal{I}(X;Y) =  \mathcal{I}(Y;X) = \frac{1}{2}\log { \frac{\det (\Gamma_{Y})}{\det (\Gamma_{Y|X})}} = \frac{1}{2} \log { \frac{\det (\Gamma_{X})}{\det (\Gamma_{X|Y})}}.
\end{equation}
The above expression can be easily verified from first principles. From \cref{eqn:mutualInfoGaussians_using_logDet}, it is clear that we could alternatively express mutual information as a function of the generalized eigenvalues of the definite pencils $(\Gamma_{Y},\Gamma_{Y|X})$ or  $(\Gamma_{X},\Gamma_{X|Y})$ (\Cref{def:definiteGEV}). The first can be viewed as a ``data space'' pencil, while the second is the corresponding ``parameter space'' pencil. Since $\Gamma_{Y} \succeq \Gamma_{Y|X} \succ 0$ and $\Gamma_{X} \succeq \Gamma_{X|Y} \succ 0$, the eigenvalues of both definite pencils are lower bounded by one. Here the symbol `$\succeq$' denotes the L\"{o}wner ordering, or the positive semi-definite ordering of Hermitian matrices (\Cref{def:lowner_ordering}). Furthermore, the two generalized eigenvalue problems, while differing in dimension, have identical generalized eigenvalues that are strictly greater than one. We make this fact precise through the following proposition.
\begin{proposition} \label{prop:eigen_prob_equivalence}
Let $X \in \mathbb{R}^{n}$ and $Y \in \mathbb{R}^{m}$ be jointly Gaussian random variables as defined in \Cref{subsubsec:lbip_notation_setup}. Then the following conditions are equivalent:
\begin{enumerate}
	\item \label{enumerate:prop_eigen_prob_equivalence_1} The definite pairs $(\Gamma_{Y} - \Gamma_{Y|X},\Gamma_{Y|X})$ and $(\Gamma_{X} - \Gamma_{X|Y},\Gamma_{X|Y})$ have identical non-trivial generalized eigenvalues, $\sigma_j > 0$.
	\item \label{enumerate:prop_eigen_prob_equivalence_2} The definite pairs $(\Gamma_{Y},\Gamma_{Y|X})$ and $(\Gamma_{X},\Gamma_{X|Y})$ have identical generalized eigenvalues that are strictly greater than $1$, $1+\sigma_j > 1$.
\end{enumerate}
\end{proposition}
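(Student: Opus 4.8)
The plan is to reduce everything to one elementary observation about definite pencils together with the $AB\sim BA$ spectral fact. Since $\Gamma_{Y|X}\succ 0$ and $\Gamma_{X|Y}\succ 0$, every pencil appearing in the proposition is definite, so in each case the generalized eigenproblem $Nv=\lambda M v$ with $M\succ 0$ is congruent to the symmetric problem for $M^{-1/2}NM^{-1/2}$ and therefore has real eigenvalues. The key identity is that, for symmetric $N$ and $M\succ 0$, a nonzero $v$ satisfies $Nv=\lambda Mv$ if and only if $(N-M)v=(\lambda-1)Mv$; hence the pencils $(N,M)$ and $(N-M,M)$ share all eigenvectors and their spectra are related by the affine shift $\lambda\mapsto\lambda-1$.

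First I would apply this identity with $(N,M)=(\Gamma_{Y},\Gamma_{Y|X})$ and, separately, with $(N,M)=(\Gamma_{X},\Gamma_{X|Y})$. Because $\Gamma_{Y}\succeq\Gamma_{Y|X}$ and $\Gamma_{X}\succeq\Gamma_{X|Y}$ in the L\"{o}wner order, the eigenvalues of both pencils in the second condition are $\geq 1$, and an eigenvalue equal to $1$ corresponds exactly to the excluded (``trivial'') eigenvalue $0$ of the difference pencil in the first condition. Consequently $\lambda\mapsto\lambda-1$ is a multiplicity-preserving bijection between the eigenvalues of $(\Gamma_{Y},\Gamma_{Y|X})$ that strictly exceed $1$ and the nonzero eigenvalues $\sigma_j$ of $(\Gamma_{Y}-\Gamma_{Y|X},\Gamma_{Y|X})$, and likewise on the parameter side. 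Matching the two bijections term by term shows that the data-space and parameter-space difference pencils share their nontrivial spectra if and only if the corresponding original pencils share their spectra above $1$, which is precisely the asserted equivalence.

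To complete the picture I would also verify that both conditions in fact hold, which simultaneously explains why the two problems can coincide despite differing in dimension. Using the posterior identity $\Gamma_{X|Y}^{-1}=\Gamma_{X}^{-1}+G^{\top}\Gamma_{Y|X}^{-1}G$ from \cref{eqn:posterior_cov}, one computes $\Gamma_{X|Y}^{-1}(\Gamma_{X}-\Gamma_{X|Y})=G^{\top}\Gamma_{Y|X}^{-1}G\,\Gamma_{X}$, while from \cref{eqn:dataMargCovariance} directly $\Gamma_{Y}-\Gamma_{Y|X}=G\Gamma_{X}G^{\top}$, so $\Gamma_{Y|X}^{-1}(\Gamma_{Y}-\Gamma_{Y|X})=\Gamma_{Y|X}^{-1}G\Gamma_{X}G^{\top}$. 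Writing $A=G^{\top}$ and $B=\Gamma_{Y|X}^{-1}G\Gamma_{X}$, these are $AB$ ($n\times n$) and $BA$ ($m\times m$), which have identical nonzero eigenvalues with multiplicities; each is moreover similar (via $\Gamma_{X}^{1/2}$, resp.\ $\Gamma_{Y|X}^{-1/2}$) to a symmetric positive semidefinite matrix, so those common nonzero eigenvalues are the positive numbers $\sigma_j$. This establishes the first condition, hence the second.

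The only real obstacle is bookkeeping rather than depth: one must state explicitly that ``nontrivial'' for a difference pencil means ``nonzero,'' use the L\"{o}wner inequalities to pin the excluded eigenvalue at exactly $1$ (resp.\ $0$), and invoke $AB\sim BA$ with multiplicities so that the dimension mismatch $m\neq n$ is absorbed entirely by trivial eigenvalues. Beyond that, the proof is just the affine shift plus the standard $AB/BA$ fact.
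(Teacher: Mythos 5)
Your proposal is correct and follows essentially the same route as the paper's proof: both rest on the identities $\Gamma_{Y}-\Gamma_{Y|X}=G\Gamma_{X}G^{\top}$ and $\Gamma_{X|Y}^{-1}=\Gamma_{X}^{-1}+G^{\top}\Gamma_{Y|X}^{-1}G$, together with the observation that each difference pencil is a unit shift of the corresponding original pencil. The only difference is packaging: where you invoke the $AB\sim BA$ spectral fact abstractly (which handles multiplicities for free), the paper unrolls that same fact by explicitly transporting an eigenvector $U_j$ of the data-space pencil to $\Gamma_{X}G^{\top}U_j$ on the parameter side, a construction it also wants for the eigenvector correspondence it uses elsewhere.
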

%
\hyperref[proof:prop_eigen_prob_equivalence]{The proof is given in \Cref{appendix_subsec:lbip}.} \medskip

Using the notation of \Cref{prop:eigen_prob_equivalence}, we can now write $\mathcal{I}(X;Y)$ as follows:
\begin{equation} \label{eqn:mutualInfoGaussians_using_gev}
	\mathcal{I}(X;Y) = \frac{1}{2} \sum_{j} \log \lr{1 + \sigma_j}.
\end{equation}
Such an expression for mutual information in terms of the generalized eigenvalues $\sigma_j$ has been highlighted in many  works \cite{Alexanderian_etal_2016,giraldi_etal_2017} that adopt a Bayesian formalism. From a more classical statistics perspective, mutual information can be written using the squared canonical correlation scores \cite{Bach_Jordan_2002} between the concerned random variables.  These scores can be computed using generalized eigenvalue problems that are different from those in  \Cref{prop:eigen_prob_equivalence}, but not surprisingly have a similar dual representation.

If we were to determine $\mathcal{I}(X;Y_{\mathcal{P}})$, the mutual information between $X$ and a subset of observations $Y_{\mathcal{P}}$, then the relevant generalized eigenvalues are those of the definite pairs $(\Gamma_{Y_\mathcal{P}},\Gamma_{Y_\mathcal{P}|X})$ or  equivalently $(\Gamma_{X},\Gamma_{X|Y_\mathcal{P}})$. The equivalence holds since \Cref{prop:eigen_prob_equivalence} applies to these new definite pairs constructed using the compressed covariance operators. If we denote by $\widehat{\sigma}_{j}$ the eigenvalues for the case when we use a subset of observations $Y_{\mathcal{P}}$, then we can  write $\mathcal{I}(X;Y_{\mathcal{P}})$ in a manner analogous to \cref{eqn:mutualInfoGaussians_using_logDet,eqn:mutualInfoGaussians_using_gev} as	\begin{equation} \label{eqn:mutualInfoGaussians_reduced_data}
	\mathcal{I}(X;Y_{\mathcal{P}})  = \frac{1}{2}\log { \frac{\det (\Gamma_{Y_{\mathcal{P}}})}{\det (\Gamma_{Y_{\mathcal{P}}|X})}} = \frac{1}{2}\log { \frac{\det (\Gamma_{X})}{\det (\Gamma_{X|Y_{\mathcal{P}}})}} = \frac{1}{2} \sum_{j} \log \lr{1 + \widehat{\sigma}_{j}}.
\end{equation}

The expression for mutual information $\mathcal{I}(X;Y_{\mathcal{P}}) $ is quite revealing upon closer scrutiny. First, notice that we can rewrite \cref{eqn:mutualInfoGaussians_reduced_data} as the difference of log principal determinant of the data marginal and observation error covariance operators,
\begin{equation} \label{eqn:mutualInfoGaussians_reduced_data_DS_function}
	\mathcal{I}(X;Y_{\mathcal{P}})  = \frac{1}{2} \lr{ \log \det \lr{\mathcal{P}^{\top}\Gamma_{Y} \mathcal{P}} - \log \det \lr{\mathcal{P}^{\top} \Gamma_{Y|X} \mathcal{P}}}.
\end{equation}
It is thus the difference between two set functions, each one defined as the log determinant of the principal submatrix of a definite matrix.  Here the set comprises the indices of rows/columns of the principal submatrix, with each index corresponding to a unique observation. Each of those set functions can be inferred to be submodular \cite{Gantmacher_Krein_1960,Kotelyanskii_1950,Fan_1967,Fan_1968,Kelmans_1983,Johnson_1985}. 
Mutual information can thus be interpreted as the difference between two submodular functions (and hence is a DS function, following the terminology in \cite{IyerBilmes2012_diff2SM}). This is not entirely surprising since it has been shown that \emph{every} set function can be decomposed as the difference between two submodular functions \cite[Lemma~4]{NarBil2005}, \cite[Lemma~3.1]{IyerBilmes2012_diff2SM}.
This fact is akin to how any continuous function, subject to weak conditions, can be expressed as the sum of a convex and concave part \cite[Theorem~1]{Yuille_Rangarajan_2003}.
Such a decomposition is not unique and can be exponentially hard to compute \cite{IyerBilmes2012_diff2SM}, but in our case it is readily apparent.

The sum of the generalized eigenvalues, $\sum \widehat{\sigma}_j$, is the expected \emph{symmetrized} Kullback--Leiber divergence between the prior $\pi_X$ and posterior $\pi_{X|Y_{\mathcal{P}}}$; this relationship is shown in \Cref{prop:expSymKLDiv}. Note that the expected symmetrized Kullback-Leiber divergence\\
\noindent $\mathbb{E}_{\pi_{Y_{\mathcal{P}}}} \left[ D_{\text{KL}}(\pi_{X|Y_{\mathcal{P}} } \| \pi_{X} ) + D_{\text{KL}}(\pi_{X} \| \pi_{ X|Y_{\mathcal{P}}} )\right] $ is nothing but the mutual information
$\mathcal{I}(X;Y_{\mathcal{P}})$ 
plus an additional term.
It is worth contrasting this design criterion with that used in traditional Bayesian A-optimal design, where the trace of the posterior covariance operator (i.e., sum of simple eigenvalues) is minimized. In the former case the generalized eigenvalues correspond to the largest reductions in posterior variance \emph{relative} to the prior \cite[Corollary~3.1]{Spantini_etal_2015}, while in the latter case the simple eigenvalues represent the largest absolute contributions to the posterior variance, without regard to the prior. On the other hand, Bayesian D-optimal design in the linear--Gaussian case, wherein we minimize the determinant of the posterior covariance operator, is identical to maximizing mutual information; this can be easily inferred from \cref{eqn:mutualInfoGaussians_reduced_data}.

\subsubsection{Bounds for the performance of the batch greedy algorithm} \label{subsubsec:bounds_batch_std_greedy}

We now provide bounds for the submodularity and supermodularity ratios of the set function  objective of \cref{eqn:exp_design_prob_statement_max} in the setting of linear Bayesian inverse problems, where the objective is given by  \cref{eqn:mutualInfoGaussians_reduced_data}. These bounds enter our theoretical guarantees for the solution of \cref{eqn:exp_design_prob_statement_max}  using \Cref{alg:batch_std_greedy}.

\begin{proposition} \label{prop:bound_sup_sub_modularity_ratio} 
In the linear--Gaussian setting defined in \Cref{subsubsec:lbip_notation_setup},
the submodularity ratio $\gamma$ and supermodularity ratio $\eta$ 
pertinent to \cref{eqn:exp_design_prob_statement_max} can be both lower bounded by $\frac{\log \zeta_{\text{min}}}{\log \zeta_{\text{max}}}$, where $\zeta$ is any generalized eigenvalue of the definite pair $\lr{\Gamma_Y, \Gamma_{Y|X}}$.
\end{proposition}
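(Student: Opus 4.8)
\emph{Proof strategy.} The plan is to reduce the whole statement to a single uniform two-sided estimate, from which both bounds drop out of the definitions. Write $F(\mathscr{A})=\mathcal{I}(X;Y_{\mathscr{A}})$ for the objective of \cref{eqn:exp_design_prob_statement_max}, where $Y_{\mathscr{A}}$ is the subvector of $Y$ indexed by $\mathscr{A}$; by \cref{eqn:mutualInfoGaussians_reduced_data_DS_function} this equals $\tfrac12\bigl(\log\det\Gamma_{Y_{\mathscr{A}}}-\log\det\Gamma_{Y_{\mathscr{A}}|X}\bigr)$, each term a log-determinant of a principal submatrix of $\Gamma_{Y}$, resp. of $\Gamma_{Y|X}$. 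Let $\zeta_{\text{min}}$ and $\zeta_{\text{max}}$ be the smallest and largest generalized eigenvalues of $(\Gamma_{Y},\Gamma_{Y|X})$, so $\zeta_{\text{min}}\ge1$ by \Cref{prop:eigen_prob_equivalence}. The target is to show that for every admissible pair $\mathscr{A},\mathscr{B}\subseteq\mathscr{V}$ with $\mathscr{A}\cap\mathscr{B}=\emptyset$ and $\mathscr{A}\neq\emptyset$,
\[
 \frac{\log\zeta_{\text{min}}}{\log\zeta_{\text{max}}} \le r(\mathscr{A},\mathscr{B}) \coloneqq \frac{\rho_{\mathscr{A}}(\mathscr{B})}{\sum_{\nu\in\mathscr{A}}\rho_{\nu}(\mathscr{B})} \le \frac{\log\zeta_{\text{max}}}{\log\zeta_{\text{min}}},
\]
with $\rho$ the incremental gain of \Cref{def:incremental_gain}. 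Granting this, the conclusion is immediate: the supermodularity ratio $\eta$ (\Cref{def:supermodularity_ratio}) is the infimum of $r(\mathscr{A},\mathscr{B})$ over admissible pairs, hence $\eta\ge\log\zeta_{\text{min}}/\log\zeta_{\text{max}}$; and the submodularity ratio $\gamma$ (\Cref{def:sm_ratio_das_kempe}) is the reciprocal of the supremum of $r(\mathscr{A},\mathscr{B})$, hence likewise $\gamma\ge(\log\zeta_{\text{max}}/\log\zeta_{\text{min}})^{-1}=\log\zeta_{\text{min}}/\log\zeta_{\text{max}}$.

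The first step is to recast incremental gains as conditional mutual informations. A Schur-complement identity applied to \cref{eqn:mutualInfoGaussians_reduced_data_DS_function} gives
\[
 \rho_{\mathscr{A}}(\mathscr{B}) = \mathcal{I}(X;Y_{\mathscr{A}}\mid Y_{\mathscr{B}}) = \tfrac12\bigl(\log\det\Gamma_{Y_{\mathscr{A}}|Y_{\mathscr{B}}}-\log\det\Gamma_{Y_{\mathscr{A}}|Y_{\mathscr{B}},X}\bigr),
\]
a log-ratio of conditional covariances (conditioning on $Y_{\mathscr{B}}$, respectively on $Y_{\mathscr{B}}$ and $X$). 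Conditioning additionally on $X$ can only shrink covariance in the L\"{o}wner order, so $(\Gamma_{Y_{\mathscr{A}}|Y_{\mathscr{B}}},\Gamma_{Y_{\mathscr{A}}|Y_{\mathscr{B}},X})$ is a definite pair, and the derivation behind \cref{eqn:mutualInfoGaussians_using_gev} yields $\rho_{\mathscr{A}}(\mathscr{B})=\tfrac12\sum_{i=1}^{\abs{\mathscr{A}}}\log\xi_i$ for its generalized eigenvalues $\xi_1,\dots,\xi_{\abs{\mathscr{A}}}$; for a singleton, $\rho_{\nu}(\mathscr{B})=\tfrac12\log\xi$ for the single such scalar. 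Thus, up to the common factor $\tfrac12$, both the numerator and the denominator of $r(\mathscr{A},\mathscr{B})$ are sums of exactly $\abs{\mathscr{A}}$ terms, each of the form $\log\xi$ with $\xi$ a generalized eigenvalue of a pencil of conditional covariances of this type.

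The heart of the argument is to confine every such $\xi$ to $[\zeta_{\text{min}},\zeta_{\text{max}}]$. Writing the conditional covariances as Schur complements in minimization form, $v^{\top}\Gamma_{Y_{\mathscr{A}}|Y_{\mathscr{B}}}v=\min_{w}u^{\top}\Gamma_{Y}u$ with $u\in\mathbb{R}^{m}$ the zero-padded concatenation of $v$ on $\mathscr{A}$ and $w$ on $\mathscr{B}$ (and analogously with $\Gamma_{Y|X}$), the Courant--Fischer description of the $\xi_i$ reduces to comparing Rayleigh quotients $u^{\top}\Gamma_{Y}u\,/\,u^{\top}\Gamma_{Y|X}u$ over vectors $u$ supported on $\mathscr{A}\cup\mathscr{B}$, each of which lies in $[\zeta_{\text{min}},\zeta_{\text{max}}]$ for nonzero $u\in\mathbb{R}^{m}$. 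A short chase of inequalities---evaluating the numerator's minimizing $w$ at the denominator's optimizer and vice versa---then shows that neither Schur-complement minimization escapes this range, so $\zeta_{\text{min}}\le\xi_i\le\zeta_{\text{max}}$ for all $i$ and all admissible $\mathscr{A},\mathscr{B}$; an equivalent route is a generalized Cauchy interlacing theorem for principal sub-pencils of $(\Gamma_{Y},\Gamma_{Y|X})$ combined with a telescoping of determinant ratios. Since $\zeta_{\text{min}}\ge1$, each $\log\xi_i$ lies in $[\log\zeta_{\text{min}},\log\zeta_{\text{max}}]\subseteq[0,\infty)$; hence both $\abs{\mathscr{A}}$-term sums in $r(\mathscr{A},\mathscr{B})$ lie in $[\tfrac{\abs{\mathscr{A}}}{2}\log\zeta_{\text{min}},\tfrac{\abs{\mathscr{A}}}{2}\log\zeta_{\text{max}}]$, and dividing produces the displayed two-sided bound.

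I expect the main obstacle to be precisely this last step: showing rigorously that every generalized eigenvalue arising from these conditioned, compressed pencils stays in $[\zeta_{\text{min}},\zeta_{\text{max}}]$, since conditioning simultaneously takes a principal submatrix and forms a Schur complement, and one must keep the numerator and denominator Rayleigh quotients optimized over matching subspaces. A secondary technicality is the degenerate case $\zeta_{\text{min}}=1$---some $Y_{\nu}$ being conditionally uninformative about $X$, so that an incremental gain vanishes and $r(\mathscr{A},\mathscr{B})$ is a $0/0$ form---where the bound is preserved by the usual convention for these ratios, or by first passing to a strictly increasing perturbation of $F$.
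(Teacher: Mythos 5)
Your proof is correct, but it reaches the bound by a route that differs in its mechanics from the paper's. The paper never forms conditional covariances: it writes $F(\mathscr{A}\cup\mathscr{B})-F(\mathscr{B})$ as a difference of $\tfrac12\log\prod_j\zeta_j^{(\cdot)}$ over the \emph{compressed} pencils $\lr{\mathcal{P}^{\top}\Gamma_Y\mathcal{P},\mathcal{P}^{\top}\Gamma_{Y|X}\mathcal{P}}$ for the two index sets, invokes a Cauchy interlacing theorem for definite pairs to pair off all but $\abs{\mathscr{A}\setminus\mathscr{B}}$ of the eigenvalue factors (each paired ratio being $\geq 1$ or $\leq 1$ as needed), and is left with $\abs{\mathscr{A}\setminus\mathscr{B}}$ extremal eigenvalues of the larger compressed pencil, which interlacing again confines to $[\zeta_{\text{min}},\zeta_{\text{max}}]$; the same is done for the singleton sums, and the four resulting one-sided bounds are divided. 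You instead identify $\rho_{\mathscr{A}}(\mathscr{B})$ with $\mathcal{I}(X;Y_{\mathscr{A}}\mid Y_{\mathscr{B}})$ via the Schur determinant identity, so that numerator and denominator of $r(\mathscr{A},\mathscr{B})$ each become sums of exactly $\abs{\mathscr{A}}$ log-eigenvalues of conditioned pencils, and you confine those eigenvalues to $[\zeta_{\text{min}},\zeta_{\text{max}}]$ by the cross-evaluation trick on the variational (minimization) form of the Schur complement; I checked that step and it goes through --- for fixed $v$, evaluating the numerator's minimization at the denominator's optimizer bounds the Rayleigh quotient above by a Rayleigh quotient of the compressed pencil, and symmetrically for the lower bound. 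What your version buys is a cleaner intermediate statement (every per-element and per-set increment is individually a mean of log-eigenvalues lying in the same interval, which makes the two-sided bound on the ratio immediate) and it handles the supermodularity and submodularity ratios in one stroke; what the paper's version buys is that it leans only on a quotable off-the-shelf interlacing theorem rather than on a Schur-complement variational argument that you would need to write out. Your handling of the degenerate case $\zeta_{\text{min}}=1$ matches the paper's own remark that the bound is then trivial.
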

\hyperref[proof:prop_bound_sup_sub_modularity_ratio]{The proof is given in \Cref{appendix_subsec:greedy_stuff}.}\medskip

Recall from \Cref{subsubsec:lbip_notation_setup} that $\zeta = 1 +\sigma$, where $\sigma$ is any generalized eigenvalue of the definite pair  $\lr{\Gamma_Y - \Gamma_{Y|X}, \Gamma_{Y|X}}$. We know $\sigma_{\text{min}} \geq 0$, and thus $\zeta_{\text{min}} \geq 1$. 
If the least eigenvalue is $1$, its algebraic multiplicity is the difference between the number of observations and the dimension of the inference parameters (\Cref{prop:eigen_prob_equivalence}). In such cases we obtain a trivial bound on the submodularity and supermodularity ratios.
If the dimension of the inference parameters is greater than the number of observations, as is typically the case in inverse problems (for instance, consider the limit $n \to \infty$ 
analyzed in  \cite{stuart2010inverse}), we obtain a non-trivial bound on the same parameters.
The empirical performance of  \Cref{alg:batch_std_greedy} is however impressive in all cases as will be demonstrated. Such gaps between worst-case bounds and practical performance are commonplace in algorithmic analysis. In this particular case the culprit is not necessarily the analysis framework, but a loose bound on the parameters featuring in the approximation guarantee.

\subsection{Relationship to other algorithmic approaches} \label{subsec:contextualizing_batch_greedy_with_other_methods}

As we have previously remarked, our results \Cref{thm:batch_std_greedy,thm:parallel_batch_std_greedy,thm:stochastic_batch_std_greedy} can be interpreted as the natural generalization of certain existing results \cite{Das_Kempe_2011,Mirzasoleiman_etal_2013,lazier_than_lazy_greedy} to the batch---and in some cases also to the non-submodular---settings. We now contrast our results with specific aspects of these works and other relevant investigations.
\begin{itemize}
 	\item In \cite{Das_Kempe_2011} the authors describe the so-called ``oblivious'' algorithm, wherein indices are selected ignoring any possible dependence among them. 
 	In the batch greedy algorithm we propose, at each step we also ignore any dependence among the indices that are selected. However we \emph{do} account for dependencies with the indices already selected.
 	When the batch size is equal to the cardinality constraint, i.e., $q=k$, or equivalently if all indices are chosen in one step, i.e., $l=1$, the batch greedy heuristic is the oblivious algorithm.
 	When we have a batch size  greater than one but strictly less than the cardinality constraint, then the batch heuristic can be understood as the interleaving of the oblivious and classical greedy algorithms.
 	
 	\item Our ideas involving batch greedy heuristics are similar in spirit to adaptive sampling algorithms \cite{balkanski_etal_2018,Balkanski_singer_2018_b,Balkanski_singer_2018_a,
 	Chekuri_Quanrud_2019_b,Fahrbach_etal_2019,Chekuri_Quanrud_2019,Chen_etal_2019,Balkanski_etal_2019}, in that both try to add a large set of elements at each step as opposed to growing the solution set incrementally. In \cite{Qian_Singer_2019} the authors analyze an adaptive sampling algorithm for the maximization of non-submodular functions. 
 	That work introduces the notion of differential submodularity ($\alpha_{\text{DF}} \in [0,1]$), which captures how tightly incremental gains corresponding to a given non-submodular function can be bounded by incremental gains corresponding to submodular functions. The corresponding approximation guarantee for cardinality constraint maximization was shown to be $(1  -1 /e^{\alpha_{\text{DF}}^2} -\epsilon)$, where $\epsilon > 0$ is a tolerance parameter in the adaptive sampling algorithm. 
 	 \cite{Qian_Singer_2019} shows how feature selection for regression and Bayesian A-optimal experimental design are $\gamma^{2}$-differentially submodular, where $\gamma$ is the submodularity ratio.
 	Using similar strategies, it can be seen that the mutual information-based Bayesian experimental design criterion is also $\gamma^{2}$-differentially submodular.  
 	 Consequently, the adaptive algorithm in \cite{Qian_Singer_2019} corresponds to a $(1  -1 /e^{\gamma^4} -\epsilon)$  performance guarantee.
 	 In our case, however, we have a strictly better
$(1  -1 /e^{\gamma^2})$ guarantee as described in \Cref{thm:batch_std_greedy}.  The approximation factor $(1  -1 /e^{\gamma^2})$ is obtained by recognizing that the submodularity and supermodularity ratios are lower bounded by the same term (see \Cref{prop:bound_sup_sub_modularity_ratio}) and hence we can replace $\eta$ by $\gamma$ in \cref{eqn:batch_std_greedy_approx_guarantee}.
It is worth emphasizing that our approximation factor $(1  -1 /e^{\gamma^2})$ is independent of batch size, and has no tolerance parameter.
	
\end{itemize}
\section{Sequential greedy algorithms based on the MM principle} \label{sec:mm_alg}

We now explore algorithms of a slightly different flavor, which are still greedy in a sense but do not necessarily make the locally optimal choice corresponding to incremental gain. They are based on the MM  (minorize--maximize or majorize--minimize)  principle \cite{Lange_2016_MMOpt}, and involve maximizing (resp.\ minimizing) at every step a minorizing (resp.\ majorizing) counterpart function to eventually obtain the $\argmax$ (resp.\ $\argmin$). Every instance of applying the MM principle is prompted by the need to transform a hard optimization problem into a sequence of simpler ones. Here simplicity can refer to one of many desirable attributes, such as reduction in overall computational complexity, linearization or convexification of the problem, easing the handling of complex constraints, and so on. For a broad survey of techniques and algorithms based on the MM principle we refer the readers to the excellent text \cite{Lange_2016_MMOpt}.

While most of the work in optimization using the MM principle has been in the setting of continuous functions, in the recent past these have been adapted to the world of set functions \cite{IyerJegelkaBilmes2012_mirror,IyerJegBil2013}. In \cite{NarBil2005,IyerBilmes2012_diff2SM}, the authors explored these ideas for the most general case of  difference between two submodular functions. Concrete hardness results on multiplicative inapproximability of such optimization problems were also shown in the same work \cite{IyerBilmes2012_diff2SM}. The MM principle proves handy in these scenarios; it is deployed by first seeking tight modular bounds for one (or both) of the terms in the difference, and subsequently optimizing the resulting submodular (or modular) objective using existing methods. The key property of submodular functions that enables such approaches is that one can define a subgradient \cite{fujishige2005} (\Cref{def:sub_grad_diff_SM}) and supergradient \cite{Jegelka_Bilmes_2011} (\Cref{def:super_grad_diff_SM}) at every point, meaning for every subset in the power set. Calculation of those semigradients, however, will require repeated function evaluations of appropriate subsets, and the semidifferential sets (\Cref{def:sub_grad_diff_SM,def:super_grad_diff_SM}) that contain them constitute a polyhedral partitioning of $\mathbb{R}^{m}$ \cite{Iyer_Bilmes_2015_polyhedral}.

Our efforts utilizing the MM principle differ from the approaches summarized above and  perhaps to a certain degree from how MM algorithms are generally thought of. The techniques in \cite{IyerJegelkaBilmes2012_mirror,IyerJegBil2013,IyerBilmes2012_diff2SM} are applicable for fairly general constraints and can be best understood as an iterative framework where one continues until local convergence is achieved.  The algorithms we consider, however, are sequential in a `bonafide' sense, meaning an individual element or a batch of elements are selected at every step and appended to the previously chosen set of elements. Our focus is restricted to cardinality-constrained problems. 
We begin by providing a reinterpretation of the classical greedy heuristic. 

\subsection{Reinterpreting the classical greedy heuristic for maximization} \label{subsec:greedy_as_mm}

 The definitions of the submodularity and supermodularity ratios allow us to write the following bounds for the incremental gain $\rho_{\mathscr{A}}(\mathscr{B}) $,
\begin{equation}
	 \eta  \sum_{\nu \in \mathscr{A}} \rho_{\nu}(\mathscr{B}) \leq \rho_{\mathscr{A}}(\mathscr{B})  \leq \frac{1}{\gamma} \sum_{\nu \in \mathscr{A}}\rho_{\nu}(\mathscr{B}).
\end{equation}
If we view $\rho_{\mathscr{A}}(\mathscr{B})$ as a function of $\mathscr{A}$, then what we have above are modular bounds on $\rho_{\mathscr{A}}(\mathscr{B}) $ defined using the incremental gains $ \rho_{\nu}(\mathscr{B})$, $\nu \in \mathscr{A}$. Alternatively if we view the set $\mathscr{B}$ as one individual entity and the elements of $\mathscr{A}$  as  distinct entities, we can write modular bounds on $ F( \mathscr{A} \cup  \mathscr{B} )$ in terms of $F(\mathscr{B})$ and $ \rho_{\nu}(\mathscr{B})$, $\nu \in \mathscr{A}$:
\begin{equation} \label{eqn:modular_bound_for_FAuB_using_incremental_gain}
	F(\mathscr{B}) + \eta \sum_{\nu \in \mathscr{A}} \rho_{\nu}(\mathscr{B}) \leq F( \mathscr{A} \cup  \mathscr{B} )  \leq F(\mathscr{B}) + \frac{1}{\gamma} \sum_{\nu \in \mathscr{A}}\rho_{\nu}(\mathscr{B}).
\end{equation}
Thus we can view the submodularity and supermodularity ratios as parameters that allow us to define the tightest possible modular bounds of the above form for any $\mathscr{A},\mathscr{B} \subseteq \mathscr{V}$. Note that the bounds are also tight in the sense that if $\mathscr{A} \setminus \mathscr{B} = \emptyset$ then \cref{eqn:modular_bound_for_FAuB_using_incremental_gain} reduces to a trivial statement. Viewed through the lens of MM algorithms, the modular lower/upper bound serves as the minorizing/majorizing counterpart to $ F( \mathscr{A} \cup  \mathscr{B} )$.

With these perspectives, the greedy heuristic for maximization in \Cref{alg:batch_std_greedy} can be easily paraphrased in the language of MM algorithms. Having selected the set $\mathscr{A}_{j-1}$ after $j-1$ steps, we  seek $ \mathscr{A}_{j-1 \rightarrow j}  \subseteq \mathscr{V} \setminus \mathscr{A}_{j-1}$, $\abs{\mathscr{A}_{j-1 \rightarrow j} } = q_j$ that maximizes $F( \mathscr{A}_{j-1} \cup \mathscr{A}_{j-1 \rightarrow j} )$. We achieve this by maximizing the minorizing counterpart: the modular lower bound as in \cref{eqn:modular_bound_for_FAuB_using_incremental_gain} defined using the incremental gains $\rho_{\nu}(\mathscr{A}_{j-1})$, $\nu \in \mathscr{V} \setminus \mathscr{A}_{j-1} $.

Incremental gains associated with individual elements, coupled with the submodularity or  supermodularity ratios, are an obvious combination for defining modular bounds for a function. Many problems, however, may have additional attributes that enable other ways of defining modular bounds. The greedy heuristic can be adapted appropriately to utilize these special modular bounds. Such bounds could be easier to compute and perhaps perform better empirically. In \Cref{alg:greedy_ascent} we outline a greedy procedure built on the ansatz of an abstract modular lower  bound $\mathcal{M}_{\uparrow}\left[ \cdot \right]$ and modular upper bound  $\mathcal{M}_{\downarrow}\left[ \cdot \right]$ for the  incremental gain $\rho_{\mathscr{A}}(\mathscr{B}) $ associated with any $\mathscr{A}$, $\mathscr{B} \subseteq \mathscr{V}$:
\begin{equation}
	\mathcal{M}_{\uparrow}\left[ \rho_{\mathscr{A}}(\mathscr{B}) \right] \leq \rho_{\mathscr{A}}(\mathscr{B}) \leq \mathcal{M}_{\downarrow}\left[ \rho_{\mathscr{A}}(\mathscr{B}) \right].
\end{equation}
The symbols $\downarrow,\uparrow$ are a visual cue to pinpoint whether the bound is from above or below.

\begin{algorithm}
\begin{algorithmic}[1]
\caption{Greedy algorithm using modular lower bounds}  \label{alg:greedy_ascent}
\State {Input  $F,\mathscr{V},l,\{q_1,\ldots,q_l\}$}
\State {Initialize $\mathscr{A}=\emptyset$}
 \For {$i = 1 \text{ to }  l$}
  \State {Determine $\mathcal{M}_{\uparrow}\left[\rho_{\mathscr{V} \setminus \mathscr{A}}(\mathscr{A}) \right]$.}
  \State {Find $\mathscr{Q} \subseteq \mathscr{V} \setminus \mathscr{A}, \abs{\mathscr{Q}} = q_i$ that maximizes $\mathcal{M}_{\uparrow}\left[\rho_{\mathscr{Q}}(\mathscr{A}) \right]$}
  \State {$\mathscr{A} \leftarrow \mathscr{A} \cup \mathscr{Q}$}
\EndFor

\Return {Index set $\mathscr{A}$}
\end{algorithmic}
\end{algorithm}

Now we give performance bounds for the abstract batch greedy approach of \Cref{alg:greedy_ascent}.
\begin{theorem} \label{thm:greedy_ascent}
Let $F$ be a non-decreasing  function. \Cref{alg:greedy_ascent} for maximizing $F(\mathscr{A})$ subject to $\abs{\mathscr{A}} \leq k$ outputs a set $\mathscr{A}$ such that
\begin{displaymath}
	F(\mathscr{A}) \geq \lr{ 1 - \prod_{i=1}^{l} \lr{ 1 - \frac{q_i }{k \lr{1+\tau_i}} }} \max_{\mathscr{B} \subset \mathscr{V}, \abs{\mathscr{B}} \leq k} F(\mathscr{B})
\end{displaymath}
where $\tau_i > 0$ is a parameter that encapsulates the effectiveness of the modular lower bound $\mathcal{M}_{\uparrow}\left[ \cdot \right] $ in locally describing the function.
\end{theorem}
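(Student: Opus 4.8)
The plan is to mimic the proof of \Cref{thm:batch_std_greedy}, replacing the use of the submodularity/supermodularity ratios by the hypothesized quality of the abstract modular bound, quantified by the parameters $\tau_i$. Let $\mathscr{A}_i$ denote the set produced after $i$ steps of \Cref{alg:greedy_ascent}, with $\mathscr{A}_0=\emptyset$, and fix an optimal set $\mathscr{B}^\star\in\argmax_{\abs{\mathscr{B}}\le k}F(\mathscr{B})$. The key inductive quantity is the optimality gap $\delta_i \coloneqq F(\mathscr{B}^\star)-F(\mathscr{A}_i)$, and the goal is to establish a per-step contraction of the form $\delta_i \le \lr{1-\tfrac{q_i}{k(1+\tau_i)}}\delta_{i-1}$, from which the product bound follows immediately by unrolling the recursion and using $F(\mathscr{A}_0)=0$ (which requires $F(\emptyset)=0$; if $F$ is merely non-decreasing this is the usual normalization, inherited implicitly as in the earlier theorems).

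First I would show the greedy step captures a good fraction of the current gap. By monotonicity and the definition of incremental gain, $\delta_{i-1} = F(\mathscr{B}^\star)-F(\mathscr{A}_{i-1}) \le F(\mathscr{B}^\star\cup\mathscr{A}_{i-1})-F(\mathscr{A}_{i-1}) = \rho_{\mathscr{B}^\star\setminus\mathscr{A}_{i-1}}(\mathscr{A}_{i-1})$. The point of the abstract modular bound is precisely that we can control this true incremental gain by the computable surrogate $\mathcal{M}_\uparrow[\cdot]$: the parameter $\tau_i$ must be \emph{defined} so that $\rho_{\mathscr{S}}(\mathscr{A}_{i-1}) \le (1+\tau_i)\,\mathcal{M}_\uparrow[\rho_{\mathscr{S}}(\mathscr{A}_{i-1})]$ for all relevant $\mathscr{S}\subseteq\mathscr{V}\setminus\mathscr{A}_{i-1}$ (so $\tau_i$ measures the multiplicative slack between the true gain and its modular minorant at step $i$), together with superadditivity/modularity of $\mathcal{M}_\uparrow$ so that $\mathcal{M}_\uparrow[\rho_{\mathscr{S}}(\mathscr{A}_{i-1})] = \sum_{\nu\in\mathscr{S}}\mathcal{M}_\uparrow[\rho_\nu(\mathscr{A}_{i-1})]$ splits over singletons. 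Combining, $\delta_{i-1} \le (1+\tau_i)\sum_{\nu\in\mathscr{B}^\star\setminus\mathscr{A}_{i-1}}\mathcal{M}_\uparrow[\rho_\nu(\mathscr{A}_{i-1})] \le (1+\tau_i)\,\tfrac{k}{q_i}\,\mathcal{M}_\uparrow[\rho_{\mathscr{Q}_i}(\mathscr{A}_{i-1})]$, where the last step is the standard averaging argument: $\mathscr{Q}_i$ is chosen to maximize the modular objective over $q_i$-subsets, so the average modular gain of its elements is at least the average over any $q_i$-subset of $\mathscr{B}^\star\setminus\mathscr{A}_{i-1}$ (and $\abs{\mathscr{B}^\star\setminus\mathscr{A}_{i-1}}\le k$).

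Next I would convert this back to the true function. Since $\mathcal{M}_\uparrow$ is a lower bound, $\mathcal{M}_\uparrow[\rho_{\mathscr{Q}_i}(\mathscr{A}_{i-1})] \le \rho_{\mathscr{Q}_i}(\mathscr{A}_{i-1}) = F(\mathscr{A}_i)-F(\mathscr{A}_{i-1}) = \delta_{i-1}-\delta_i$. Chaining the two displays gives $\delta_{i-1} \le (1+\tau_i)\tfrac{k}{q_i}(\delta_{i-1}-\delta_i)$, i.e. $\delta_i \le \lr{1-\tfrac{q_i}{k(1+\tau_i)}}\delta_{i-1}$, which is exactly the contraction sought. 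Unrolling over $i=1,\dots,l$ yields $\delta_l \le \prod_{i=1}^l\lr{1-\tfrac{q_i}{k(1+\tau_i)}}\,F(\mathscr{B}^\star)$, and rearranging $\delta_l = F(\mathscr{B}^\star)-F(\mathscr{A})$ gives the claimed bound.

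The main obstacle, in my view, is not the recursion — that is a routine adaptation of the \Cref{thm:batch_std_greedy} argument — but pinning down the precise structural hypotheses on $\mathcal{M}_\uparrow$ and the exact definition of $\tau_i$ that make the two inequalities above valid simultaneously: one needs (a) the modularity of $\mathcal{M}_\uparrow$ so the averaging argument goes through, (b) $\mathcal{M}_\uparrow[\rho_\mathscr{A}(\mathscr{B})]\le\rho_\mathscr{A}(\mathscr{B})$ so the surrogate is genuinely a minorant (hence $\mathcal{M}_\uparrow[\rho_{\mathscr{Q}_i}(\mathscr{A}_{i-1})]\le\delta_{i-1}-\delta_i$), and (c) a matching upper control $\rho_\mathscr{A}(\mathscr{B})\le(1+\tau_i)\mathcal{M}_\uparrow[\rho_\mathscr{A}(\mathscr{B})]$ that is allowed to vary per step so $\tau_i$ can adapt to $\mathscr{A}_{i-1}$. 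A minor subtlety is the positivity/normalization of $\mathcal{M}_\uparrow$ on the relevant subsets (needed so the inequality direction in the averaging step is preserved) and confirming that the sets $\mathscr{B}^\star\setminus\mathscr{A}_{i-1}$ over which we average indeed have cardinality at most $k$, so the factor $k/q_i$ rather than $\abs{\mathscr{B}^\star\setminus\mathscr{A}_{i-1}}/q_i$ is the correct (weakest) bound; these are exactly the places where the proof should make the role of $\tau_i$ explicit.
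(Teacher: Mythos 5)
Your argument is correct and its skeleton is the one the paper uses: the same chain (monotonicity, averaging over the modular surrogate to pass from $\mathscr{A}^{*}\setminus\mathscr{A}_{i-1}$ to the selected batch, then the minorant property to return to the true incremental gain) yielding the identical contraction $\delta_i \leq \lr{1-\tfrac{q_i}{k(1+\tau_i)}}\delta_{i-1}$. The one substantive divergence is the definition of $\tau_i$, which you correctly identify as the crux. You define it \emph{multiplicatively} and uniformly, requiring $\rho_{\mathscr{S}}(\mathscr{A}_{i-1}) \leq (1+\tau_i)\,\mathcal{M}_{\uparrow}\left[\rho_{\mathscr{S}}(\mathscr{A}_{i-1})\right]$ for all relevant $\mathscr{S}$; the paper instead defines $\tau_i$ via an \emph{additive} slack evaluated only at the optimal set, namely the smallest scalar with $\rho_{\mathscr{A}^{*}}(\mathscr{A}_{i-1}) - \mathcal{M}_{\uparrow}\left[\rho_{\mathscr{A}^{*}}(\mathscr{A}_{i-1})\right] \leq \tau_i k\,\rho_{\mathscr{A}_{i}}(\mathscr{A}_{i-1})/q_i$ (see \cref{eqn:tau_i_def}), inserting this slack at step (c) of the chain rather than folding it into a multiplicative factor at the start. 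The paper's choice buys two things. First, a finite $\tau_i$ exists even when the modular lower bound is nonpositive while the true gain is positive -- a case the authors explicitly intend to cover (the minorants of \Cref{prop:MBound_MI} and \cref{eqn:alt_MBound_MI} need not be positive), and one in which your multiplicative definition admits no finite $\tau_i$; you partially flag this under ``positivity/normalization'' but it is a real restriction of your version, not just a technicality of the averaging step. Second, because the paper's $\tau_i$ is anchored only at $\mathscr{A}^{*}$ and normalized by the realized gain of step $i$, it is generically smaller than a uniform multiplicative constant, giving a tighter guarantee -- this is exactly the distinction the paper draws between \cref{eqn:tau_i_def} and the looser \cref{eqn:tau_i_def_looser}. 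With that caveat, your remaining structural hypotheses (modularity of $\mathcal{M}_{\uparrow}$ over singletons, the minorant property, the normalization $F(\emptyset)=0$) match what the paper implicitly assumes.
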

\hyperref[proof:theorem_greedy_ascent]{The proof is given in \Cref{appendix_subsec:greedy_stuff}.}\medskip

The analysis of \Cref{alg:greedy_ascent} is along the same lines as that of \Cref{alg:batch_std_greedy}. The effectiveness of the modular lower bound in locally describing the function is baked into the approximation guarantee via the parameter $\tau_i$. If $\mathscr{A}^{*}$ is the optimal set of the maximization problem, then for the $i^{\text{th}}$ step, $\tau_i$ is defined as the smallest scalar satisfying
\begin{equation} \label{eqn:tau_i_def}
  \rho_{\mathscr{A}^{*}} \lr{ \mathscr{A}_{i-1}} -  \mathcal{M}_{\uparrow}  \left[  \rho_{\mathscr{A}^{*}} \lr{ \mathscr{A}_{i-1}} \right] \leq  \frac{ \tau_i k \rho_{\mathscr{A}_{i}} \lr{ \mathscr{A}_{i-1}}}{q_i}.
\end{equation}
The dependence of $\tau_i$ is strictly on the  modular lower bound, but in many cases estimating it is easier when it is defined as the smallest scalar satisfying
\begin{equation} \label{eqn:tau_i_def_looser}
   \mathcal{M}_{\downarrow}  \left[  \rho_{\mathscr{A}^{*}} \lr{ \mathscr{A}_{i-1}} \right] -  \mathcal{M}_{\uparrow}  \left[  \rho_{\mathscr{A}^{*}} \lr{ \mathscr{A}_{i-1}} \right] \leq  \frac{ \tau_i  k  \mathcal{M}_{\uparrow}  \left[  \rho_{\mathscr{A}_{i}} \lr{ \mathscr{A}_{i-1}} \right]}{q_i}.
\end{equation}
Notice that $\tau_i$ estimated using the definition in \cref{eqn:tau_i_def_looser}  is always larger than what is necessary to satisfy \cref{eqn:tau_i_def}.

One instance of $\tau_i$ which we have previously seen, indirectly, is when the modular bounds are defined using incremental gains and submodularity/supermodularity ratios. In that case, with a little effort it is easy to see that $\tau_i$ according to \cref{eqn:tau_i_def_looser} will be
 $\frac{1 - \gamma_{\mathscr{V},k} \eta_{\mathscr{V},k} }{\gamma_{\mathscr{V},k} \eta_{\mathscr{V},q_i} }$. But this value is sub-optimal when compared to the $\tau_i $ one can implicitly infer from the result in \Cref{thm:batch_std_greedy}, which is $\frac{1 - \gamma_{\mathscr{V},k} \eta_{\mathscr{V},q_i} }{\gamma_{\mathscr{V},k} \eta_{\mathscr{V},q_i} }$. The sub-optimality stems from the fact that $\eta_{\mathscr{V},q_i} \geq \eta_{\mathscr{V},k}$, $\forall q_i \leq k$. The discrepancy is easily explained since in the proof of \Cref{thm:greedy_ascent} we allow for the modular lower bound $\mathcal{M}_{\uparrow}\left[ \cdot \right] $ to be negative. If we assume non-negativity of the modular lower bound, the proof can be accordingly modified to obtain the optimal result.
 

\subsection{Adapting MM techniques for linear Bayesian optimal experimental design} \label{sec:mm_alg_lbip}

In this section we develop specialized MM-based algorithms for the linear Bayesian optimal experimental design problem formulated in \Cref{subsubsec:lbip_exp_design_prop}.

\subsubsection{Modular bounds for mutual information design criteria} 

In \Cref{subsubsec:lbip_exp_design_prop} we discussed some key spectral properties of design criteria based on mutual information. Now  we derive helpful majorizations and minorizations using tight modular bounds on these information theoretic objectives.  

\begin{proposition}\label{prop:MBound_MI}
Let $X \in \mathbb{R}^{n}$ and $Y \in \mathbb{R}^{m}$ be jointly Gaussian random variables as defined in \Cref{subsubsec:lbip_notation_setup}. Let $\mathcal{P} \in \mathbb{R}^{m \times k }$ be a selection operator such that $Y_{\mathcal{P}} \coloneqq  \mathcal{P}^{\top}Y = \left[ Y_{i_1},\ldots,Y_{i_{k}} \right] ^{\top}$. The mutual information between $X$ and $Y_{\mathcal{P}}$ can be bounded on both sides by the following modular functions:
\begin{displaymath}
\begin{aligned}
 \mathcal{I}\lr{X;Y_{\mathcal{P}}} & \geq 
   \frac{1}{2} \tr{\mathcal{P}^{\top} \lr{ \log\lr{ \Gamma_{Y}} - \log \mathrm{diag}\lr{ \Gamma_{Y|X}}} \mathcal{P}},\\
 \mathcal{I}\lr{X;Y_{\mathcal{P}}} & \leq 
   \frac{1}{2} \tr{\mathcal{P}^{\top} \lr{ \log \mathrm{diag}\lr{\Gamma_{Y}} - \log \lr{ \Gamma_{Y|X}} } \mathcal{P}}   .
\end{aligned}
\end{displaymath}
\end{proposition}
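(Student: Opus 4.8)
The plan is to establish both inequalities from the spectral representation of mutual information in \cref{eqn:mutualInfoGaussians_reduced_data_DS_function}, namely $\mathcal{I}(X;Y_{\mathcal{P}}) = \tfrac12\bigl(\log\det(\mathcal{P}^\top \Gamma_Y \mathcal{P}) - \log\det(\mathcal{P}^\top \Gamma_{Y|X}\mathcal{P})\bigr)$, by bounding each of the two $\log\det$ terms separately with a modular (i.e.\ trace-of-compression) surrogate. The crucial tool is the classical Hadamard-type inequality for positive definite matrices together with the operator monotonicity/concavity of $\log$: for any positive definite $k\times k$ matrix $M$ one has $\log\det M \le \tr(\log\operatorname{diag}(M)) = \sum_i \log M_{ii}$, and more is true --- I also need a lower bound of the same modular form. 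For the lower bound on $\log\det(\mathcal{P}^\top \Gamma_Y \mathcal{P})$ I would use the fact that $\log\det$ of a principal submatrix dominates the corresponding partial sum coming from $\mathcal{P}^\top(\log \Gamma_Y)\mathcal{P}$; concretely, for a positive definite $A$ and a selection operator $\mathcal{P}$, $\log\det(\mathcal{P}^\top A \mathcal{P}) \ge \tr(\mathcal{P}^\top (\log A)\mathcal{P})$. This last inequality is exactly a statement that $S\mapsto \log\det(\mathcal{P}^\top A\mathcal{P})$, viewed appropriately, is bounded below by the "linearized" version $\tr(\mathcal{P}^\top\log A\,\mathcal{P})$; it follows from the concavity of $\log\det$ on the positive definite cone combined with the operator Jensen inequality for the compression map $A \mapsto \mathcal{P}^\top A \mathcal{P}$ (a unital positive map after suitable normalization, or directly via the variational characterization $\log\det B = \min_{C\succ 0}\{\tr(BC) - \log\det C - k\}$ style duality).

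Concretely, the first step is to record the two matrix inequalities I need as lemmas (or cite \Cref{appendix:techResults}): (i) for positive definite $A\in\mathbb{R}^{m\times m}$ and any selection operator $\mathcal{P}\in\mathcal{S}(k)$, $\log\det(\mathcal{P}^\top A\mathcal{P}) \ge \tr\!\bigl(\mathcal{P}^\top \log(A)\,\mathcal{P}\bigr)$ --- this is the operator-concavity/Jensen bound applied to the compression; and (ii) for positive definite $B\in\mathbb{R}^{k\times k}$, $\log\det B \le \tr\bigl(\log\operatorname{diag}(B)\bigr)$ --- this is Hadamard's inequality written in log form. The second step is to apply (i) to $A = \Gamma_Y$ to get $\log\det(\mathcal{P}^\top\Gamma_Y\mathcal{P}) \ge \tr(\mathcal{P}^\top\log(\Gamma_Y)\mathcal{P})$, and to apply (ii) to $B = \mathcal{P}^\top\Gamma_{Y|X}\mathcal{P}$, noting $\operatorname{diag}(\mathcal{P}^\top\Gamma_{Y|X}\mathcal{P}) = \mathcal{P}^\top\operatorname{diag}(\Gamma_{Y|X})\mathcal{P}$ because compression by a selection operator simply picks out principal entries, to get $\log\det(\mathcal{P}^\top\Gamma_{Y|X}\mathcal{P}) \le \tr(\mathcal{P}^\top\log\operatorname{diag}(\Gamma_{Y|X})\mathcal{P})$. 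Subtracting, dividing by two, and using $\log(cX) $-style bookkeeping (here everything stays as traces of compressions, which are modular in $\mathscr{I}_{\mathcal{P}}$) yields the claimed lower bound on $\mathcal{I}(X;Y_{\mathcal{P}})$. The upper bound is entirely symmetric: apply (ii) to $B = \mathcal{P}^\top\Gamma_Y\mathcal{P}$ and apply (i) to $A = \Gamma_{Y|X}$, then subtract.

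The main obstacle I anticipate is justifying inequality (i), the lower bound $\log\det(\mathcal{P}^\top A\mathcal{P}) \ge \tr(\mathcal{P}^\top\log(A)\mathcal{P})$, cleanly and at the right level of generality. One has to be careful that the compression map $\Phi(A) = \mathcal{P}^\top A \mathcal{P}$ is not unital ($\Phi(I_m) = I_k$ actually holds for a selection operator, so it \emph{is} unital --- this is the saving grace), and then invoke the operator Jensen inequality: for a unital positive linear map $\Phi$ and an operator-concave function $f$ (here $f = \log$), $\Phi(f(A)) \preceq f(\Phi(A))$, hence taking traces, $\tr(\mathcal{P}^\top\log(A)\mathcal{P}) = \tr\Phi(\log A) \le \tr\log\Phi(A) = \log\det(\mathcal{P}^\top A\mathcal{P})$ --- wait, I need the trace to preserve the Löwner order, which it does, and I need $\tr\log M = \log\det M$, which is standard. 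So in fact the only subtlety is asserting $\mathcal{P}^\top I_m \mathcal{P} = I_k$, which is immediate since the columns of $\mathcal{P}$ are distinct canonical basis vectors. I would present (i) via operator Jensen and (ii) via Hadamard, flag both as collected in \Cref{appendix:techResults}, and keep the main-text proof to the three lines of assembling them.
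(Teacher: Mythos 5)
Your proposal is correct and follows essentially the same route as the paper: the paper also writes $\mathcal{I}(X;Y_{\mathcal{P}})$ as $\tfrac12\bigl(\log\det(\mathcal{P}^{\top}\Gamma_Y\mathcal{P})-\log\det(\mathcal{P}^{\top}\Gamma_{Y|X}\mathcal{P})\bigr)$ and bounds the two terms via exactly your lemmas (i) and (ii) --- the operator-concave (Jensen) inequality for compressions by an isometry (\Cref{thm:operator_concave_inequality}, yielding the lower bound in \Cref{corr:logDetInequality}) and Hadamard's inequality in log form (\Cref{corr:logDetInequality_hadamard}). Your observations that a selection operator is an isometry (so the compression is unital) and that $\mathrm{diag}(\mathcal{P}^{\top}A\mathcal{P})=\mathcal{P}^{\top}\mathrm{diag}(A)\mathcal{P}$ are precisely the points the paper relies on.
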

\hyperref[proof:prop_MBound_MI]{The proof is given in \Cref{appendix_subsec:lbip_mm}.}\medskip

Note that the lower bound in the proposition above is always positive when mutual information is submodular (\Cref{corr:MI_SM_case_bounding_BP}), but in general it need not be.
The machinery used to obtain \Cref{prop:MBound_MI} exploits the linear algebraic structure of the information theoretic objectives. Using the same arsenal of tools we can provide the following alternative bounds:
\begin{subequations} \label{eqn:alt_MBound_MI}
\begin{align} 
 \mathcal{I}\lr{X;Y_{\mathcal{P}}} & \geq 
   \frac{1}{2} \tr{\mathcal{P}^{\top} \lr{ \log\lr{ \Gamma_{Y}} - \log \lr{ \Gamma_{Y|X}} + I_{m}\log\lr{\varrho_1} } \mathcal{P}} ,\\
 \mathcal{I}\lr{X;Y_{\mathcal{P}}} & \leq 
   \frac{1}{2} \tr{\mathcal{P}^{\top} \lr{ \log\lr{ \Gamma_{Y}} - I_{m}\log\lr{\varrho_2} - \log \lr{ \Gamma_{Y|X}} } \mathcal{P}}.
 \end{align} \end{subequations}
We include arguments for these bounds \cref{eqn:alt_MBound_MI} in the proof of \Cref{prop:MBound_MI} as well.
The constants $\varrho_1,\varrho_2 \in (0,1]$ depend on the range of the spectrum of covariance operators $\Gamma_{Y|X}, \Gamma_{Y}$ respectively. If $\text{eig}(\Gamma_{Y|X}) \in [\alpha_s,\alpha_l] \subset \mathbb{R}_{>0}$ and $\text{eig}(\Gamma_{Y}) \in [\beta_s,\beta_l] \subset \mathbb{R}_{>0}$, then $\varrho_1 \coloneqq  \frac{4 \alpha_{s} \alpha_{l}}{\lr{\alpha_{s} + \alpha_{l}}^{2}}$ and  $\varrho_2 \coloneqq  \frac{4 \beta_{s} \beta_{l}}{\lr{\beta_{s} + \beta_{l}}^{2}}$.
Note that lower bound in \cref{eqn:alt_MBound_MI} is not guaranteed to be positive even when mutual information is submodular, and the bounds in \cref{eqn:alt_MBound_MI} are tighter when the cardinality of $\mathscr{I}\lr{\mathcal{P}}$ is smaller.


To understand the modular bounds  in \Cref{prop:MBound_MI} and their significance, it is helpful to discuss two simple corollaries. We begin with the more restricted of the two, which is valid only when the observations are conditionally independent.
\begin{corollary}\label{corr:MI_SM_case_bounding_BP}
For random variables  $X \in \mathbb{R}^{n}$ and  $Y \in \mathbb{R}^{m}$ as defined in \Cref{prop:MBound_MI}, if $Y_{i_k} | X$ are additionally independent, then the following bounds hold for any selection operator $\mathcal{P} \in \mathbb{R}^{m \times k }$:
\begin{displaymath}
\mathcal{I}(X;Y)  - \mathcal{I}(X;Y\setminus Y_{\mathcal{P}}) 
\leq  \frac{1}{2} \tr{\mathcal{P}^{\top} \lr{\log \lr{ \Gamma_{Y}} - \log \lr{\Gamma_{Y|X}}} \mathcal{P}}
\leq  \mathcal{I}\lr{X;Y_{\mathcal{P}}}.
 \end{displaymath}
\end{corollary}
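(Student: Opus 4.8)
The plan is to derive both inequalities from \Cref{prop:MBound_MI} by exploiting the fact that, under conditional independence, $\Gamma_{Y|X}$ is diagonal, so the operations $\log\mathrm{diag}(\cdot)$ and $\log(\cdot)$ coincide on $\Gamma_{Y|X}$. The middle quantity $\frac{1}{2}\tr{\mathcal{P}^{\top}(\log\Gamma_{Y} - \log\Gamma_{Y|X})\mathcal{P}}$ is precisely the upper bound in \Cref{prop:MBound_MI} once $\log\mathrm{diag}(\Gamma_{Y|X})$ is replaced by $\log\Gamma_{Y|X}$; hence the second inequality of the corollary, $\frac{1}{2}\tr{\mathcal{P}^{\top}(\log\Gamma_{Y}-\log\Gamma_{Y|X})\mathcal{P}}\le \mathcal{I}(X;Y_{\mathcal{P}})$, is nothing but the lower bound of \Cref{prop:MBound_MI} rewritten under the diagonality hypothesis. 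So that direction is immediate.

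For the first inequality, I would use \Cref{prop:MI_SM_uncorrelatedCase}: when $Y_{i_j}\mid X$ are independent, $\mathcal{I}(X;\,\cdot\,)$ is submodular, and therefore (as remarked just after that proposition) the ``loss'' objective $\mathcal{I}(X;Y) - \mathcal{I}(X;Y\setminus Y_{\mathcal{P}})$ is its supermodular dual. The key structural fact to invoke is that the middle quantity is itself a \emph{modular} function of $\mathscr{I}_{\mathcal{P}}$ (it is a trace of a principal submatrix, i.e.\ a sum of diagonal entries of $\log\Gamma_{Y}-\log\Gamma_{Y|X}$), and that it agrees with $\mathcal{I}(X;Y_{\mathcal{P}})$ when $\mathcal{P}=I_m$ — both equal $\frac12(\log\det\Gamma_Y-\log\det\Gamma_{Y|X})=\mathcal{I}(X;Y)$, since $\log\Gamma_{Y|X}$ diagonal makes $\tr{\log\Gamma_{Y|X}}=\log\det\Gamma_{Y|X}$, and $\log\Gamma_Y$ has trace $\log\det\Gamma_Y$ regardless. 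So the modular function in the middle is a modular minorant of the submodular function $\mathcal{I}(X;Y_{\mathcal{P}})$ that is tight at the full set; a standard property of submodular functions (the ``Nemhauser'' subgradient at $\mathscr{V}$, see \Cref{def:sub_grad_diff_SM}) is that such a modular function evaluated on a subset $\mathscr{A}$ equals $\mathcal{I}(X;Y)$ minus the sum over the complement of the marginal losses at $\mathscr{V}$, which by submodularity is a lower bound on $\mathcal{I}(X;Y)-\mathcal{I}(X;Y\setminus Y_{\mathcal{P}})$. Concretely: for modular $g$ with $g(\mathscr{V})=\mathcal{I}(X;Y)$ and $g(\{\nu\}\text{-losses})$ matching the singleton marginal losses at $\mathscr{V}$, submodularity gives $\mathcal{I}(X;Y)-\mathcal{I}(X;Y\setminus Y_{\mathcal{P}}) = \sum_{\nu\in\mathscr{I}_{\mathcal{P}}}\rho_\nu(\cdot)\le \ldots \le g(\mathscr{I}_{\mathcal{P}})$, which is the claimed bound.

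The main obstacle is bookkeeping: I need to check that the particular modular function $\frac12\tr{\mathcal{P}^{\top}(\log\Gamma_Y-\log\Gamma_{Y|X})\mathcal{P}}$ is exactly the Nemhauser-type modular upper bound on the submodular function $\mathcal{I}(X;Y_{\mathcal{P}})$ anchored at the full set $\mathscr{V}$ — i.e.\ that its singleton values coincide with the marginal gains $\mathcal{I}(X;Y)-\mathcal{I}(X;Y\setminus Y_{\{i\}})$, \emph{not} with the prior-to-singleton gains $\mathcal{I}(X;Y_{\{i\}})$. This is where diagonality of $\Gamma_{Y|X}$ is essential: it forces $\tr{\mathcal{P}^{\top}\log\Gamma_{Y|X}\mathcal{P}} = \log\det(\mathcal{P}^{\top}\Gamma_{Y|X}\mathcal{P})$, which is what makes the ``anchored at $\mathscr{V}$'' and the decomposition-into-diagonal-entries pictures consistent. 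If verifying the anchoring directly is awkward, the fallback is purely algebraic: write $\mathcal{I}(X;Y)-\mathcal{I}(X;Y\setminus Y_{\mathcal{P}}) = \frac12(\log\det\Gamma_Y - \log\det\Gamma_{Y|X}) - \frac12(\log\det\Gamma_{Y_{\mathcal{P}^c}} - \log\det\Gamma_{Y_{\mathcal{P}^c}|X})$ using \cref{eqn:mutualInfoGaussians_reduced_data_DS_function}, note $\log\det\Gamma_{Y_{\mathcal{P}^c}|X} = \tr{(\mathcal{P}^c)^{\top}\log\Gamma_{Y|X}\mathcal{P}^c}$ by diagonality, and then the inequality reduces to $\log\det\Gamma_Y - \log\det\Gamma_{Y_{\mathcal{P}^c}} \le \tr{\mathcal{P}^{\top}\log\Gamma_Y\,\mathcal{P}}$, i.e.\ $\log\det\Gamma_Y - \log\det(\mathcal{P}^{\top}_c\Gamma_Y\mathcal{P}_c) \le \tr{\mathcal{P}^{\top}\log\Gamma_Y\,\mathcal{P}}$, which is a known log-det / Schur-complement inequality for positive definite matrices (Fischer-type: the drop in log-det from deleting a principal block is at most the trace of $\log$ of the deleted block). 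I would present the algebraic route as the clean proof and mention the submodularity interpretation as intuition.
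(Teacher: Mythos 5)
Your ``clean'' algebraic route is correct and is essentially the paper's own proof: the paper obtains the first inequality by applying the modular lower bound of \Cref{prop:MBound_MI} (with $\log \mathrm{diag}\lr{\Gamma_{Y|X}} = \log\lr{\Gamma_{Y|X}}$ under conditional independence) to the \emph{complementary} selection operator $\mathcal{P}^{c}$ and subtracting from $\mathcal{I}(X;Y)$, which after the splitting $\tr{\log\Gamma_{Y}-\log\Gamma_{Y|X}}=\tr{\mathcal{P}^{\top}(\cdot)\mathcal{P}}+\tr{\mathcal{P}^{c\top}(\cdot)\mathcal{P}^{c}}$ is exactly your reduction to $\tr{\mathcal{P}^{c\top}\log\lr{\Gamma_{Y}}\mathcal{P}^{c}}\leq\log\det\lr{\Gamma_{Y_{\mathcal{P}^{c}}}}$. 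Note that this last inequality is not an external ``Fischer-type'' fact to be imported: it is precisely the operator-concavity bound $\tr{K^{*}\log(A)K}\leq\log\det\lr{K^{*}AK}$ of \Cref{corr:logDetInequality}, i.e., the very lemma behind the lower bound in \Cref{prop:MBound_MI}, and it is strictly stronger than Fischer's inequality (which would put $\log\det\Gamma_{Y_{\mathcal{P}}}$, not $\tr{\mathcal{P}^{\top}\log\lr{\Gamma_{Y}}\mathcal{P}}$, on the right).

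One correction to your first (submodularity) narrative, which you rightly hedged on: the singleton values of $d=\tfrac12\,\mathrm{diag}\lr{\log\Gamma_{Y}-\log\Gamma_{Y|X}}$ do \emph{not} coincide with the marginal gains $\mathcal{I}(X;Y)-\mathcal{I}(X;Y\setminus Y_{\{i\}})$ anchored at $\mathscr{V}$ (those involve the log-determinant of a Schur complement of $\Gamma_{Y}$, not the $(i,i)$ entry of $\log\Gamma_{Y}$), nor with the gains at $\emptyset$; so $d$ is not a Nemhauser-type anchored modular bound, and that verification would fail. The statement that is both true and sufficient is weaker: $d$ is a modular minorant of the submodular $F(\mathscr{A})=\mathcal{I}(X;Y_{\mathcal{P}_{\mathscr{A}}})$ (your second inequality) that is tight at $\mathscr{V}$ (the trace identity), i.e., $d\in B(F)$, whence
\begin{equation*}
F^{\#}(\mathscr{A})=F(\mathscr{V})-F(\mathscr{V}\setminus\mathscr{A})\leq d(\mathscr{V})-d(\mathscr{V}\setminus\mathscr{A})=d(\mathscr{A})
\end{equation*}
in one line, with no appeal to submodularity or to diminishing returns along a chain. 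Since your final proof rests on the algebraic route rather than on the anchoring claim, there is no gap, but the intuition paragraph should be replaced by this one-line dual argument.
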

\hyperref[proof:corr_MI_SM_case_bounding_BP]{The proof is given in \Cref{appendix_subsec:lbip_mm}.}\medskip

Recall that  $\mathcal{I}\lr{X;Y_{\mathcal{P}}}$ is the objective corresponding to \cref{eqn:exp_design_prob_statement_max}, and $\mathcal{I}(X;Y)  - \mathcal{I}(X;Y\setminus Y_{\mathcal{P}})$ is the objective corresponding to \cref{eqn:exp_design_prob_statement_min}.
If $\mathcal{P}$ is square with its corresponding index set $\mathscr{I}_{\mathcal{P}} = \mathscr{V}$, then the corollary statement reduces to a simple identity. In all other circumstances the inequalities are strict.

We now highlight a few properties encapsulated by \Cref{corr:MI_SM_case_bounding_BP} which relate to concepts in polyhedral combinatorics.
\begin{itemize}
\item We know the mutual information $ \mathcal{I}\lr{X;Y_{\mathcal{P}}}$ is submodular when observations are conditionally independent (\Cref{prop:MI_SM_uncorrelatedCase}). The term $\mathcal{I}(X;Y)  - \mathcal{I}(X;Y\setminus Y_{\mathcal{P}})$  is the supermodular dual (\Cref{def:supermodular_dual}) of  $\mathcal{I}\lr{X;Y_{\mathcal{P}}}$, and both the functions have the same base polytope (\Cref{def:submodular_base_polyhedra}).

\item  The diagonal of the matrix appearing in the middle term deserves special attention.  We denote the diagonal of $\frac{1}{2} \lr{\log \lr{ \Gamma_{Y}} - \log \lr{\Gamma_{Y|X}}}$ by the vector $d \in \mathbb{R}^{m}$. Note that the covariance  $\Gamma_{Y|X}$ in this context is diagonal since we are presently discussing the case of uncorrelated observation error.  The vector $d$ can be inferred to be a point in the base polytope (\Cref{def:submodular_base_polyhedra}) of $ \mathcal{I}\lr{X;Y_{\mathcal{P}}}$ or equivalently of $\mathcal{I}(X;Y)  - \mathcal{I}(X;Y\setminus Y_{\mathcal{P}})$. 
As a result, it is naturally a subgradient (\Cref{def:sub_grad_diff_SM}) of $ \mathcal{I}\lr{X;Y_{\mathcal{P}}}$ and a supergradient of  $\mathcal{I}(X;Y)  - \mathcal{I}(X;Y\setminus Y_{\mathcal{P}})$ defined at $\mathscr{V}$ or $\emptyset$. The individual components of $d$ are all positive, which must be the case since the base polytope lies in the positive orthant for all non-decreasing submodular functions. 

\item Most interestingly, $d$ is not necessarily one of the extreme points of the base polytope but rather can lie in its interior. Note that there are at most $m!$ extreme points and these are at least in theory determinable using the `greedy algorithm' (Rado--Edmonds theorem \cite{Edmonds1971}). Typically subgradients defined using extremal points are used to  drive the optimization steps  \cite{IyerJegelkaBilmes2012_mirror,IyerJegBil2013,IyerBilmes2012_diff2SM}. Using a non-extremal point to drive optimization, as we do here, is a novel approach, albeit specific to the setting of linear--Gaussian Bayesian problems.

\end{itemize}

We now consider a generalization of \Cref{corr:MI_SM_case_bounding_BP} pertinent to the case of correlated observation errors. Recall that in this case the mutual information $\mathcal{I}\lr{X;Y_{\mathcal{P}}}$ is neither submodular nor supermodular, but simply a non-decreasing set function.
\begin{corollary} \label{corr:MI_deviation_from_SM}
For random variables  $X \in \mathbb{R}^{n}, Y \in \mathbb{R}^{m}$ as defined in \Cref{prop:MBound_MI}, the following bounds hold for any selection operator $\mathcal{P} \in \mathbb{R}^{m \times k}$:
\begin{align}
\mathcal{I}(X;Y)  - \mathcal{I}(X;Y\setminus Y_{\mathcal{P}}) 
\label{eqn:corr_MI_deviation_from_SM_eq1}    & \leq   \frac{1}{2}  \tr{\mathcal{P}^{\top} \lr{ \log\lr{ \Gamma_{Y}} -   \log\lr{ \Gamma_{Y|X}}} \mathcal{P}}  +\mathfrak{D}_{\mathcal{P}^{c}}, \\
\Longleftrightarrow
\mathcal{I}(X; Y_{\mathcal{P}}) 
\label{eqn:corr_MI_deviation_from_SM_eq2}    & \geq   \frac{1}{2}  \tr{\mathcal{P}^{\top} \lr{ \log\lr{ \Gamma_{Y}} -   \log\lr{ \Gamma_{Y|X}}} \mathcal{P}} - \mathfrak{D}_{\mathcal{P}},
\end{align}
where $\mathfrak{D}_{\mathcal{P}}  \coloneqq  \frac{1}{2} \tr{\mathcal{P}^{\top} \lr{\log \mathrm{diag} \lr{\Gamma_{Y|X}} - \log\lr{ \Gamma_{Y|X}}} \mathcal{P}}$.
\end{corollary}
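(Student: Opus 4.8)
The plan is to obtain both inequalities directly from the modular minorization in \Cref{prop:MBound_MI}, using only two elementary facts: that the map $M \mapsto \tr{\mathcal{P}^{\top} M \mathcal{P}}$ is linear in its matrix argument, and that $\mathcal{P}\mathcal{P}^{\top} + \mathcal{P}^{c}\mathcal{P}^{c\top} = I_{m}$. The key is to keep the matrix logarithm \emph{outside} the compression throughout, so that all manipulations are linear; this is exactly what lets us sidestep the non-commutativity of $\log(\cdot)$ with $\mathcal{P}^{\top}(\cdot)\mathcal{P}$ that forces \Cref{prop:MBound_MI} to be a bound rather than an identity in the first place.

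First I would prove the lower bound \cref{eqn:corr_MI_deviation_from_SM_eq2}. Apply the lower bound of \Cref{prop:MBound_MI} to $\mathcal{P}$ to get $\mathcal{I}(X;Y_{\mathcal{P}}) \geq \frac{1}{2}\tr{\mathcal{P}^{\top}(\log(\Gamma_{Y}) - \log\mathrm{diag}(\Gamma_{Y|X}))\mathcal{P}}$, then write $\log\mathrm{diag}(\Gamma_{Y|X}) = \log(\Gamma_{Y|X}) + (\log\mathrm{diag}(\Gamma_{Y|X}) - \log(\Gamma_{Y|X}))$ and split the right-hand side by linearity of $M \mapsto \tr{\mathcal{P}^{\top} M \mathcal{P}}$ into $\frac{1}{2}\tr{\mathcal{P}^{\top}(\log(\Gamma_{Y}) - \log(\Gamma_{Y|X}))\mathcal{P}}$ minus $\mathfrak{D}_{\mathcal{P}}$. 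That is \cref{eqn:corr_MI_deviation_from_SM_eq2}.

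Next I would record the complement identity. Since $\mathcal{P}$ and $\mathcal{P}^{c}$ have index sets partitioning $\mathscr{V}$, their columns together are a permutation of the columns of $I_{m}$, so $\mathcal{P}\mathcal{P}^{\top} + \mathcal{P}^{c}\mathcal{P}^{c\top} = I_{m}$, and hence by the cyclic property of the trace $\tr{M} = \tr{\mathcal{P}^{\top} M \mathcal{P}} + \tr{\mathcal{P}^{c\top} M \mathcal{P}^{c}}$ for every symmetric $M$. Taking $M = \log(\Gamma_{Y}) - \log(\Gamma_{Y|X})$ and using $\mathcal{I}(X;Y) = \frac{1}{2}(\log\det\Gamma_{Y} - \log\det\Gamma_{Y|X}) = \frac{1}{2}\tr{\log(\Gamma_{Y}) - \log(\Gamma_{Y|X})}$ from \cref{eqn:mutualInfoGaussians_using_logDet}, this shows that $\mathcal{I}(X;Y)$ splits as the sum of its ``$\mathcal{P}$-part'' and ``$\mathcal{P}^{c}$-part.'' Then to get \cref{eqn:corr_MI_deviation_from_SM_eq1}, use $\mathcal{I}(X;Y\setminus Y_{\mathcal{P}}) = \mathcal{I}(X;Y_{\mathcal{P}^{c}})$, apply the already-proved \cref{eqn:corr_MI_deviation_from_SM_eq2} with $\mathcal{P}$ replaced by $\mathcal{P}^{c}$, and substitute the splitting of $\mathcal{I}(X;Y)$; the ``$\mathcal{P}^{c}$-part'' cancels up to a residual of exactly $\mathfrak{D}_{\mathcal{P}^{c}}$, giving \cref{eqn:corr_MI_deviation_from_SM_eq1}. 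The asserted equivalence $\Longleftrightarrow$ is precisely this computation read both ways: \cref{eqn:corr_MI_deviation_from_SM_eq1} for $\mathcal{P}$ is, after substituting $\mathcal{I}(X;Y) = \tfrac12\tr{\mathcal{P}^{\top}M\mathcal{P}} + \tfrac12\tr{\mathcal{P}^{c\top}M\mathcal{P}^{c}}$, literally \cref{eqn:corr_MI_deviation_from_SM_eq2} for $\mathcal{P}^{c}$, and since complementation is an involution on selection operators the two displayed families of inequalities coincide.

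I do not expect a genuine obstacle here, since all the analytic content sits in \Cref{prop:MBound_MI}; the only care needed is (i) performing every step at the level of $\tr{\mathcal{Q}^{\top}(\cdot)\mathcal{Q}}$, which is additive in its argument, rather than at the level of compressed log-determinants, and (ii) justifying the additive splitting of $\mathcal{I}(X;Y)$ along $\mathcal{P},\mathcal{P}^{c}$ via $\mathcal{P}\mathcal{P}^{\top}+\mathcal{P}^{c}\mathcal{P}^{c\top}=I_{m}$ together with $\log\det = \tr\log$. One may additionally remark that $\mathfrak{D}_{\mathcal{P}}\ge 0$ — so that \Cref{corr:MI_deviation_from_SM} is a strict weakening of \Cref{corr:MI_SM_case_bounding_BP} — which follows by comparing the minorization and majorization in \Cref{prop:MBound_MI}, but this is not needed for the statement.
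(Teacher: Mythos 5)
Your proposal is correct and follows essentially the same route as the paper: identify \cref{eqn:corr_MI_deviation_from_SM_eq2} as a rearrangement of the lower bound in \Cref{prop:MBound_MI} (splitting $\log \mathrm{diag}(\Gamma_{Y|X})$ into $\log(\Gamma_{Y|X})$ plus the $\mathfrak{D}$-term), and obtain \cref{eqn:corr_MI_deviation_from_SM_eq1} by applying that bound to $\mathcal{P}^{c}$ and subtracting from $\mathcal{I}(X;Y)$. You simply make explicit the trace-splitting identity $\tr{M} = \tr{\mathcal{P}^{\top}M\mathcal{P}} + \tr{\mathcal{P}^{c\top}M\mathcal{P}^{c}}$ that the paper's terse proof leaves implicit.
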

\hyperref[proof:corr_MI_deviation_from_SM]{The proof is given in \Cref{appendix_subsec:lbip_mm}.}\medskip

The term $\mathfrak{D}_{\mathcal{P}}$ is non-negative and  strictly increases with increasing cardinality of the index set $\mathscr{I}(\mathcal{P})$.  When the observation errors are uncorrelated, meaning that the covariance operator $\Gamma_{Y|X}$ is diagonal, $\mathfrak{D}_{\mathcal{P}}$ evaluates to zero and we recover the statement in \Cref{corr:MI_SM_case_bounding_BP} by combining \cref{eqn:corr_MI_deviation_from_SM_eq1,eqn:corr_MI_deviation_from_SM_eq2}. Adding \cref{eqn:corr_MI_deviation_from_SM_eq1,eqn:corr_MI_deviation_from_SM_eq2} we obtain,
\begin{equation}
\mathcal{I}(X;Y)  - \mathcal{I}(X;Y\setminus Y_{\mathcal{P}})
 \leq  \mathcal{I}\lr{X;Y_{\mathcal{P}}}
  + \frac{1}{2}\tr{  \log \mathrm{diag} \lr{\Gamma_{Y|X}} - \log\lr{ \Gamma_{Y|X}}}.
 \end{equation}
Loosely speaking, $\mathfrak{D} \coloneqq \frac{1}{2}\tr{  \log \mathrm{diag} \lr{\Gamma_{Y|X}} - \log\lr{ \Gamma_{Y|X}}}$ reflects the deviation from submodularity of the mutual information $\mathcal{I}\lr{X;Y_{\mathcal{P}}}$: the smaller its magnitude, the more $\mathcal{I}\lr{X;Y_{\mathcal{P}}}$ behaves as a submodular function.
If $\mathfrak{D}_{\mathcal{P}^{c}}$ and $\mathfrak{D}_{\mathcal{P}}$ in \Cref{corr:MI_deviation_from_SM} are replaced by $\mathfrak{D}$, the inequalities are akin to affine modular bounds on the set functions $\mathcal{I}(X;Y)  - \mathcal{I}(X;Y\setminus Y_{\mathcal{P}}) $ and $\mathcal{I}(X; Y_{\mathcal{P}}) $.
The pair $\lr{d,\mathfrak{D}}$, where the vector $d \in \mathbb{R}^{m}$  is the diagonal of $\frac{1}{2} \lr{\log \lr{ \Gamma_{Y}} - \log \lr{\Gamma_{Y|X}}}$, characterizes the affine modular bound in \Cref{corr:MI_deviation_from_SM}. Such a pair in general defines generalized lower and upper  polyhedra (\Cref{def:generalized_polyhedra}) for arbitrary set functions that need not  be sub/super-modular. \cite{Iyer_Bilmes_2015_polyhedral} uses those theoretical constructs to study polyhedral aspects of submodular functions.

\subsubsection{MM algorithms for maximizing information gain} \label{subsec:maximize_info_gain}

For the problem of maximizing information gain \cref{eqn:exp_design_prob_statement_max}, wherein the objective is non-decreasing, we have previously discussed the performance of the standard batch greedy algorithm (\Cref{alg:batch_std_greedy}) in \Cref{subsec:theoretical_guarantees_LBIP_std_greedy}. We now consider \Cref{alg:greedy_ascent}, implemented with the modular lower bounds for incremental gain given 
in \Cref{prop:MBound_MI}. 
We will refer to this algorithm as \texttt{MMGreedy} in the comparative discussion of \Cref{sec:num_results}.
Note that incremental gain in the present context corresponds to information gain, which in turn can be written as conditional mutual information. The inequalities in \Cref{prop:MBound_MI} can be suitably adapted by replacing the covariance operators with the appropriate conditional covariances. We describe this process below.

Suppose our goal is to select $k<m$ observations altogether, and we have already selected $k_1< k$. Let the indices of the selected $k_1$ observations correspond to the selection operator $\mathcal{P}_1$,  $\mathscr{I}_{\mathcal{P}_1} \subset \mathscr{V}$. We now seek the remaining $k_2 = k - k_1$ observations whose corresponding index selection operator $\mathcal{P}_2$, with $\mathscr{I}_{\mathcal{P}_2} \subset \mathscr{V} \setminus \mathscr{I}_{\mathcal{P}_1}$, is  $\argmax_{\mathcal{P}, \abs{\mathscr{I}_{\mathcal{P}}}=k_2} \mathcal{I}(X;Y_{\mathcal{P}} | Y_{\mathcal{P}_1})$. In the proposed MM framework, this is achieved by maximizing the minorizing modular lower bound, 
\begin{equation} \label{eqn:max_incremental_gain_lowerbound}
\frac{1}{2} \tr{\widehat{\mathcal{P}}^{\top} \lr{ \log\lr{ \Gamma_{Y_{\mathcal{P}_1^c}|Y_{\mathcal{P}_1}}} - \log \mathrm{diag}\lr{ \Gamma_{Y_{\mathcal{P}^c_1}|X,Y_{\mathcal{P}_1}}}} \widehat{\mathcal{P}}} \leq \mathcal{I}(X;Y_{\mathcal{P}} | Y_{\mathcal{P}_1}).
\end{equation}
In \cref{eqn:max_incremental_gain_lowerbound}, $\mathcal{P}^c_1$ is the complement of $\mathcal{P}_1$ with $\mathscr{I}_{\mathcal{P}^c_1} = \mathscr{V} \setminus \mathscr{I}_{\mathcal{P}_1}$. The selection operator $\widehat{\mathcal{P}}$ in \cref{eqn:max_incremental_gain_lowerbound} has the same index set as $\mathcal{P}$ and is simply its counterpart when selecting from the reduced dimension random variable $Y_{\mathcal{P}^c_1} \in \mathbb{R}^{m-k_1}$. The approximation guarantee in this case follows from \Cref{thm:greedy_ascent}.   Bounding the parameter $\tau_i$ (see \Cref{thm:greedy_ascent}) that encapsulates the effectiveness of the modular lower bound is a non-trivial exercise which we will not pursue. Our empirical results (see \Cref{sec:num_results}), however, demonstrate that the performance of the algorithm is comparable to that of standard batch greedy (\Cref{alg:batch_std_greedy}) and in some instances slightly better.

\subsubsection{MM algorithms for minimizing information loss} \label{subsec:minimize_info_loss}

Our motivation to independently study minimizing information loss is triggered by a combination of factors. The asymmetry that exists between incrementally choosing good observations---as opposed to discarding bad observations---is perhaps the most intriguing reason.
In the context of linear Bayesian inverse problems, \Cref{corr:MI_SM_case_bounding_BP} encapsulates our motivation  perfectly. Observe how the same modular bound proves useful in selecting the best observation and discarding the worst. Yet each procedure carried out sequentially, by updating the modular bound, yields different answers in general. While \Cref{corr:MI_SM_case_bounding_BP} is restricted to the submodular case with conditionally independent observations, we have the generalization in \Cref{corr:MI_deviation_from_SM} with slightly different modular bounds.

\cref{eqn:exp_design_prob_statement_min} is the formal representation of the problem of minimizing information loss. 
Exploiting the previously discussed modular bounds for mutual information, we propose the following approach for its solution.
Suppose our goal is to discard $k<m$ observations, and we have already removed $k_1< k$. Let the  indices of the discarded $k_1$ observations correspond to the selection operator $\mathcal{P}_1$,  $\mathscr{I}_{\mathcal{P}_1} \subset \mathscr{V}$. We now seek $k_2  \coloneqq  k-k_1$ observations whose corresponding index selection operator $\mathcal{P}_2$,  with $\mathscr{I}_{\mathcal{P}_2} \subset \mathscr{V} \setminus \mathscr{I}_{\mathcal{P}_1}$, is the solution of the following problem:
\begin{equation} \label{eqn:min_info_loss_lbip}
\mathcal{P}_2 = \argmin_{\mathcal{P}, \abs{\mathscr{I}_{\mathcal{P}}}=k_2} \mathcal{I}(X;Y) - \mathcal{I}(X;Y \setminus Y_{\mathcal{P}_1}, Y_{\mathcal{P}}) = \argmin_{\mathcal{P}, \abs{\mathscr{I}_{\mathcal{P}}}=k_2} \mathcal{I}(X;Y|Y_{\mathcal{P}_1}) - \mathcal{I}(X;Y \setminus Y_{\mathcal{P}_1}, Y_{\mathcal{P}}).
\end{equation}

We solve \cref{eqn:min_info_loss_lbip} using \Cref{alg:min_info_loss_lbip} by minimizing the majorizing modular upper bound, as given in \Cref{prop:min_MI_loss_upper_bound}. 
We refer to this algorithm as \texttt{MMReverseGreedy}, alluding to the fact that we remove indices sequentially from the candidate set until we reach the desired cardinality.
While it is unclear if this approach can be supplemented with an  approximation guarantee, numerical  results indicate excellent empirical performance.
\begin{algorithm}
\begin{algorithmic}[1]
\caption{Sequential greedy algorithm for minimizing information loss}  \label{alg:min_info_loss_lbip}
\State {Input  $G, \Gamma_X, \Gamma_{Y|X},\mathscr{V},l,\{q_1,\ldots,q_l\}$}
\State {Initialize $\mathscr{I}(\mathcal{P}_{r})=\emptyset$}
 \For {$i = 1 \text{ to }  l$}
  \State {Determine $\mathcal{M}^{\downarrow}\left[ \mathcal{I}(X;Y|Y_{\mathcal{P}_{r}}) - \mathcal{I}(X;Y \setminus Y_{\mathcal{P}_{r}}, Y_{\mathcal{P}})  \right]$ as per \Cref{prop:min_MI_loss_upper_bound}.}
  \State {Find $\mathcal{P}$ with $\mathscr{I}_{\mathcal{P}} \subseteq \mathscr{V} \setminus \mathscr{I}_{\mathcal{P}_{r}}$ and $\abs{\mathscr{I}_{\mathcal{P}}} = q_i$ that minimizes $\mathcal{M}^{\downarrow}\left[ \mathcal{I}(X;Y|Y_{\mathcal{P}_{r}}) - \mathcal{I}(X;Y \setminus Y_{\mathcal{P}_{r}}, Y_{\mathcal{P}})  \right]$}
  \State {$\mathscr{I}_{\mathcal{P}_r} \leftarrow \mathscr{I}_{\mathcal{P}_r} \cup \mathscr{I}_{\mathcal{P}}$}
\EndFor

\Return {Index set $\mathscr{I}_{\mathcal{P}_r}$}
\end{algorithmic}
\end{algorithm}

\begin{proposition} \label{prop:min_MI_loss_upper_bound}
For random variables  $X \in \mathbb{R}^{n}$ and $Y \in \mathbb{R}^{m}$ as defined in \Cref{prop:MBound_MI}, and selection operators $\mathcal{P} \in \mathbb{R}^{m \times k}$ and $\mathcal{P}_1 \in \mathbb{R}^{m \times k_1}$ such that $k_1 < k$, the following bound holds:
\begin{align*}
 \mathcal{I}(X;Y|Y_{\mathcal{P}_1}) - \mathcal{I}(X;Y \setminus Y_{\mathcal{P}_1}, Y_{\mathcal{P}}) 
 & \leq \tr{ \widehat{\mathcal{P}}^{\top} \lr{ \log \lr{\Gamma_{Y|Y_{\mathcal{P}_1}}} - \log \lr{\Gamma_{Y|X,Y_{\mathcal{P}_1}}} }\widehat{\mathcal{P}}}  \\
& \quad +  \tr{ \widehat{\mathcal{P}}^{c}{}^{\top} \lr{ \log \mathrm{diag} \lr{\Gamma_{Y|X,Y_{\mathcal{P}_1}}} - \log \lr{\Gamma_{Y|X,Y_{\mathcal{P}_1}}} }\widehat{\mathcal{P}}^{c}}  \\ 
& \quad +  \tr{ \widehat{\mathcal{P}}^{c}{}^{\top} \lr{ \log \mathrm{diag} \lr{\Gamma_{Y \setminus Y_{\mathcal{P}_1}|X}} - \log \lr{\Gamma_{Y|X,Y_{\mathcal{P}_1}}} }\widehat{\mathcal{P}}^{c}} .
\end{align*}
Here $\widehat{\mathcal{P}}$ is the counterpart of $\mathcal{P}$ when selecting from the  reduced dimension random variable $Y_{\mathcal{P}^c_1} \in \mathbb{R}^{m-k_1}$.
\end{proposition}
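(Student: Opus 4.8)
The plan is to reduce every information quantity to the Gaussian log-determinant identities of \cref{eqn:mutualInfoGaussians_reduced_data,eqn:mutualInfoGaussians_reduced_data_DS_function}, absorb the already-discarded set $\mathscr{I}_{\mathcal{P}_1}$ by conditioning, and then apply the same two matrix inequalities that drive \Cref{prop:MBound_MI}: Hadamard's inequality (the log-determinant of a symmetric positive-definite principal submatrix is at most the sum of the logarithms of its diagonal entries), used to bound from above, and the operator Jensen inequality for the operator-concave map $\log$ --- equivalently $\mathcal{P}^{\top}(\log M)\mathcal{P}\preceq\log(\mathcal{P}^{\top}M\mathcal{P})$, hence $\mathrm{trace}(\mathcal{P}^{\top}(\log M)\mathcal{P})\le\log\det(\mathcal{P}^{\top}M\mathcal{P})$ --- used to bound from below. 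I would also use that $\log$ is operator monotone and that conditioning on more variables only shrinks a covariance in the L\"owner order.

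First I would fix notation. Let $\mathscr{R}\coloneqq\mathscr{V}\setminus\mathscr{I}_{\mathcal{P}_1}$ index the $m-k_1$ observations not yet discarded, put $Z\coloneqq Y_{\mathcal{P}_1^{c}}\in\mathbb{R}^{m-k_1}$, and let $S\coloneqq\mathscr{I}_{\mathcal{P}}\subseteq\mathscr{R}$ be the batch removed at the current step; $\widehat{\mathcal{P}}$ and $\widehat{\mathcal{P}}^{c}$ select $S$ and $\mathscr{R}\setminus S$ out of $Z$. Conditioning on $Y_{\mathcal{P}_1}$ yields another linear--Gaussian model in which $Z$ has marginal covariance $\Gamma_{Y\mid Y_{\mathcal{P}_1}}\coloneqq\Gamma_{Z\mid Y_{\mathcal{P}_1}}$ and conditional-on-$X$ covariance $\Gamma_{Y\mid X,Y_{\mathcal{P}_1}}\coloneqq\Gamma_{Z\mid X,Y_{\mathcal{P}_1}}$ (the latter a Schur complement of a principal block of $\Gamma_{Y\mid X}$), so that $\mathcal{I}(X;Y\mid Y_{\mathcal{P}_1})=\mathcal{I}(X;Z\mid Y_{\mathcal{P}_1})=\tfrac12\bigl[\log\det\Gamma_{Z\mid Y_{\mathcal{P}_1}}-\log\det\Gamma_{Z\mid X,Y_{\mathcal{P}_1}}\bigr]$ by Bayes' rule and the moment formulas of \Cref{subsubsec:lbip_notation_setup}. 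Similarly $\mathcal{I}\bigl(X;Y\setminus Y_{\mathcal{P}_1},Y_{\mathcal{P}}\bigr)=\mathcal{I}(X;Z_{\mathscr{R}\setminus S})$ is the log-determinant ratio of the \emph{plain} principal submatrices of $\Gamma_{Y}$ and $\Gamma_{Y\mid X}$ indexed by $\mathscr{R}\setminus S$ --- which is exactly why the ordinary submatrix $\Gamma_{Y\setminus Y_{\mathcal{P}_1}\mid X}=(\Gamma_{Y\mid X})_{\mathscr{R}}$ appears in the third trace of the claimed bound.

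Next I would expand the two ``full-set'' determinants $\log\det\Gamma_{Z\mid Y_{\mathcal{P}_1}}$ and $\log\det\Gamma_{Z\mid X,Y_{\mathcal{P}_1}}$ \emph{exactly} via $\log\det M=\mathrm{trace}\log M$ and split each trace over $S$ and $\mathscr{R}\setminus S$; the two ``$(\mathscr{R}\setminus S)$-indexed'' determinants coming from $\mathcal{I}(X;Z_{\mathscr{R}\setminus S})$ I would instead bound --- the $\Gamma_{Y\mid X}$-type submatrix determinant from above by Hadamard, the $\Gamma_{Y}$-type submatrix determinant from below by operator Jensen. Collecting terms, the $S$-part of the expansion is precisely the first trace in the statement; the leftover $(\mathscr{R}\setminus S)$-part I would then massage into the remaining two traces by (i) using operator monotonicity of $\log$ together with $\Gamma_{Z\mid X,Y_{\mathcal{P}_1}}\preceq\Gamma_{Z\mid X}=\Gamma_{Y\setminus Y_{\mathcal{P}_1}\mid X}$ and $\Gamma_{Z\mid Y_{\mathcal{P}_1}}\preceq\Gamma_{Z}$ to replace the conditional-on-$(X,Y_{\mathcal{P}_1})$ covariance by the cheaper conditional-on-$X$-only covariance $\Gamma_{Y\setminus Y_{\mathcal{P}_1}\mid X}$ (this generates the third trace) and (ii) recognizing the residual as the nonnegative $\mathfrak{D}$-type quantity of \Cref{corr:MI_deviation_from_SM}, namely $\tfrac12\,\mathrm{trace}\bigl(\widehat{\mathcal{P}}^{c\,\top}(\log\mathrm{diag}\Gamma_{Y\mid X,Y_{\mathcal{P}_1}}-\log\Gamma_{Y\mid X,Y_{\mathcal{P}_1}})\widehat{\mathcal{P}}^{c}\bigr)$ (this generates the second trace). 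A more compact alternative is to apply \cref{eqn:corr_MI_deviation_from_SM_eq1} of \Cref{corr:MI_deviation_from_SM} inside the model conditioned on $Y_{\mathcal{P}_1}$ and then absorb the gap to the unconditioned second term by the same L\"owner-ordering step.

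The main obstacle is bookkeeping rather than any single hard inequality: one must keep three different ``reductions'' of the covariance operators straight --- plain principal submatrices, Schur complements (conditional covariances), and diagonals --- and ensure Hadamard is invoked on exactly the matrices that carry a $+$ sign after the reduction while operator Jensen is invoked on those carrying a $-$ sign, so that every step moves the bound upward. A secondary subtlety is the distinction between $(\log M)_{jj}$ and $\log(M_{jj})$ (the former is never larger, by Jensen applied to the spectral measure of $M$ at coordinate $j$): the three traces in the statement mix $\log\mathrm{diag}$ and $\mathrm{diag}\,\log$, and controlling the signs of those gaps --- together with the observation that each of the three traces is individually nonnegative, which is what lets the factor $\tfrac12$ implicit in the Gaussian mutual-information formula be absorbed into the stated (looser, more readily computable) majorizer --- is where the argument needs the most care. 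Since no approximation guarantee is asserted for \Cref{alg:min_info_loss_lbip}, producing this valid modular upper bound is all that is required.
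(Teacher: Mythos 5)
Your proposal is correct, but it reaches the bound by a genuinely different decomposition than the paper. The paper's pivotal first step is information-theoretic: it splits the loss as $\bigl[\mathcal{I}(X;Y|Y_{\mathcal{P}_1}) - \mathcal{I}(X;Y\setminus Y_{\mathcal{P}}\,|\,Y_{\mathcal{P}_1})\bigr] + \bigl[\mathcal{I}(X;Y\setminus Y_{\mathcal{P}}\,|\,Y_{\mathcal{P}_1}) - \mathcal{I}(X;Y\setminus Y_{\mathcal{P}_1},Y_{\mathcal{P}})\bigr]$, bounds the second bracket by the interaction identity $\mathcal{I}(X;W|Y_{\mathcal{P}_1}) - \mathcal{I}(X;W) = \mathcal{I}(Y_{\mathcal{P}_1};W|X) - \mathcal{I}(Y_{\mathcal{P}_1};W) \leq \mathcal{I}(Y_{\mathcal{P}_1};W|X)$, then bounds the first bracket by \Cref{corr:MI_deviation_from_SM} applied inside the model conditioned on $Y_{\mathcal{P}_1}$ (yielding the first two traces) and the residual conditional mutual information by \Cref{corr:logDetInequality_hadamard,corr:logDetInequality} (yielding the third trace). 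You instead never leave the log-determinant representation: you expand the two conditional determinants exactly as traces of matrix logarithms, split over $S$ and $\mathscr{R}\setminus S$, and attack the two plain principal-submatrix determinants coming from $\mathcal{I}(X;Z_{\mathscr{R}\setminus S})$ with Hadamard and the operator Jensen inequality, supplemented by the L\"{o}wner comparisons $\Gamma_{Z|Y_{\mathcal{P}_1}}\preceq\Gamma_{Z}$ and $\Gamma_{Z|X,Y_{\mathcal{P}_1}}\preceq\Gamma_{Z|X}$ together with operator monotonicity of $\log$. Both routes rest on the same two matrix inequalities, but yours avoids the chain-rule bookkeeping entirely (and sidesteps the somewhat delicate $\mathcal{I}(Y_{\mathcal{P}_1};Y\setminus Y_{\mathcal{P}},Y_{\mathcal{P}_1})$ terms appearing in the paper's identity). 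A pleasant byproduct of your accounting is that the retained-set leftover is controlled by the third trace alone, so the second trace enters only as additional nonnegative slack and you in fact establish a slightly sharper bound, namely one half of the sum of the first and third traces. Your explicit justification for discarding the factor $\tfrac12$ from the Gaussian mutual-information formula --- that each of the three traces is individually nonnegative, by operator monotonicity of $\log$, the L\"{o}wner ordering of the conditional covariances, and the inequality $(\log M)_{jj}\leq\log(M_{jj})$ --- is also more careful than the paper, which drops the $\tfrac12$ silently when adapting \Cref{corr:MI_deviation_from_SM}.
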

\hyperref[proof:prop_min_MI_loss_upper_bound]{The proof is given in \Cref{appendix_subsec:lbip_mm}.}\medskip

The two trace terms involving $\widehat{\mathcal{P}}^{c}$ are non-negative and identically zero if the observations are conditionally independent. In that scenario, the proposition statement reduces to a slightly modified version of  \Cref{corr:MI_SM_case_bounding_BP}. 

\subsubsection{Numerical and computational issues concerning MM algorithms} \label{subsec:complexity_mod_bounds}

Our aim here is to provide some technical information on how to compute the modular bounds, and the associated computational costs. With regard to computational complexity, our focus here is not the number of function calls to the value oracle model, but complexity in terms of number of floating point operations (FLOPs). This is a more pertinent comparative metric since the optimization is performed using a  minorizing/majorizing surrogate, not the actual objective itself.
We will use the symbol $\mathscr{O}(\cdot)$ when referring to FLOP count, rather than $\mathcal{O}(\cdot)$, which we previously used to specify the number of function evaluations.

For the algorithms outlined in \Cref{subsec:maximize_info_gain,subsec:minimize_info_loss}, one critical task is estimating the diagonal of the matrix logarithm of a definite matrix.  
In each iteration, the matrix of interest is obtained using a Schur complement operation. More precisely, it is the Schur complement corresponding to one of the principal submatrices of the (larger) matrix in the previous iteration. The computational overhead of the Schur complement is moderate compared to that of determining the diagonal entries of the matrix logarithm. The cumulative cost of the latter task across all the iterations is what determines the complexity of the algorithm.

If the objective is submodular, and when the batch size is one, it is easy to see that we only require the relative ordering of the diagonal entries as opposed to  absolute accuracy of each. However there is no direct way to obtain relative ordering and, surprisingly, nor is there a direct efficient way to estimate the diagonal of a matrix logarithm. 
At first glance, it is tempting to consider estimating each entry of the diagonal as the bilinear form $e^{\top}_{i} \log (A) e_{i}$,  but such efforts \cite{Golub_Meurant_2009_book,Bai_Fahey_Golub_1996,Golub_Meurant_1993} are quickly realized to be inefficient in this case. 
Avoiding  computation of the matrix logarithm in all but special cases in not an option. We refer to the readers to \cite[Chap.~11]{Higham_2008_book} for a comprehensive survey on computing matrix logarithms.  

In our case, since we seek the logarithm of a definite matrix, we can use its eigenvalue decomposition. Recall that for  a definite matrix $A \in \mathbb{R}^{m \times m}$ with  eigendecomposition $A = U \Sigma U^{*}$, its matrix logarithm is simply $\log A = U \log \Sigma U^{*}$. Note how any favorable decay in the spectrum of the matrix $A$ may not necessarily aid the estimation of its matrix logarithm, since eigenmodes corresponding to the smallest eigenvalues could also dominate the estimate. In special cases, though, randomized methods that provide inexpensive low rank approximations of a matrix could still be useful.

In general, however, the leading order complexity for the algorithms in \Cref{subsec:maximize_info_gain,subsec:minimize_info_loss} is $\mathscr{O}(\frac{m^3  k}{q})$. Here $m$ is the size of the candidate set, $k$ is the desired cardinality, and $q$ is the batch size. The dominating cost is that of determining the sub/super--gradient using the modular bounds of \Cref{prop:MBound_MI}. Asymptotically, we do not improve on the complexity of batch greedy based on incremental gains (\Cref{alg:batch_std_greedy}); the dominating cost there is that of evaluating log determinants.


\subsubsection{Contextualizing with other approaches} \label{subsec:contextualizing_with_other_methods}

In various simplified settings, the sequential methods outlined in \Cref{subsec:maximize_info_gain,subsec:minimize_info_loss} for the linear Bayesian optimal experimental design problem share similarities to existing techniques. Consider the case of independent and identically distributed observation errors; in this case the objective in \cref{eqn:mutualInfoGaussians_using_logDet}
is submodular and defined entirely by the data marginal $\pi_Y$. \cref{eqn:exp_design_prob_statement_max} is now equivalent to finding the mode of a fixed-size determinantal point process (DPP) \cite{Kulesza_Taskar_2011,Kulesza_Taskar_2012}, with the kernel of the DPP being 
the marginal covariance of the data, $\Gamma_Y$.

In the MM approach of \Cref{subsec:maximize_info_gain}, we define a tight modular lower bound on the objective using the trace of the logarithm of the data marginal covariance, or its appropriate conditional counterpart. The individual diagonal entries of the matrix logarithm are akin to \emph{weighted} leverage scores associated with each row/column index; here the weights are simply the logarithms of the eigenvalues. Historically, the notion of leverage scores was introduced in the context of linear regression for outlier detection, and to assess the amount of influence exerted by one observation regardless of its actual value \cite{chatterjee_hadi_1986,Hoaglin_Welsch_1978}. In the recent past such scores have been useful in performing subset selection \cite{Boutsidis_etal_2009} and in linear regression \cite{Drineas_etal_2011}, both of which directly relate to optimal experimental design.

\textbf{Sampling based on leverage scores versus DPPs.} Volume sampling in DPPs and sampling based on traditional statistical leverage scores are clearly different. But in precisely what way?
Contrasting their differences and understanding the nuances is a useful exercise.
Consider a DPP defined through an $L$-ensemble specified by $\mathcal{L} \succeq 0$; often the semi-definite matrix $\mathcal{L}$ is also termed the kernel matrix.  One popular interpretation is obtained by expressing $\mathcal{L}$ as a  Gram matrix, $\mathcal{B}^{\top} \mathcal{B}$, where the columns of $\mathcal{B}$ are feature vectors representing items in the candidate set. Furthermore, if each column $B_i$ is written as the product of a quality term $q_i \in \mathbb{R}_{>0}$ and a vector of normalized diversity features $\phi_i \in \mathbb{R}^{d}$, $\norm{\phi_i} = 1$, then we have $\mathcal{L}_{ij} = q_i \phi_i^{\top} \phi_j q_j$. 
Now suppose that $\mathcal{U} \Lambda \mathcal{U}^{\top}$ is the eigendecomposition of $\mathcal{L}$. Then it is easy to see that $ q_i^2  = {\mathcal{U} \Lambda \mathcal{U}^{\top}}_{[i,i]}$.  The diagonal entries in ${\mathcal{U} \Lambda \mathcal{U}^{\top}}$ are strictly speaking not statistical leverage scores, since we are weighing the contribution of each eigenvector by its corresponding eigenvalue. But note that when using randomized numerical linear algebra methods, the practice is always to give more importance to higher eigenmodes. In the context of this comparative discussion, it is evident that sampling based on leverage scores does not account for diversity among the candidates, but accounts only for the quality of each one. 

\section{Numerical results} \label{sec:num_results}

In this section we evaluate the performance of the batch greedy algorithms when applied to linear Bayesian optimal experimental design.
Our focus here to is to demonstrate how batch size affects performance and to evaluate the impact of different modular bounds; hence we only consider the standard greedy algorithm and the MM algorithms. The performances of distributed and stochastic variants of the greedy heuristic, compared to the standard greedy paradigm, have been showcased in earlier works \cite{Mirzasoleiman_etal_2013,lazier_than_lazy_greedy}.
In the discussion and figures that follow, we label the approach of maximizing information gain
using \Cref{alg:batch_std_greedy} as \texttt{StdGreedy}; the approach of maximizing information gain using the minorizing surrogate (\Cref{alg:greedy_ascent} and \Cref{subsec:maximize_info_gain}) as \texttt{MMGreedy};
and analogously, the approach of  minimizing information loss using the majorizing surrogate (\Cref{alg:min_info_loss_lbip} and \Cref{subsec:minimize_info_loss}) as \texttt{MMReverseGreedy}.

\subsection{An inverse problem with structured random operators} \label{subsec:num_results_random}

We consider a Bayesian inference problem in the setting of \Cref{subsubsec:lbip_notation_setup}, with a randomly generated linear forward operator $G$. The dimension of the parameters $X$ is set to $n=20$, while cardinality of the candidate set of observations $Y$ is fixed at $m=100$.
Realizations of the forward operator are constructed by independently generating random singular values and left/right singular vectors. The spectrum of $G$, while random, has a prescribed exponential rate of decay reflective of many real world inverse problems \cite{Spantini_etal_2015}. The random left and right singular vectors are obtained by taking a modified Gram-Schmidt QR factorization of a random normal matrix of the appropriate dimension. We specify the prior and observation error covariances,  $\Gamma_X$  and $\Gamma_{Y|X}$, using squared exponential kernels with correlation lengths $0.105$ and $0.021$, respectively, on a unit domain. We draw $1000$ random instances of the forward operator $G$ and solve the experimental design problem of maximizing mutual information $\mathcal{I}(X,\mathcal{P}^{\top}Y)$ in each case. The spectra of the relevant operators are shown in \Cref{fig:spectrum_corr_obs_error}. If a spectrum corresponds to a random operator, we plot the median and indicate the spread.

\begin{figure}[!ht] 	
  	\centering \includegraphics[width=\textwidth,angle=0]{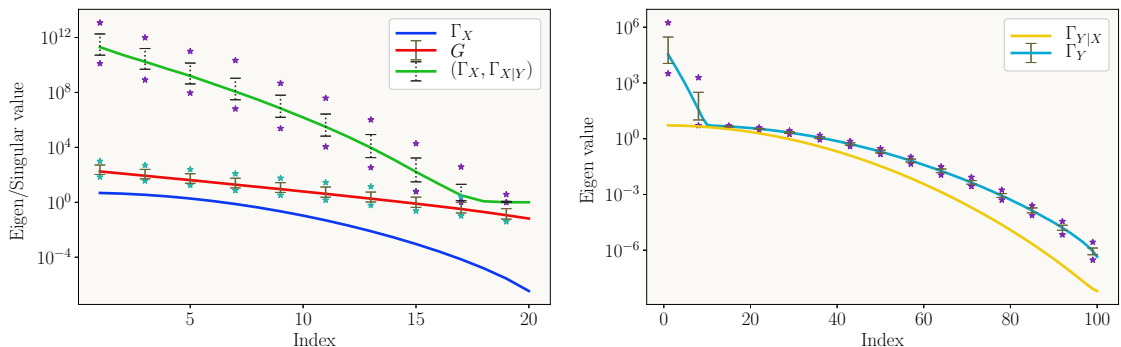}
  	 \caption{Spectrum of the relevant operators of the inverse problem with correlated observation error. The solid line is the median across 1000 random instances of the forward model. The whiskers capture the interquantile range (10\% to 90\%),  and the $\star$ symbols mark the maximum and minimum eigen/singular value. The prior and observation error covariances are not random.}
  	\label{fig:spectrum_corr_obs_error}
\end{figure}

In \Cref{fig:batch_effect_subplot_corr_obs_error} we illustrate the influence of batch size for each algorithm. On the horizontal axis we indicate the number of observations included, and on the vertical axis we plot the amount of mutual information captured at that cardinality relative to the maximum amount possible (i.e., using all the candidate observations $Y$). We consider seven different batch sizes, corresponding to $q \in  \{1\%, 10\%, 20\%, 30\%, 40\%, 50\%, 100\%\}$. 
(Here we have expressed the batch size $q$ as a fraction of the size of the candidate set.) For each algorithm and for each choice of batch size, \Cref{fig:batch_effect_subplot_corr_obs_error} shows the median performance across the random instances of the forward model. \Cref{fig:random_sampling_error_4x3subplot_corr_obs_error} illustrates the variability in performance across these random problem instances; here, in addition to the median mutual information, we show the 0.1/0.9 quantiles and range of mutual information obtained at each cardinality.
Kinks in the continuous lines of Figures~\ref{fig:batch_effect_subplot_corr_obs_error}--\ref{fig:random_sampling_error_4x3subplot_corr_obs_error} reflect the start of each new batch calculation.
Improved gains at smaller batch sizes are consistent with our theoretical claims in \Cref{thm:batch_std_greedy,thm:greedy_ascent}. 
The numerical results also corroborate our remark in \Cref{subsec:theoretical_guarantees_LBIP_std_greedy}, that the trivial approximation guarantee for $m > n$ is merely an artifact of loose bounds on the sub/super-modularity ratios. Interestingly, while we do not have a theoretical analysis of the \texttt{MMReverseGreedy} algorithm, the effect of batch size on its performance is similar to that of the other two heuristics.

In \Cref{fig:batch_effect_subplot_random_selection}, we compare the performance of the greedy algorithms, for a range of batch sizes, to that of a \emph{random} selection of indices. To obtain the latter results, we draw 1000 random index selections for each random forward model. In general, the greedy algorithms vastly outperform random selection. The median performance of even the one-shot ($q=100\%$) greedy approaches is better than the median performance of random selection. This difference is greatest for the \texttt{MMGreedy} and \texttt{MMReverseGreedy} algorithms, which show significantly better performance at large batch sizes than \texttt{StdGreedy}. As shown in the right two panels of \Cref{fig:batch_effect_subplot_random_selection}, the median performance of these two greedy heuristics is better than even the 90\% quantile of performance of random selection.

\begin{figure}[!ht] 	
  	\centering \includegraphics[width=\textwidth,angle=0]{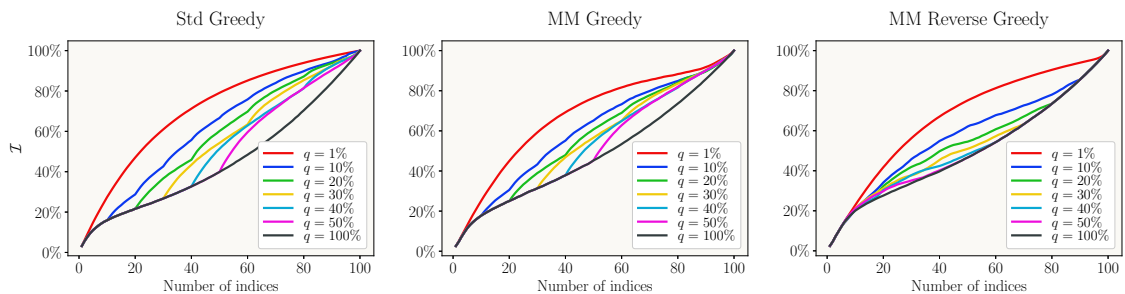}
	\caption{Performance of each greedy heuristic for different batch sizes, ranging from single index selection ($q=1\%$) to a one-shot approach ($q=100\%$). The solid line is the median across 1000 random instances of the forward model.}
  	\label{fig:batch_effect_subplot_corr_obs_error}
\end{figure}

\begin{figure}[!ht] 	
  	\centering \includegraphics[width=\textwidth,angle=0]{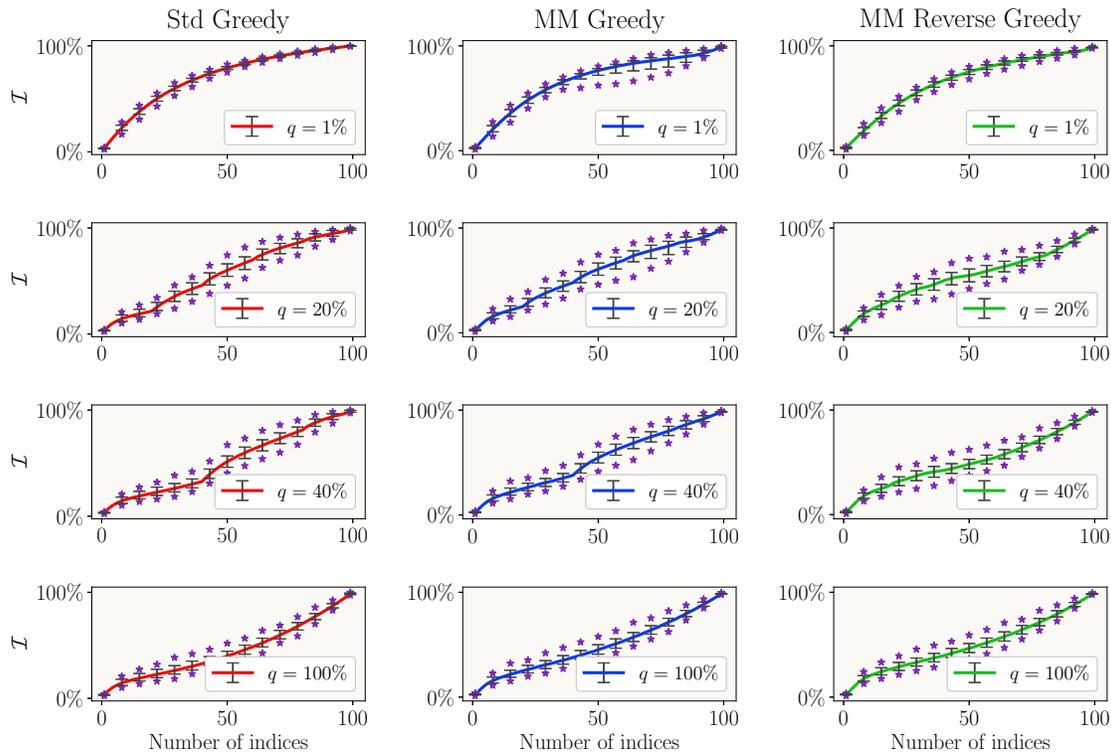}
	\caption{Performance of the greedy heuristics for different batch sizes across all 1000 random instances of the forward model. The solid line is the median; the whiskers bound the 10\% to 90\% interquantile range,  and the $\star$ marks the maximum and minimum mutual information captured.}
  	\label{fig:random_sampling_error_4x3subplot_corr_obs_error}
\end{figure}

\begin{figure}[!ht] 	
  	\centering \includegraphics[width=\textwidth,angle=0]{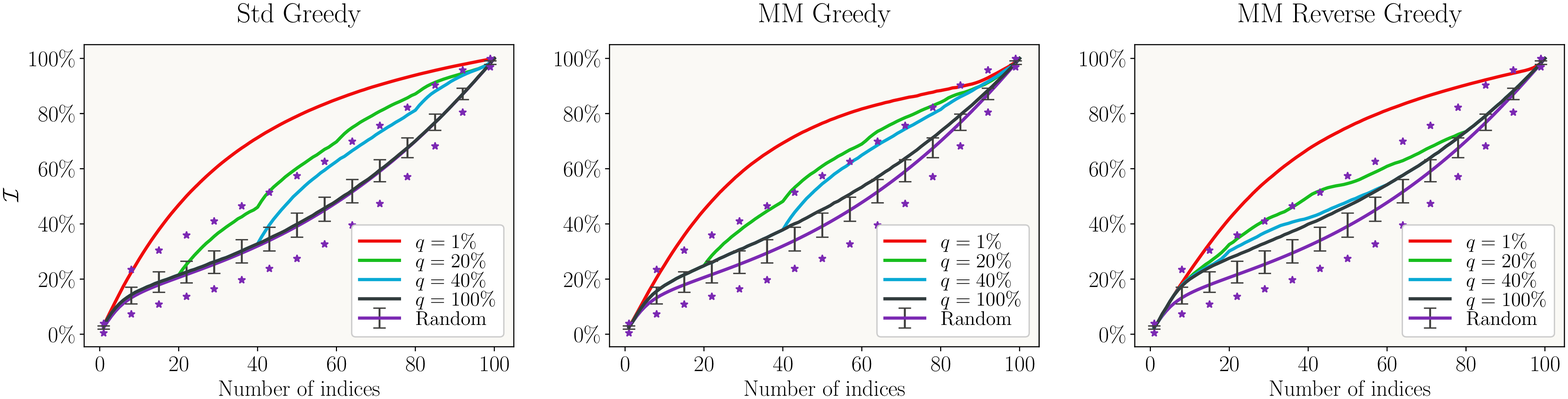}
  	 \caption{Performance of each greedy heuristic, for different batch sizes $q$, compared to a random selection of indices. The solid line corresponding to a given batch size is the median across 1000 random instances of the forward model.  The solid line labeled ``random'' is the median performance across $10^6$ cases: 1000 random instances of the forward model tensorized with 1000 random selections of indices. For the random selection results, the whiskers bound the 10\% to 90\% interquantile range and the  $\star$ symbols mark the maximum and minimum mutual information captured. Note that the random selection results are the same in each of the three panels above; only the batch greedy algorithms are different.}
  	\label{fig:batch_effect_subplot_random_selection}
\end{figure}

Fixing the batch size, we compare the three greedy algorithms' performance more directly in \Cref{fig:compare_algorithms_4subplot_corr_obs_error}. We consider the cases  $q \in  \{1\%,20\%,40\%,100\%\}$. Once again, we only plot the median performance across 1000 random instances of the forward model to retain visual clarity. When choosing indices one at a time ($q = 1\%$)  the standard greedy heuristic does marginally better than both MM greedy heuristics, but at larger batch sizes, both MM approaches provide better gains. This distinction is more readily apparent in \Cref{fig:diff_MI_random_sampling_error_4x2subplot_corr_obs_error}, where we plot the difference between the relative amount of information captured by \texttt{MMGreedy} and \texttt{MMReverseGreedy} in comparison to the standard greedy heuristic. Here we also indicate the spread due to the forward model being random. The differences in the performances of the three heuristics are stark at relatively lower cardinalities, and diminish at higher cardinality since information saturates. The upside to using the \texttt{MMGreedy} heuristic in comparison to the standard greedy heuristic is much greater than the downside, as indicated by the interquantile range and the maximum/minimum mutual information captured.

\begin{figure}[!ht] 	
  	\centering \includegraphics[width=\textwidth,angle=0]{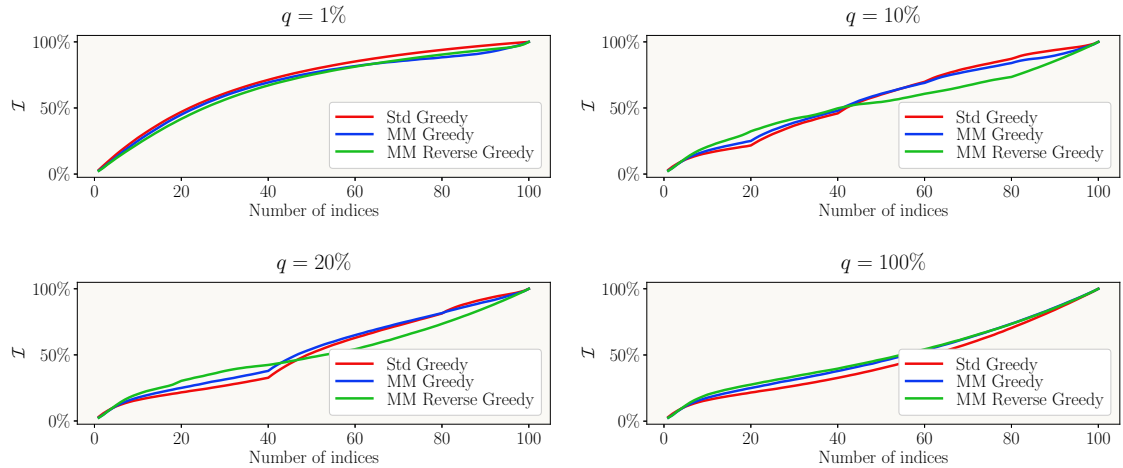}
	\caption{A comparative study of the greedy heuristics for four different batch sizes. The solid line is the median across 1000 random instances of the forward model.}
  	\label{fig:compare_algorithms_4subplot_corr_obs_error}
  	\end{figure}

\begin{figure}[!ht] 	
  	\centering \includegraphics[width=\textwidth,angle=0]{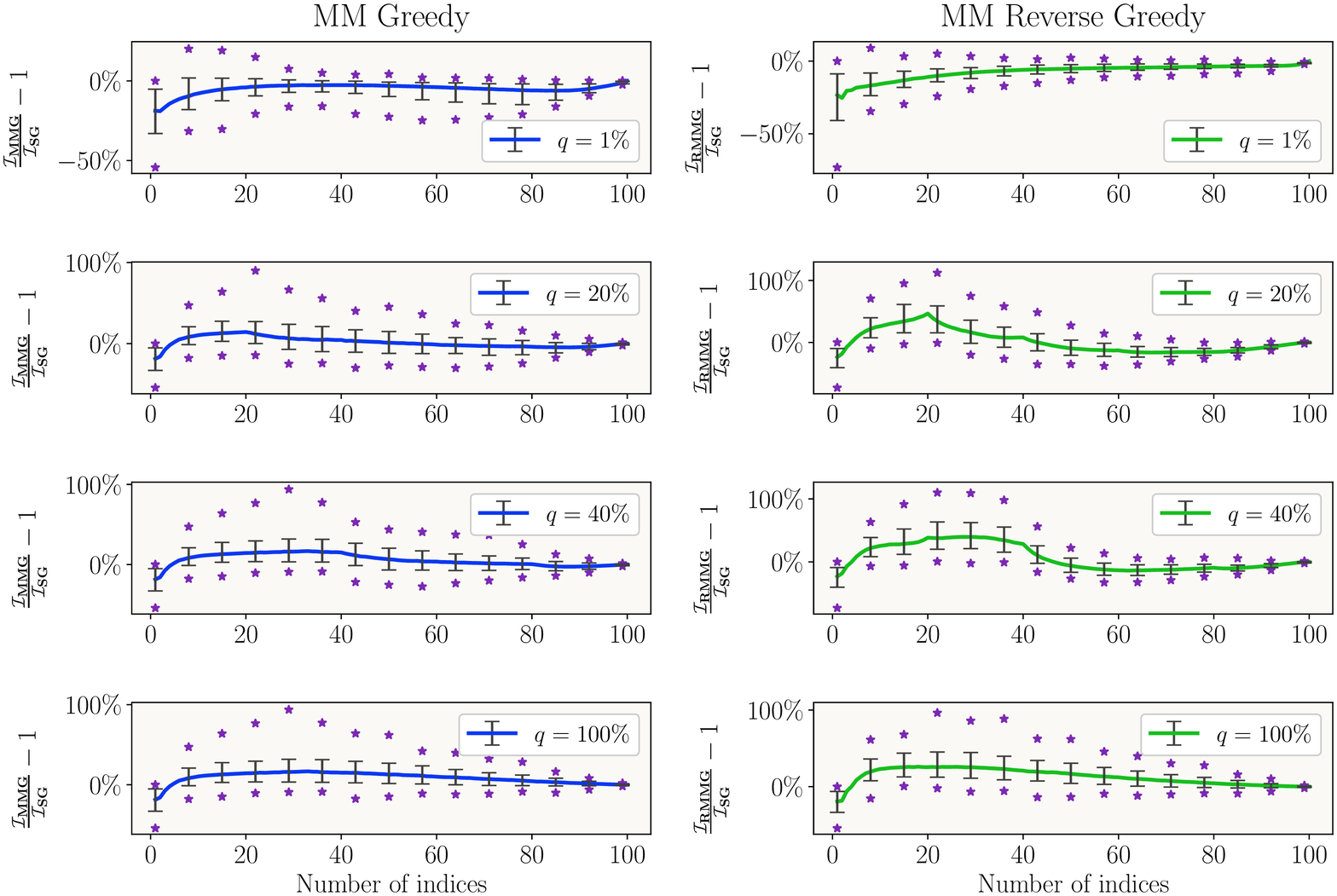}
  	 \caption{MM-based batch greedy approaches compared against the standard batch greedy heuristic for different batch sizes, across all 1000 random instances of the forward model. The solid line is the median; the whiskers capture the 10\% to 90\% interquantile range,  and the $\star$ marks the maximum and minimum mutual information captured.}
  	\label{fig:diff_MI_random_sampling_error_4x2subplot_corr_obs_error}
\end{figure}


The results we have discussed thus far are for the case of correlated but structured observation error covariance. Smaller batch sizes clearly yield more information gain, and should be preferred unless computational demands dictate otherwise. But many realistic problems have error terms with a less structured correlation (see \Cref{subsec:num_results_oscm}), or independent and identically distributed (i.i.d.)~observation errors. In the latter case, the experimental design objective is also submodular.  
In these scenarios, the use of a smaller batch size may be unwarranted, as our numerical experiments in \Cref{subsec:num_results_oscm} 
and \Cref{appendix:num_results_iid_error} indicate.
More generally, if the observation error covariance is not strongly correlated, meaning the observations are nearly conditionally independent, the advantage of a smaller batch size is diminished.

We have compiled in \Cref{appendix:num_results_exp_prior} numerical results for the case when the prior covariance is defined using a exponential kernel, while the observation error covariance is defined using a squared exponential kernel as before. Now there is significantly less prior correlation among the parameters. The results indicate, however, that the choice of prior covariance kernel is not particularly important; the performance of each heuristic is similar to this section, where we used a squared exponential kernel to define the prior covariance.

We have also numerically investigated the performance of the algorithms when the dimension of the parameters is greater than the cardinality of the candidate set of observations, i.e., $n > m$; these results are reported in \Cref{appendix:num_results_param_greater_than_obs}. Recall that in such cases the results in \Cref{prop:bound_sup_sub_modularity_ratio} provide non-trivial bounds on the sub/super--modularity ratios. The \texttt{StdGreedy} and \texttt{MMGreedy} heuristics perform similarly to the case discussed in this section. The performance of the \texttt{MMReverseGreedy} heuristic, however, appears insensitive to the batch size.

\subsection{Optimal sensor placement to improve climate models}  \label{subsec:num_results_oscm}

We now consider a problem of optimizing sensor networks for climate models (which we refer to as the ``SNCM'' problem). Given a desired cardinality, our goal is to select field observation sites that will yield the most informative  data for parameter inference. In the present application, the parameters are uncertain inputs to the land-surface component of a climate model, while the field data correspond to certain observable outputs of the same model. Our example is based on the land-surface component of the Energy Exascale Earth System Model (E3SM)  \cite{E3SM}. The latter is an ongoing effort, led by the US Department of Energy, comprising multiple model components, each with its own set of uncertain input parameters. These individual components can be coupled, and together they simulate the earth's atmosphere, ocean, land surface, and sea ice \cite{hurrell2013community}. Uncertainty in any one component can contribute to a large spread in the overall model predictions. This affects our understanding of severe climate events, their timing, and our ability to cope with the consequences. 

The \emph{simplified} E3SM land model \cite{sELM}, henceforth referred to as sELM, is a ``land model'' derived from E3SM that simulates carbon cycle processes relevant to the earth system in an efficient way. This allows for large regional ensemble simulations that would otherwise be infeasible using the complete land model.
The simulation region of our focus is the eastern part of the north American continent located between the latitudes \ang{28.25}N--\ang{48.25}N and longitudes \ang{66.25}W--\ang{96.25}W. Please see \Cref{fig:monthly_gpp_mean,fig:gpp_variance} for a depiction of the region. The simulation resolution is \ang{.5} in each direction, which corresponds to a grid of $41 \times 61$ points. Only 1642 of those grid points fall within the land area, however, and at those locations we have access to the sELM outputs. In the version of sELM we consider, there are 47 input parameters; these parameters have no spatial variability and have the same prior distribution at every location. Drawing realizations of these parameters yields a simulation ensemble with 2000 samples.  The code for sELM is publicly available \cite{sELM}, and more details about the E3SM land models can be found in  \cite{lu2019efficient,ricciuto2018impact}.

To set up the optimal experimental design problem,  we focus exclusively on one output of the sELM, the gross primary production (GPP). GPP can be understood to be a proxy for the amount of carbon flux attributable to the natural vegetation at that location. GPP is a function of the sELM input parameters and relevant meteorological quantities such as temperature. In the version of sELM we consider, the GPP is output as monthly averages for thirty years starting from the year 1980. In \Cref{fig:monthly_gpp_mean} we plot the monthly GPP averaged across the 2000 samples of the parameter ensemble and across thirty years of output history. The trends in the plot reflect expected seasonal variations, with more activity in the tropical southern regions. If we treat the GPP output at any grid point as a random variable, then its variance is affected by the uncertainty of the sELM input parameters and the temporal variation of  meteorological quantities. In \Cref{fig:gpp_variance} we plot the variance of the GPP as output from the sELM. 
 
\begin{figure}[!ht] 	
  	\centering \includegraphics[width=\textwidth,angle=0]{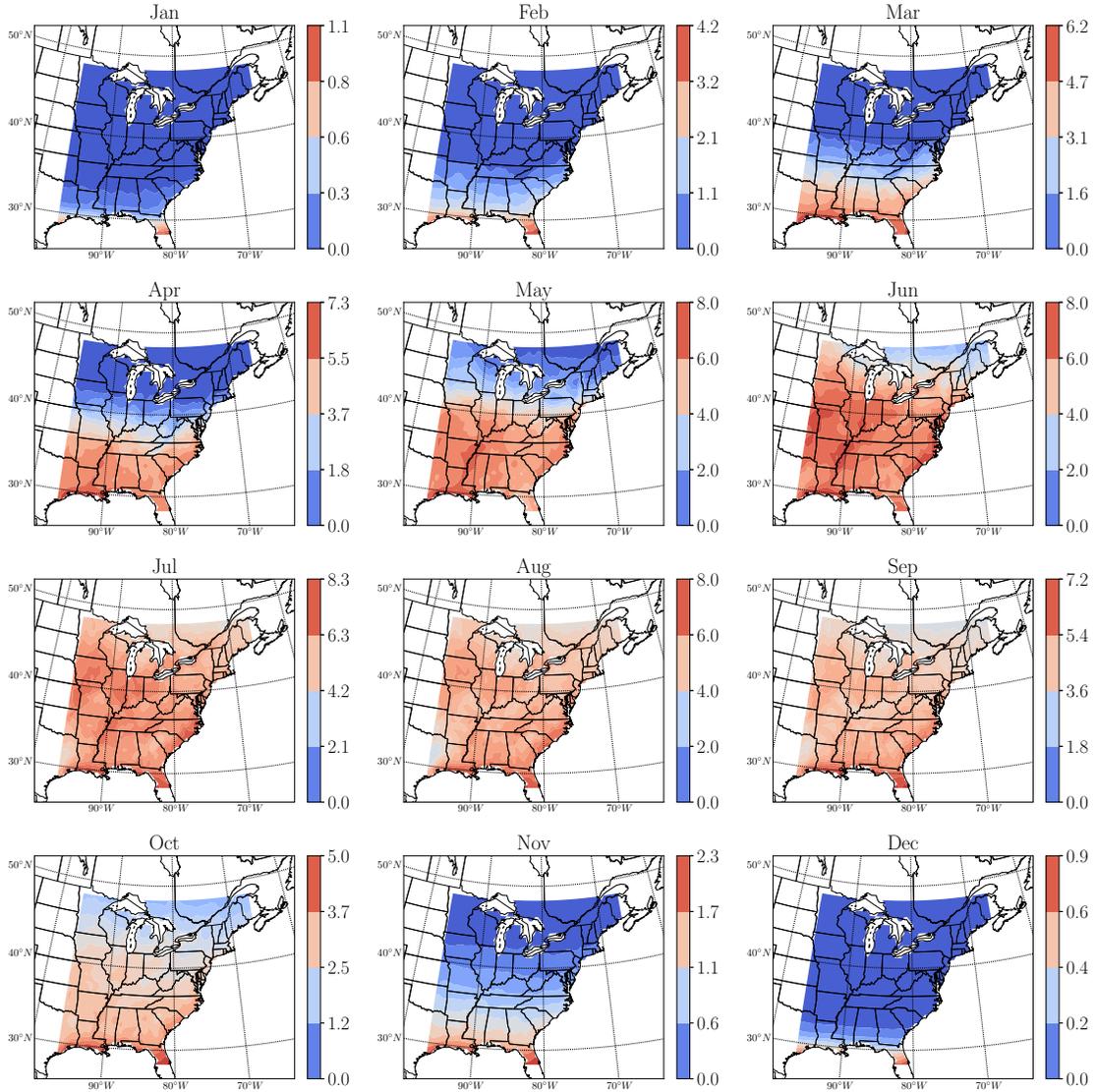}
  	 \caption{Mean GPP for each month averaged across both the parameter ensemble and temporally.}
  	\label{fig:monthly_gpp_mean}
\end{figure}

\begin{figure}[!ht]
    \begin{minipage}[t]{.62\textwidth}
              \centering \includegraphics[width=\textwidth,angle=0]{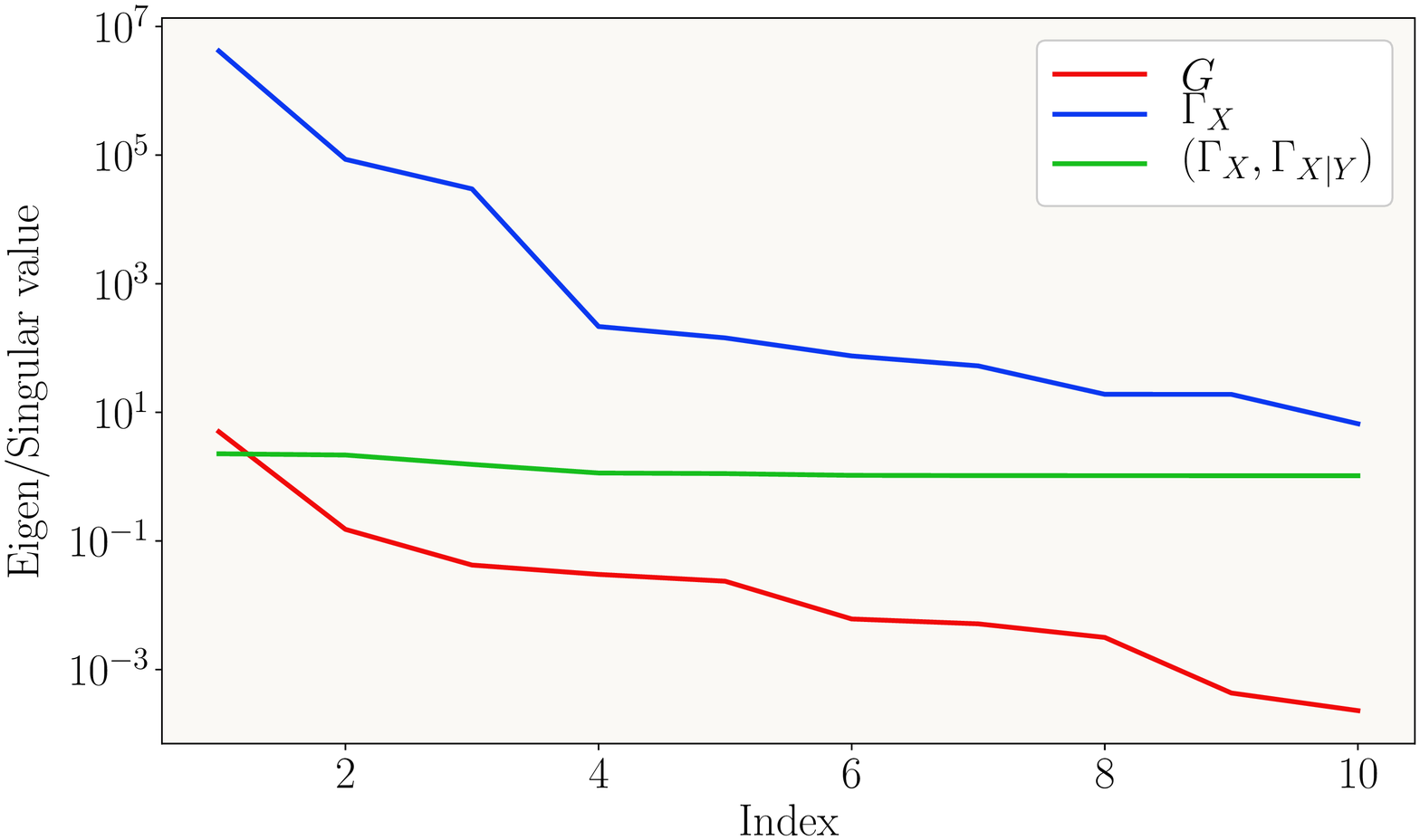}
              \subcaption{Spectrum of the linearized forward model, Gaussian prior, and the matrix pencil comprising of prior and full posterior.}
              \label{fig:spectrum_using_emp_cov}
    \end{minipage}
    \hspace{0.05\linewidth}
    \begin{minipage}[t]{.4\textwidth}
             \centering \includegraphics[width=\textwidth,angle=0]{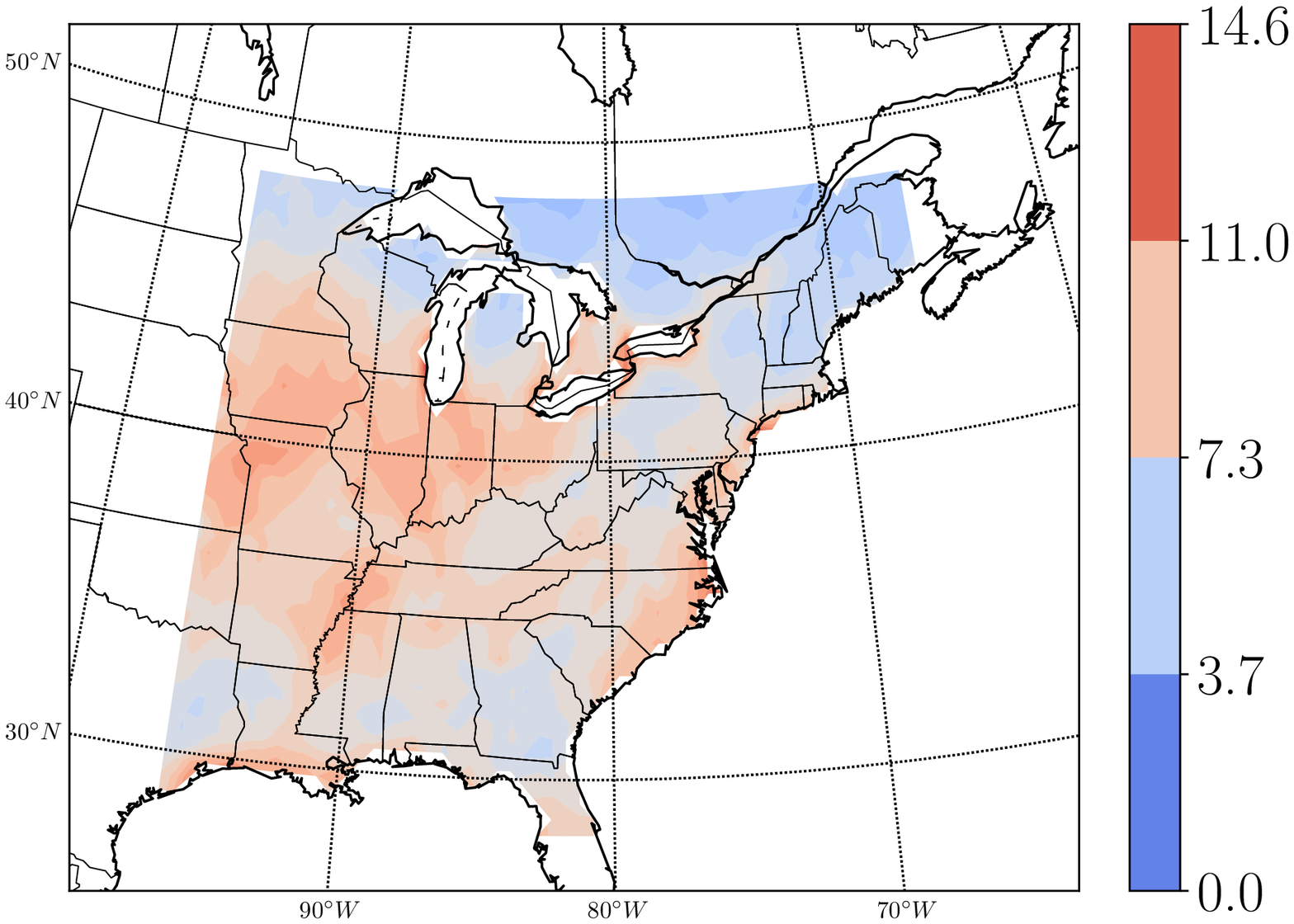}
			\subcaption{Variance of the GPP marginal visualized on the United States map.}
		  	\label{fig:gpp_variance}
    \end{minipage}  
    \label{fig:spectrum_and_variance}
    \caption{Linear operators of the SNCM problem.}
\end{figure}

%


Isolating the contribution of the model parameter uncertainty to the variance of the GPP output can be accomplished in a number of ways. We could perform a multivariate regression with covariance estimation that accounts for spatial correlation among the GPP output variables \cite{rothman2010sparse}, or alternatively use factor model approaches that are popular in econometrics \cite{fan2008high,fan2011high}. However we adopt a more straightforward technique, since our primary goal is to set up a design problem suitable for a comparative study of the batch greedy algorithms we have proposed. We simply estimate the linear relationship between the GPP output, $Y$, and sELM model parameters, $X$, using an empirical estimate of the cross-covariance $\Gamma_{Y,X}$. Incorporating this result into the setting of \Cref{subsubsec:lbip_notation_setup} assumes that the parameters have a normal prior and that the error term is independent of the parameters. In \Cref{fig:spectrum_using_emp_cov} we plot the spectrum of the linear forward operator $G$ obtained in such a manner, along with the prior covariance $\Gamma_{X}$. Observe that we have reduced the dimension of the parameters to $10$ by retaining only those that have a prior variance larger than $\mathcal{O}(1)$. The number of candidate observations, corresponding to the dimension of $Y$, is $m=1642$. The generalized eigenvalues of the matrix pencil $(\Gamma_{X},\Gamma_{X|Y})$, also shown in \Cref{fig:spectrum_using_emp_cov}, suggest that the data are only marginally informative about the parameters.

Using the derived operators, we study the performance of the previously proposed batch greedy algorithms. From \Cref{fig:batch_effect_subplot_oscm_using_emp_cov}, it is evident that decreasing the batch size does not reward us with any significant gains; furthermore, all the greedy heuristics have similar performance as indicated in \Cref{fig:compare_algorithms_4subplot_oscm_using_emp_cov}.
The difference between the relative amount of information captured by the MM greedy heuristics in comparison to the standard greedy heuristic is shown in \Cref{fig:diff_MI_4x2subplot_using_emp_cov}. The MM greedy heuristics have better gains at lower cardinality numbers (except for extremely low cardinality), but these differences return to zero once information saturates.
In \Cref{fig:batch_effect_subplot_oscm_using_emp_cov,fig:compare_algorithms_4subplot_oscm_using_emp_cov,fig:diff_MI_4x2subplot_using_emp_cov}, we have not shown the case of $q=1\%$ since it requires substantial computing time, but given the trends we expect its performance will not be any better than that of batch size $q=10\%$.


\begin{figure}[!ht] 	
  	\centering \includegraphics[width=\textwidth,angle=0]{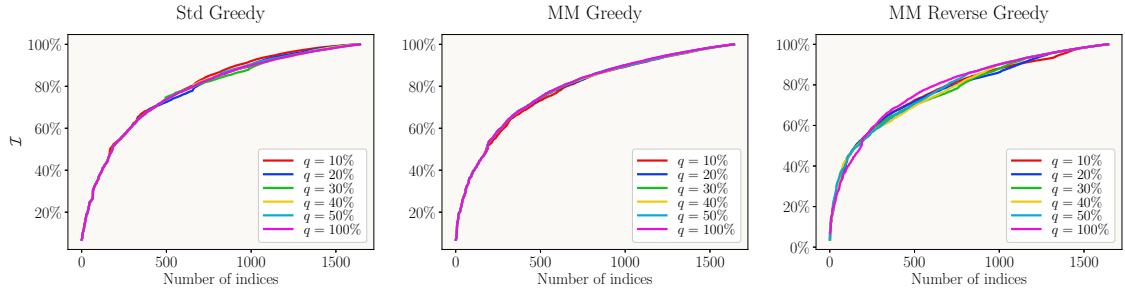}
	\caption{Performance of each greedy heuristic for different batch sizes for the SNCM problem. The batch sizes range from $q=10 \%$ to the one-shot approach with $q=100 \%$.}
  	\label{fig:batch_effect_subplot_oscm_using_emp_cov}
\end{figure}

\begin{figure}[!ht] 	
  	\centering \includegraphics[width=\textwidth,angle=0]{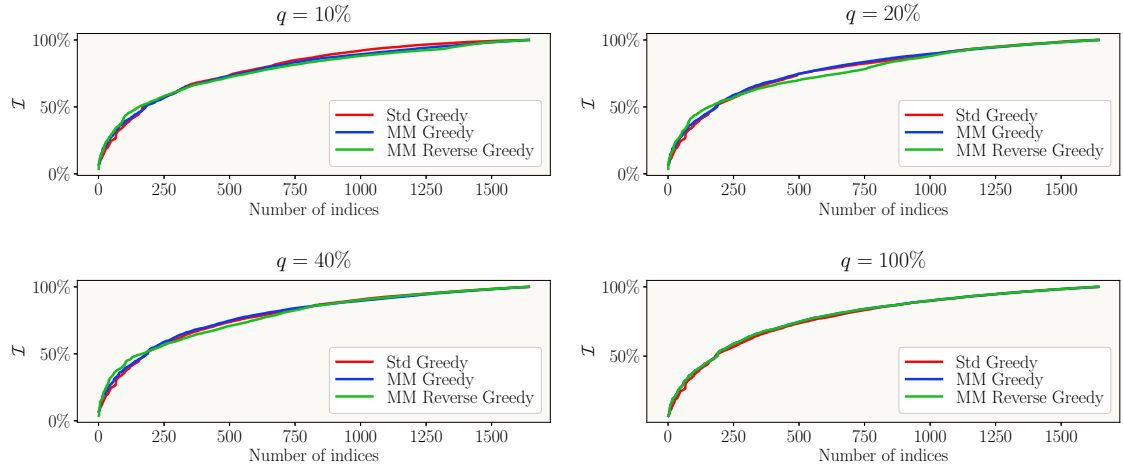}
	\caption{A comparative study of the greedy heuristics for different batch sizes for the SNCM problem.}
	  	\label{fig:compare_algorithms_4subplot_oscm_using_emp_cov}
\end{figure}

\begin{figure}[!ht] 	
  	\centering \includegraphics[width=\textwidth,angle=0]{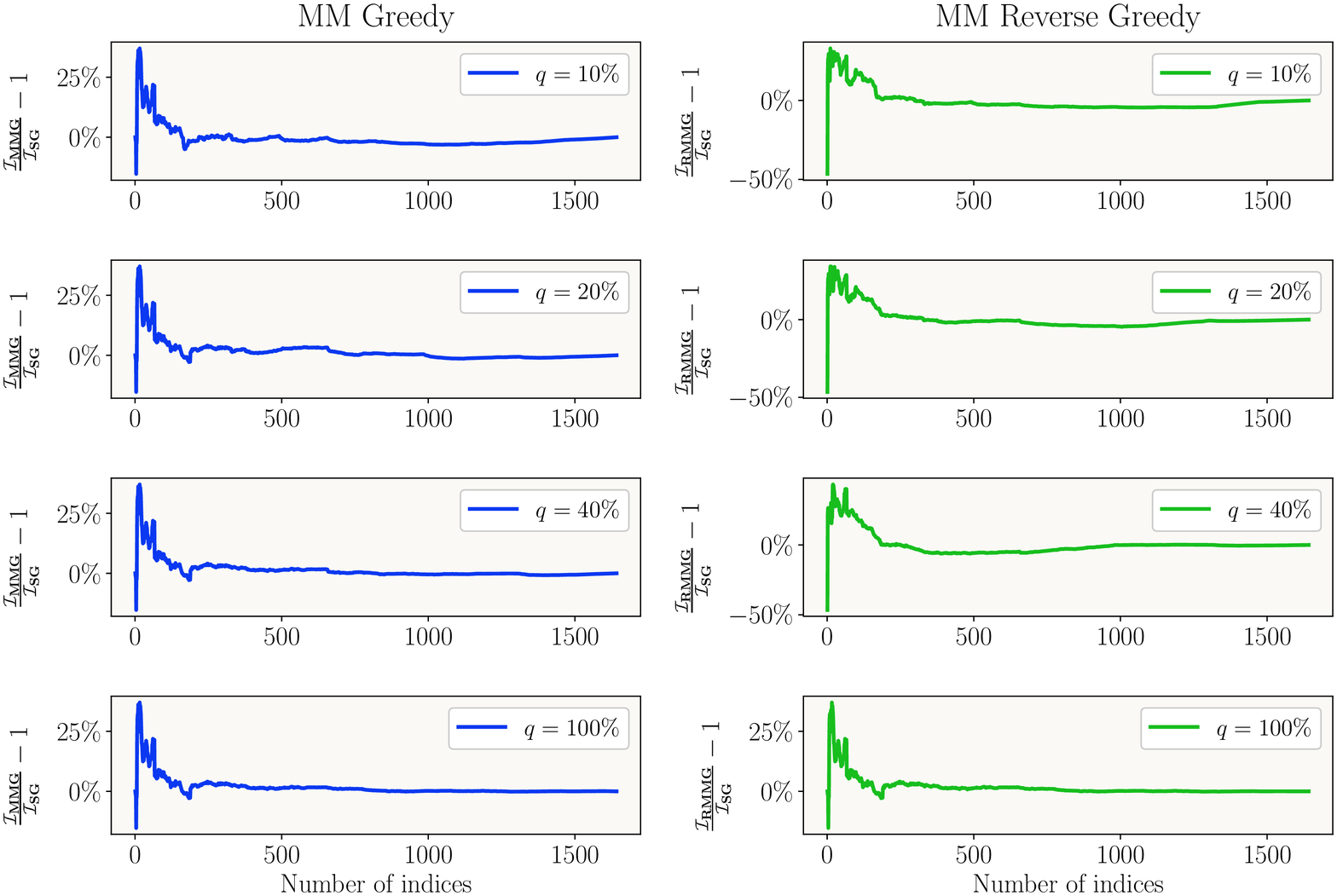}
  	 \caption{MM-based batch greedy approaches compared against the standard batch greedy heuristic for different batch sizes for the SNCM problem.}  	
  	   	\label{fig:diff_MI_4x2subplot_using_emp_cov}
\end{figure}

In \Cref{fig:new_loc_exp_design} we have marked the first 10 locations given by each of the heuristics for the case when the batch size is $10\%$ of the total cardinality. The fact that most of these locations are close to the coastal boundaries---and predominantly the southern coast---is intriguing. This phenomenon is simply a result of the sELM data. To comprehend it better, we have visualized in \Cref{fig:oscm_one_shot_ranking_visualized} the rankings of locations at the start of the standard greedy and MM greedy heuristics (i.e., the ``one-shot'' rankings). These rankings follow from either the incremental gain associated with each location (\texttt{StdGreedy}) or the initial evaluation of the modular lower bound (\texttt{MMGreedy}). While each heuristic provides a different set of locations as the solution of the design problem, the collective information gained from them may not differ significantly. Such a behavior is not unique to the SNCM problem, and is common in many scenarios where combinatorial choices have to be made.

\begin{figure}[!ht] 	
  	\centering \includegraphics[width=\textwidth,angle=0]{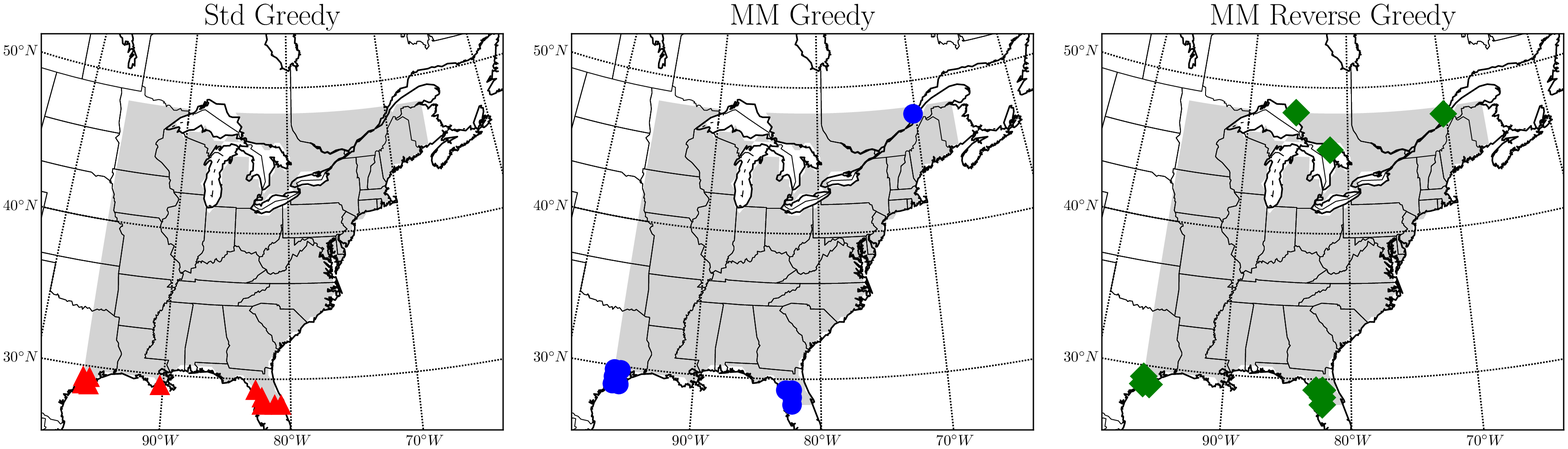}
	\caption{The first ten locations obtained by each greedy heuristic when the batch size $q$ is set to $10\%$ of the full cardinality.}  	
	  	\label{fig:new_loc_exp_design}
\end{figure}

\begin{figure}[!ht] 	
  	\centering \includegraphics[width=\textwidth,angle=0]{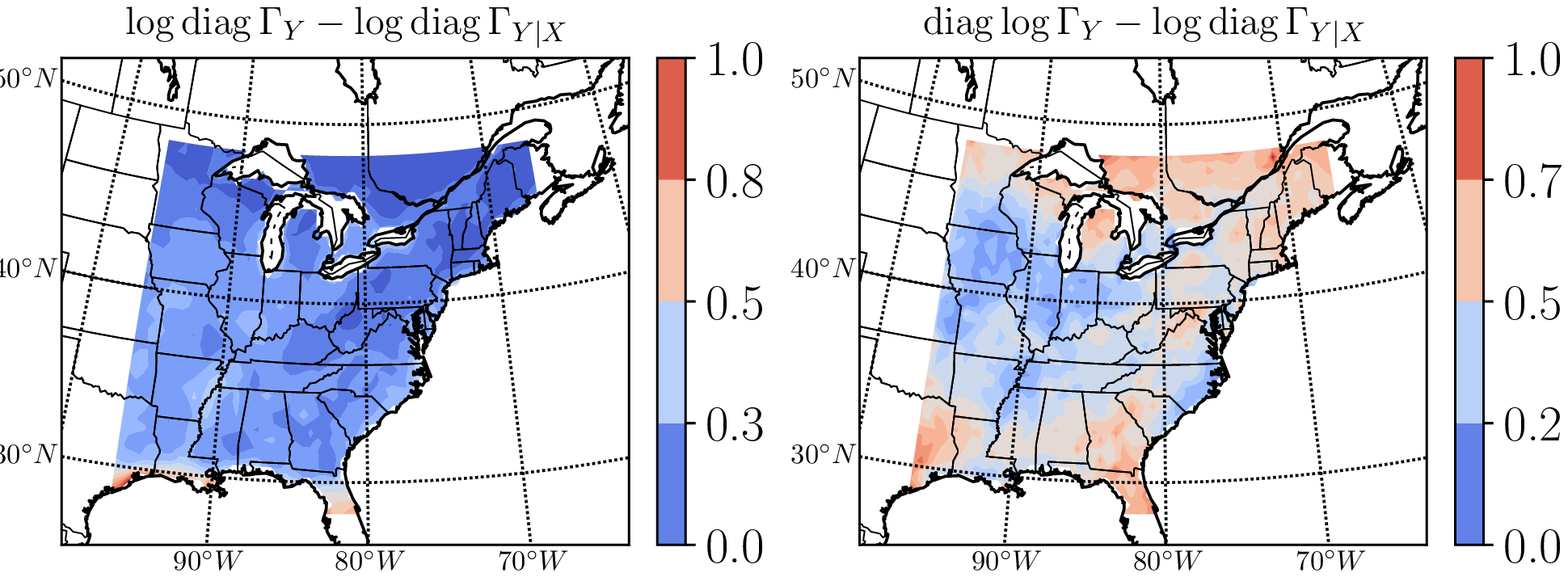}
	\caption{ (\lowerRomanNum{1})  Incremental gains of each candidate location,
	$\log \mathrm{diag} \, \Gamma_Y - \log \mathrm{diag} \, \Gamma_{Y|X}$, scaled to the interval $[0,1]$. (\lowerRomanNum{2}) The modular lower bound associated with the one-shot MM greedy approach, $\mathrm{diag} \log \, \Gamma_Y - \log \mathrm{diag} \, \Gamma_{Y|X}$, scaled to the interval $[0,1]$.}
	  	\label{fig:oscm_one_shot_ranking_visualized}
\end{figure}
\section{Discussion} \label{sec:conclusions}

This paper has investigated batch greedy heuristics for maximizing monotone non-submodular functions under cardinality constraints. 
We analyzed batch versions of the standard greedy paradigm, and of its distributed and stochastic variants.
Our theoretical guarantees for the resulting batch greedy algorithms are characterized by a combination of submodularity and supermodularity ratios. In the context of linear Bayesian optimal experimental design, we bound these parameters for the mutual information design criterion. Reinterpreting the classical greedy heuristic in the language of MM algorithms, we also argue how any good modular bound---not necessarily one based on incremental gains---can instantiate a related greedy technique. Based on those insights, we propose several novel modular bounds and algorithms for optimizing information theoretic design criteria in the context of  Bayesian experimental design.
Now we discuss some further context and potential extensions. 

Our result as expressed in \cref{eqn:batch_std_greedy_approx_guarantee} does not incorporate curvature of the function. The classical notion of curvature  (\cite{Conforti_Cornuejols_1984} and \Cref{def:total_curvature}) measures how close a submodular set function is to being modular, while the notion of generalized curvature (\cite{Bian_etal_2017} and \Cref{def:gen_curvature}) measures how close a set function is to being supermodular. In \cite{Bian_etal_2017},
the submodularity ratio 
and the generalized curvature together quantify how close a set function is to being modular. These parameters provide  approximation guarantees that refine the worst case bounds depending on the instance of the function. In our case, the product of supermodularity and submodularity ratios characterizes the modularity of a function, but as it
appears in \Cref{thm:batch_std_greedy}, this product does not always
refine the worst case bound as tightly as desired.
Incorporating curvature into our arguments should result in a more expressive approximation guarantee, and the technical path could be similar to  \cite{Conforti_Cornuejols_1984,Bian_etal_2017}.

The batch size $q$ implicitly appears in the result \cref{eqn:batch_std_greedy_approx_guarantee} through the supermodularity ratio. It can be understood to be either the uniform batch size or more generally the maximum batch size. One's computational budget should ultimately dictate the batch size; we know that a smaller maximum batch size will tend to yield better performance. An adaptive strategy to change the batch size  across different steps could thus aid the efficient utilization of  computational resources.
Such a strategy will not influence the worst case bound, but in many practical problems should improve empirical performance. Several factors should impact any adaptive strategy: the number of indices already selected, the number that remains to be selected, and the contrasts between the incremental gains corresponding to each of the remaining indices. It would make sense to measure the latter contrasts relative to the function value evaluated on the set of already chosen indices. 

The supermodularity ratio that is essential to characterize the batch greedy heuristic can be a useful theoretical construct in several other settings. Consider the work \cite{Ilev_2001_backward}, where the author analyzed the greedy descent algorithm for minimizing any non-increasing supermodular function.
The approximation guarantee was given using the \emph{steepness} of the function, which is a counterpart to curvature 
for supermodular functions.
Using the supermodularity ratio, it should be possible to analyze the case of minimizing any non-increasing set function. Such an effort would mirror the work in \cite{Bian_etal_2017} but in the context of minimizing non-increasing functions.

The modular bounds we derived for the information theoretic objectives in \Cref{sec:mm_alg_lbip} have  intriguing connections to concepts in polyhedral combinatorics. When the design objective is a submodular function, the subgradient 
vector that defines the modular bound is in general a non-extremal point in the base polytope
associated with the function.
Typically algorithms rely on enumerating  coordinates of the vertices of the polytope and iterate towards a locally optimal solution \cite{IyerJegelkaBilmes2012_mirror,IyerJegBil2013}; our approach is thus somewhat unconventional.
The existence of such a subgradient was established using the operator concave inequality (\Cref{thm:operator_concave_inequality}), and was possible since our objective involves an operator monotone function, $\log(\cdot)$, acting on a Hermitian operator (covariance of the data marginal).
Hermitian operators that characterize the {volume/diversity} of subsets arise in other situations too---for instance, as kernels of determinantal point processes.
In all such cases it is unclear if more nuanced and fundamental links exist between classical results in functional analysis and notions in combinatorial optimization.  It would be interesting to understand these connections more fully, building on what we have been able to show and exploit.

\section*{Acknowledgments}
The authors acknowledge support from the US Department of Energy, Office of Advanced Scientific Computing Research, SciDAC program; from the Air Force Office of Scientific Research, Computational Mathematics Program; and from the National Science Foundation under grant DMS-1723011. The authors are grateful to Daniel Ricciuto for help with the optimal sensor placement problem; to Stefanie Jegelka for an insightful discussion concerning submodular functions; to Jean-Christophe Bourin for a helpful correspondence clarifying a detail about operator concave inequalities; and to Arvind Saibaba for help in simplifying some linear algebra arguments. The authors would also like to thank Alessio Spantini, Ricardo Baptista, and Fengyi Li for helpful discussions.

\FloatBarrier
\bibliographystyle{siamplain}
\bibliography{references}
\newpage
\FloatBarrier
\appendix
\section{Additional background and definitions} \label{appendix:def}

\subsection{Definite matrices and generalized eigenvalue problems} \label{subsec:notation_defMat_defGEV}

\begin{definition}[L\"{o}wner ordering or the positive semi-definite ordering] \label{def:lowner_ordering} For any two Hermitian matrices $A$ and $B$, we write $A \succeq B$ if and only if $A-B$ is positive semi-definite. The positive semi-definite condition can be used to define a partial ordering on all Hermitian matrices.
\end{definition}
\begin{definition}  [Matrix pencil \cite{sun_stewart_1990}] \label{def:matrixPencil}  Given matrices $A$ and $B$, a matrix pencil is a family of matrices $A - \lambda B$, parametrized by a complex number $\lambda$.
\end{definition}
\begin{definition} [Definite pencil and definite generalized eigenvalue problem \cite{sun_stewart_1990}]\label{def:definiteGEV} 
The pair of Hermitian matrices $(A, B)$ is a definite pencil if
\begin{displaymath}
c(A,B) = \inf _{\norm{x} =1}  \lbrace x^{*}(A+\sqrt{-1}B)x \rbrace   > 0,
\end{displaymath}
where $c( A, B)$ is called the Crawford number \cite{crawford_1976} of the definite pencil $A - \lambda B$. The generalized eigenvalue problem $A\Upsilon=B\Upsilon\Sigma$  is definite if $(A, B)$ is definite.
\end{definition}
Throughout the manuscript we refer to the generalized eigenvalue problem involving matrices $A$ and $B$ through the pair $(A, B)$. We  use $\sigma_k$ to denote any non-trivial generalized eigenvalue of $(A,B)$ and $\Sigma$ to denote the diagonal matrix of eigenvalues.

\subsection{Notions from statistical information theory} \label{subsec:stat_info_theory}

\begin{definition} [Differential entropy \cite{cover_thomas_2005}]\label{def:entropy} The entropy of a continuous random variable $X$ with density $\pi_{X}$ is defined as
\begin{displaymath}
	\mathcal{H}(X) \coloneqq  - \mathbb{E}_{\pi_{X}} \log \pi_{X}(x)\, .
\end{displaymath}
\end{definition}
\begin{definition} [Condtional differential entropy \cite{cover_thomas_2005}]\label{def:cond_entropy} For two continuous random variables $X$ and $Y$, let $\pi_{X,Y}$ denote their joint density and let $\pi_{X|Y}$ denote the density of the conditional distribution of $X|Y$. Then the conditional entropy $\mathcal{H}\lr{X|Y}$ is defined as
\begin{displaymath}
	\mathcal{H}\lr{X|Y} \coloneqq  - \mathbb{E}_{\pi_{X,Y}} \log \pi_{X|Y}(x \vert y) \, .
\end{displaymath}
\end{definition}
\begin{definition} [Relative entropy / Kullback--Leibler divergence \cite{cover_thomas_2005}] \label{def:KL_divergence}
The relative entropy or Kullback--Leibler divergence between two distributions with densities $\pi$ and $\widehat{\pi}$ is defined as
\begin{displaymath}
	D_{KL} \lr{\pi \| \widehat{\pi} } \coloneqq  \mathbb{E}_{\pi} \log { \frac{\pi(x)}{\widehat{\pi}(x)} } \, .
\end{displaymath}
\end{definition}
\subsection{Submodularity and related concepts} \label{subsec:notation_submodularity}
\begin{definition} [Modular set function \cite{fujishige2005,Bach2013}] \label{def:modular_function}
A set function $F:2^{\mathscr{V}} \rightarrow \mathbb{R}$  is modular (i.e., both submodular and supermodular) if and only if there exists $s \in \mathbb{R}^{\abs{\mathscr{V}}}$ such that $F\lr{\mathscr{A}} = \sum_{k \in \mathscr{A}} s_k$. 
\end{definition}
In the literature concerning submodular functions, it is common to refer to any vector $s \in \mathbb{R}^{\abs{\mathscr{V}}}$ as the modular set function defined as $ s(\mathscr{A}) = \sum_{k \in \mathscr{A}} s_k$. This practice is particularly useful when discussing submodular and base polyhedra, or subgradients.
\begin{definition} [Submodular and base polyhedra \cite{fujishige2005,Bach2013}]  \label{def:submodular_base_polyhedra}  Let $F$ be a
submodular function such that $F(\emptyset) = 0$. The submodular polyhedron $P(F)$ and the base polyhedron $B(F)$ are defined as:
\begin{align*}
P(F) 
& \coloneqq \left \{ s \in \mathbb{R}^{m} :  \forall \mathscr{A} \subset \mathscr{V}, s(\mathscr{A}) \leq F(\mathscr{A}) \right \}, \ \text{and} \\
B(F) 
&\coloneqq \left \{ s \in \mathbb{R}^{m} :  s(\mathscr{V}) = F(\mathscr{V}), \forall \mathscr{A} \subset \mathscr{V}, s(\mathscr{A}) \leq F(\mathscr{A}) \right \}, \\
&= P(F)  \cap \left \{ s(\mathscr{V}) = F(\mathscr{V})  \right \}.
\end{align*} 
\end{definition}
\begin{remark} Analogous to the submodular polyhedron, one can also define the supermodular polyhedron for supermodular functions with the inequalities in \Cref{def:submodular_base_polyhedra} being accordingly reversed.
\end{remark}
\begin{definition} [Generalized  lower  and  upper  polyhedra \cite{Iyer_Bilmes_2015_polyhedral}]  \label{def:generalized_polyhedra}  Let $F$ be any set function, not necessarily sub- or super-modular. The generalized lower polyhedron is defined as:
\begin{displaymath}
P^{gen}_l(F) \coloneqq \left \{ \lr{s,\mathfrak{S}} : s \in \mathbb{R}^{m}, \mathfrak{S} \in \mathbb{R}, \forall \mathscr{A} \subset \mathscr{V}, s(\mathscr{A}) + \mathfrak{S} \leq F(\mathscr{A}) \right \}
\end{displaymath}
By reversing the inequality above, we can define the generalized upper polyhedron as:
\begin{displaymath}
P^{gen}_u(F) \coloneqq \left \{ \lr{s,\mathfrak{S}} : s \in \mathbb{R}^{m}, \mathfrak{S} \in \mathbb{R}, \forall \mathscr{A} \subset \mathscr{V}, s(\mathscr{A}) + \mathfrak{S} \geq F(\mathscr{A}) \right \}
\end{displaymath}
\end{definition}
\begin{definition} [Subgradients and subdifferentials of submodular functions \cite{fujishige2005}] \label{def:sub_grad_diff_SM} 
Consider a submodular function $F: \mathscr{D} \rightarrow \mathbb{R}$  on a distributive lattice $ \mathscr{D} \subseteq 2^{\mathscr{V}}$, with $\emptyset,\mathscr{V} \in  \mathscr{D}$. For  $\alpha \in  \mathbb{R}^{\mathscr{V}}$ and $\mathscr{A} \in  \mathscr{D}$, if
\begin{displaymath}
\alpha(\mathscr{B})-\alpha(\mathscr{A}) \leq F(\mathscr{B})-F(\mathscr{A})
\end{displaymath}
holds for each $\mathscr{B} \in  \mathscr{D}$,  then we call $\alpha$ a subgradient of $F$ at $\mathscr{A}$. We denote by $\partial_{F}(\mathscr{A})$  the set of all the subgradients of  $F$ at $\mathscr{A}$ and call $\partial_{F}(\mathscr{A})$ the subdifferential  of $F$ at $\mathscr{A}$.
\end{definition}
\begin{definition} [Supergradients and superdifferentials of submodular functions \cite{fujishige2005}] \label{def:super_grad_diff_SM}
Consider a submodular function $F: \mathscr{D} \rightarrow \mathbb{R}$  on a distributive lattice $ \mathscr{D} \subseteq 2^{\mathscr{V}}$, with $\emptyset,\mathscr{V} \in  \mathscr{D}$. For  $\alpha \in  \mathbb{R}^{\mathscr{V}}$ and $\mathscr{A} \in  \mathscr{D}$, if
\begin{displaymath}
\alpha(\mathscr{B})-\alpha(\mathscr{A}) \geq F(\mathscr{B})-F(\mathscr{A})
\end{displaymath}
holds for each $\mathscr{B} \in  \mathscr{D}$,  then we call $\alpha$ a supergradient of $F$ at $\mathscr{A}$. We denote by $\partial^{F} (\mathscr{A})$  the set of all the supergradients of  $F$ at $\mathscr{A}$ and call $\partial^{F}(\mathscr{A})$ the superdifferential of  $F$ at $\mathscr{A}$.
\end{definition}
\begin{definition} [Supermodular dual  \cite{fujishige2005}] \label{def:supermodular_dual} For any submodular function $F: 2^{\mathscr{V}} \rightarrow \mathbb{R}$, the function $F^{\#}(\mathscr{X}) \coloneqq  F(\mathscr{V}) - F(\mathscr{V} \setminus \mathscr{X}), \mathscr{X} \subseteq \mathscr{V}$ is referred to as its  supermodular dual, with the properties $(F^{\#})^{\#} = F$, and $B(F)=B(F^{\#})$ \cite[Lemma~2.4]{fujishige2005}.  Here $B$ is the base polytope associated with each function (\Cref{def:submodular_base_polyhedra}). 
\end{definition}

\begin{definition} [Total curvature of a non-decreasing submodular function \cite{Conforti_Cornuejols_1984,Sviridenko_etal_2017}] \label{def:total_curvature}
\begin{displaymath}
c \coloneqq \max_{\nu \in \mathscr{V}} \frac{\rho_{\nu}(\emptyset) - \rho_{\nu}(\mathscr{V}\setminus \{ \nu \}) }{\rho_{\nu}(\emptyset)} = 1 - \min_{\nu \in \mathscr{V}} \frac{\rho_{\nu}(\mathscr{V}\setminus \{ \nu \}) }{\rho_{\nu}(\emptyset)}.
\end{displaymath}
\end{definition}

\begin{definition} [Generalized curvature \cite{Bian_etal_2017}] \label{def:gen_curvature} The generalized curvature of a non-negative set function $F$ is the smallest scalar $\alpha$  s.t.,
\begin{displaymath}
\rho_{\nu} \left(  \mathscr{A} \setminus \{\nu\}  \cup \mathscr{B}  \right)
\geq 
(1-\alpha) \rho_{\nu} \left(  \mathscr{A} \setminus \{\nu\} \right), 
\qquad \forall \mathscr{A}, \mathscr{B} \subseteq \mathscr{V}, \nu \in \mathscr{A} \setminus \mathscr{B}.
\end{displaymath}
\end{definition}

\begin{definition} [Submodularity ratio from \cite{Das_Kempe_2011}] \label{def:sm_ratio_das_kempe} The submodularity ratio of a non-negative set function $F$ with respect to a set $\mathscr{V}$ and a parameter $k \geq 1$ is
\begin{displaymath}
\gamma_{\mathscr{V},k}(F) \coloneqq \min_{\mathscr{B} \subseteq \mathscr{V}, \mathscr{A}: \abs{\mathscr{A}} \leq k, \mathscr{A} \cap \mathscr{B} = \emptyset} \frac{ \sum_{\nu \in \mathscr{A}} \rho_{\nu}(\mathscr{B})}{\rho_{\mathscr{A}}(\mathscr{B})}
\end{displaymath}
\end{definition}
\begin{definition} [Submodularity ratio from \cite{Bian_etal_2017}] \label{def:sm_ratio_bian} The submodularity ratio of a non-negative set function $F$ is the largest scalar $\gamma$  s.t.,
\begin{displaymath}
\sum_{\nu \in \mathscr{A} \setminus \mathscr{B}} \rho_{\nu}(\mathscr{B}) \geq \gamma \rho_{\mathscr{A}} (\mathscr{B}),
\qquad  \forall \mathscr{A}, \mathscr{B} \subseteq \mathscr{V}.
\end{displaymath}
\end{definition}
\section{Technical results} \label{appendix:techResults}

\begin{proof} {\bfseries \Cref{prop:MI_SM_uncorrelatedCase}} \label{proof:prop_MI_SM_uncorrelatedCase}
If $Y_{i_j} | X$ are independent then the conditional entropy of  $\mathcal{P}^{\top}Y|X$ can be written as $
\mathcal{H}(\mathcal{P}^{\top}Y|X) = \sum_{j=1}^{k} \mathcal{H}(Y_{i_j} | X).$ This allows us to write the mutual information $\mathcal{I}(X;\mathcal{P}^{\top}Y)$ as follows:
\begin{equation} \label{eqn:MI_UncorrelatedCase}
 \mathcal{I}(X;\mathcal{P}^{\top}Y) = \mathcal{H}(\mathcal{P}^{\top}Y) - \mathcal{H}(\mathcal{P}^{\top}Y|X)= \mathcal{H}(\mathcal{P}^{\top}Y) - \sum_{j=1}^{k} \mathcal{H}(Y_{i_j} | X).
\end{equation}
Consider selection operators $\mathcal{P}_1 \in \mathbb{R}^{ m \times k_1}$ and $\mathcal{P}_2 \in \mathbb{R}^{ m \times k_2}$ such that $\mathscr{I}(\mathcal{P}_1) \subseteq \mathscr{I}(\mathcal{P}_2)$. Consider a canonical row $e_j$ that selects $Y_{i_j}$, with  $\text{supp}(e_j) \notin \mathscr{I}(\mathcal{P}_2)$. We define new selection operators $\widehat{\mathcal{P}}_1 \coloneqq  [\mathcal{P}_1, e_j] \in \mathbb{R}^{ m \times (k_1 + 1) }$ and  $\widehat{\mathcal{P}}_2 \coloneqq  [\mathcal{P}_2, e_j] \in \mathbb{R}^{ m \times (k_2+1)}$ by appending $e_j$ to $\mathcal{P}_1$ and $\mathcal{P}_2$ respectively. The  incremental change in mutual information from  incorporating $Y_{i_j}$ can be determined for each case. Let us  consider $\mathcal{I}(X;\widehat{\mathcal{P}}_2^{\top} Y) - \mathcal{I}(X;\mathcal{P}_2^{\top} Y) $; the expression can be adapted for $  \mathcal{I}(X;\widehat{\mathcal{P}}_1^{\top} Y) - \mathcal{I}(X;\mathcal{P}_1^{\top} Y)$ accordingly.
\begin{subequations} \label{eqn:MI_incrementalDiff}
\begin{align}
  \mathcal{I}(X;\widehat{\mathcal{P}}_2^{\top} Y) - \mathcal{I}(X;\mathcal{P}_2^{\top} Y) 
\label{eqn:MI_incrementalDiff_a}  &= \mathcal{H}(\widehat{\mathcal{P}}_2^{\top} Y) - \mathcal{H}(\mathcal{P}_2^{\top} Y) - \mathcal{H}(Y_{i_j} | X),\\
\label{eqn:MI_incrementalDiff_b}  &= \mathcal{H}( Y_{i_j} | \mathcal{P}_2^{\top} Y) - \mathcal{H}(Y_{i_j} | X).
\end{align}
\end{subequations}
To obtain \cref{eqn:MI_incrementalDiff_b} we recognize from the chain rule for entropy that $\mathcal{H}( Y_{i_j} | \mathcal{P}_2^{\top} Y)  =   \mathcal{H}(\widehat{\mathcal{P}}_2^{\top} Y) - \mathcal{H}(\mathcal{P}_2^{\top} Y)$. Since conditioning cannot increase entropy we can assert, 
\begin{align}
\mathcal{H}( Y_{i_j} | \mathcal{P}_1^{\top} Y) & \geq \mathcal{H}( Y_{i_j} | \mathcal{P}_2^{\top} Y), \\
\Rightarrow  \mathcal{I}(X;\widehat{\mathcal{P}}_1^{\top} Y) - \mathcal{I}(X;\mathcal{P}_1^{\top} Y)  & \geq   \mathcal{I}(X;\widehat{\mathcal{P}}_2^{\top} Y) - \mathcal{I}(X;\mathcal{P}_2^{\top} Y),
\end{align}
which concludes the proof.
\end{proof}

\subsection{Proofs concerning batch greedy algorithms} \label{appendix_subsec:greedy_stuff}

Below we provide a formal proof demonstrating that any function  $F$  is supermodular if and only if the supermodularity ratio $\eta = 1$. Recall that the supermodularity ratio without the cardinality parameter is defined as the largest scalar such that
\begin{displaymath}
\frac{\rho_{\mathscr{A}} (\mathscr{B})}{\sum_{\nu \in \mathscr{A} \setminus \mathscr{B}} \rho_{\nu}(\mathscr{B})} \geq \eta ,
\qquad  \forall \mathscr{A}, \mathscr{B} \subseteq \mathscr{V}.
\end{displaymath}
\begin{proof_without_of_suffix}  \label{proof:prop_supermodularity_ratio}
    Assuming $\eta=1$, we can claim the following inequalities for any set $\mathscr{B} \subseteq \mathscr{V}$, and $\{\nu_1,\nu_2\} \in \mathscr{V} \setminus \mathscr{B}$
    \begin{align}
    		F(\{\nu_1,\nu_2\} \cup \mathscr{B}) - F(\mathscr{B}) & \geq  \sum_{i=1}^{2} \lr{  F(\{\nu_i\} \cup \mathscr{B})  - F(\mathscr{B})  },\\
\Rightarrow     		F(\{\nu_1,\nu_2\} \cup \mathscr{B}) - F(\{\nu_1\} \cup \mathscr{B}) & \geq F(\{\nu_2\} \cup \mathscr{B}) - F(\mathscr{B}).
    \end{align}    
This demonstrates supermodularity  \cite[Proposition~2.3]{Bach2013} having assumed $\eta=1$. To complete the proof, we prove the proposition statement the other way around by recursively exploiting the supermodularity property. For any sets $\mathscr{A},\mathscr{B} \subseteq \mathscr{V}$ with $\abs{\mathscr{A}}\leq k, \mathscr{A} \cap \mathscr{B} = \emptyset$ consider $\nu_{k} \in \mathscr{A} $, we claim because of  supermodularity
\begin{align}
	F(\mathscr{A}\cup \mathscr{B}) - F(\mathscr{B}) 
	&\geq F(  \mathscr{A} \setminus \{\nu_{k}\} \cup\mathscr{B}) - F(\mathscr{B}) + \underbrace{F(\{\nu_{k}\} \cup \mathscr{B}) - F(\mathscr{B})}_{\rho_{\nu_k}(\mathscr{B})}.
\end{align}
Repeating the same argument but on the term $F(  \mathscr{A} \setminus \{\nu_{k}\} \cup\mathscr{B}) - F(\mathscr{B})$ we have,
\begin{align}
	F(\mathscr{A}\cup \mathscr{B}) - F(\mathscr{B}) 
	&\geq F(  \mathscr{A} \setminus \{\nu_{k},\nu_{k-1}\} \cup\mathscr{B}) - F(\mathscr{B}) + \rho_{\nu_k}(\mathscr{B}) + \rho_{\nu_{k-1}}(\mathscr{B}).
\end{align}
Continuing the process gives us the inequality
\begin{equation}
	F(\mathscr{A} \cup \mathscr{B}) - F(\mathscr{B}) 
	\geq \sum_{\nu \in \mathscr{A}} \lr{F(\{\nu\} \cup \mathscr{B}) - F(\mathscr{B})}.
\end{equation}\end{proof_without_of_suffix}

\subsubsection{Standard batch greedy algorithm} \label{appendix_subsubsec:std_batch_greedy}

\begin{proof}  {\bfseries \Cref{thm:batch_std_greedy}} \label{proof:theorem_batch_std_greedy} 
	Our arguments closely follow those of \cite{Nemhauser1978,Wolsey1982} with suitable modifications. Let $\mathscr{A}^{*}$ be a maximizer of $F$ with $k$ elements. Let $\{a_{i_1},\ldots,a_{i_{q_j}} \}$ be the $q_j$ elements selected during the $j$--th step of the greedy algorithm. If $\mathscr{A}_{j-1}$ is the set of elements after $j-1$ steps, then $\mathscr{A}_j = \mathscr{A}_{j-1} \cup \{a_{i_1},\ldots,a_{i_{q_j}} \}$. If there are $l$ steps altogether, we naturally have $q_1 + q_2 + \cdots + q_l = k$.
	For a given $j \in \{1,\ldots,l \}$, we denote by $\{b_1,\ldots,b_m\}$ the elements of $\mathscr{A}^{*} \setminus \mathscr{A}_{j-1}$ (we must have $k \geq m \geq q_j + q_{j+1} + \cdots + q_l $). We then have
\begin{subequations}	\begin{align}
		F(\mathscr{A}^{*}) 
		& \overset{a}{\leq} F(\mathscr{A}^{*} \cup \mathscr{A}_{j-1}),\\
	    & \overset{b}{\leq} F(\mathscr{A}_{j-1})  + \frac{1}{\gamma_{\mathscr{V},m}}\sum_{i=1}^{m} \lr{F(\mathscr{A}_{j-1} \cup \{ b_i \}) - F(\mathscr{A}_{j-1})}\\
    	    & \overset{c}{\leq} F(\mathscr{A}_{j-1}) + \frac{m}{q_j \gamma_{\mathscr{V},m}} \sum_{t=1}^{q_j} \lr{F(\mathscr{A}_{j-1} \cup \{ a_{i_t} \}) - F(\mathscr{A}_{j-1})}\\
    	    & \overset{d}{\leq} F(\mathscr{A}_{j-1}) + \frac{m}{q_j \eta_{\mathscr{V},q_j} \gamma_{\mathscr{V},m}} \lr{F(\mathscr{A}_{j}) - F(\mathscr{A}_{j-1})}\\
\label{eqn:proof_theorem_batch_std_greedy_1}  & \overset{e}{\leq} F(\mathscr{A}_{j-1})  + \frac{k}{q_j  \eta_{\mathscr{V},q_j} \gamma_{\mathscr{V},k}} \lr{F(\mathscr{A}_{j}) - F(\mathscr{A}_{j-1})}.
	\end{align} \end{subequations}
\begin{itemize}
	\item[a)] holds because $F$ is non-decreasing,
	\item[b)] holds by definition of the submodularity ratio (\Cref{def:sm_ratio_das_kempe}),
	\item[c)] holds because $q_j \leq m$, and the $q_j$ elements in the set $\{a_{i_1},\ldots,a_{i_{q_j}} \}$ are (by virtue of the batch greedy algorithm) those that correspond to the highest incremental gains at step $j$:
	\[ \frac{\sum_{i=1}^{m} \lr{F(\mathscr{A}_{j-1} \cup \{ b_i \}) - F(\mathscr{A}_{j-1})}}{m} \leq \frac{\sum_{t=1}^{q_j} \lr{F(\mathscr{A}_{j-1} \cup \{ a_{i_t} \}) - F(\mathscr{A}_{j-1})}}{q_j} \]
	\item[d)] holds by definition of $ \eta_{\mathscr{V},q_j}$ as the supermodularity ratio (\Cref{def:supermodularity_ratio}),
	\item[e)] holds because $m \leq k$, and $\gamma_{\mathscr{V},m} \geq \gamma_{\mathscr{V},k}$.
\end{itemize}
A simple manipulation of \cref{eqn:proof_theorem_batch_std_greedy_1} allows us to write the following,
\begin{equation}
	F(\mathscr{A}^{*})  - F(\mathscr{A}_j) \leq \lr{ 1 - \frac{q_j  \eta_{\mathscr{V},q_j} \gamma_{\mathscr{V},k} }{k}} \lr{ F(\mathscr{A}^{*})  - F(\mathscr{A}_{j-1})} .
\end{equation}
This leads to $F(\mathscr{A}^{*})  - F(\mathscr{A}_l) \leq \prod_{j=1}^{l} \lr{ 1 - \frac{q_j  \eta_{\mathscr{V},q_j} \gamma_{\mathscr{V},k}}{k}}  F(\mathscr{A}^{*})$, and the theorem statement follows.
\end{proof}

\subsubsection{Distributed batch greedy algorithm} \label{appendix_subsubsec:parallel_batch_greedy}

Our approach to prove \Cref{thm:parallel_batch_std_greedy}  {dapts the strategies in \cite{Mirzasoleiman_etal_2013} to the batch and nonsubmodular setting. 
Similar to the exposition there, we first investigate some approximation properties of the intractable but communication efficient counterpart to \Cref{alg:parallel_batch_std_greedy}. Here one  first distributes the ground set $\mathscr{V}$ to $n_p$ processes. Each process then finds the optimal solution $A^{\text{opt}}_{i,[n_p,k]}$, i.e., a set of cardinality at most $k$, that maximizes the value of $F$ in each partition. These solutions are then merged $\bigcup_i A^{\text{opt}}_{i,[n_p,k]}$, and the optimal subset $A^{\text{d-opt}}_{[n_p,k]}$ of cardinality $k$ is found in the combined set. The following lemma relates $A^{\text{d-opt}}_{[n_p,k]}$ to the combinatorial optimal solution $A^{\text{opt}}_{[k]} = \argmax_{\mathscr{B} \subset \mathscr{V}, \abs{\mathscr{B}} \leq k} F(\mathscr{B})$.

\begin{lemma} \label{lemma:parallel_greedy_helpful_lemma_1} 
Let $F$ be a non-decreasing  function with $F(\emptyset)=0$, and let $k > 0$. Then
\begin{displaymath}
F(A^{\text{d-opt}}_{[n_p,k]}) \geq \frac{\gamma_{\mathscr{V},k}}{k}  F(A^{\text{opt}}_{[k]})
\end{displaymath}
\end{lemma}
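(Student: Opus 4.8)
The plan is to follow the skeleton of \cite[Theorem~4.1]{Mirzasoleiman_etal_2013}, replacing the submodularity-based estimates with singleton bounds and the submodularity ratio. Write $\mathscr{A}^{*} \coloneqq A^{\text{opt}}_{[k]}$ for the global maximizer. For each $\nu \in \mathscr{A}^{*}$, let $j(\nu)$ denote the index of the partition containing $\nu$, i.e., $\nu \in \mathscr{V}_{j(\nu)}$. The single observation that drives the whole argument is that $\{\nu\}$ is itself a feasible candidate in the $j(\nu)$-th subproblem (it has cardinality $1 \le k$ and lies in $\mathscr{V}_{j(\nu)}$), so by optimality of $A^{\text{opt}}_{j(\nu),[n_p,k]}$ within that partition, $F(A^{\text{opt}}_{j(\nu),[n_p,k]}) \ge F(\{\nu\}) = \rho_{\nu}(\emptyset)$.

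Next I would push this bound through the merge step. Since $A^{\text{opt}}_{j(\nu),[n_p,k]} \subseteq \mathscr{M}$ and has cardinality at most $k$, it is (after padding with arbitrary elements of $\mathscr{M}$ if its cardinality is strictly less than $k$, which only increases $F$ by monotonicity) a feasible candidate for the second-stage problem solved on $\mathscr{M}$. Hence $F(A^{\text{d-opt}}_{[n_p,k]}) \ge F(A^{\text{opt}}_{j(\nu),[n_p,k]}) \ge \rho_{\nu}(\emptyset)$ for \emph{every} $\nu \in \mathscr{A}^{*}$. Taking the average over the (at most $k$) elements of $\mathscr{A}^{*}$ gives
\[
F(A^{\text{d-opt}}_{[n_p,k]}) \;\ge\; \frac{1}{k} \sum_{\nu \in \mathscr{A}^{*}} \rho_{\nu}(\emptyset).
\]

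Finally I would invoke the submodularity ratio. Applying \Cref{def:sm_ratio_das_kempe} with $\mathscr{B} = \emptyset$ and $\mathscr{A} = \mathscr{A}^{*}$ (legitimate since $|\mathscr{A}^{*}| \le k$) yields $\sum_{\nu \in \mathscr{A}^{*}} \rho_{\nu}(\emptyset) \ge \gamma_{\mathscr{V},k}\, \rho_{\mathscr{A}^{*}}(\emptyset) = \gamma_{\mathscr{V},k}\, F(\mathscr{A}^{*})$, and combining with the previous display gives $F(A^{\text{d-opt}}_{[n_p,k]}) \ge \frac{\gamma_{\mathscr{V},k}}{k} F(A^{\text{opt}}_{[k]})$, as claimed. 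There is no real obstacle here — the argument is short; the only points requiring a little care are the padding used to make $A^{\text{opt}}_{j(\nu),[n_p,k]}$ an exact-cardinality-$k$ candidate (trivial by monotonicity, and harmless in the non-degenerate regime $|\mathscr{M}| \ge k$), and remembering that this bound deliberately does not exploit the partition geometry, which is why the denominator is $k$ rather than the sharper $\min(n_p,k)$ obtainable in the submodular case.
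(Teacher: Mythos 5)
Your proposal is correct and follows essentially the same route as the paper's proof: both rest on the observations that any singleton of $\mathscr{A}^{*}$ is feasible in its own partition's subproblem and survives the merge step, combined with the submodularity-ratio bound $\sum_{\nu \in \mathscr{A}^{*}} \rho_{\nu}(\emptyset) \geq \gamma_{\mathscr{V},k} F(\mathscr{A}^{*})$. The only cosmetic difference is that you average the singleton bounds over all $\nu \in \mathscr{A}^{*}$, whereas the paper isolates the best singleton $\nu^{*} = \argmax_{\nu} F(\nu)$ and uses $\frac{1}{k}\sum \leq \max$; the two are interchangeable.
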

\begin{proof}  {\bfseries \Cref{lemma:parallel_greedy_helpful_lemma_1}}
Let $ A^{\text{opt}}_{[k]} = \{ \nu_1,\ldots,\nu_k \}$. Using the notion of submodularity ratio, we have $\sum_{\nu \in  A^{\text{opt}}_{[k]} } \rho_{\nu}(\emptyset) \geq \gamma_{\mathscr{V},k} \rho_{A^{\text{opt}}_{[k]}}(\emptyset)$, 
$\Rightarrow F(A^{\text{opt}}_{[k]}) \leq \frac{1}{\gamma_{\mathscr{V},k}}\sum_{\nu \in  A^{\text{opt}}_{[k]} } F(\nu)$. If $\nu^{*} = \argmax_{\nu \in  A^{\text{opt}}_{[k]}} F(\nu)$, then $ F(A^{\text{opt}}_{[k]}) \leq \frac{k}{\gamma_{\mathscr{V},k}} F(\nu^{*})$. Suppose $\nu^{*} \in \mathscr{V}_j$, the $j$--th partition  of the ground set; then we know that $F(\nu^{*}) \leq F(A^{\text{opt}}_{j,[k]}) $.  The lemma statement now follows since $ F(A^{\text{opt}}_{j,[n_p,k]}) \leq \max_{j} F(A^{\text{opt}}_{j,[n_p,k]}) \leq F(A^{\text{d-opt}}_{[n_p,k]})$.
\end{proof}

Apart from \Cref{lemma:parallel_greedy_helpful_lemma_1} we also need   a slight generalization of \Cref{thm:batch_std_greedy,eqn:batch_std_greedy_approx_guarantee} 
to prove \Cref{thm:parallel_batch_std_greedy}. The  result below is straightforward to derive and we omit the proof.
\begin{lemma} \label{lemma:parallel_greedy_helpful_lemma_2} 
Let $F$ be a non-decreasing  function with $F(\emptyset)=0$, and let $\mathscr{A}^{\text{bg}}_{[\widehat{k}]}$ be the index set of cardinality $\widehat{k}=q \widehat{l}$ returned by the batch greedy algorithm maximizing $F(\mathscr{A})$ subject to $\abs{\mathscr{A}} \leq k = ql$. Then,
\begin{displaymath}
	F(\mathscr{A}^{\text{bg}}_{[\widehat{k}]}) \geq \lr{1 - e^{- \frac{\widehat{l}}{l}\eta_{\mathscr{V},q} \gamma_{\mathscr{V},k} }} 	F(A^{\text{opt}}_{[k]}).
\end{displaymath}
Here $\gamma_{\mathscr{V},k}$ is the submodularity ratio and $\eta_{\mathscr{V},q}$ is the supermodularity ratio.
\end{lemma}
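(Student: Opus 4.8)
The plan is to essentially rerun the argument in the proof of \Cref{thm:batch_std_greedy}, but stop the induction after $\widehat{l}$ steps rather than $l$ steps, and then bound the resulting product. First I would recall the key recursion established in the proof of \Cref{thm:batch_std_greedy}: writing $\mathscr{A}_j$ for the set held after $j$ batch steps (each of size $q$, so $\abs{\mathscr{A}_j} = jq$), the chain of inequalities (a)--(e) there yields, for every $j$,
\begin{displaymath}
  F(\mathscr{A}^{*}) - F(\mathscr{A}_j) \leq \lr{1 - \frac{q\,\eta_{\mathscr{V},q}\,\gamma_{\mathscr{V},k}}{k}}\lr{F(\mathscr{A}^{*}) - F(\mathscr{A}_{j-1})},
\end{displaymath}
where $\mathscr{A}^{*} = A^{\text{opt}}_{[k]}$ is the optimal set of size $k$. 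I should note that step (e) there only uses $m \le k$ (where $m = \abs{\mathscr{A}^* \setminus \mathscr{A}_{j-1}}$) and monotonicity of $\gamma_{\mathscr{V},\cdot}$, so the recursion is valid for every step $j = 1,\dots,\widehat{l}$ even though the algorithm is run past the nominal cardinality $k$ only when $\widehat{l} > l$; no new idea is needed here since $\gamma_{\mathscr{V},m} \ge \gamma_{\mathscr{V},k}$ continues to hold. Actually, since $\mathscr{A}^* $ has cardinality $k$, we always have $m \le k$, so the recursion holds verbatim for all $\widehat{l}$ steps.

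Next I would iterate this recursion $\widehat{l}$ times starting from $F(\mathscr{A}_0) = F(\emptyset) = 0$, obtaining
\begin{displaymath}
  F(\mathscr{A}^{*}) - F(\mathscr{A}^{\text{bg}}_{[\widehat{k}]}) \leq \lr{1 - \frac{q\,\eta_{\mathscr{V},q}\,\gamma_{\mathscr{V},k}}{k}}^{\widehat{l}} F(\mathscr{A}^{*}),
\end{displaymath}
which rearranges to $F(\mathscr{A}^{\text{bg}}_{[\widehat{k}]}) \ge \bigl(1 - (1 - \tfrac{q\eta\gamma}{k})^{\widehat{l}}\bigr) F(A^{\text{opt}}_{[k]})$. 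Then, using $k = ql$ so that $\tfrac{q\eta\gamma}{k} = \tfrac{\eta\gamma}{l}$, and the standard inequality $1 - x \le e^{-x}$ applied with $x = \tfrac{\eta_{\mathscr{V},q}\gamma_{\mathscr{V},k}}{l}$, I get $(1 - \tfrac{\eta\gamma}{l})^{\widehat{l}} \le e^{-\widehat{l}\,\eta\gamma/l}$, which delivers exactly the claimed bound
\begin{displaymath}
  F(\mathscr{A}^{\text{bg}}_{[\widehat{k}]}) \ge \lr{1 - e^{-\frac{\widehat{l}}{l}\eta_{\mathscr{V},q}\gamma_{\mathscr{V},k}}} F(A^{\text{opt}}_{[k]}).
\end{displaymath}

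There is no real obstacle here — the lemma is, as the text says, "straightforward to derive," and the only point requiring a moment's care is making sure the recursion from \Cref{thm:batch_std_greedy} remains valid when $\widehat{l} > l$ (i.e., when the batch greedy algorithm is allowed to overshoot the cardinality constraint $k$); this is fine because the bound on $m = \abs{\mathscr{A}^* \setminus \mathscr{A}_{j-1}} \le k$ and the monotonicity $\gamma_{\mathscr{V},m} \ge \gamma_{\mathscr{V},k}$ used in step (e) hold regardless of how many steps have elapsed. The mild subtlety that $\gamma_{\mathscr{V},k}$ (rather than a $\widehat{k}$-dependent ratio) appears in the exponent is exactly what the fixed denominator $k = ql$ in the recursion produces, so the statement matches.
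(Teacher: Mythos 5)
Your overall strategy is exactly the intended one: the paper omits the proof of \Cref{lemma:parallel_greedy_helpful_lemma_2}, noting only that it is a straightforward generalization of \Cref{thm:batch_std_greedy}, and rerunning the per-step recursion for $\widehat{l}$ iterations with the fixed denominator $k=ql$, then applying $1-x\le e^{-x}$, is precisely that generalization; the bound you obtain matches the statement.

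One correction to your discussion of the case $\widehat{l}>l$: the step that actually needs care is (c), not (e). Step (c) compares the \emph{average} incremental gain over the $m=\abs{\mathscr{A}^{*}\setminus\mathscr{A}_{j-1}}$ elements of the optimal set with the average over the $q$ selected elements, and that comparison is only valid when $q\le m$. Once $j>l$ it can happen that $0<m<q$, and then the average of the top $q$ gains can be strictly smaller than the average over $\mathscr{A}^{*}\setminus\mathscr{A}_{j-1}$ (one element with a large gain amid elements with negligible gain is a counterexample). The recursion nevertheless survives: when $m\le q$ the sum of the top $m$ gains is at most the sum of the top $q$ gains, since all gains are nonnegative by monotonicity, so $\sum_{i=1}^{m}\rho_{b_i}(\mathscr{A}_{j-1})\le\sum_{t=1}^{q}\rho_{a_{i_t}}(\mathscr{A}_{j-1})\le\frac{k}{q}\sum_{t=1}^{q}\rho_{a_{i_t}}(\mathscr{A}_{j-1})$ because $k\ge q$, which is exactly the inequality step (c) is meant to deliver. (The degenerate case $m=0$ is also fine: then $F(\mathscr{A}^{*})-F(\mathscr{A}_{j-1})\le 0$ by monotonicity and the contraction holds trivially.) With that patch your argument is complete.
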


\begin{proof}  {\bfseries \Cref{thm:parallel_batch_std_greedy}} \label{proof:theorem_parallel_batch_std_greedy} 
Let $\widehat{\mathscr{A}} = \argmax_{\mathscr{B} \subset \mathscr{M}, \abs{\mathscr{B}} \leq \widehat{k}} F(\mathscr{B})$, where $ \mathscr{M} \coloneqq \bigcup_i \mathscr{A}_{i,[n_p,\widehat{k}]}^{\text{bg}}$. 
 We know that $F(\widehat{\mathscr{A}}) \geq \max_{i} F(\mathscr{A}^{\text{bg}}_{i,[n_p,\widehat{k}]}) $.
From \Cref{lemma:parallel_greedy_helpful_lemma_2}, we have
\begin{align}
	F(\mathscr{A}^{\text{bg}}_{i,[n_p,\widehat{k}]}) & \geq  \lr{1 - e^{- \eta_{\mathscr{V},q} \gamma_{\mathscr{V},k} \widehat{l} / l }} 	F(A^{\text{opt}}_{i,[n_p,k]}), \\
\Rightarrow 	F(\widehat{\mathscr{A}})  & \geq  \lr{1 - e^{- \eta_{\mathscr{V},q} \gamma_{\mathscr{V},k} \widehat{l} / l }}  \max_{i} 	F(A^{\text{opt}}_{i,[n_p,k]}).
\end{align}
From \Cref{lemma:parallel_greedy_helpful_lemma_1}, we know that $\max_{i} F(A^{\text{opt}}_{i,[n_p,k]}) \geq  \frac{\gamma_{\mathscr{V},k}}{k} F(A^{\text{opt}}_{[k]})$; therefore
\begin{equation} \label{eqn:parallel_batch_std_greedy_proof_eqn1}
 	F(\widehat{\mathscr{A}})  \geq  \lr{1 - e^{- \eta_{\mathscr{V},q} \gamma_{\mathscr{V},k} \widehat{l} / l }}  \frac{\gamma_{\mathscr{V},k}}{k} F(A^{\text{opt}}_{[k]}).
\end{equation}
Using \Cref{lemma:parallel_greedy_helpful_lemma_2} we can relate $F(\widehat{\mathscr{A}})$ and $	F(\mathscr{A}^{\text{d-bg}})$ as follows:
\begin{equation} \label{eqn:parallel_batch_std_greedy_proof_eqn2}
	F(\mathscr{A}_{[n_p,\widetilde{k}]}^{\text{d-bg}}) \geq  \lr{1 - e^{- \eta_{\mathscr{V},q} \gamma_{\mathscr{V},\widehat{k}} \widetilde{l} / \widehat{l} }} F(\widehat{\mathscr{A}}).
\end{equation}
Combining \cref{eqn:parallel_batch_std_greedy_proof_eqn1,eqn:parallel_batch_std_greedy_proof_eqn2}, we obtain the theorem statement.
\end{proof}
\subsubsection{Stochastic batch greedy algorithm} \label{appendix_subsubsec:stochastic_batch_greedy}

We first provide two lemmas that will prove useful in establishing \Cref{thm:stochastic_batch_std_greedy}. The first lemma is a bound on the probability of a certain event involving the random set drawn at each step, and the second lemma is a bound on the expected incremental gain at each step.
\begin{lemma} \label{lemma:stochastic_batch_std_greedy_prob_bound}
Given solution $\mathscr{A}_i$ after $i$--steps of the \texttt{StochasticBatchGreedy} algorithm, if $k \leq \floor{\sqrt{m/e} -1/e}$ and $\frac{m -2k}{2 e k^2} \geq  \frac{q-1}{\log^2 \frac{q}{\epsilon}}$, then for any set $\mathscr{R}$  of size $\frac{m}{k} \log\frac{q}{\epsilon}$ sampled uniformly at random from  $ \mathscr{V} \setminus \mathscr{A}_{i}$, we have
$\mathbb{P}(\abs{\mathscr{R} \cap \mathscr{A}^{*} \setminus \mathscr{A}_{i}}  \geq q)  \geq \lr{1 - \epsilon} \frac{\abs{\mathscr{A}^{*} \setminus \mathscr{A}_{i}}}{k}$.
\end{lemma}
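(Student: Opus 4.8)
The plan is to re-express the event as a lower‑tail statement for a hypergeometric variable. Set $\mathscr{B}\coloneqq\mathscr{A}^{*}\setminus\mathscr{A}_i$ and $b\coloneqq\abs{\mathscr{B}}$; because the lemma is applied at a step before the algorithm halts we have $q\le b\le k$. The candidate pool $\mathscr{V}\setminus\mathscr{A}_i$ has size $n\coloneqq m-iq$, so $m-k< n\le m$, and the count $Z\coloneqq\abs{\mathscr{R}\cap\mathscr{B}}$ of ``good'' indices drawn into $\mathscr{R}$ is hypergeometric with population $n$, number of marked items $b$, and $s=\tfrac{m}{k}\log\tfrac{q}{\epsilon}$ draws. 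The target becomes the tail bound $\mathbb{P}(Z\ge q)\ge(1-\epsilon)\tfrac{b}{k}$.

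First I would record the $q=1$ computation, which is exactly the estimate of \cite{lazier_than_lazy_greedy}: $\mathbb{P}(Z\ge1)=1-\prod_{j=0}^{s-1}\tfrac{n-b-j}{n-j}\ge 1-(1-b/n)^{s}\ge 1-e^{-sb/n}\ge 1-e^{-sb/m}=1-\epsilon^{b/k}$, followed by $1-\epsilon^{b/k}\ge(1-\epsilon)\tfrac{b}{k}$ because $x\mapsto 1-\epsilon^{x}$ is concave on $[0,1]$ and coincides with the chord $x\mapsto(1-\epsilon)x$ at $x=0$ and $x=1$. Running the identical chain with the actual $s=\tfrac{m}{k}\log\tfrac{q}{\epsilon}$ gives, for later use, $\mathbb{P}(Z\ge1)\ge 1-(\epsilon/q)^{b/k}\ge(1-\epsilon/q)\tfrac{b}{k}$.

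For $q>1$ the idea is to peel off the ``too few'' outcomes: $\mathbb{P}(Z\ge q)=\mathbb{P}(Z\ge1)-\sum_{j=1}^{q-1}\mathbb{P}(Z=j)$. Each subtracted term is $\mathbb{P}(Z=j)=\binom{b}{j}\binom{n-b}{s-j}/\binom{n}{s}$, which I would bound by cancelling factorials into the shape $\mathbb{P}(Z=j)\le\tfrac{1}{j!}\bigl(\tfrac{bs}{n-s}\bigr)^{j}(1-b/n)^{s-j}$ and then applying $(1-b/n)^{s-j}\le e^{-b(s-j)/n}$. Summing this geometric‑type series and using $b\le k$ and $m-k< n\le m$ produces an explicit upper bound on $\sum_{j=1}^{q-1}\mathbb{P}(Z=j)$ in terms of $m,k,q,\epsilon$; the two hypotheses $k\le\lfloor\sqrt{m/e}-1/e\rfloor$ and $\tfrac{m-2k}{2ek^{2}}\ge\tfrac{q-1}{\log^{2}(q/\epsilon)}$ are precisely what force this bound below $\bigl(1-\tfrac{\epsilon}{q}\bigr)\tfrac{b}{k}-(1-\epsilon)\tfrac{b}{k}=\epsilon\,\tfrac{q-1}{q}\,\tfrac{b}{k}$. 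Combining with $\mathbb{P}(Z\ge1)\ge(1-\epsilon/q)\tfrac{b}{k}$ from the previous paragraph then yields $\mathbb{P}(Z\ge q)\ge(1-\epsilon)\tfrac{b}{k}$.

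I expect the third step---controlling $\sum_{j=1}^{q-1}\mathbb{P}(Z=j)$ against a target that, when $b$ is close to $k$, is close to $1$---to be the real obstacle, and the awkward form of the two hypotheses (acknowledged in the remark after \Cref{thm:stochastic_batch_std_greedy}) is a symptom of exactly this bookkeeping. The auxiliary factors generated by the factorial cancellation, quantities such as $\bigl(\tfrac{n}{n-b}\bigr)^{b}$ and $\bigl(1-\tfrac{s}{n}\bigr)^{-1}$, must be absorbed into genuine constants, and the constraint $k=O(\sqrt{m})$ is what makes $b^{2}/n$, $s^{2}/n$ and $bs/n$ all $O(1)$ so that this is possible; since the final inequality carries essentially no slack, I would keep every constant explicit rather than hide it in order‑of‑magnitude notation, and I would treat the near‑saturated regime $b\approx k$ as the case governing the sharpness of the constraints.
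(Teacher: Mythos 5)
Your modelling of $Z=\abs{\mathscr{R}\cap\mathscr{A}^{*}\setminus\mathscr{A}_i}$ as a hypergeometric count and your handling of $\mathbb{P}(Z\ge 1)$ are fine, and your chord argument $1-\epsilon^{x}\ge(1-\epsilon)x$ on $[0,1]$ is essentially the same device the paper uses --- except that the paper applies concavity to the single function $x\mapsto 1-e^{-sx/m}\psi(x)$ on $[0,k]$, with $\psi(x)=1+(q-1)\tfrac{ex^{2}}{m-2k}$, after bounding the complementary event $\mathbb{P}(Z\le q-1)$ in one shot, rather than splitting off $\mathbb{P}(Z\ge 1)$. The architectures diverge exactly where $q>1$ enters, and that is where your plan breaks. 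The peeling identity $\mathbb{P}(Z\ge q)=\mathbb{P}(Z\ge1)-\sum_{j=1}^{q-1}\mathbb{P}(Z=j)$ commits you to showing $\sum_{j=1}^{q-1}\mathbb{P}(Z=j)\le\epsilon\tfrac{q-1}{q}\tfrac{b}{k}$, and that inequality does not hold under the stated hypotheses. Already the $j=1$ term is of order $\tfrac{sb}{n}e^{-sb/n}$; with $s=\tfrac{m}{k}\log\tfrac{q}{\epsilon}$, $n\le m$, and $b=q$ (the case $i=l-1$), this is roughly $\tfrac{q}{k}\log\tfrac{q}{\epsilon}$, while your budget is $\tfrac{\epsilon(q-1)}{k}$; since $q\log\tfrac{q}{\epsilon}>\epsilon(q-1)$ for every $q\ge2$ and $0<\epsilon<1$, the subtraction overshoots. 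The two displayed hypotheses cannot rescue you: they tie together $m,k,q,\epsilon$ but impose no lower bound on $\epsilon/\log\tfrac{q}{\epsilon}$. Concretely, $m=1000$, $k=10$, $q=b=2$, $\epsilon=1/2$ satisfies both hypotheses, gives $s\approx139$ and $\mathbb{P}(Z=1)\approx0.21$ (your own bound $\tfrac{bs}{n-s}(1-b/n)^{s-1}$ evaluates to about $0.25$), against a budget of $0.05$.

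The root of the mismatch is the size of the ``exactly $j$ hits'' terms. The paper's expansion of the complementary event uses summands $\binom{\widetilde k}{i}(1-p)^{s-i}p^{i}$ with $p=\widetilde k/\widetilde m$, so its $i\ge1$ corrections scale like $(\widetilde k^{2}/\widetilde m)^{i}$ and are absorbed into $\psi(\widetilde k)$ using $k=\mathcal{O}(\sqrt{m})$. The correct hypergeometric probabilities behave like $\binom{\widetilde k}{j}(s/\widetilde m)^{j}(1-s/\widetilde m)^{\widetilde k-j}$, i.e., they scale like $(s\widetilde k/\widetilde m)^{j}$ --- larger by the factor $(s/\widetilde k)^{j}$, which is exactly what blows your budget (in the numbers above, $0.003$ versus $0.21$ at $j=1$). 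This is not reconcilable by tracking constants more carefully: when $s\gg\widetilde k$ the paper's $i\ge 1$ summands do not dominate the true probabilities $\mathbb{P}(Z=i)$, so a faithful tail computation of the kind you propose will not reproduce the stated constant $(1-\epsilon)\tfrac{\widetilde k}{k}$ for $q>1$. If you want a provable statement along your lines, either $s$ must grow faster in $q$ (a factor of $q$ rather than $\log q$) or the conclusion must be weakened for $q>1$; as written, the third step of your plan is not a gap you can close by bookkeeping.
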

\begin{proof_without_of_suffix}
We assume that all random quantities are conditioned on $A_{i}$. 
The set $\mathscr{R}$ consists of $s$ elements from $ \mathscr{V} \setminus \mathscr{A}_{i}$ sampled uniformly at random (w.l.o.g.\ with repetition).
Let $\widetilde{k} \coloneqq  \abs{\mathscr{A}^{*} \setminus \mathscr{A}_{i}}, \widetilde{m} \coloneqq  \abs{\mathscr{V} \setminus \mathscr{A}_{i}}$, and $p \coloneqq  \frac{\abs{\mathscr{A}^{*} \setminus \mathscr{A}_{i}}} {\abs{\mathscr{V} \setminus \mathscr{A}_{i}}} = \frac{\widetilde{k}}{\widetilde{m}}$ be the probability representing the event that any element in the random set $\mathscr{R}$ is from $\mathscr{A}^{*} \setminus \mathscr{A}_{i}$.
We now estimate the probability of  $\abs{\mathscr{R} \cap \mathscr{A}^{*} \setminus \mathscr{A}_{i}}  \geq q$ by first deriving an upper bound on the complementary event $\abs{\mathscr{R} \cap \mathscr{A}^{*} \setminus \mathscr{A}_{i}}  \leq q-1$.
\begin{subequations}	\begin{align}
\mathbb{P}(\abs{\mathscr{R} \cap \mathscr{A}^{*} \setminus \mathscr{A}_{i}}  \leq q-1) 
& \overset{a}{=} \sum^{q-1}_{i=0} \binom{\widetilde{k}}{i} (1-p)^{s-i} p^{i}, \\
& \overset{b}{\leq}  \sum^{q-1}_{i=0} \lr{\frac{ e \widetilde{k}}{i}}^{i} (1-p)^{s-i} p^{i}, \\
& \overset{c}{\leq} (1-p)^{s} \sum^{q-1}_{i=0}  \lr{\frac{e \widetilde{k} p}{1-p}}^{i}, \\
& \overset{d}{\leq} (1-p)^{s} \lr{ 1 + (q-1) \lr{\frac{e \widetilde{k}^2}{m-2k}}  }, \\
\label{eqn:prob_complementary_event_upper_bound_f} & \overset{e}{\leq} \exp(-s \frac{\widetilde{k}}{m}) \psi(\widetilde{k}).
\end{align} \end{subequations}
\begin{itemize}
	\item[b)] holds by upper bounding the binomial coefficient $\binom{\widetilde{k}}{i} \leq \lr{\frac{e \widetilde{k} }{i}}^{i}$.
	\item[c)] holds because $\lr{\frac{1}{i}}^i \leq 1 \quad \forall i \in \mathbb{Z}_{\geq 0}$.
	\item[d)] holds because of the additional assumption we impose on the cardinality constraint $ k \leq \floor{\sqrt{m/e} -1/e}$. Using the facts that $m-k+q \leq \widetilde{m}$ and $\widetilde{k} \leq k$ we can claim,
     \[ \frac{e \widetilde{k} p}{1-p} = \frac{e \widetilde{k}^2}{\widetilde{m}-\widetilde{k}} \leq \frac{e \widetilde{k}^2}{m+q-2k} \leq \frac{e \widetilde{k}^2}{m-2k} \leq \frac{e k^2}{m-2k}  \leq 1 \Leftarrow k \leq \floor{\sqrt{m/e} -1/e} \]
	\item[e)] holds because,
	\[ (1-p)^{s} = (1-\widetilde{k}/\widetilde{m})^{s} \leq  \exp(-s \frac{\widetilde{k}}{\widetilde{m}}) \leq  \exp(-s \frac{\widetilde{k}}{m}) \Leftarrow \widetilde{m} \leq m \]
	 We have defined $\psi(\widetilde{k}) \equiv \psi(\widetilde{k};q,m,k) \coloneqq  1 + (q-1) \lr{\frac{e \widetilde{k}^2}{m-2k}}$.
\end{itemize}

We now derive a lower bound for the probability of  $\abs{\mathscr{R} \cap \mathscr{A}^{*} \setminus \mathscr{A}_{i}}  \geq q$.
\begin{subequations}	\begin{align}
\mathbb{P}(\abs{\mathscr{R} \cap \mathscr{A}^{*} \setminus \mathscr{A}_{i}}  \geq q) 
& \overset{a}{=} 1 - \mathbb{P}(\abs{\mathscr{R} \cap \mathscr{A}^{*} \setminus \mathscr{A}_{i}}  \leq q-1),\\
& \overset{b}{\geq} 1 - \exp(-s \frac{\widetilde{k}}{m}) \psi(\widetilde{k}) ,\\
& \overset{c}{\geq} \lr{1 - \exp(-s \frac{k}{m}) \psi(k)} \frac{\widetilde{k}}{k},\\
& \overset{d}{\geq} \lr{1 - \epsilon} \frac{\widetilde{k}}{k}.
\end{align} \end{subequations}
\begin{itemize}
	\item[b)] follows from \cref{eqn:prob_complementary_event_upper_bound_f},
	\item[c)] For $s \geq m \sqrt{\frac{2 e (q-1)}{m-2k}}$, the function $1 - \exp(-s \frac{x}{m}) \psi(x) $ is concave in the interval $x \in [0,k]$. It can be easily verified that $ \frac{\partial^2  \exp(-s \frac{x}{m}) \psi(x) }{\partial^2 x} > 0$  if  $s \geq m \sqrt{\frac{2 e (q-1)}{m-2k}} $.
	\begin{align*}
		\Rightarrow \frac{1 - \exp(-s \frac{x}{m}) \psi(x)}{x} &\geq \frac{1 - \exp(-s \frac{k}{m}) \psi(k)}{k},\\
		\Leftrightarrow 1 - \exp(-s \frac{x}{m}) \psi(x) &\geq \frac{x}{k} \lr{1 - \exp(-s \frac{k}{m}) \psi(k)}.
	\end{align*}
 	\item[d)] 
	We choose the set $\mathscr{R}$ \textit{s.t.}\ $\abs{\mathscr{R}} \eqqcolon s =\frac{m}{k} \log\frac{q}{\epsilon}$, where $0<\epsilon<1$.
	If the batch size $q$ and tolerance $\epsilon$ are chosen \textit{s.t.}\ $\frac{m -2k}{2 e k^2} \geq  \frac{q-1}{\log^2 \frac{q}{\epsilon}}$, then
	 $\frac{m}{k} \log\frac{q}{\epsilon} \geq  m \sqrt{\frac{2 e (q-1)}{m-2k}}$, ensuring concavity of the desired function  in the previous step. For the cardinality constraint regime $k \leq \floor{\sqrt{m/e} -1/e}$, this condition on batch size and tolerance is easily satisfied. Since $\psi(\widetilde{k}) \in [1,q]$, we now have
	\begin{equation}
			1 - \exp(-s \frac{\widetilde{k}}{m}) \psi(\widetilde{k}) \geq \frac{\widetilde{k}}{k} \lr{1 - \exp(-s \frac{k}{m}) \psi(k)} \geq \frac{\widetilde{k}}{k} \lr{1 - \epsilon} .
	\end{equation}	
\end{itemize}
\end{proof_without_of_suffix}
\begin{lemma}  \label{lemma:stochastic_batch_std_greedy_exp_gain}
Given solution $\mathscr{A}_i$ after $i$--steps of the \texttt{StochasticBatchGreedy} algorithm, if the assumptions of \Cref{lemma:stochastic_batch_std_greedy_prob_bound} hold, then the expected gain $\mathbb{E}[F(\mathscr{A}_{i+1})- F(\mathscr{A}_{i})]$
 in step $i+1$ is at least $\lr{1 - \epsilon} \frac{\gamma_{\mathscr{V},k} \eta_{\mathscr{V},q} q}{k}  \mathbb{E}[\rho_{\mathscr{A}^{*} \setminus \mathscr{A}_{i}}(\mathscr{A}_{i})]$.
\end{lemma}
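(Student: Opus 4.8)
The plan is to condition on the random set $\mathscr{R}$ drawn in step $i+1$ and split the expectation according to whether or not $\mathscr{R}$ contains at least $q$ elements of $\mathscr{A}^{*} \setminus \mathscr{A}_{i}$. On the event $\{\abs{\mathscr{R} \cap \mathscr{A}^{*} \setminus \mathscr{A}_{i}} \geq q\}$, there is a subset $\mathscr{Q}' \subseteq \mathscr{R} \cap (\mathscr{A}^{*} \setminus \mathscr{A}_{i})$ of size exactly $q$. Since the algorithm picks the size-$q$ subset of $\mathscr{R}$ with the largest sum of incremental gains $\sum_{\nu}\rho_{\nu}(\mathscr{A}_{i})$, the chosen batch $\mathscr{Q}$ satisfies $\sum_{\nu \in \mathscr{Q}}\rho_{\nu}(\mathscr{A}_{i}) \geq \sum_{\nu \in \mathscr{Q}'}\rho_{\nu}(\mathscr{A}_{i})$. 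By the definition of the supermodularity ratio $\eta_{\mathscr{V},q}$ (Definition \ref{def:supermodularity_ratio}), the actual gain of the step is $F(\mathscr{A}_{i+1}) - F(\mathscr{A}_{i}) = \rho_{\mathscr{Q}}(\mathscr{A}_{i}) \geq \eta_{\mathscr{V},q}\sum_{\nu \in \mathscr{Q}}\rho_{\nu}(\mathscr{A}_{i}) \geq \eta_{\mathscr{V},q}\sum_{\nu \in \mathscr{Q}'}\rho_{\nu}(\mathscr{A}_{i})$. On the complementary event, $F(\mathscr{A}_{i+1}) - F(\mathscr{A}_{i}) \geq 0$ since $F$ is non-decreasing, so that term only helps.

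Next I would handle the sum $\sum_{\nu \in \mathscr{Q}'}\rho_{\nu}(\mathscr{A}_{i})$ over a $q$-element subset of $\mathscr{A}^{*}\setminus\mathscr{A}_{i}$. Taking expectations over the random choice of which $q$-subset of $\mathscr{A}^{*}\setminus\mathscr{A}_{i}$ lands in $\mathscr{R}$ — conditioned on at least $q$ landing there — each element of $\mathscr{A}^{*}\setminus\mathscr{A}_{i}$ is equally likely to be among them, so the conditional expectation of $\sum_{\nu\in\mathscr{Q}'}\rho_{\nu}(\mathscr{A}_{i})$ equals $\frac{q}{\abs{\mathscr{A}^{*}\setminus\mathscr{A}_{i}}}\sum_{\nu \in \mathscr{A}^{*}\setminus\mathscr{A}_{i}}\rho_{\nu}(\mathscr{A}_{i})$. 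Then the submodularity ratio $\gamma_{\mathscr{V},k}$ (Definition \ref{def:sm_ratio_das_kempe}) gives $\sum_{\nu \in \mathscr{A}^{*}\setminus\mathscr{A}_{i}}\rho_{\nu}(\mathscr{A}_{i}) \geq \gamma_{\mathscr{V},k}\,\rho_{\mathscr{A}^{*}\setminus\mathscr{A}_{i}}(\mathscr{A}_{i})$. Combining, conditioned on $\mathscr{A}_{i}$ and on the event $\abs{\mathscr{R}\cap\mathscr{A}^{*}\setminus\mathscr{A}_{i}}\geq q$, the expected gain is at least $\eta_{\mathscr{V},q}\cdot\frac{q}{\abs{\mathscr{A}^{*}\setminus\mathscr{A}_{i}}}\cdot\gamma_{\mathscr{V},k}\,\rho_{\mathscr{A}^{*}\setminus\mathscr{A}_{i}}(\mathscr{A}_{i})$.

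Finally I would multiply by the probability of that event. By Lemma \ref{lemma:stochastic_batch_std_greedy_prob_bound}, conditioned on $\mathscr{A}_{i}$ this probability is at least $(1-\epsilon)\frac{\abs{\mathscr{A}^{*}\setminus\mathscr{A}_{i}}}{k}$. The factor $\abs{\mathscr{A}^{*}\setminus\mathscr{A}_{i}}$ cancels against the $1/\abs{\mathscr{A}^{*}\setminus\mathscr{A}_{i}}$ from the uniform-subset averaging, leaving
\begin{displaymath}
\mathbb{E}[F(\mathscr{A}_{i+1}) - F(\mathscr{A}_{i}) \mid \mathscr{A}_{i}] \;\geq\; (1-\epsilon)\,\frac{\gamma_{\mathscr{V},k}\,\eta_{\mathscr{V},q}\,q}{k}\,\rho_{\mathscr{A}^{*}\setminus\mathscr{A}_{i}}(\mathscr{A}_{i}).
\end{displaymath}
Taking the outer expectation over $\mathscr{A}_{i}$ yields the claimed bound. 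The one subtlety to be careful about — and the step I expect to require the most attention — is the independence structure: whether conditioning on the event $\{\abs{\mathscr{R}\cap\mathscr{A}^{*}\setminus\mathscr{A}_{i}}\geq q\}$ keeps the $q$-subset that lies in $\mathscr{R}$ uniformly distributed over all $q$-subsets of $\mathscr{A}^{*}\setminus\mathscr{A}_{i}$ (it does, by symmetry of uniform sampling with repetition once we condition on the multiset's intersection size, but this needs a clean argument), and making sure the "$\geq 0$ on the complement" step is actually valid, which it is since $\mathscr{Q}\subseteq\mathscr{V}\setminus\mathscr{A}_{i}$ and $F$ is monotone.
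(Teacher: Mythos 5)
Your proposal is correct and follows essentially the same route as the paper's proof: lower-bound the chosen batch's gain via the supermodularity ratio, compare against a uniformly random $q$-subset of $\mathscr{A}^{*}\setminus\mathscr{A}_{i}$ on the event $\abs{\mathscr{R}\cap\mathscr{A}^{*}\setminus\mathscr{A}_{i}}\geq q$ (using the same symmetry argument to justify uniformity), apply the submodularity ratio, and multiply by the probability bound from \Cref{lemma:stochastic_batch_std_greedy_prob_bound} so that the $\abs{\mathscr{A}^{*}\setminus\mathscr{A}_{i}}$ factors cancel. The only cosmetic difference is that you apply $\eta_{\mathscr{V},q}$ before averaging over the random subset while the paper applies it afterward; this does not change the argument.
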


\begin{proof_without_of_suffix}
We assume that all random quantities are conditioned on $A_{i}$.  We bound $F(\mathscr{A}_{i+1}) - F(\mathscr{A}_{i})$ from below. Since $\mathscr{Q}_{i+1} \coloneqq  \mathscr{A}_{i+1} \setminus \mathscr{A}_{i}$ is chosen by the stochastic batch greedy rule from $\mathscr{R}$, and if $\abs{\mathscr{R} \cap \mathscr{A}^{*} \setminus \mathscr{A}_{i}} \geq q$, then $\sum_{a \in \mathscr{Q}_{i+1}} \rho_{a}(\mathscr{A}_{i})$  is at least as large as  $\sum_{a \in \mathscr{Q}_{r}} \rho_{a}(\mathscr{A}_{i})$ in expectation. 
Here the set $\mathscr{Q}_{r}$ with $\abs{\mathscr{Q}_{r}}=q$ is chosen uniformly at random $\mathscr{R} \cap \mathscr{A}^{*} \setminus \mathscr{A}_{i}$. The set $\mathscr{R}$ is equally likely to contain any each element of $\mathscr{A}^{*} \setminus \mathscr{A}_{i}$. Hence a uniform random subset $\mathscr{Q}_{r} \subseteq \mathscr{R} \cap \mathscr{A}^{*} \setminus \mathscr{A}_{i}$ is actually a uniformly random subset of $\mathscr{A}^{*} \setminus \mathscr{A}_{i}$. As a result we obtain
\begin{subequations}	\begin{align}
	\mathbb{E}[\sum_{a \in \mathscr{Q}_{i+1}} \rho_{a}(\mathscr{A}_{i})] 
&\geq \mathbb{P}(\abs{\mathscr{R} \cap \mathscr{A}^{*} \setminus \mathscr{A}_{i}}  \geq q) \frac{\sum_{\mathscr{Q}_{r} \subseteq \mathscr{A}^{*} \setminus \mathscr{A}_{i}} \sum_{a \in \mathscr{Q}_{r}} \rho_{a}(\mathscr{A}_{i})}{ \binom{\abs{\mathscr{A}^{*} \setminus \mathscr{A}_{i}}}{q}  },\\
\label{eqn:stochastic_batch_std_greedy_exp_gain_eqn_1} &\geq \lr{1 - \epsilon} \frac{\abs{\mathscr{A}^{*} \setminus \mathscr{A}_{i}}}{k}
 \frac{\binom{\abs{\mathscr{A}^{*} \setminus \mathscr{A}_{i}}}{q}\frac{q }{ \abs{\mathscr{A}^{*} \setminus \mathscr{A}_{i}}} \sum_{a \in\mathscr{A}^{*} \setminus \mathscr{A}_{i}} \rho_{a}(\mathscr{A}_{i})}{ \binom{\abs{\mathscr{A}^{*} \setminus \mathscr{A}_{i}}}{q}  },\\
\label{eqn:stochastic_batch_std_greedy_exp_gain_eqn_2} &\geq \lr{1 - \epsilon} \frac{q}{k} \sum_{a \in\mathscr{A}^{*} \setminus \mathscr{A}_{i}} \rho_{a}(\mathscr{A}_{i}).
\end{align} \end{subequations}
\cref{eqn:stochastic_batch_std_greedy_exp_gain_eqn_1} holds on account of \Cref{lemma:stochastic_batch_std_greedy_prob_bound}.  Using the notion of submodularity and supermodularity ratios we can claim,
 \begin{align}
\label{eqn:stochastic_batch_greedy_sup_mod_ratio}	\mathbb{E}[\rho_{\mathscr{Q}_{i+1}}(\mathscr{A}_{i})] 	
	& \geq  	\eta_{\mathscr{V},q} \mathbb{E}[\sum_{a \in \mathscr{Q}_{i+1}} \rho_{a}(\mathscr{A}_{i})] , \\
\label{eqn:stochastic_batch_greedy_sub_mod_ratio}    \sum_{a \in\mathscr{A}^{*} \setminus \mathscr{A}_{i}} \rho_{a}(\mathscr{A}_{i}) 		
		&\geq   \gamma_{\mathscr{V},k} \rho_{\mathscr{A}^{*} \setminus \mathscr{A}_{i}}(\mathscr{A}_{i}).
 \end{align}
Combining  \cref{eqn:stochastic_batch_std_greedy_exp_gain_eqn_2,eqn:stochastic_batch_greedy_sup_mod_ratio,eqn:stochastic_batch_greedy_sub_mod_ratio} we have
\begin{equation}
	\mathbb{E}[\rho_{\mathscr{Q}_{i+1}}(\mathscr{A}_{i})]
	\geq \lr{1 - \epsilon} \frac{\gamma_{\mathscr{V},k} \eta_{\mathscr{V},q}  q}{k}  \rho_{\mathscr{A}^{*} \setminus \mathscr{A}_{i}}(\mathscr{A}_{i}).
\end{equation}
By taking expectation over all possible realizations of $\mathscr{A}_i$ we obtain the lemma.
\end{proof_without_of_suffix}

\begin{proof}  {\bfseries \Cref{thm:stochastic_batch_std_greedy}} \label{proof:theorem_stochastic_batch_std_greedy} 
Let $\mathscr{A}^{*}$ be a maximizer of $F$ with $k$ elements, i.e., $\mathscr{A}^{*} \coloneqq  \argmax_{\mathscr{B} \subset \mathscr{V}, \abs{\mathscr{B}} \leq k} F(\mathscr{B})$.  By recursively using \Cref{lemma:stochastic_batch_std_greedy_exp_gain}  for $i = 1, \ldots, l$, where $l= k/q$, and knowing that $ F(\emptyset) = 0$, we obtain:
\begin{align}
\mathbb{E}[F(\mathscr{A}_{l})] 
&	\geq F(\mathscr{A}^{*}) - \lr{1 - \lr{1 - \epsilon} \frac{\gamma_{\mathscr{V},k} \eta_{\mathscr{V},q}}{l}}^l \lr{ F(\mathscr{A}^{*}) -F(\emptyset)},\\
&	\geq \lr{1 - e^{- \lr{1 - \epsilon} \gamma_{\mathscr{V},k} \eta_{\mathscr{V},q}}} F(\mathscr{A}^{*}).
\end{align}
\end{proof}


\subsubsection{MM batch greedy algorithm} \label{appendix_subsubsec:MM_batch_greedy}

\begin{proof} {\bfseries \Cref{thm:greedy_ascent}} \label{proof:theorem_greedy_ascent}
	The arguments are similar to the proof of \Cref{thm:batch_std_greedy}. Let $\mathscr{A}^{*}$ be a maximizer of $F$ with $k$ elements. Let $\{a_{i_1},\ldots,a_{i_{q_j}} \}$ be the $q_j$ elements selected during the $j$--th step of the greedy algorithm. If $\mathscr{A}_{j-1}$ is the set of elements after $j-1$ steps, then $\mathscr{A}_j = \mathscr{A}_{j-1} \cup \{a_{i_1},\ldots,a_{i_{q_j}} \}$. If there are $l$--steps all together, we naturally have $q_1 + q_2 + \cdots + q_l = k$.
	For a given $j \in \{1,\ldots,l \}$, we denote by $\{b_1,\ldots,b_m\}$ the elements of $\mathscr{A}^{*} \setminus \mathscr{A}_{j-1}$ (we must have $k \geq m \geq q_j + q_{j+1} + \cdots + q_l $). We then have,
\begin{subequations} \begin{align}
	F(\mathscr{A}^{*}) 
	& \overset{a}{\leq} F(\mathscr{A}^{*} \cup \mathscr{A}_{j-1}),\\
	&\overset{b}{=}  F(\mathscr{A}_{j-1})  +  \rho_{\mathscr{A}^{*}} \lr{ \mathscr{A}_{j-1}} , \\
	&\overset{c}{\leq}  F(\mathscr{A}_{j-1})  +   \delta_j +  \mathcal{M}_{\uparrow}  \left[  \rho_{\mathscr{A}^{*}} \lr{ \mathscr{A}_{j-1}} \right], \\
	&\overset{d}{\leq}  F(\mathscr{A}_{j-1})  +   \delta_j +  \frac{k}{q_j} \mathcal{M}_{\uparrow}  \left[  \rho_{\mathscr{A}_{j}} \lr{ \mathscr{A}_{j-1}} \right], \\
	&\overset{e}{\leq}  F(\mathscr{A}_{j-1})  +   \delta_j +  \frac{k}{q_j} 	\rho_{\mathscr{A}_{j}} \lr{ \mathscr{A}_{j-1}}, \\
\label{eqn:proof_theorem_greedy_ascent_1} 	&\overset{f}{\leq}  F(\mathscr{A}_{j-1}) + \frac{k \lr{1+\tau_j}}{q_j} \rho_{\mathscr{A}_{j}} \lr{ \mathscr{A}_{j-1}}.
\end{align} \end{subequations}
\begin{itemize}
	\item[a)] holds because $F$ is non-decreasing,
	\item[b)] is an identity since $F(\mathscr{A}^{*} \cup \mathscr{A}_{j-1}) - F(\mathscr{A}_{j-1}) \eqqcolon  \rho_{\mathscr{A}^{*}} \lr{ \mathscr{A}_{j-1}}$ is the incremental gain,
	\item[c)] holds by defining $\delta_j$ as the smallest scalar satisfying,
	\begin{equation}
       \rho_{\mathscr{A}^{*}} \lr{ \mathscr{A}_{j-1}}  \leq   \delta_j +  \mathcal{M}_{\uparrow}  \left[  \rho_{\mathscr{A}^{*}} \lr{ \mathscr{A}_{j-1}} \right],
	\end{equation}
	\item[d)] holds because $q_j \leq m \leq k$, and the $q_j$ elements in the set $\{a : a \in \mathscr{A}_j \setminus \mathscr{A}_{j-1}\}$ maximize $\mathcal{M}_{\uparrow}  \left[  \rho_{a} \lr{ \mathscr{A}_{j-1}} \right]$, and hence
	\[ \frac{\mathcal{M}_{\uparrow}  \left[  \rho_{\mathscr{A}^{*}} \lr{ \mathscr{A}_{j-1}} \right]}{m} \leq  \frac{\mathcal{M}_{\uparrow}  \left[  \rho_{\mathscr{A}_{j}} \lr{ \mathscr{A}_{j-1}} \right]}{q_j}, \]
     \item[e)] holds since $\mathcal{M}_{\uparrow}  \left[  \rho_{\mathscr{A}_{j}} \lr{ \mathscr{A}_{j-1}} \right] \leq  \rho_{\mathscr{A}_{j}} \lr{ \mathscr{A}_{j-1}}$,
		\item[f)] holds by defining  $\tau_j$ as the smallest scalar satisfying
	\begin{equation}
	 \delta_j \leq \frac{ \tau_j k \rho_{\mathscr{A}_{j}} \lr{ \mathscr{A}_{j-1}}}{q_j}.
	\end{equation}
\end{itemize}
	$\delta_j$ quantifies the slackness of the modular lower bound, while $\tau_j$ does so relative to the incremental gain at the current step. A simple manipulation of \cref{eqn:proof_theorem_greedy_ascent_1} allows us to write the following:
\begin{equation}
	F(\mathscr{A}^{*})  - F(\mathscr{A}_j) \leq \lr{ 1 - \frac{q_j}{k \lr{1+\tau_j}}} \lr{ F(\mathscr{A}^{*})  - F(\mathscr{A}_{j-1})} .
\end{equation}
This leads to $F(\mathscr{A}^{*})  - F(\mathscr{A}_l) \leq \prod_{j=1}^{l} \lr{ 1 - \frac{q_j}{k \lr{1+\tau_j}}} F(\mathscr{A}^{*})$, and the theorem statement follows.
\end{proof}	
	
\subsection{Proofs concerning linear Bayesian experimental design criterion} \label{appendix_subsec:lbip}

\begin{proof} {\bfseries \Cref{prop:eigen_prob_equivalence}} \label{proof:prop_eigen_prob_equivalence}
Let $(\sigma_j, U_j)$  be any generalized eigenvalue-eigenvector pair of the definite pair $(\Gamma_{Y}-\Gamma_{Y|X},\Gamma_{Y|X})$.
Since  $\Gamma_{Y}-\Gamma_{Y|X}\succeq 0$ and $\Gamma_{Y|X} \succ 0$, we can claim $\sigma_j \geq 0$. It is evident that $(1+\sigma_j, U_j)$ is the corresponding generalized eigenvalue-eigenvector pair of the definite pair $(\Gamma_{Y},\Gamma_{Y|X})$. To 
establish \cref{enumerate:prop_eigen_prob_equivalence_1} and \cref{enumerate:prop_eigen_prob_equivalence_2}, we need to show that $\sigma_j$ and $1+\sigma_j$ are the corresponding generalized eigenvalues of the definite pair $(\Gamma_{X}- \Gamma_{X|Y},\Gamma_{X|Y})$ and $(\Gamma_{X},\Gamma_{X|Y})$, respectively. \Cref{enumerate:prop_eigen_prob_equivalence_1} $\Leftrightarrow$ \cref{enumerate:prop_eigen_prob_equivalence_2} then holds automatically.

Using the specified linear statistical model \cref{eqn:linearFwdModel}, we can deduce $\Gamma_{Y}-\Gamma_{Y|X} = G \Gamma_{X} G^{\top}$. For the generalized eigenvalue-eigenvector pair $(\sigma_j, U_j)$, we have the relationship
\begin{equation} \label{eqn:eigProb_Yspace}
	G \Gamma_{X} G^{\top} U_j = \Gamma_{Y|X} U_j \sigma_j.
\end{equation}
Multiplying \cref{eqn:eigProb_Yspace} by $G^{\top} \Gamma_{Y|X}^{-1}$, we have
\begin{equation} \label{eqn:eigProb_Xspace_interim}
	G^{\top} \Gamma_{Y|X}^{-1} G \Gamma_{X} G^{\top} U_j = G^{\top} U_j \sigma_j.
\end{equation}
Define $\widetilde{V}_j\coloneqq \frac{1}{\alpha_j} \Gamma_{X} G^{\top} U_j$, where $\alpha_j$ is some scaling parameter to obtain the desired orthogonality, and rewrite \cref{eqn:eigProb_Xspace_interim} as
\begin{equation} \label{eqn:eigProb_Xspace}
	G^{\top} \Gamma_{Y|X}^{-1} G  \widetilde{V}_j = \Gamma_{X}^{-1} \widetilde{V}_j \sigma_j.
\end{equation}
 From \cref{eqn:eigProb_Xspace}, we claim $(\sigma_j,\widetilde{V}_j)$ is a generalized eigenvalue-eigenvector pair of  $(G^{\top} \Gamma_{Y|X}^{-1} G, \Gamma_{X}^{-1})$. Consequently, $(1+\sigma_j,\widetilde{V}_j)$ is the corresponding pair of  $(\Gamma_{X}^{-1}+G^{\top} \Gamma_{Y|X}^{-1} G, \Gamma_{X}^{-1}) \eqqcolon  (\Gamma_{X|Y}^{-1}, \Gamma_{X}^{-1}) $. By definition the definite pairs $(\Gamma_{X|Y}^{-1}, \Gamma_{X}^{-1}) $ and  $(\Gamma_{X},\Gamma_{X|Y})$ have the same spectrum. Further, if $(1+\sigma_j,\widetilde{V}_j)$ is a generalized eigenvalue-eigenvector pair of  $(\Gamma_{X|Y}^{-1}, \Gamma_{X}^{-1}) $, then  $(1+\sigma_j,\Gamma_{X}^{-1}\widetilde{V}_j)$ is the corresponding pair of $(\Gamma_{X},\Gamma_{X|Y})$. This completes the proof for  \cref{enumerate:prop_eigen_prob_equivalence_2}. We could have alternatively defined $\Gamma_{X|Y}^{-1}\widetilde{V}_j$ as the generalized eigenvector without any bearing on the spectrum. The choice we make is motivated by the orthogonality of eigenvectors we desire.
 
Let us define $V_j \coloneqq  \Gamma_{X}^{-1}\widetilde{V}_j$. Then \cref{enumerate:prop_eigen_prob_equivalence_1} follows by recognizing that $(\sigma_j,V_j)$ is the corresponding generalized eigenvalue-eigenvector pair of  $(\Gamma_{X}-\Gamma_{X|Y},\Gamma_{X|Y})$. 
\end{proof}


\begin{proposition}\label{prop:expSymKLDiv}
Let $X \in \mathbb{R}^{n}$ and $Y \in \mathbb{R}^{m}$ be jointly Gaussian random variables as defined in \Cref{subsubsec:lbip_notation_setup}. Let $\mathcal{P} \in \mathbb{R}^{m \times k }$ be a selection operator such that $Y_{\mathcal{P}} \coloneqq  \mathcal{P}^{\top}Y = \left[ Y_{i_1},\ldots,Y_{i_{k}} \right] ^{\top}$. The expected symmetrized Kullback--Leiber divergence, $\mathbb{E}_{\pi_{Y_{\mathcal{P}}}} \left[ D^{sym}_{\text{KL}} (\pi_{X|Y_{\mathcal{P}}}, \pi_{X} ) \right]$, between the prior  $\pi_{X}$ and the posterior $\pi_{X|Y_{\mathcal{P}}}$
 is equal to the sum of the generalized eigenvalues $\widehat{\sigma}_j$ of the definite pair $(\Gamma_{Y_{\mathcal{P}}} - \Gamma_{Y_{\mathcal{P}}|X},\Gamma_{Y_{\mathcal{P}}|X})$ or equivalently $(\Gamma_{X} - \Gamma_{X|Y_{\mathcal{P}} },\Gamma_{X|Y_{\mathcal{P}} })$, i.e., 
\begin{displaymath}
		\mathbb{E}_{\pi_{Y_{\mathcal{P}}}} \left[ D^{sym}_{\text{KL}} (\pi_{X|Y_{\mathcal{P}}}, \pi_{X} ) \right] \coloneqq  \mathbb{E}_{\pi_{Y_{\mathcal{P}}}} \left[ D_{\text{KL}}(\pi_{{X|Y_{\mathcal{P}} }} \| \pi_{{X}} ) + D_{\text{KL}}(\pi_{{X}} \| \pi_{{X|Y_{\mathcal{P}} }} )\right]  =  \sum_j   \widehat{\sigma}_{j}.
\end{displaymath}
\end{proposition}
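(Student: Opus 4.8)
\textbf{Proof proposal for Proposition~\ref{prop:expSymKLDiv}.}

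The plan is to compute the symmetrized KL divergence $D^{sym}_{\text{KL}}(\pi_{X|Y_{\mathcal{P}}=y}, \pi_X)$ for a fixed realization $y$ of $Y_{\mathcal{P}}$, take the expectation over $\pi_{Y_{\mathcal{P}}}$, and recognize the result as $\sum_j \widehat{\sigma}_j$. Since both $\pi_X = \mathcal{N}(0,\Gamma_X)$ and $\pi_{X|Y_{\mathcal{P}}=y} = \mathcal{N}(\mu_{X|Y_{\mathcal{P}}}(y), \Gamma_{X|Y_{\mathcal{P}}})$ are Gaussian, I would start from the closed-form expression for the KL divergence between two Gaussians. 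Writing $\mu \coloneqq \mu_{X|Y_{\mathcal{P}}}(y)$, $\Gamma_{\text{post}} \coloneqq \Gamma_{X|Y_{\mathcal{P}}}$, $\Gamma_{\text{pr}} \coloneqq \Gamma_X$, the two divergences are
\begin{align*}
D_{\text{KL}}(\pi_{X|Y_{\mathcal{P}}} \| \pi_X) &= \tfrac12\left(\tr(\Gamma_{\text{pr}}^{-1}\Gamma_{\text{post}}) - n + \mu^\top \Gamma_{\text{pr}}^{-1}\mu + \log\det(\Gamma_{\text{pr}}\Gamma_{\text{post}}^{-1})\right),\\
D_{\text{KL}}(\pi_X \| \pi_{X|Y_{\mathcal{P}}}) &= \tfrac12\left(\tr(\Gamma_{\text{post}}^{-1}\Gamma_{\text{pr}}) - n + \mu^\top \Gamma_{\text{post}}^{-1}\mu + \log\det(\Gamma_{\text{post}}\Gamma_{\text{pr}}^{-1})\right).
\end{align*}
Adding these, the $\log\det$ terms cancel and the $-n$ terms combine, yielding
\[
D^{sym}_{\text{KL}}(\pi_{X|Y_{\mathcal{P}}=y}, \pi_X) = \tfrac12\left(\tr(\Gamma_{\text{pr}}^{-1}\Gamma_{\text{post}}) + \tr(\Gamma_{\text{post}}^{-1}\Gamma_{\text{pr}}) - 2n + \mu^\top(\Gamma_{\text{pr}}^{-1} + \Gamma_{\text{post}}^{-1})\mu\right).
\]

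Next I would take the expectation over $y \sim \pi_{Y_{\mathcal{P}}}$. The only $y$-dependent term is the quadratic form in $\mu = \mu_{X|Y_{\mathcal{P}}}(y)$; using $\mathbb{E}_{\pi_{Y_{\mathcal{P}}}}[\mu\mu^\top] = \Gamma_{\text{pr}} - \Gamma_{\text{post}}$ (the covariance of the posterior mean equals the prior covariance minus the posterior covariance, a standard identity for the jointly Gaussian model), we get
\[
\mathbb{E}\big[\mu^\top(\Gamma_{\text{pr}}^{-1}+\Gamma_{\text{post}}^{-1})\mu\big] = \tr\!\big((\Gamma_{\text{pr}}^{-1}+\Gamma_{\text{post}}^{-1})(\Gamma_{\text{pr}} - \Gamma_{\text{post}})\big) = n - \tr(\Gamma_{\text{pr}}^{-1}\Gamma_{\text{post}}) + \tr(\Gamma_{\text{post}}^{-1}\Gamma_{\text{pr}}) - n.
\]
Substituting back and simplifying, the $-2n$ cancels and the $\tr(\Gamma_{\text{pr}}^{-1}\Gamma_{\text{post}})$ terms cancel, leaving
\[
\mathbb{E}_{\pi_{Y_{\mathcal{P}}}}\big[D^{sym}_{\text{KL}}(\pi_{X|Y_{\mathcal{P}}}, \pi_X)\big] = \tr(\Gamma_{\text{post}}^{-1}\Gamma_{\text{pr}}) - n = \tr\!\big(\Gamma_{X|Y_{\mathcal{P}}}^{-1}(\Gamma_X - \Gamma_{X|Y_{\mathcal{P}}})\big).
\]
Finally I would identify this trace with $\sum_j \widehat{\sigma}_j$: by definition $\widehat{\sigma}_j$ are the generalized eigenvalues of $(\Gamma_X - \Gamma_{X|Y_{\mathcal{P}}}, \Gamma_{X|Y_{\mathcal{P}}})$, i.e.\ the eigenvalues of $\Gamma_{X|Y_{\mathcal{P}}}^{-1}(\Gamma_X - \Gamma_{X|Y_{\mathcal{P}}})$, so their sum is exactly this trace; the equivalence with the data-space pencil $(\Gamma_{Y_{\mathcal{P}}} - \Gamma_{Y_{\mathcal{P}}|X}, \Gamma_{Y_{\mathcal{P}}|X})$ follows from \Cref{prop:eigen_prob_equivalence} applied to the compressed operators.

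The main obstacle is establishing the identity $\mathbb{E}_{\pi_{Y_{\mathcal{P}}}}[\mu_{X|Y_{\mathcal{P}}}(y)\,\mu_{X|Y_{\mathcal{P}}}(y)^\top] = \Gamma_X - \Gamma_{X|Y_{\mathcal{P}}}$ cleanly; this is the law of total covariance ($\Gamma_X = \mathbb{E}[\mathrm{Cov}(X|Y_{\mathcal{P}})] + \mathrm{Cov}(\mathbb{E}[X|Y_{\mathcal{P}}])$, and the conditional covariance is the deterministic constant $\Gamma_{X|Y_{\mathcal{P}}}$ in the linear--Gaussian case), but it can also be verified directly from the explicit formula \cref{eqn:reduced_data_posterior_mean} together with $\mathbb{E}[yy^\top] = \Gamma_{Y_{\mathcal{P}}}$ and a short computation using the matrix identities relating $\Gamma_{X|Y_{\mathcal{P}}}$, $G_{\mathcal{P}}$, $\Gamma_{Y_{\mathcal{P}}|X}$, and $\Gamma_{Y_{\mathcal{P}}}$. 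Everything else is bookkeeping with Gaussian KL formulas and trace manipulations.
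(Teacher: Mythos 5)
Your proposal is correct and follows essentially the same route as the paper's proof: expand the two Gaussian KL divergences, note the cancellation of the log-determinant terms, use $\mathbb{E}_{\pi_{Y_{\mathcal{P}}}}[\mu_{X|Y_{\mathcal{P}}}\mu_{X|Y_{\mathcal{P}}}^{\top}] = \Gamma_X - \Gamma_{X|Y_{\mathcal{P}}}$ to reduce everything to $\tr(\Gamma_{X|Y_{\mathcal{P}}}^{-1}\Gamma_X) - n$, and invoke \Cref{prop:eigen_prob_equivalence} to identify this with $\sum_j \widehat{\sigma}_j$. The only cosmetic difference is that you appeal to the law of total covariance for the key identity, whereas the paper verifies it by direct computation from \cref{eqn:reduced_data_posterior_mean}; both are fine.
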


\begin{proof}  {\bfseries \Cref{prop:expSymKLDiv} }  \label{proof:prop_expSymKLDiv}
Since the prior and posterior are normal distributions, we can write the expected symmetrized KL divergence analytically,
\begin{multline} \label{eqn:expSymDKL}
	\mathbb{E}_{\pi_{Y_{\mathcal{P}}}} \left[ D^{sym}_{\text{KL}} (\pi_{X|Y_{\mathcal{P}}}, \pi_{X} ) \right]
     = \frac{1}{2} \Bigl( \tr{\Gamma_{X}^{-1}\Gamma_{X|Y_{\mathcal{P}}}}   +	\mathbb{E} \left[ \mu_{X|Y_{\mathcal{P}}}^{\top }\Gamma_{X}^{-1}\mu_{X|Y_{\mathcal{P}}} \right]  \\
      + \tr{\Gamma_{X|Y_{\mathcal{P}}}^{-1}\Gamma_{X}} +\mathbb{E}\left[ \mu_{X|Y_{\mathcal{P}}}^{\top }\Gamma_{X|Y_{\mathcal{P}}}^{-1}\mu_{X|Y_{\mathcal{P}}} \right] - 2n  \Bigr).
\end{multline}
Above we have assumed without any loss of generality that the prior has zero mean. The posterior mean $\mu_{X|Y_{\mathcal{P}}} \coloneqq  \Gamma_{X|Y_{\mathcal{P}}} G^{\top}_{\mathcal{P}} \Gamma_{Y_{\mathcal{P}}|X}^{-1} Y_{\mathcal{P}}$ is a function of the actual realization of data, unlike the covariance operator $\Gamma_{X|Y_{\mathcal{P}}}$ which is independent of data. 

Simplifying the above expression is easier when we evaluate $\mathbb{E}_{\pi_{Y_{\mathcal{P}}}} \left[ \mu_{X|Y_{\mathcal{P}}} \mu_{X|Y_{\mathcal{P}}}^{\top } \right]$.
\begin{subequations}\begin{align}
	\mathbb{E}_{\pi_{Y_{\mathcal{P}}}} \left[ \mu_{X|Y_{\mathcal{P}}} \mu_{X|Y_{\mathcal{P}}}^{\top } \right]
	&= \Gamma_{X|Y_{\mathcal{P}}} G^{\top}_{\mathcal{P}} \Gamma_{Y_{\mathcal{P}}|X}^{-1}  \mathbb{E} \left[ Y_{\mathcal{P}} Y_{\mathcal{P}}^{\top} \right]  \Gamma_{Y_{\mathcal{P}}|X}^{-1} G_{\mathcal{P}} \Gamma_{X|Y_{\mathcal{P}}}, \\
	&\overset{(i)}{=}  \Gamma_{X|Y_{\mathcal{P}}} G^{\top}_{\mathcal{P}} \Gamma_{Y_{\mathcal{P}}|X}^{-1}  \lr{ \Gamma_{Y_{\mathcal{P}}|X}  + G_{\mathcal{P}} \Gamma_{X} G^{\top}_{\mathcal{P}}} \Gamma_{Y_{\mathcal{P}}|X}^{-1} G_{\mathcal{P}} \Gamma_{X|Y_{\mathcal{P}}}, \\
	&=  \Gamma_{X|Y_{\mathcal{P}}}  \lr{ \Gamma_{X}^{-1}  + G^{\top}_{\mathcal{P}} \Gamma_{Y_{\mathcal{P}}|X}^{-1} G_{\mathcal{P}}} \Gamma_{X} G^{\top}_{\mathcal{P}}\Gamma_{Y_{\mathcal{P}}|X}^{-1} G_{\mathcal{P}} \Gamma_{X|Y_{\mathcal{P}}}, \\
	&\overset{(ii)}{=}  \Gamma_{X|Y_{\mathcal{P}}}  \Gamma_{X|Y_{\mathcal{P}}}^{-1}  \Gamma_{X} \lr{ \Gamma_{X|Y_{\mathcal{P}}}^{-1} - \Gamma_{X} } \Gamma_{X|Y_{\mathcal{P}}}, \\
	&=   \Gamma_{X} - \Gamma_{X|Y_{\mathcal{P}}}.
\end{align} \end{subequations}
In the above set of equations, $(i)$ holds since $\Gamma_{Y_{\mathcal{P}}|X}  + G_{\mathcal{P}} \Gamma_{X} G^{\top}_{\mathcal{P}} = \mathbb{E}_{\pi_{Y_{\mathcal{P}}}} \left[ Y_{\mathcal{P}} Y_{\mathcal{P}}^{\top} \right] \eqqcolon  \Gamma_{Y_{\mathcal{P}}}$ is the marginal of the data, and $(ii)$ holds since $ \Gamma_{X}^{-1}  + G^{\top}_{\mathcal{P}} \Gamma_{Y_{\mathcal{P}}|X}^{-1} G_{\mathcal{P}} \eqqcolon  \Gamma_{X|Y_{\mathcal{P}}}^{-1}$ is the posterior precision.

Using the cyclic property of trace and linearity of the expectation operator it is now easy to see that,
\begin{align}
\label{eqn:expPostMeanPriorPrecision}\mathbb{E}_{\pi_{Y_{\mathcal{P}}}} \left[ \mu_{X|Y_{\mathcal{P}}}^{\top }\Gamma_{X}^{-1}\mu_{X|Y_{\mathcal{P}}} \right]
&= \tr{\Gamma_{X}^{-1} \lr{\Gamma_{X} - \Gamma_{X|Y_{\mathcal{P}}}}} = n - \tr{\Gamma_{X}^{-1}\Gamma_{X|Y_{\mathcal{P}}}} , \\
\label{eqn:expPostMeanPostPrecision} \mathbb{E}_{\pi_{Y_{\mathcal{P}}}} \left[ \mu_{X|Y_{\mathcal{P}}}^{\top }\Gamma_{X|Y_{\mathcal{P}}}^{-1}\mu_{X|Y_{\mathcal{P}}} \right]
&= \tr{\Gamma_{X|Y_{\mathcal{P}}}^{-1} \lr{\Gamma_{X} - \Gamma_{X|Y_{\mathcal{P}}}}} =  \tr{\Gamma_{X|Y_{\mathcal{P}}}^{-1} \Gamma_{X}} -n.
\end{align}

Using \cref{eqn:expPostMeanPriorPrecision} and \cref{eqn:expPostMeanPostPrecision} in \cref{eqn:expSymDKL} we have,
\begin{equation}
\mathbb{E}_{\pi_{Y_{\mathcal{P}}}} \left[ D^{sym}_{\text{KL}} (\pi_{X|Y_{\mathcal{P}}}, \pi_{X} ) \right]
=  \tr{\Gamma_{X|Y_{\mathcal{P}}}^{-1}\Gamma_{X}}  - n.
\end{equation}
The proof is completed by recognizing that $ \tr{\Gamma_{X|Y_{\mathcal{P}}}^{-1}\Gamma_{X}}$ is the sum of generalized eigenvalues of the definite pair $\lr{\Gamma_{X},\Gamma_{X|Y_{\mathcal{P}}}}$, which we know from \Cref{prop:eigen_prob_equivalence}  is $n + \sum_j \widehat{\sigma}_j $, where $\widehat{\sigma}_j$ are the generalized eigenvalues of the definite pair $(\Gamma_{X} - \Gamma_{X|Y_{\mathcal{P}} },\Gamma_{X|Y_{\mathcal{P}} })$ or equivalently $(\Gamma_{Y_{\mathcal{P}}} - \Gamma_{Y_{\mathcal{P}}|X},\Gamma_{Y_{\mathcal{P}}|X})$.

\end{proof}

\subsubsection{Bounds on sub/sup--modularity ratios}

We adapt an eigenvalue interlacing result for definite pairs which will be useful in proving \Cref{prop:bound_sup_sub_modularity_ratio}.
\begin{theorem}\textbf{\rm \bf (Cauchy Interlacing Theorem \cite{horn_johnson_2012,bhatia1997}, \cite[Theorem~2.3]{Kressner2014}, \cite[Theorem~2.1]{KovacStriko_Veselic_1995})}
\label{thm:cauchy_interlacing}
Let $A,B \in \mathbb{R}^{n \times n}$ with $A \succeq 0 ,B \succ 0$, and let $\gamma_1 \geq \gamma_2 \geq \cdots  \geq \gamma_{\widehat{n}} > 0, \ \widehat{n} \leq n$, be the eigenvalues of $(A,B)$. 
For any $Z \in \mathbb{R}^{n \times p}$, with  $p \leq n$ and full column-rank, let $\mu_1 \geq \mu_2 \geq \cdots  \geq \mu_{\widehat{p}} > 0, \ \widehat{p} \leq p,$ be the eigenvalues of $(Z^{\top}AZ,Z^{\top}BZ)$. Then:
\begin{displaymath}
\gamma_{n-p+k} 
\leq \mu_k 
\leq \gamma_k , \qquad k = 1,\ldots,\widehat{p}.
\end{displaymath}
If additionally  $A \succ 0$, then $\widehat{n} = n$ and $\widehat{p} = p$.
\end{theorem}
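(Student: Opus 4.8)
The plan is to reduce the generalized (definite pair) eigenvalue problem to a standard symmetric eigenvalue problem, then invoke the classical Cauchy interlacing theorem in that setting. First I would handle the case $A \succ 0$, which is cleaner: since $B \succ 0$, write $B = L L^{\top}$ with $L$ invertible (Cholesky, or $B^{1/2}$). Then the eigenvalues of $(A,B)$ coincide with those of the symmetric positive definite matrix $\widehat{A} \coloneqq L^{-1} A L^{-\top}$, because $A v = \gamma B v \iff \widehat{A}(L^{\top}v) = \gamma (L^{\top}v)$. Similarly, for $Z$ of full column rank, $Z^{\top}BZ \succ 0$, so its Cholesky factor $Z^{\top}BZ = M M^{\top}$ is invertible, and the eigenvalues of $(Z^{\top}AZ, Z^{\top}BZ)$ are those of $M^{-1}(Z^{\top}AZ)M^{-\top}$. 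The key algebraic step is to exhibit this latter matrix as a compression $W^{\top}\widehat{A}W$ of $\widehat{A}$ by an isometry $W \in \mathbb{R}^{n\times p}$. Setting $W \coloneqq L^{\top}Z M^{-\top}$, one checks $W^{\top}W = M^{-1}Z^{\top}L L^{\top}Z M^{-\top} = M^{-1}(Z^{\top}BZ)M^{-\top} = I_p$, so $W$ has orthonormal columns, and $W^{\top}\widehat{A}W = M^{-1}Z^{\top}L L^{-1}AL^{-\top}L^{\top}Z M^{-\top} = M^{-1}(Z^{\top}AZ)M^{-\top}$, exactly the matrix whose eigenvalues are the $\mu_k$. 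Now the standard Cauchy interlacing theorem for the compression of a symmetric matrix by an isometry gives $\gamma_{n-p+k} \le \mu_k \le \gamma_k$ for $k = 1,\ldots,p$, and since $A \succ 0$ forces $\widehat{A}\succ 0$ and $W^{\top}\widehat{A}W \succ 0$ we have $\widehat{n} = n$, $\widehat{p} = p$, so all indices are in range.

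For the general case $A \succeq 0$ (possibly singular), the reduced matrix $\widehat{A} = L^{-1}AL^{-\top}$ is only positive semidefinite, with $\widehat{n}$ positive eigenvalues and $n - \widehat{n}$ zero eigenvalues; likewise $W^{\top}\widehat{A}W \succeq 0$ has $\widehat{p}$ positive eigenvalues. The cleanest route is a perturbation argument: apply the already-established strict case to $A_\varepsilon \coloneqq A + \varepsilon I \succ 0$ (note $Z^{\top}A_\varepsilon Z \succ 0$ as well), obtaining $\gamma^\varepsilon_{n-p+k} \le \mu^\varepsilon_k \le \gamma^\varepsilon_k$ for all $k = 1,\ldots,p$, then let $\varepsilon \to 0^+$; by continuity of eigenvalues of definite pairs (the $B$-side is fixed and positive definite, so the generalized eigenvalues depend continuously on $A$), the $\gamma^\varepsilon_j \to \gamma_j$ where we pad the spectrum of $(A,B)$ with zeros to length $n$, and similarly $\mu^\varepsilon_j \to \mu_j$ padded to length $p$. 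Restricting attention to the indices $k \le \widehat{p}$ where $\mu_k > 0$ yields the stated inequalities with the genuine (positive) eigenvalues; the only subtlety is bookkeeping the indices correctly when zeros are inserted, which is a routine consequence of the padding convention.

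The main obstacle I anticipate is not conceptual but notational: keeping the index alignment straight between the padded-to-$n$ convention (needed to invoke the textbook interlacing statement, which is about all $n$ eigenvalues including zeros) and the paper's convention of listing only the $\widehat{n}$ (respectively $\widehat{p}$) positive generalized eigenvalues. One must verify that $n - p + k$ still points to a positive $\gamma$ whenever $k \le \widehat{p}$, i.e., that $n - p + k \le \widehat{n}$ in that range, which follows from $\widehat{p} \le \widehat{n} - (n-p)$ — itself a consequence of interlacing applied to ranks (the rank of the compression $W^{\top}\widehat A W$ drops by at most $n - p$ from that of $\widehat A$). A secondary, very minor point is justifying the continuity of generalized eigenvalues under the $\varepsilon$-perturbation; since $B$ is fixed and invertible this reduces to continuity of eigenvalues of $L^{-1}A_\varepsilon L^{-\top}$ in $\varepsilon$, which is standard. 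Everything else is a short chain of identities, so I would keep the write-up brief and cite \cite{horn_johnson_2012,bhatia1997} for the underlying standard Cauchy interlacing theorem.
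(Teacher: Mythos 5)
The paper does not prove this theorem: it is stated as a known result and attributed to \cite{horn_johnson_2012,bhatia1997,Kressner2014,KovacStriko_Veselic_1995}, so there is no in-paper argument to compare against. Your reduction is the standard textbook route and its core is correct: the congruence $\widehat{A}=L^{-1}AL^{-\top}$, the isometry $W=L^{\top}ZM^{-\top}$ with $W^{\top}W=I_p$ and $W^{\top}\widehat{A}W=M^{-1}(Z^{\top}AZ)M^{-\top}$, and the Poincar\'e separation theorem together give the interlacing inequalities. (The $\varepsilon$-perturbation detour is unnecessary: Poincar\'e separation applies directly to the positive semidefinite $\widehat{A}$, with zero eigenvalues included, and you then restrict to the indices where $\mu_k>0$.)

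There is, however, a concrete error in your final bookkeeping paragraph. You claim that $n-p+k\leq\widehat{n}$ for all $k\leq\widehat{p}$, justified by ``$\widehat{p}\leq\widehat{n}-(n-p)$'' as a consequence of the rank of the compression dropping by at most $n-p$. The rank statement you invoke is Sylvester's inequality, and it points the other way: writing $\widehat{A}=C^{\top}C$ with $\operatorname{rank}(C)=\widehat{n}$, one gets $\widehat{p}=\operatorname{rank}(CW)\geq\widehat{n}+p-n$, i.e.\ $\widehat{p}\geq\widehat{n}-(n-p)$, which does not give what you need. In fact the claim $n-p+\widehat{p}\leq\widehat{n}$ is false: take $n=2$, $p=1$, $B=I_2$, $A=\operatorname{diag}(1,0)$, $Z=e_1$; then $\widehat{n}=\widehat{p}=1$ but $n-p+\widehat{p}=2>1=\widehat{n}$. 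So $\gamma_{n-p+k}$ can index past the positive part of the spectrum; the correct reading (consistent with how \Cref{prop:bound_sup_sub_modularity_ratio} uses the result via \cref{eqn:corr_cauchy_interlacing}) is to pad the spectrum of $(A,B)$ with zeros to length $n$, in which case the lower bound $\gamma_{n-p+k}\leq\mu_k$ holds trivially as $0\leq\mu_k$ whenever $n-p+k>\widehat{n}$. With that convention fixed, the rest of your argument goes through.
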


\begin{proof}  {\bfseries \Cref{prop:bound_sup_sub_modularity_ratio}} \label{proof:prop_bound_sup_sub_modularity_ratio}
In \cite{Bian_etal_2017}  the authors provide a bound on the submodularity ratio for the Bayesian A-optimality design criterion. Our arguments use that proof as a template, but are modified to account for the mutual information design criterion. In addition we seek a bound for the supermodularity ratio, which intriguingly is the same.

In order to  bound the \emph{submodularity} ratio we need to lower bound, 
\begin{displaymath}
	\frac{\sum_{\nu \in \mathscr{A} \setminus \mathscr{B}} \rho_{\nu}(\mathscr{B})}{\rho_{\mathscr{A}} (\mathscr{B})} = \frac{\sum_{\nu \in \mathscr{A} \setminus \mathscr{B}}  F(\nu \cup\mathscr{B}) -F(\mathscr{B})}{ F(\mathscr{A}\cup\mathscr{B}) -F(\mathscr{B})}, \qquad \forall \mathscr{A}, \mathscr{B} \subseteq \mathscr{V}.
\end{displaymath}
Similarly, in order to  bound the \emph{supermodularity} ratio we need to lower bound,
\begin{displaymath}
	\frac{\rho_{\mathscr{A}} (\mathscr{B})}{\sum_{\nu \in \mathscr{A} \setminus \mathscr{B}} \rho_{\nu}(\mathscr{B})} = \frac{ F(\mathscr{A}\cup\mathscr{B}) -F(\mathscr{B})}{\sum_{\nu \in \mathscr{A} \setminus \mathscr{B}}  F(\nu \cup\mathscr{B}) -F(\mathscr{B})}, \qquad \forall \mathscr{A}, \mathscr{B} \subseteq \mathscr{V}.
\end{displaymath}
It is clear that we can achieve the above tasks by finding lower and upper bounds for  $F(\mathscr{A}\cup\mathscr{B}) -F(\mathscr{B})$ and
$\sum_{\nu \in \mathscr{A} \setminus \mathscr{B}}  F(\nu \cup\mathscr{B}) -F(\mathscr{B})$ respectively. 

First we introduce some notation to aid the proof.  For any index set $\mathscr{A}$ with corresponding selection operator $\mathcal{P}_{\mathscr{A}}$, let $\zeta_j^{\mathscr{A}}$ represent the eigenvalue of the compressed pencil $\lr{\mathcal{P}_{\mathscr{A}}^{\top} \Gamma_Y \mathcal{P}_{\mathscr{A}}, \mathcal{P}_{\mathscr{A}}^{\top} \Gamma_{Y|X}\mathcal{P}_{\mathscr{A}}}$.
 $\zeta_j$ \emph{without any superscript} will continue to represent any eigenvalue of the pencil $\lr{\Gamma_Y, \Gamma_{Y|X}}$. Since we consider mutual information as the objective, $F(\mathscr{A})$ in terms of the eigenvalues $\zeta_j^{\mathscr{A}}$ is simply (\cref{eqn:mutualInfoGaussians_using_logDet,eqn:mutualInfoGaussians_reduced_data}),
\begin{equation}
	F(\mathscr{A}) = \frac{1}{2} \log \prod_{j=1}^{\abs{\mathscr{A}}} \zeta_j^{\mathscr{A}}.
\end{equation}

We now state a simple corollary of \Cref{thm:cauchy_interlacing}. We omit the proof since it is quite trivial. For any two index sets $\mathscr{A}, \mathscr{B}$  we have,
\begin{equation} \label{eqn:corr_cauchy_interlacing}
	\zeta_j^{\mathscr{A} \cup \mathscr{B}} \geq \zeta_j^{\mathscr{B}} \geq \zeta^{\mathscr{A}  \cup \mathscr{B}}_{\abs{\mathscr{A} \cup \mathscr{B}} - \abs{\mathscr{B}} + j}, \qquad j=1,\ldots,\abs{\mathscr{B}}.
\end{equation}
As in \Cref{thm:cauchy_interlacing}, in the above statement we have ordered the eigenvalues such that $\zeta_1^{\mathscr{B}} \geq \cdots \geq \zeta_{\abs{\mathscr{B}}}^{\mathscr{B}}$ and $\zeta_1^{\mathscr{A} \cup \mathscr{B}} \geq \cdots \geq \zeta_{\abs{\mathscr{A} \cup \mathscr{B}}}^{\mathscr{A}\cup \mathscr{B}}$. This simple result proves handy in bounding $ F(\mathscr{A} \cup \mathscr{B}) -F(\mathscr{B})$.
\begin{subequations} \label{eqn:prop_bound_sup_sub_modularity_ratio_helpful_bound1} \begin{align}
 F(\mathscr{A} \cup \mathscr{B}) -F(\mathscr{B})
 	&=  \frac{1}{2} \log \prod_{j=1}^{\abs{\mathscr{A} \cup \mathscr{B}}}  \zeta_j^{\mathscr{A}  \cup\mathscr{B}} - \frac{1}{2} \log \prod_{j=1}^{\abs{\mathscr{B}}} \zeta_j^{\mathscr{B}}, \\
 	&=  \frac{1}{2} \log \prod_{j=1}^{\abs{\mathscr{A} \cup \mathscr{B}} - \abs{\mathscr{B}}} \zeta_j^{\mathscr{A}  \cup\mathscr{B}} + \frac{1}{2} \log \prod_{j=1}^{\abs{\mathscr{B}}}  \frac{ \zeta_{\abs{\mathscr{A} \cup \mathscr{B}} - \abs{\mathscr{B}} + j}^{\mathscr{A}  \cup\mathscr{B}}}{\zeta_j^{\mathscr{B}}}, \\
 	&\leq \frac{1}{2}  \log \prod_{j=1}^{\abs{\mathscr{A} \setminus \mathscr{B}}} \zeta_j^{\mathscr{A}  \cup\mathscr{B}}, \quad \because \quad \text{\cref{eqn:corr_cauchy_interlacing}} \Rightarrow  \frac{ \zeta_{\abs{\mathscr{A} \cup \mathscr{B}} - \abs{\mathscr{B}} + j}^{\mathscr{A}  \cup\mathscr{B}}}{\zeta_j^{\mathscr{B}}} \leq 1, \\
 	&\leq \frac{1}{2} \log \lr{ \zeta_1^{\mathscr{A}  \cup\mathscr{B}} }^{\abs{\mathscr{A} \setminus \mathscr{B}}},  \quad \because \quad  \zeta_1^{\mathscr{A}  \cup\mathscr{B}} \geq  \zeta_j^{\mathscr{A}  \cup\mathscr{B}} \quad \forall j,   \\
 	&\leq \frac{\abs{\mathscr{A} \setminus \mathscr{B}}}{2} \log  \zeta_1^{\mathscr{A}  \cup\mathscr{B}}.
\end{align}  \end{subequations}
\begin{subequations} \label{eqn:prop_bound_sup_sub_modularity_ratio_helpful_bound2} \begin{align}
 F(\mathscr{A} \cup \mathscr{B}) -F(\mathscr{B})
 	&=  \frac{1}{2} \log \prod_{j=1}^{\abs{\mathscr{A} \cup \mathscr{B}} - \abs{\mathscr{B}}} \zeta_{\abs{\mathscr{B}} + j}^{\mathscr{A}  \cup\mathscr{B}} + \frac{1}{2} \log \prod_{j=1}^{\abs{\mathscr{B}}}  \frac{ \zeta_{j}^{\mathscr{A}  \cup\mathscr{B}}}{\zeta_j^{\mathscr{B}}}, \\
 	&\geq \frac{1}{2}  \log \prod_{j=1}^{\abs{\mathscr{A} \setminus \mathscr{B}}} \zeta_{\abs{\mathscr{B}} + j}^{\mathscr{A}  \cup\mathscr{B}}, \quad \because \quad \text{\cref{eqn:corr_cauchy_interlacing}} \Rightarrow   \frac{ \zeta_{j}^{\mathscr{A}  \cup\mathscr{B}}}{\zeta_j^{\mathscr{B}}} \geq 1,  \\
 	&\geq \frac{1}{2} \log \lr{ \zeta_{\abs{\mathscr{A} \cup \mathscr{B}}}^{\mathscr{A}  \cup\mathscr{B}}}^{\abs{\mathscr{A}  \setminus \mathscr{B}}},  \quad \because \quad \zeta^{\mathscr{A}  \cup\mathscr{B}}_{\abs{\mathscr{A} \cup \mathscr{B}}} \leq \zeta^{\mathscr{A}  \cup\mathscr{B}}_{\abs{\mathscr{B}}+j} \quad \forall j, \\
 	&\geq \frac{\abs{\mathscr{A} \setminus \mathscr{B}}}{2} \log  \zeta_{\abs{\mathscr{A} \cup \mathscr{B}}}^{\mathscr{A}  \cup\mathscr{B}}.
\end{align}  \end{subequations}

Specializing  \cref{eqn:corr_cauchy_interlacing} to the case when the set $\mathscr{A}$ is a singleton set consisting of the element $\nu$ we have the following result,
\begin{equation} \label{eqn:corr_cauchy_interlacing_special_case}
	\zeta_j^{\nu \cup \mathscr{B}} \geq \zeta_j^{\mathscr{B}} \geq \zeta^{\nu \cup \mathscr{B}}_{j+1}, \qquad j=1,\ldots,\abs{\mathscr{B}}.
\end{equation}
Using \cref{eqn:corr_cauchy_interlacing_special_case}  we can bound the term $\sum_{\nu \in \mathscr{A} \setminus \mathscr{B}}  F(\nu \cup\mathscr{B}) -F(\mathscr{B})$ using similar arguments as  in \cref{eqn:prop_bound_sup_sub_modularity_ratio_helpful_bound1,eqn:prop_bound_sup_sub_modularity_ratio_helpful_bound2}.
\begin{subequations} \label{eqn:prop_bound_sup_sub_modularity_ratio_helpful_bound3} \begin{align}
	\sum_{\nu \in \mathscr{A} \setminus \mathscr{B}}  F(\nu \cup\mathscr{B}) -F(\mathscr{B})
	&= \sum_{\nu \in \mathscr{A} \setminus \mathscr{B}} \lr{ \frac{1}{2} \log \prod_{j=1}^{\abs{\mathscr{B}}+1} \zeta_j^{\nu \cup\mathscr{B}} - \frac{1}{2} \log \prod_{j=1}^{\abs{\mathscr{B}}} \zeta_j^{\mathscr{B}} }, \\
	&= \sum_{\nu \in \mathscr{A} \setminus \mathscr{B}} \lr{ \frac{1}{2} \log  \zeta_{\abs{\mathscr{B}}+1}^{\nu \cup \mathscr{B}} +  \frac{1}{2} \log \prod_{j=1}^{\abs{\mathscr{B}}} \frac{\zeta_j^{\nu \cup\mathscr{B}}}{\zeta_j^{\mathscr{B}} } }, \\
	&\geq   \frac{\abs{\mathscr{A} \setminus \mathscr{B}}}{2}  \log  \zeta_{\abs{\mathscr{B}}+1}^{\nu^{\dagger} \cup \mathscr{B}}, \quad \because \quad  \frac{\zeta_j^{\nu \cup\mathscr{B}}}{\zeta_j^{\mathscr{B}}} \geq 1,
\end{align}  \end{subequations}
where $\nu^{\dagger} = \argmin_{\nu \in \mathscr{A} \setminus \mathscr{B}}  \zeta_{\abs{\mathscr{B}}+1}^{\nu \cup \mathscr{B}}$.
\begin{subequations}  \label{eqn:prop_bound_sup_sub_modularity_ratio_helpful_bound4} \begin{align}
	\sum_{\nu \in \mathscr{A} \setminus \mathscr{B}}  F(\nu \cup\mathscr{B}) -F(\mathscr{B})
	&= \sum_{\nu \in \mathscr{A} \setminus \mathscr{B}} \lr{ \frac{1}{2} \log  \zeta_{1}^{\nu \cup \mathscr{B}} +  \frac{1}{2} \log \prod_{j=1}^{\abs{\mathscr{B}}} \frac{\zeta_{j+1}^{\nu \cup\mathscr{B}}}{\zeta_j^{\mathscr{B}} } }, \\
	&\leq   \frac{\abs{\mathscr{A} \setminus \mathscr{B}}}{2}  \log  \zeta_{1}^{\nu^{\ddagger} \cup \mathscr{B}},  \quad \because \quad  \frac{\zeta_{j+1}^{\nu \cup\mathscr{B}}}{\zeta_j^{\mathscr{B}}} \leq 1,
\end{align}  \end{subequations}
where $\nu^{\ddagger} = \argmax_{\nu \in \mathscr{A} \setminus \mathscr{B}}  \zeta_{1}^{\nu \cup \mathscr{B}}$.
Gathering the results from \cref{eqn:prop_bound_sup_sub_modularity_ratio_helpful_bound1,eqn:prop_bound_sup_sub_modularity_ratio_helpful_bound2,eqn:prop_bound_sup_sub_modularity_ratio_helpful_bound3,eqn:prop_bound_sup_sub_modularity_ratio_helpful_bound4} we have,
\begin{alignat}{2}
\label{eqn:prop_bound_sup_sub_modularity_ratio_helpful_bound5}	 \frac{\abs{\mathscr{A} \setminus \mathscr{B}}}{2}  \log  \zeta_{\abs{\mathscr{B}}+1}^{\nu^{\dagger} \cup \mathscr{B}} 
	& 	\leq  \sum_{\nu \in \mathscr{A} \setminus \mathscr{B}}  F(\nu \cup\mathscr{B}) -F(\mathscr{B}) 
	&& \leq \frac{\abs{\mathscr{A} \setminus \mathscr{B}}}{2}  \log  \zeta_{1}^{\nu^{\ddagger} \cup \mathscr{B}},\\
\label{eqn:prop_bound_sup_sub_modularity_ratio_helpful_bound6}	 \frac{\abs{\mathscr{A} \setminus \mathscr{B}}}{2}  \log  \zeta_{\abs{\mathscr{A} \cup \mathscr{B}}}^{\mathscr{A}  \cup\mathscr{B}} 
	&  \leq \quad  F(\mathscr{A} \cup \mathscr{B}) -F(\mathscr{B}) 
	&& \leq \frac{\abs{\mathscr{A} \setminus \mathscr{B}}}{2}  \log  \zeta_1^{\mathscr{A}  \cup\mathscr{B}} .	
\end{alignat}

Using \cref{eqn:prop_bound_sup_sub_modularity_ratio_helpful_bound5,eqn:prop_bound_sup_sub_modularity_ratio_helpful_bound6} we have,
\begin{equation}
	 \frac{\sum_{\nu \in \mathscr{A} \setminus \mathscr{B}}  F(\nu \cup\mathscr{B}) -F(\mathscr{B})}{ F(\mathscr{A}\cup\mathscr{B}) -F(\mathscr{B})}
	 \geq \frac{ \log  \zeta_{\abs{\mathscr{B}}+1}^{\nu^{\dagger} \cup \mathscr{B}} }{\log  \zeta_1^{\mathscr{A}  \cup\mathscr{B}}} \geq \frac{\log \zeta_{\text{min}}}{\log \zeta_{\text{max}}}.
\end{equation}
\begin{equation}
	\frac{ F(\mathscr{A}\cup\mathscr{B}) -F(\mathscr{B})}{\sum_{\nu \in \mathscr{A} \setminus \mathscr{B}}  F(\nu \cup\mathscr{B}) -F(\mathscr{B})}
    \geq \frac{\log  \zeta_{\abs{\mathscr{A} \cup \mathscr{B}}}^{\mathscr{A}  \cup\mathscr{B}} }{\log  \zeta_{1}^{\nu^{\ddagger} \cup \mathscr{B}}} \geq  \frac{\log \zeta_{\text{min}}}{\log \zeta_{\text{max}}}.
\end{equation}
\end{proof}

\subsection{Proofs concerning MM algorithms for linear Bayesian experimental design} \label{appendix_subsec:lbip_mm}

This subsection is devoted to proving \Cref{prop:MBound_MI} and its corollaries.  We begin by stating certain standard results in linear algebra, subsequently deducing useful corollaries, and some helpful lemmas from them, all of which contribute to the argument of the main result.

\begin{theorem} [Hadamard's inequality \cite{horn_johnson_2012}] \label{thm:hadamard_inequality} Let $A \in \mathbb{R}^{m \times m}$ be any positive definite Hermitian matrix, with $a_{ij}$ representing its individual entries. Then,
\begin{displaymath}
	\det A \leq a_{11} \cdots a_{mm}
\end{displaymath}
\end{theorem}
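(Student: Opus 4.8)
\textbf{Proof plan for Hadamard's inequality (Theorem~\ref{thm:hadamard_inequality}).}

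The plan is to prove this by reduction to a scaled matrix whose diagonal entries are all $1$, and then applying the arithmetic--geometric mean inequality to the trace. First I would observe that since $A$ is positive definite, every diagonal entry $a_{ii}$ is strictly positive (it equals $e_i^{\top} A e_i > 0$). Define the diagonal matrix $D = \mathrm{diag}(a_{11}^{1/2},\ldots,a_{mm}^{1/2})$, which is invertible, and set $B = D^{-1} A D^{-1}$. Then $B$ is again positive definite (congruence preserves positive definiteness), and by construction $b_{ii} = a_{ii}/(a_{ii}^{1/2} a_{ii}^{1/2}) = 1$ for every $i$, so $\mathrm{trace}(B) = m$. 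Moreover $\det B = \det(D^{-1})^2 \det A = \det A / (a_{11}\cdots a_{mm})$, so the claimed inequality $\det A \le a_{11}\cdots a_{mm}$ is equivalent to $\det B \le 1$.

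Next I would bound $\det B$ in terms of $\mathrm{trace}(B)$. Since $B$ is Hermitian positive definite, it has real positive eigenvalues $\lambda_1,\ldots,\lambda_m$, with $\det B = \prod_j \lambda_j$ and $\mathrm{trace}(B) = \sum_j \lambda_j = m$. The AM--GM inequality gives
\begin{displaymath}
\lr{\prod_{j=1}^{m} \lambda_j}^{1/m} \leq \frac{1}{m}\sum_{j=1}^{m} \lambda_j = 1,
\end{displaymath}
hence $\det B = \prod_j \lambda_j \le 1$. Combining with the equivalence from the previous paragraph yields $\det A \le a_{11}\cdots a_{mm}$, which is the desired statement.

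There is essentially no serious obstacle here; the argument is standard. The one point that deserves a sentence of care is the justification that the diagonal entries are positive and that the congruence $B = D^{-1}AD^{-1}$ preserves positive definiteness, so that $B$ genuinely has positive real eigenvalues and AM--GM applies — but this is routine. (An alternative route, if one prefers to avoid the eigenvalue step, is an induction on $m$ using the Schur complement: writing $A$ in block form with the $(m-1)\times(m-1)$ leading principal submatrix $A_{11}$, one has $\det A = \det A_{11}\cdot(a_{mm} - v^{\top}A_{11}^{-1}v) \le \det A_{11}\cdot a_{mm}$ since $A_{11}^{-1}\succ 0$, and then applies the inductive hypothesis to $A_{11}$. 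I would present the AM--GM version as the cleaner of the two.)
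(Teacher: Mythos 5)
Your argument is correct: the diagonal rescaling $B = D^{-1}AD^{-1}$ reduces the claim to $\det B \le 1$ for a positive definite matrix with unit diagonal, and AM--GM on the eigenvalues finishes it; the Schur-complement induction you sketch as an alternative is equally valid. Note that the paper does not prove this statement at all --- it is quoted as a standard result with a citation to \cite{horn_johnson_2012} and used only to derive \Cref{corr:logDetInequality_hadamard} --- so your proof supplies a complete and correct justification where the paper simply defers to the literature.
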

\begin{corollary} \label{corr:logDetInequality_hadamard}  Let $A \in \mathbb{R}^{m \times m}$ be any positive definite Hermitian matrix, and $\mathcal{P} \in  \mathbb{R}^{m \times k}, k < m$ be a selection operator (\Cref{def:selectionOperator}), then:
\begin{displaymath}
	\log \lr{ \det \lr{ \mathcal{P}^{\top} A \mathcal{P}}} \leq  \tr{\mathcal{P}^{\top} \log \lr{\mathrm{diag}\lr{A}}  \mathcal{P}}
\end{displaymath}
\end{corollary}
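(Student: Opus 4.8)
The plan is to apply Hadamard's inequality (\Cref{thm:hadamard_inequality}) to the principal submatrix $\mathcal{P}^{\top} A \mathcal{P}$ and then translate the resulting bound into the language of traces. Since $\mathcal{P} = [e_{i_1},\ldots,e_{i_k}]$ is a selection operator, the matrix $\mathcal{P}^{\top} A \mathcal{P}$ is exactly the $k \times k$ principal submatrix of $A$ obtained by keeping rows and columns indexed by $\mathscr{I}_{\mathcal{P}} = \{i_1,\ldots,i_k\}$; in particular it is positive definite Hermitian, so \Cref{thm:hadamard_inequality} applies and gives $\det(\mathcal{P}^{\top} A \mathcal{P}) \leq \prod_{j=1}^{k} a_{i_j i_j}$, where $a_{i_j i_j} = e_{i_j}^{\top} A e_{i_j} > 0$ are the relevant diagonal entries of $A$.

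Next I would take logarithms of both sides, which is valid because all quantities are strictly positive. This yields $\log\det(\mathcal{P}^{\top} A \mathcal{P}) \leq \sum_{j=1}^{k} \log a_{i_j i_j}$. The remaining step is purely bookkeeping: the right-hand side is the sum of the diagonal entries of $\log(\mathrm{diag}(A))$ over the indices selected by $\mathcal{P}$, which is precisely $\tr{\mathcal{P}^{\top} \log(\mathrm{diag}(A)) \mathcal{P}}$. Indeed, $\mathrm{diag}(A)$ is the diagonal matrix with entries $a_{ii}$, so $\log(\mathrm{diag}(A))$ is the diagonal matrix with entries $\log a_{ii}$, and $\mathcal{P}^{\top} \log(\mathrm{diag}(A)) \mathcal{P}$ is the $k \times k$ diagonal matrix with entries $\log a_{i_j i_j}$; its trace is $\sum_{j=1}^{k} \log a_{i_j i_j}$. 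Combining the two displays gives the claim.

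There is essentially no obstacle here — the corollary is a direct, almost mechanical consequence of Hadamard's inequality plus the observation that multiplication by a selection operator extracts principal submatrices and diagonal subvectors. The only thing warranting a word of care is ensuring the reader sees why $\mathcal{P}^{\top} A \mathcal{P}$ is positive definite (it is a principal submatrix of a positive definite matrix, or equivalently $x^{\top}(\mathcal{P}^{\top} A \mathcal{P})x = (\mathcal{P}x)^{\top} A (\mathcal{P}x) > 0$ for $x \neq 0$ since $\mathcal{P}$ has full column rank) and why the diagonal entries $a_{i_j i_j}$ are strictly positive (they are $e_{i_j}^{\top} A e_{i_j} > 0$), so that the logarithms are well-defined. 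I would state these observations briefly and then present the three-line chain of inequalities.

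\begin{proof_without_of_suffix}[Proof of \Cref{corr:logDetInequality_hadamard}]
Since $\mathcal{P} = [e_{i_1},\ldots,e_{i_k}]$ with $\mathscr{I}_{\mathcal{P}} = \{i_1,\ldots,i_k\}$, the matrix $B \coloneqq \mathcal{P}^{\top} A \mathcal{P} \in \mathbb{R}^{k \times k}$ is the principal submatrix of $A$ with rows and columns indexed by $\mathscr{I}_{\mathcal{P}}$, i.e.\ $B_{rs} = a_{i_r i_s}$. As $\mathcal{P}$ has full column rank, $x^{\top} B x = (\mathcal{P}x)^{\top} A (\mathcal{P}x) > 0$ for all $x \neq 0$, so $B$ is positive definite Hermitian; in particular each diagonal entry $a_{i_j i_j} = e_{i_j}^{\top} A e_{i_j} > 0$. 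Applying Hadamard's inequality (\Cref{thm:hadamard_inequality}) to $B$ gives
\begin{displaymath}
\det \lr{\mathcal{P}^{\top} A \mathcal{P}} = \det B \leq \prod_{j=1}^{k} a_{i_j i_j}.
\end{displaymath}
Taking logarithms, which is permissible because every term is strictly positive, we obtain
\begin{displaymath}
\log \lr{\det \lr{\mathcal{P}^{\top} A \mathcal{P}}} \leq \sum_{j=1}^{k} \log a_{i_j i_j}.
\end{displaymath}
Finally, $\log \lr{\mathrm{diag}\lr{A}}$ is the diagonal matrix with entries $\log a_{ii}$, so $\mathcal{P}^{\top} \log \lr{\mathrm{diag}\lr{A}} \mathcal{P}$ is the $k \times k$ diagonal matrix with diagonal entries $\log a_{i_j i_j}$, and hence $\tr{\mathcal{P}^{\top} \log \lr{\mathrm{diag}\lr{A}} \mathcal{P}} = \sum_{j=1}^{k} \log a_{i_j i_j}$. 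Combining the last two displays yields the claim.
\end{proof_without_of_suffix}
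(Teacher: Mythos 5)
Your proof is correct and follows exactly the route the paper intends: the paper's own proof simply states that the corollary "follows immediately" from Hadamard's inequality, and your argument supplies precisely the details (applying \Cref{thm:hadamard_inequality} to the principal submatrix $\mathcal{P}^{\top} A \mathcal{P}$, taking logarithms, and rewriting the sum of log-diagonal entries as the stated trace) that the paper leaves implicit.
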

\begin{proof_without_of_suffix}
The corollary follows immediately from \Cref{thm:hadamard_inequality}.
\end{proof_without_of_suffix}
\begin{remark} The $\log \det$ of a principal submatrix is a submodular function with respect to the indices defining the submatrices \cite{Gantmacher_Krein_1960,Kotelyanskii_1950,Fan_1967,Fan_1968,Kelmans_1983,Johnson_1985}. We know that submodularity implies subadditivity for nonnegative functions, thus the modular upper bound in the statement of \Cref{corr:logDetInequality_hadamard} can be understood in that sense as well.
\end{remark}


\begin{theorem} [Operator concave inequality \cite{Chandler_1957,Hansen1980,HansenPedersen1982,hansen_pedersen_2003,Bourin_2006}] \label{thm:operator_concave_inequality}
Let $A$ be any Hermitian operator defined on a Hilbert space, and $K$ be an isometry on a subspace of the Hilbert space. Denote    the compression of $A$ by $K$ as $K^{*} A K$, then for any operator concave function $\phi(\cdot)$ we have that,
\begin{displaymath}
 K^{*} \phi(A) K	 \preceq \phi (K^{*} A K)
\end{displaymath}
\end{theorem}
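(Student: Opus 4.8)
The plan is to prove the inequality by reducing a general operator concave $\phi$ to a small family of elementary building blocks via L\"{o}wner's integral representation, and then verifying the inequality directly on each block. First I would record that the map $\phi \mapsto \phi(K^{*}AK) - K^{*}\phi(A)K$ is \emph{linear} in $\phi$: for fixed $A$ and $K$, both $\phi \mapsto K^{*}\phi(A)K$ and $\phi \mapsto \phi(K^{*}AK)$ are linear through the functional calculus, and the composite respects the L\"{o}wner order under nonnegative combinations and monotone/dominated limits. Hence it suffices to establish $K^{*}\phi(A)K \preceq \phi(K^{*}AK)$ for each term in the representation. By an affine change of variable (which preserves operator concavity and yields equality in the desired inequality, since affine functions are handled by $K^{*}K = I$) I would reduce to $\phi$ operator concave on $[0,\infty)$ with the spectrum of $A$ contained in $[0,\infty)$; note that $\mathrm{spec}(K^{*}AK)$ lies in the convex hull of $\mathrm{spec}(A)$, so $\phi(K^{*}AK)$ is well defined.

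On $[0,\infty)$ I would invoke the integral representation
\[ \phi(t) = \alpha + \beta t + \int_{(0,\infty)} \frac{\lambda t}{\lambda + t}\, d\mu(\lambda), \qquad \beta \ge 0,\ \mu \ge 0. \]
For the constant block $\phi \equiv \alpha$ and the linear block $\phi(t) = \beta t$ the inequality holds with equality, since $K^{*}(\alpha I)K = \alpha K^{*}K = \alpha I$ and $K^{*}(\beta A)K = \beta K^{*}AK$. The only nontrivial block is the resolvent term $g_{\lambda}(t) = \frac{\lambda t}{\lambda + t} = \lambda - \lambda^{2}(t + \lambda)^{-1}$, which enters with nonnegative weight $d\mu(\lambda)$. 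For it, the claim $K^{*}g_{\lambda}(A)K \preceq g_{\lambda}(K^{*}AK)$ unwinds to the resolvent inequality $K^{*}(A + \lambda I)^{-1}K \succeq (K^{*}AK + \lambda I)^{-1}$.

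I would prove this resolvent inequality by a block-matrix positivity argument. Writing $B = A + \lambda I \succ 0$, the block operator $\left(\begin{smallmatrix} B^{-1} & I \\ I & B \end{smallmatrix}\right)$ is positive semidefinite, because $B \succ 0$ and its Schur complement $B^{-1} - I\,B^{-1}\,I = 0$ is positive semidefinite. Compressing by $W = \left(\begin{smallmatrix} K & 0 \\ 0 & K \end{smallmatrix}\right)$ preserves positivity, and using $K^{*}K = I$ gives
\[ \begin{pmatrix} K^{*}B^{-1}K & I \\ I & K^{*}BK \end{pmatrix} \succeq 0. \]
Since $K^{*}BK = K^{*}AK + \lambda I \succ 0$, the Schur complement criterion yields $K^{*}B^{-1}K \succeq (K^{*}BK)^{-1} = (K^{*}AK + \lambda I)^{-1}$, which is exactly the resolvent inequality. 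Integrating the block inequalities against $d\mu$ and adding the (equality) constant and linear contributions then gives $K^{*}\phi(A)K \preceq \phi(K^{*}AK)$.

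The main obstacle is not the resolvent computation---which is short---but the supporting structural facts: justifying the precise L\"{o}wner representation for an operator concave function on a general interval (handled by the affine reduction to $[0,\infty)$ together with a citation to L\"{o}wner's theorem), and justifying the interchange of the compression $K^{*}(\cdot)K$ and of the functional calculus with the spectral integral over $\lambda$ (a routine continuity/monotone-convergence argument, cleanest in the finite-dimensional setting relevant here, where $A$ is a covariance matrix). An alternative, representation-free route would dilate the isometry $K$ to a unitary and deduce the inequality from the scalar concavity of $\phi$ via Jensen's inequality; I would keep the integral-representation proof as primary, since each step is elementary and self-contained modulo the cited representation.
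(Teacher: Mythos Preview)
The paper does not supply a proof of this theorem; it is quoted as a classical result with citations to Davis, Hansen--Pedersen, and Bourin. There is thus no in-paper proof to compare against, and your attempt must be assessed on its own merits.

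Your resolvent/Schur-complement step is correct and is indeed the engine of the standard proof. The gap is in the integral representation: what you wrote,
\[
\phi(t)=\alpha+\beta t+\int_{(0,\infty)}\frac{\lambda t}{\lambda+t}\,d\mu(\lambda),\qquad \beta\ge 0,\ \mu\ge 0,
\]
is L\"{o}wner's formula for operator \emph{monotone} functions on $[0,\infty)$, not for general operator \emph{concave} ones. Operator concavity on $(0,\infty)$ does not force monotonicity (take $\phi(t)=2\sqrt{t}-t$ or $\phi(t)=-t^{2}$), and the correct representation---obtained by negating the Kraus form for operator convex functions---carries an extra quadratic term $-\gamma t^{2}$ with $\gamma\ge 0$, while $\beta$ is unrestricted in sign. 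The sign constraint on $\beta$ is harmless (the linear block yields equality for any real $\beta$), but the missing $-\gamma t^{2}$ block requires one more verification: $(K^{*}AK)^{2}\preceq K^{*}A^{2}K$. This is the Kadison--Schwarz inequality for the unital positive map $X\mapsto K^{*}XK$, and it falls to the same Schur-complement device applied to the positive block
\(
\left(\begin{smallmatrix}A^{2}&A\\A&I\end{smallmatrix}\right)\succeq 0.
\)
With that block added and $\beta\in\mathbb{R}$, your decomposition covers all operator concave $\phi$ and the argument is complete. For the paper's only application of the theorem, namely $\phi=\log$ (which \emph{is} operator monotone on $(0,\infty)$), your proof as written already suffices.
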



\begin{theorem} [Matrix versions of the Cauchy and Kantorovich inequalities \cite{Marshall_Olkin_1990,MondPecaric1994,Bourin_2005}] \label{thm:Cauchy_Kantorovich_inequality} Let $A \in \mathbb{R}^{m \times m}$ be any positive definite Hermitian matrix, with eigenvalues contained in the interval $[\lambda_{s},\lambda_{l}] \subset \mathbb{R}_{> 0}$. If $K$ is an isometry then:
\begin{displaymath}
\frac{4 \lambda_{s} \lambda_{l}}{\lr{\lambda_{s} + \lambda_{l}}^{2}}  K^{*} A^{-1} K
\preceq \lr{K^{*} A K}^{-1} 
\preceq  K^{*} A^{-1} K  
\end{displaymath}
\end{theorem}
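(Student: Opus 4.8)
The plan is to prove the two inequalities by completely different routes, since only the lower one uses the spectral bounds $[\lambda_s,\lambda_l]$. The upper inequality $(K^{*}AK)^{-1}\preceq K^{*}A^{-1}K$ is immediate from \Cref{thm:operator_concave_inequality}: the scalar function $\phi(t)=-t^{-1}$ is operator concave on $(0,\infty)$, so $K^{*}\phi(A)K\preceq\phi(K^{*}AK)$ reads $-K^{*}A^{-1}K\preceq-(K^{*}AK)^{-1}$, and negating both sides gives the claim. No further work is needed for that half.

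For the lower (Kantorovich-type) bound I would start from the observation that, since $A$ is Hermitian with spectrum in $[\lambda_s,\lambda_l]$, every eigenvalue $\lambda$ satisfies $(\lambda-\lambda_s)(\lambda-\lambda_l)\le 0$, i.e.\ $\lambda+\lambda_s\lambda_l\,\lambda^{-1}\le\lambda_s+\lambda_l$. Because $A$ commutes with $A^{-1}$, the spectral theorem lifts this one-variable inequality to the operator inequality $A+\lambda_s\lambda_l\,A^{-1}\preceq(\lambda_s+\lambda_l)I$. Compressing by the isometry $K$ (which preserves the L\"owner order and, via $K^{*}K=I$, fixes scalar multiples of the identity) yields
\begin{displaymath}
\lambda_s\lambda_l\,K^{*}A^{-1}K\preceq(\lambda_s+\lambda_l)I-K^{*}AK.
\end{displaymath}
Next, writing $B\coloneqq K^{*}AK\succ 0$, the elementary identity $\bigl((\lambda_s+\lambda_l)-2b\bigr)^{2}\ge 0$ for $b>0$ rearranges to $(\lambda_s+\lambda_l)-b\le\tfrac{(\lambda_s+\lambda_l)^{2}}{4}\,b^{-1}$, hence by the spectral theorem applied to $B$ to $(\lambda_s+\lambda_l)I-B\preceq\tfrac{(\lambda_s+\lambda_l)^{2}}{4}\,B^{-1}$. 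Chaining the two displays and dividing by $\lambda_s\lambda_l$ gives $K^{*}A^{-1}K\preceq\tfrac{(\lambda_s+\lambda_l)^{2}}{4\lambda_s\lambda_l}(K^{*}AK)^{-1}$, which is the desired bound after rearrangement.

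I do not expect a genuine analytic obstacle: the whole argument reduces, through the spectral theorem and positivity of the conjugating factors, to the two one-variable inequalities $(\lambda-\lambda_s)(\lambda-\lambda_l)\le 0$ and $\bigl((\lambda_s+\lambda_l)-2b\bigr)^{2}\ge 0$. The only points that need care are bookkeeping ones: verifying that conjugation/compression by $K$ preserves $\preceq$ and turns $\lambda_s\lambda_l I$ and $(\lambda_s+\lambda_l)I$ into themselves (this is exactly where $K^{*}K=I$ is used), and applying the two scalar inequalities to the correct operator — $A$ in the first step, $B=K^{*}AK$ in the second, noting that $B$ also has spectrum in $[\lambda_s,\lambda_l]$ but that the second inequality in fact holds for all $b>0$.
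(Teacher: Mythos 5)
Your proof is correct. There is, however, nothing in the paper to compare it against: \Cref{thm:Cauchy_Kantorovich_inequality} is imported from the cited literature \cite{Marshall_Olkin_1990,MondPecaric1994,Bourin_2005} and the paper gives no proof of its own. Your two-part argument is the standard derivation and both halves check out. The upper bound via \Cref{thm:operator_concave_inequality} applied to $\phi(t)=-t^{-1}$ is legitimate precisely because $K$ is an isometry --- that is the form in which the paper states the Jensen-type operator inequality, and it is the same device the paper itself uses with $\phi=\log$ in the proof of \Cref{corr:logDetInequality}. The lower bound via the two scalar inequalities $(\lambda-\lambda_s)(\lambda-\lambda_l)\le 0$ and $\bigl((\lambda_s+\lambda_l)-2b\bigr)^2\ge 0$, lifted by the functional calculus (each applied to an operator commuting with its own inverse) and compressed by $K$, is the classical proof of the operator Kantorovich inequality; your bookkeeping remarks --- that $K^{*}K=I$ is what turns $(\lambda_s+\lambda_l)I_m$ into $(\lambda_s+\lambda_l)I_k$ under compression, and that the second scalar inequality holds for all $b>0$ so no spectral localization of $K^{*}AK$ is needed --- are exactly the right points of care. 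I see no gap.
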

As pointed out in \cite{Marshall_Olkin_1990}, if $K$ is merely a selection operator, then the upper bound of \Cref{thm:Cauchy_Kantorovich_inequality} reduces to a standard result in linear algebra, first established in \cite{Chollet_1982} and now a common theorem in texts \cite[Theorem 7.7.15.]{horn_johnson_2012}; the inverse of a principal submatrix of any definite matrix is less than or equal to the corresponding submatrix of the inverse.


\begin{corollary} \label{corr:logDetInequality} Let $A \in \mathbb{R}^{m \times m}$ be any positive definite Hermitian matrix with an eigenvalue interval $[\lambda_{s},\lambda_{l}]$ as prescribed in \Cref{thm:Cauchy_Kantorovich_inequality}. Define the scalar parameter $\varrho \coloneqq  \frac{4 \lambda_{s} \lambda_{l}}{\lr{\lambda_{s} + \lambda_{l}}^{2}}$. If $K \in  \mathbb{R}^{m \times k}, k < m$ is an isometry, $K^{*}K = I_{k}$, then:
\begin{displaymath}
 \tr{ K^{*} \log \lr{ A} K} \leq \log \det \lr{K^{*} A K} \leq  \tr{ K^{*} \lr{ \log \lr{A} - I_{m}\log\lr{\varrho} } K}
\end{displaymath}
\end{corollary}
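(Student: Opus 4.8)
The plan is to derive \Cref{corr:logDetInequality} as a consequence of \Cref{thm:operator_concave_inequality} and \Cref{thm:Cauchy_Kantorovich_inequality}, together with the fact that the trace is monotone with respect to the L\"{o}wner ordering and the fact that $\log\det(M) = \tr\log(M)$ for any positive definite $M$. The key observation is that $\log(\cdot)$ is operator concave on $\mathbb{R}_{>0}$, so the isometry $K$ (with $K^{*}K = I_{k}$) falls within the scope of \Cref{thm:operator_concave_inequality}.

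First I would establish the lower bound. Applying \Cref{thm:operator_concave_inequality} with $\phi = \log$ and the isometry $K$ gives $K^{*}\log(A)K \preceq \log(K^{*}AK)$. Taking traces of both sides and using monotonicity of the trace under the L\"{o}wner ordering yields
\begin{equation*}
\tr{K^{*}\log(A)K} \leq \tr{\log(K^{*}AK)} = \log\det(K^{*}AK),
\end{equation*}
which is exactly the left-hand inequality in the corollary. Here the final equality is the standard identity relating the trace of a matrix logarithm to the logarithm of the determinant, valid since $K^{*}AK \succ 0$ (as $A \succ 0$ and $K$ has full column rank).

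Next I would establish the upper bound. The idea is to apply the reasoning above to $A^{-1}$ instead of $A$, and then convert back. Since $A \succ 0$ has eigenvalues in $[\lambda_s,\lambda_l]$, so does $A^{-1}$ have eigenvalues in $[1/\lambda_l, 1/\lambda_s]$; more directly, \Cref{thm:Cauchy_Kantorovich_inequality} gives $\varrho\, K^{*}A^{-1}K \preceq (K^{*}AK)^{-1}$ with $\varrho = \frac{4\lambda_s\lambda_l}{(\lambda_s+\lambda_l)^2}$. Taking $\log\det$ — which is monotone on positive definite matrices with respect to the L\"{o}wner ordering — of both sides gives
\begin{equation*}
\log\det\lr{\varrho\, K^{*}A^{-1}K} \leq \log\det\lr{(K^{*}AK)^{-1}} = -\log\det(K^{*}AK).
\end{equation*}
Now $\log\det(\varrho\, K^{*}A^{-1}K) = k\log\varrho + \log\det(K^{*}A^{-1}K)$. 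Applying the already-proved lower bound to the positive definite matrix $A^{-1}$ in place of $A$ gives $\tr{K^{*}\log(A^{-1})K} \leq \log\det(K^{*}A^{-1}K)$, i.e. $-\tr{K^{*}\log(A)K} \leq \log\det(K^{*}A^{-1}K)$. Chaining these, and writing $k\log\varrho = \tr{K^{*}(I_m\log\varrho)K}$ since $K^{*}K = I_k$, we obtain
\begin{equation*}
\tr{K^{*}(I_m\log\varrho)K} - \tr{K^{*}\log(A)K} \leq -\log\det(K^{*}AK),
\end{equation*}
which rearranges to $\log\det(K^{*}AK) \leq \tr{K^{*}(\log(A) - I_m\log\varrho)K}$, the right-hand inequality. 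I do not anticipate a serious obstacle here; the only point requiring care is the correct bookkeeping of signs and the constant $\varrho$ when passing between $A$ and $A^{-1}$, and ensuring that the monotonicity facts (trace monotone under $\preceq$, $\log\det$ monotone under $\preceq$ on the positive definite cone) are invoked in the right direction. One should also note that the lower bound holds with no spectral assumption, while the upper bound genuinely uses the eigenvalue interval through $\varrho$; when $\lambda_s = \lambda_l$ (i.e. $A$ a scalar multiple of the identity) $\varrho = 1$ and the two bounds collapse, consistent with $K^{*}AK$ then being exactly diagonal-like.
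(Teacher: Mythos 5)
Your proposal is correct and follows essentially the same route as the paper: the lower bound via the operator concave inequality (\Cref{thm:operator_concave_inequality}) applied to $\log$ together with $\tr\log = \log\det$, and the upper bound via the Kantorovich-type lower bound $\varrho\,K^{*}A^{-1}K \preceq (K^{*}AK)^{-1}$ from \Cref{thm:Cauchy_Kantorovich_inequality} combined with the lower bound applied to $A^{-1}$. The only cosmetic difference is that you reuse the already-proved lower bound for $A^{-1}$ where the paper re-invokes \Cref{thm:operator_concave_inequality} directly on $A^{-1}$; these are the same step.
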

\begin{proof} {\bfseries \Cref{corr:logDetInequality}} \label{proof:corr_logDetInequality}
Since $A \succ 0$, its compression by $K$, $K^{*} A K \succ 0$. Hence we have the identity $\tr{\log \lr{K^{*} A K}} =  \log \det \lr{K^{*} A K}$. Since $\log$ is a concave function, we claim using \Cref{thm:operator_concave_inequality} $\log \lr{K^{*} A K} \succeq K^{*} \log \lr{ A} K$, and this implies the lower bound.

To prove the upper bound, we begin by using the identity
\begin{equation}
	\log \det \lr{K^{*} A K} = -\log \det \lr{K^{*} A K}^{-1}  = -\tr{ \log  \lr{K^{*} A K}^{-1}}.
\end{equation}
Using the lower bound in \Cref{thm:Cauchy_Kantorovich_inequality} we have,
\begin{align} 
 \log \det \lr{ \varrho K^{*} A^{-1} K} &\leq \log \det\lr{K^{*} A K}^{-1}.\\
  \label{eqn:upper_bound_proof_1}
\Rightarrow	-\tr{\log  \lr{K^{*} A K}^{-1}}   &\leq     -\tr{\log \lr{ K^{*} A^{-1} K}} - \tr{I_{k} \log\lr{\varrho}}.
\end{align}
 $ K^{*} A^{-1} K$ is a compression by $K$ of $A^{-1} \succ 0$. Using  \Cref{thm:operator_concave_inequality} we claim,
 \begin{equation}
 			 \log \lr{K^{*} A^{-1} K}  \succeq K^{*} \log \lr{A^{-1}} K = -K^{*} \log \lr{A} K.
 \end{equation}
	\begin{equation} \label{eqn:upper_bound_proof_2}
\Rightarrow -\tr{\log \lr{K^{*} A^{-1} K}}  \leq \tr{K^{*} \log \lr{A} K}.	
	\end{equation}
Using \cref{eqn:upper_bound_proof_2} in \cref{eqn:upper_bound_proof_1} we have,
 \begin{equation}
	-\tr{\log  \lr{K^{*} A K}^{-1}}   \leq     \tr{K^{*} \log \lr{A} K} - \tr{I_{k} \log\lr{\varrho}}.
 \end{equation}
The upper bound in the corollary statement is now immediately obtained by  linearity of the trace operator and the isometry property of $K$.
\end{proof}


\begin{proof} {\bfseries \Cref{prop:MBound_MI}} \label{proof:prop_MBound_MI}
Equation \cref{eqn:mutualInfoGaussians_reduced_data} provides an expression for the mutual information $\mathcal{I}(  X;Y_{\mathcal{P}})$,
\begin{displaymath}
	   \mathcal{I}(  X; Y_{\mathcal{P}})  =  \frac{1}{2} \log \lr{ \frac{\det \lr{ \mathcal{P}^{\top} \Gamma_{Y}  \mathcal{P}}}{\det \lr{ \mathcal{P}^{\top}\Gamma_{Y|X}  \mathcal{P}}} }.
\end{displaymath}
The proposition statement and the alternative modular bounds in \cref{eqn:alt_MBound_MI} are easily obtained by using \Cref{corr:logDetInequality_hadamard,corr:logDetInequality} on the two $\log\det$ terms.
\end{proof}


\begin{proof} {\bfseries \Cref{corr:MI_deviation_from_SM}} \label{proof:corr_MI_deviation_from_SM}
\Cref{eqn:corr_MI_deviation_from_SM_eq1} is simply a restatement of the lower bound in  \Cref{prop:MBound_MI}. The equivalence between \cref{eqn:corr_MI_deviation_from_SM_eq1} and \cref{eqn:corr_MI_deviation_from_SM_eq2} is trivial. Rewriting \cref{eqn:corr_MI_deviation_from_SM_eq2} with $Y \setminus Y_\mathcal{P}$ as the argument, and subtracting that term from $\mathcal{I}(  X;Y)$ we obtain  \cref{eqn:corr_MI_deviation_from_SM_eq1}. In a similar way we can obtain  \cref{eqn:corr_MI_deviation_from_SM_eq2} from  \cref{eqn:corr_MI_deviation_from_SM_eq1}.
\end{proof}


\begin{proof} {\bfseries \Cref{corr:MI_SM_case_bounding_BP} } \label{proof:corr_MI_SM_case_bounding_BP}
If $Y_{i_k} | X$ are independent, then we know $\Gamma_{Y|X}$ is diagonal, and hence $\log \mathrm{diag} \lr{\Gamma_{Y|X}} = \log \lr{\Gamma_{Y|X}}$. Thus the corollary statement is apparent as a consequence  of \Cref{corr:MI_deviation_from_SM}.
\end{proof}


\begin{proof} {\bfseries \Cref{prop:min_MI_loss_upper_bound}} \label{proof:prop_min_MI_loss_upper_bound}
Observe that
\begin{equation}
		\mathcal{I}(X;Y \setminus Y_{\mathcal{P}} |Y_{\mathcal{P}_1}) - \mathcal{I}(X;Y \setminus Y_{\mathcal{P}}, Y_{\mathcal{P}_1}) 
	=
	\mathcal{I}(Y_{\mathcal{P}_1}; Y \setminus Y_{\mathcal{P}},Y_{\mathcal{P}_1} |X) - \mathcal{I}(Y_{\mathcal{P}_1};Y \setminus Y_{\mathcal{P}},Y_{\mathcal{P}_1}).
\end{equation}
This implies
\begin{equation} \label{eqn:proof_prop_min_MI_loss_upper_bound_1}
   \mathcal{I}(X;Y \setminus Y_{\mathcal{P}} |Y_{\mathcal{P}_1}) 	-\mathcal{I}(X;Y \setminus Y_{\mathcal{P}}, Y_{\mathcal{P}_1})  \leq  \mathcal{I}(Y_{\mathcal{P}_1}; Y \setminus Y_{\mathcal{P}},Y_{\mathcal{P}_1} |X).
\end{equation}
Using \cref{eqn:proof_prop_min_MI_loss_upper_bound_1}, we have
\begin{equation}
 \mathcal{I}(X;Y|Y_{\mathcal{P}_1}) - \mathcal{I}(X;Y \setminus Y_{\mathcal{P}_1}, Y_{\mathcal{P}})  \leq 
  \mathcal{I}(X;Y|Y_{\mathcal{P}_1}) - \mathcal{I}(X;Y \setminus Y_{\mathcal{P}} | Y_{\mathcal{P}_1}) + \mathcal{I}(Y_{\mathcal{P}_1}; Y \setminus Y_{\mathcal{P}},Y_{\mathcal{P}_1} |X).
\end{equation}
Consider the term $\mathcal{I}(X;Y|Y_{\mathcal{P}_1}) - \mathcal{I}(X;Y \setminus Y_{\mathcal{P}} | Y_{\mathcal{P}_1})$; adapting the result in \Cref{corr:MI_deviation_from_SM} we have the following bound:
\begin{align} \label{eqn:proof_prop_min_MI_loss_upper_bound_2}
	  \mathcal{I}(X;Y|Y_{\mathcal{P}_1}) - \mathcal{I}(X;Y \setminus Y_{\mathcal{P}} | Y_{\mathcal{P}_1}) 
\nonumber	  & \leq \tr{ \widehat{\mathcal{P}}^{\top} \lr{ \log \lr{\Gamma_{Y|Y_{\mathcal{P}_1}}} - \log \lr{\Gamma_{Y|X,Y_{\mathcal{P}_1}}} }\widehat{\mathcal{P}}} \\
& \quad +  \tr{ \widehat{\mathcal{P}}^{c}{}^{\top} \lr{ \log \mathrm{diag} \lr{\Gamma_{Y|X,Y_{\mathcal{P}_1}}} - \log \lr{\Gamma_{Y|X,Y_{\mathcal{P}_1}}} }\widehat{\mathcal{P}}^{c}} .
\end{align}
Now consider the term $ \mathcal{I}(Y_{\mathcal{P}_1}; Y \setminus Y_{\mathcal{P}},Y_{\mathcal{P}_1} |X)$. Using \Cref{corr:logDetInequality_hadamard,corr:logDetInequality} we have the following bound:
\begin{align} \label{eqn:proof_prop_min_MI_loss_upper_bound_3}
 \mathcal{I}(Y_{\mathcal{P}_1}; Y \setminus Y_{\mathcal{P}},Y_{\mathcal{P}_1} |X) \leq  \tr{ \widehat{\mathcal{P}}^{c}{}^{\top} \lr{ \log \mathrm{diag} \lr{\Gamma_{Y \setminus Y_{\mathcal{P}_1}|X}} - \log \lr{\Gamma_{Y|X,Y_{\mathcal{P}_1}}} }\widehat{\mathcal{P}}^{c}}.
\end{align}
Adding \cref{eqn:proof_prop_min_MI_loss_upper_bound_1,eqn:proof_prop_min_MI_loss_upper_bound_2}, we get the proposition statement.
\end{proof}

\section{Maximizing information gain versus minimizing information loss} \label{subsec:max_info_gain_vs_min_info_loss}
When solving the design problem using sequential algorithms, is maximizing information gain necessarily  better than minimizing information loss? Are there circumstances when one ought to be preferred over the other?

These are natural questions that arise when contrasting the two approaches. Here we discuss nuances of the two formulations, using the Bayesian experimental design problem of \Cref{subsec:BIP_and_exp_design} as a template. Recall that the goal of this design problem is to select a subset of observations $Y_{\mathcal{P}}$ that best informs the parameters $X$.
The fundamental difference between the two approaches is most evident at the onset of each procedure.
When maximizing information gain, we first compare independent contributions given by $\mathcal{I}\lr{X;Y_i}$. When minimizing information loss, on the other hand, we first assess the relative magnitudes of
 $\mathcal{I}\lr{X;Y} - \mathcal{I}\lr{X;Y \setminus Y_i}$; here the amount of collective information provided by the observations $Y \setminus Y_i$ is the guiding factor. In the latter case, note how the relative importance of each observation $Y_i$ is dictated by its absence, in contrast to the former.
These differences suggest two ideas:
 \begin{enumerate}[\itshape i)\upshape] 
 \item If the dominating contributor to the total mutual information  $\mathcal{I}\lr{X;Y}$ is the aggregate of  \emph{independent} contributions $\mathcal{I}\lr{X;Y_i}$, then maximizing information gain seems the better approach.
 \item Conversely, if interaction among the observations $Y_i$ is the dominating contributor to $\mathcal{I}\lr{X;Y}$, then minimizing information loss would appear as a better choice.
 \end{enumerate}

The above remarks are not entirely new in and of themselves. Several authors have discussed similar notions in the context of seeking symmetric multivariate correlation measures \cite{Watanabe_1960,Han_1978}, and analyzing the polymatroidal dependence structure of a set of random variables \cite{Han_1975,Fujishige_1978}.

If a subset of observations has already been selected/discarded, one would simply redefine the terms by appropriately conditioning on that set. The main caveat of course is the size of the conditioning set. If a substantial number of observations have already been chosen, then the strategy of maximizing information gain also implicitly accounts for any favorable interactions among the observations. Thus the desired cardinality constraint and any prior knowledge about the strength of the interaction can help in the choice of strategy.

From an optimization perspective, there are clear distinctions between the two approaches.
Maximizing information gain corresponds to maximizing a non-decreasing set function, and when the observations are conditionally independent the function is submodular. 
Minimizing information loss corresponds to minimizing a non-decreasing set function, and when the observations are conditionally independent the function is supermodular. 
The constrained maximization of non-decreasing set functions is well understood theoretically, especially when the function is submodular. Results for the minimization of non-decreasing set functions, however, are fewer and less satisfactory. Even in the restricted setting of the non-decreasing set function being supermodular, the analysis of the greedy algorithm is more involved \cite{Ilev_2001_backward}, and the approximation guarantee is given using \emph{steepness} of the function. For the more general case of minimizing a set function which is not necessarily sub/super-modular, there are no approximation guarantees of which we are aware.
Note that in \cite{Ilev_2001_backward} the result was given for any non-increasing supermodular function. We are dealing with a non-decreasing function since the function argument ($Y \setminus Y_{\mathcal{P}}$) involves the complement of the index set, $\mathscr{I}_{\mathcal{P}}$, that we seek. The result in \cite{Ilev_2001_backward} is then applicable with suitable modifications.

\section{Additional numerical results} \label{appendix:extra_discussion_and_num_results}

\subsection{Structured random example with i.i.d.\ observation error} \label{appendix:num_results_iid_error}

Consider the linear inverse problem as described in  \Cref{subsec:num_results_random}, but with  i.i.d.\  observation error. In \Cref{fig:spectrum_iid_obs_error,fig:batch_effect_subplot_iid_obs_error,fig:compare_algorithms_4subplot_iid_obs_error,fig:random_sampling_error_4x3subplot_iid_obs_error,fig:diff_MI_random_sampling_error_4x2subplot_iid_obs_error}
we present numerical investigations analogous to those in \Cref{subsec:num_results_random}.

\begin{figure}[h!] 	
  	\centering \includegraphics[width=\textwidth,angle=0]{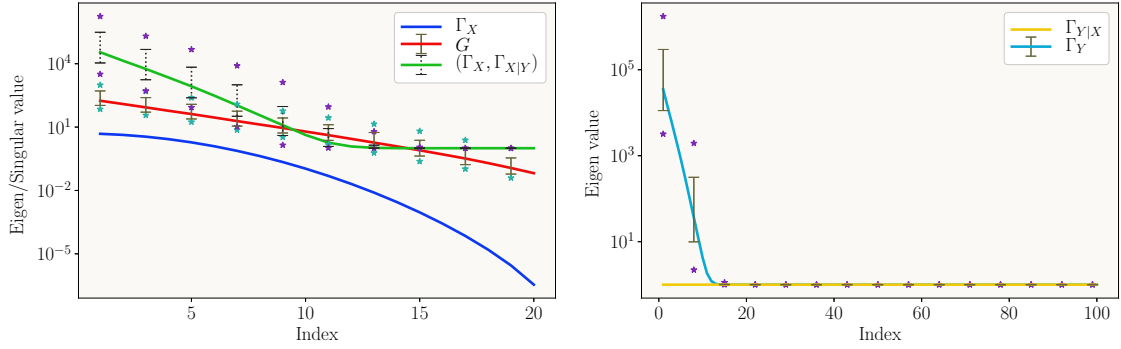}
  	 \caption{Spectrum of the relevant operators of the inverse problem with i.i.d.\  observation error. The solid line is the median across 1000 random instances of the forward model. The whiskers capture the interquantile range (10\% to 90\%),  and the $\star$ marks the maximum and minimum eigen/singular value. The prior and observation error covariances are fixed and not random.}
  	\label{fig:spectrum_iid_obs_error}
\end{figure}

\begin{figure}[h!] 	
  	\centering \includegraphics[width=\textwidth,angle=0]{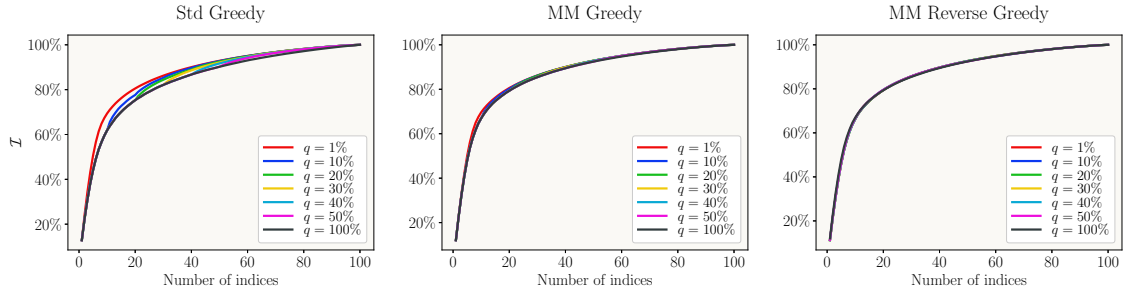}
	\caption{Performance of each greedy heuristic for different batch sizes for the case of inverse problem with i.i.d.\  observation error. The batch size ranges from single index selection to one shot approach. The solid line is the median across 1000 random instances for the forward model.}
  	\label{fig:batch_effect_subplot_iid_obs_error}
\end{figure}

\begin{figure}[h!] 	
  	\centering \includegraphics[width=\textwidth,angle=0]{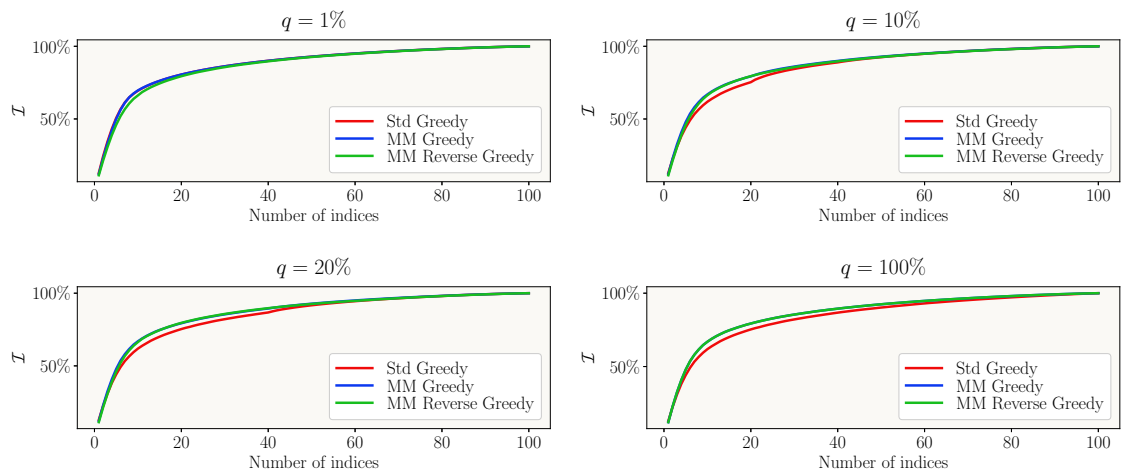}
	\caption{A comparative study of the greedy heuristics for four different batch sizes for the case of inverse problem with i.i.d.\  observation error. The solid line is the median across 1000 random instances for the forward model.}
  	\label{fig:compare_algorithms_4subplot_iid_obs_error}
\end{figure}

\begin{figure}[h!] 	
  	\centering \includegraphics[width=\textwidth,angle=0]{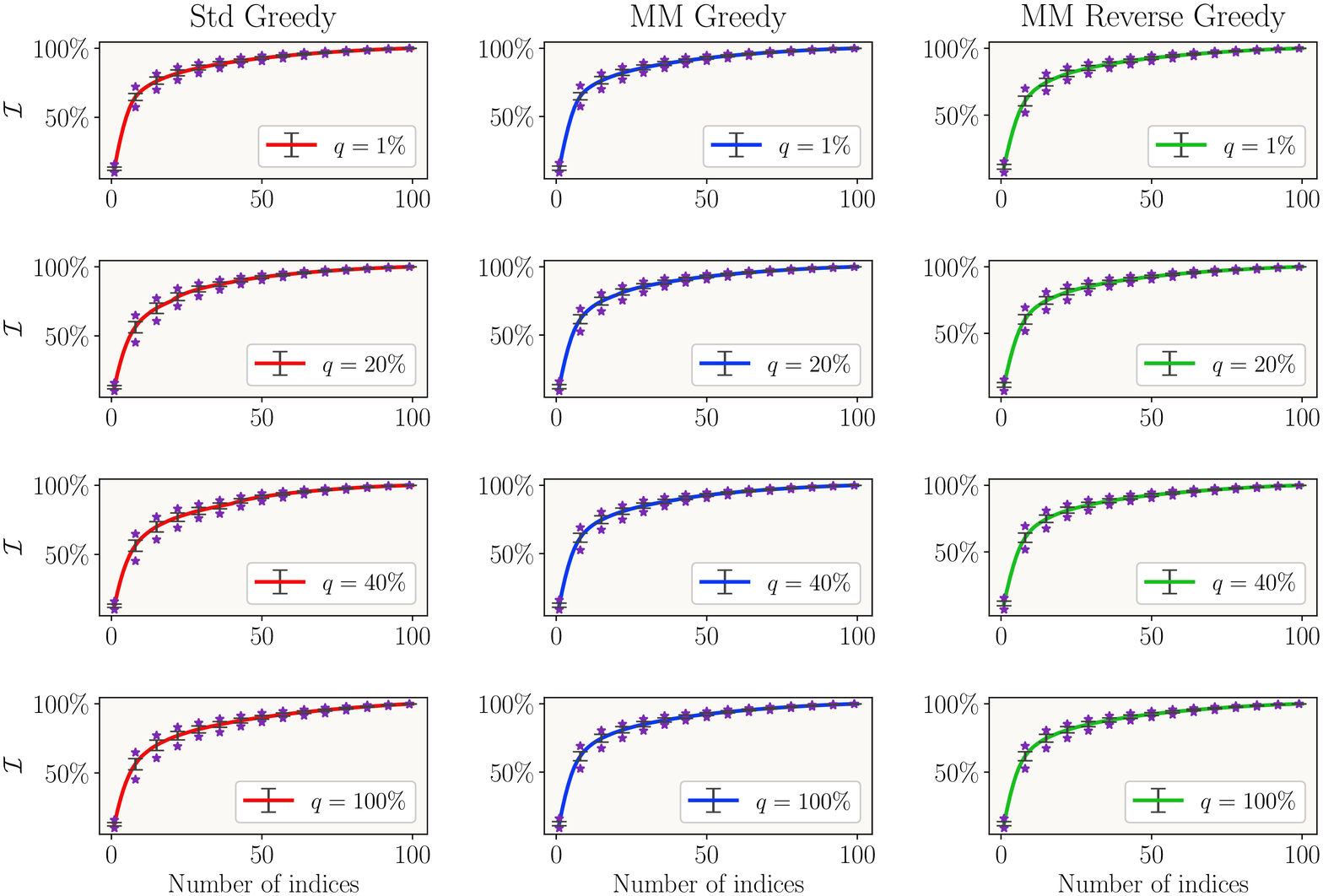}
	\caption{Performance of the greedy heuristics for different batch sizes across all 1000 random instances of the forward model for the case of inverse problem with i.i.d.\  observation error. The solid line is the median; the whiskers capture the interquantile range (10\% to 90\%),  and the $\star$ marks the maximum and minimum mutual information captured.}
  	\label{fig:random_sampling_error_4x3subplot_iid_obs_error}
\end{figure}

\begin{figure}[h!] 	
  	\centering \includegraphics[width=\textwidth,angle=0]{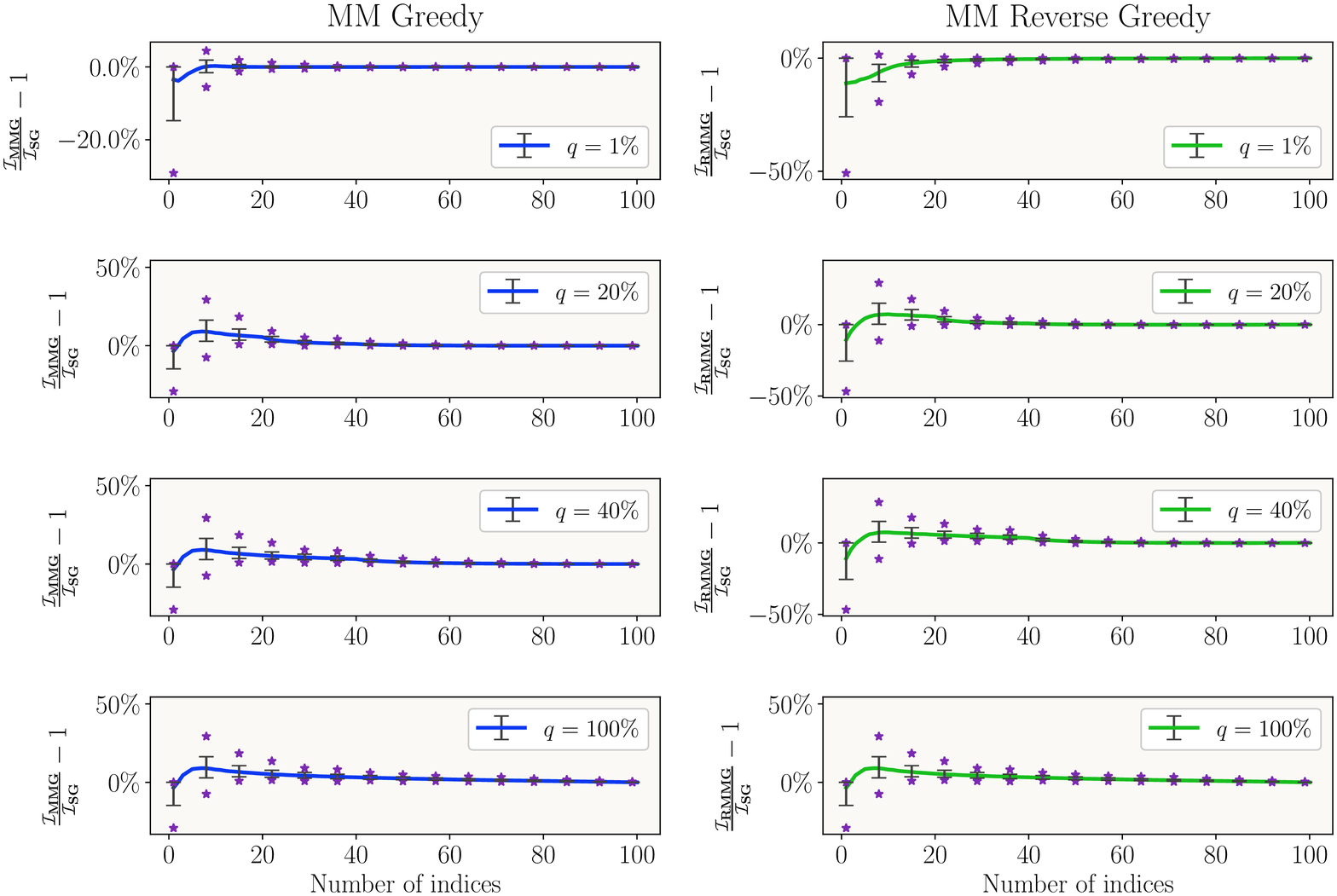}
  	 \caption{MM based batch greedy approaches compared against the standard batch greedy heuristic for different batch sizes across all 1000 random instances of the forward model for the case of inverse problem with i.i.d.\  observation error. The solid line is the median; the whiskers capture the interquantile range (10\% to 90\%),  and the $\star$ marks the maximum and minimum mutual information captured.}
  	\label{fig:diff_MI_random_sampling_error_4x2subplot_iid_obs_error}
\end{figure}

\subsection{Structured random example with exponential prior covariance} \label{appendix:num_results_exp_prior}

Consider the linear inverse problem as described in  \Cref{subsec:num_results_random}, but with exponential kernel used to define the prior on the inference parameters. In \Cref{fig:spectrum_corr_obs_error_exp_prior,fig:batch_effect_subplot_corr_obs_error_exp_prior,fig:compare_algorithms_4subplot_corr_obs_error_exp_prior,fig:random_sampling_error_4x3subplot_corr_obs_error_exp_prior,fig:diff_MI_random_sampling_error_4x2subplot_corr_obs_error_exp_prior}
we present numerical investigations analogous to those in \Cref{subsec:num_results_random}.

\begin{figure}[h!] 	
  	\centering \includegraphics[width=\textwidth,angle=0]{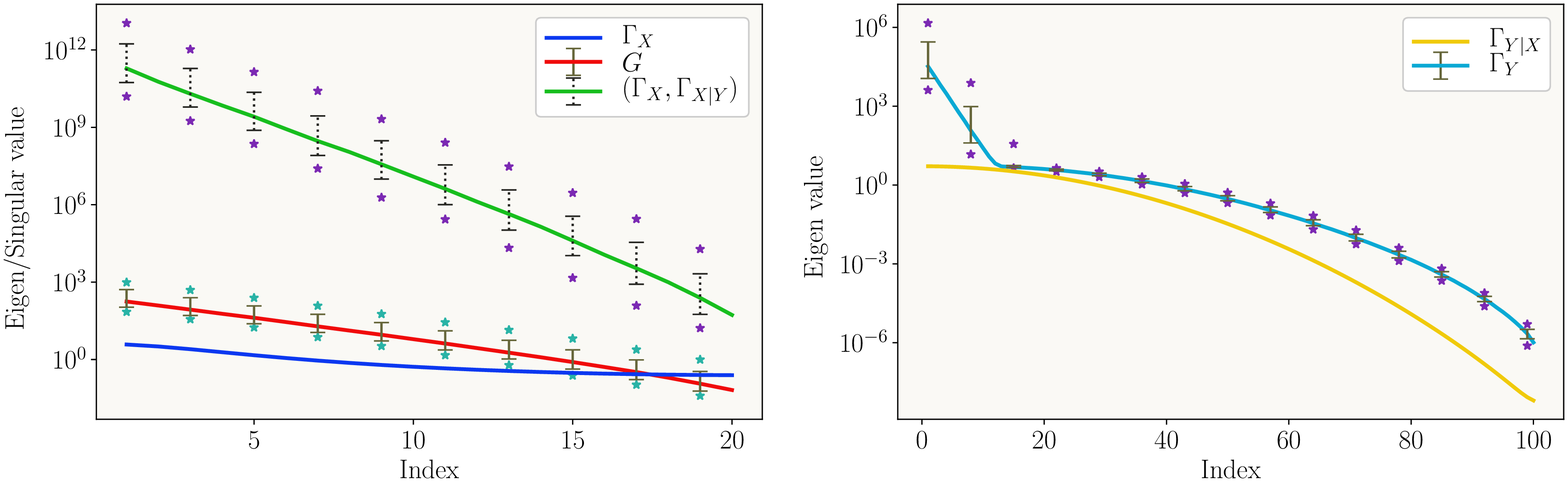}
  	 \caption{Spectrum of the relevant operators of the inverse problem with exponential prior covariance. The solid line is the median across 1000 random instances of the forward model. The whiskers capture the interquantile range (10\% to 90\%),  and the $\star$ marks the maximum and minimum eigen/singular value. The prior and observation error covariances are fixed and not random.}
  	\label{fig:spectrum_corr_obs_error_exp_prior}
\end{figure}

\begin{figure}[h!] 	
  	\centering \includegraphics[width=\textwidth,angle=0]{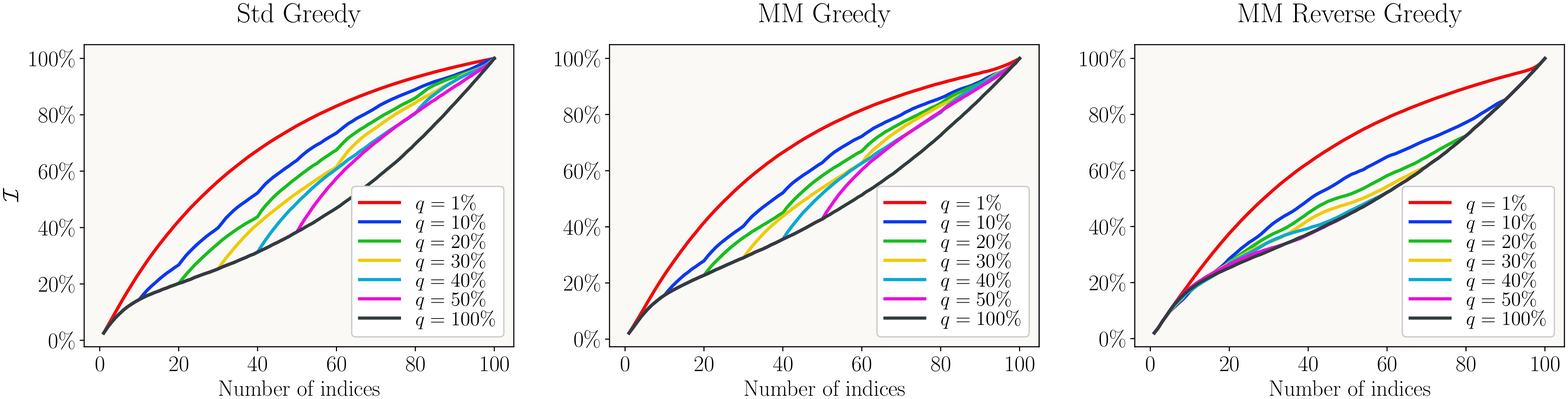}
	\caption{Performance of each greedy heuristic for different batch sizes for the case of inverse problem with exponential prior covariance. The batch size ranges from single index selection to one shot approach. The solid line is the median across 1000 random instances for the forward model.}
  	\label{fig:batch_effect_subplot_corr_obs_error_exp_prior}
\end{figure}

\begin{figure}[h!] 	
  	\centering \includegraphics[width=\textwidth,angle=0]{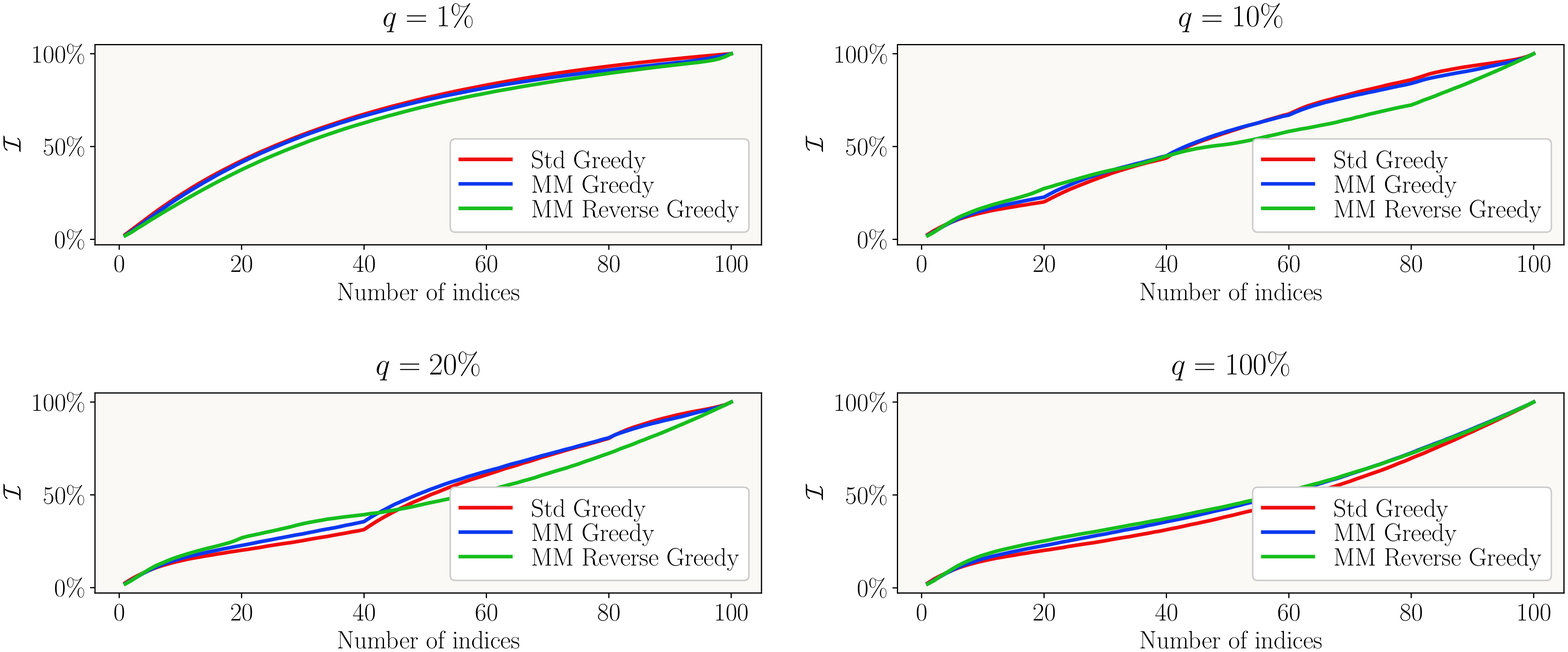}
	\caption{A comparative study of the greedy heuristics for four different batch sizes for the case of inverse problem with exponential prior covariance. The solid line is the median across 1000 random instances for the forward model.}
  	\label{fig:compare_algorithms_4subplot_corr_obs_error_exp_prior}
\end{figure}

\begin{figure}[h!] 	
  	\centering \includegraphics[width=\textwidth,angle=0]{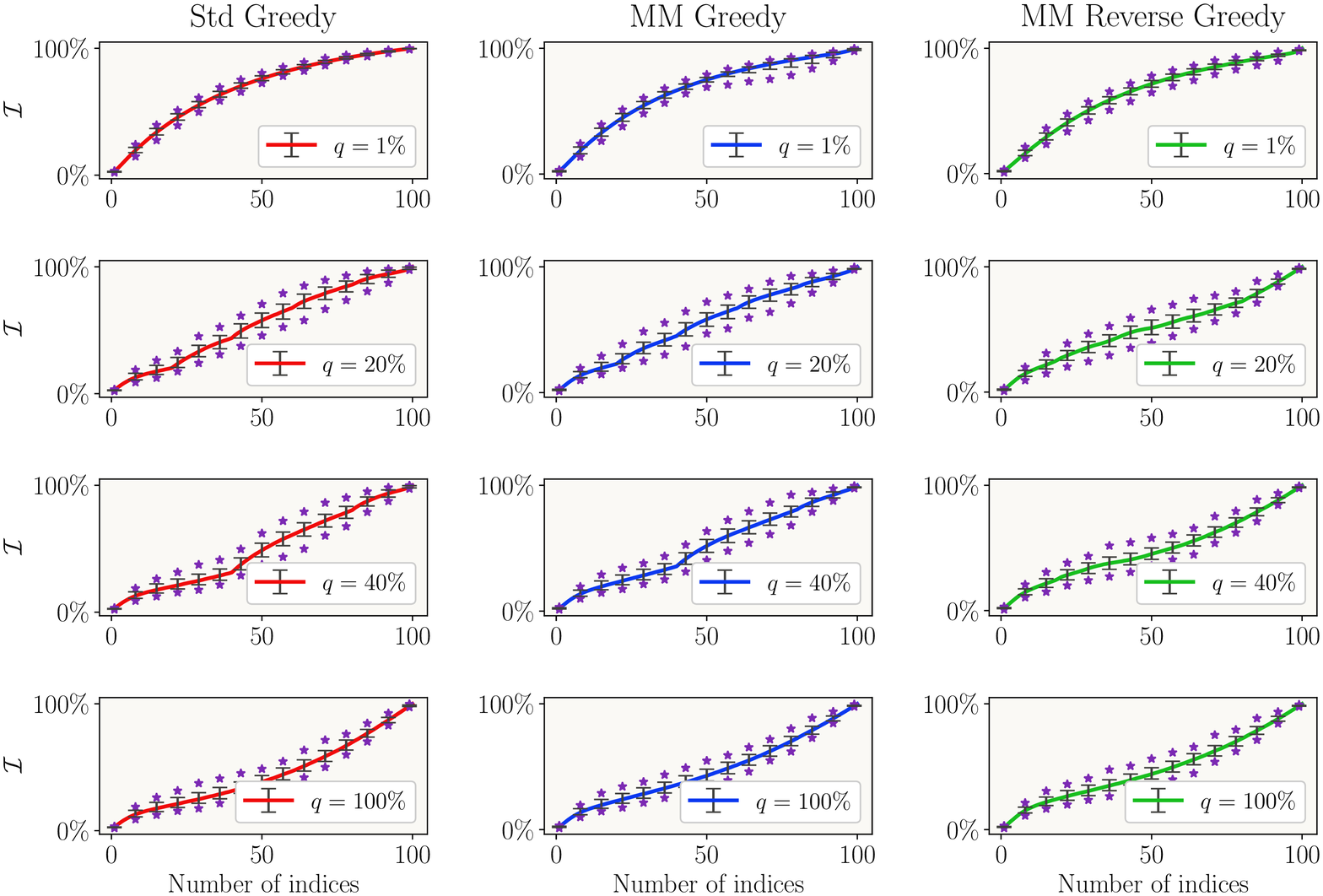}
	\caption{Performance of the greedy heuristics for different batch sizes across all 1000 random instances of the forward model for the case of inverse problem with exponential prior covariance. The solid line is the median; the whiskers capture the interquantile range (10\% to 90\%),  and the $\star$ marks the maximum and minimum mutual information captured.}
  	\label{fig:random_sampling_error_4x3subplot_corr_obs_error_exp_prior}
\end{figure}

\begin{figure}[h!] 	
  	\centering \includegraphics[width=\textwidth,angle=0]{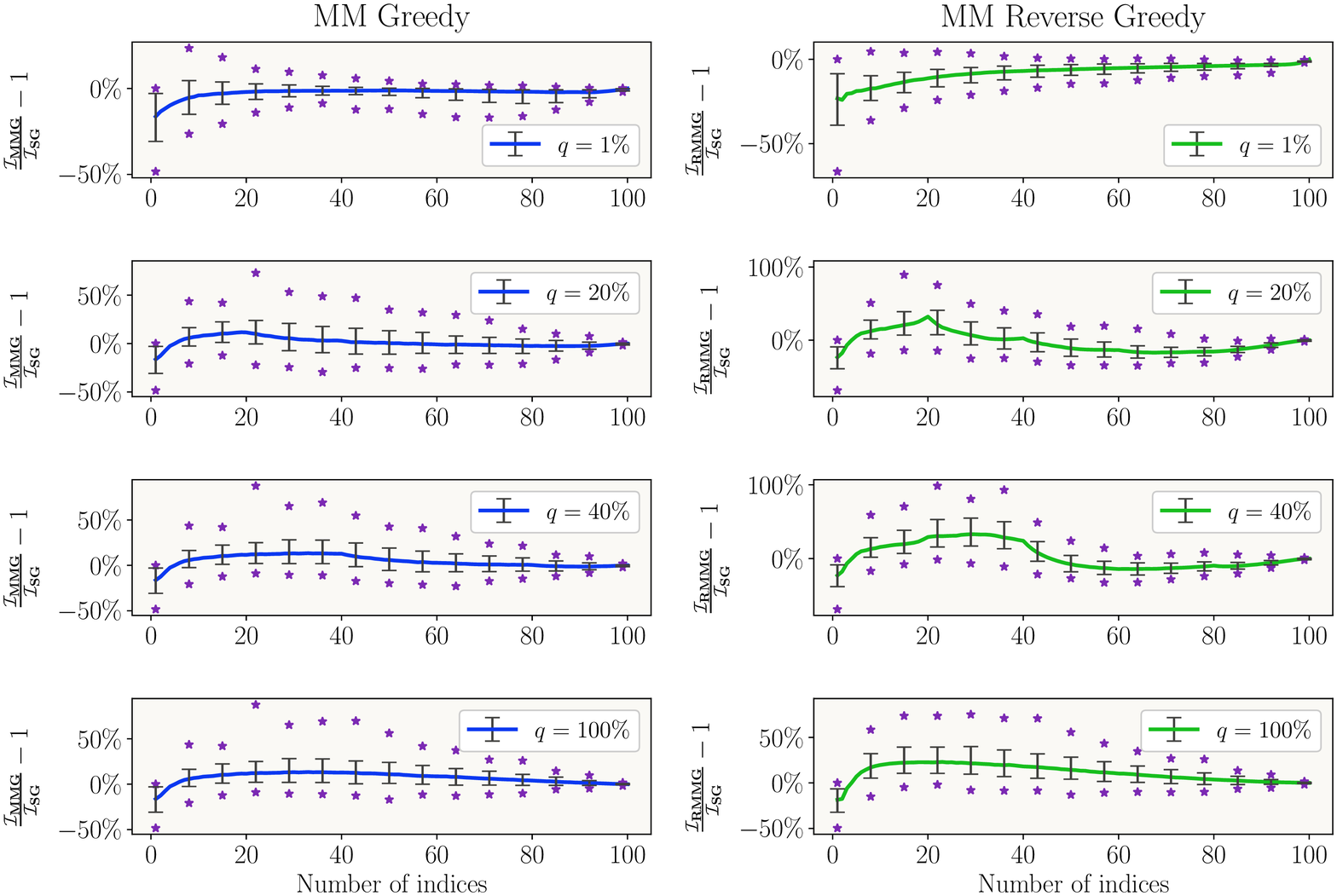}
  	 \caption{MM based batch greedy approaches compared against the standard batch greedy heuristic for different batch sizes across all 1000 random instances of the forward model for the case of inverse problem with exponential prior covariance. The solid line is the median; the whiskers capture the interquantile range (10\% to 90\%),  and the $\star$ marks the maximum and minimum mutual information captured.}
  	\label{fig:diff_MI_random_sampling_error_4x2subplot_corr_obs_error_exp_prior}
\end{figure}

\subsection{Structured random example with dimension of inference parameters greater than observations} \label{appendix:num_results_param_greater_than_obs}

We  consider a linear inverse problem with random linear forward model similar to the one described in \Cref{subsec:num_results_random}, but now the dimension of parameters is set to $100$, $X \in \mathbb{R}^{100}$, while cardinality of the candidate set of observations is fixed at $50$, $Y \in \mathbb{R}^{50}$.
We fix the prior, $\Gamma_X$ and observation error, $\Gamma_{Y|X}$, covariances using squared exponential kernel with correlation length $0.021$ and $0.042$ respectively. In \Cref{fig:spectrum_param_number_greater_than_obs,fig:batch_effect_subplot_param_number_greater_than_obs,fig:compare_algorithms_4subplot_param_number_greater_than_obs,fig:random_sampling_error_4x3subplot_param_number_greater_than_obs,fig:diff_MI_random_sampling_error_4x2subplot_param_number_greater_than_obs}
we present numerical investigations analogous to those in \Cref{subsec:num_results_random}.

\begin{figure}[h!] 	
  	\centering \includegraphics[width=\textwidth,angle=0]{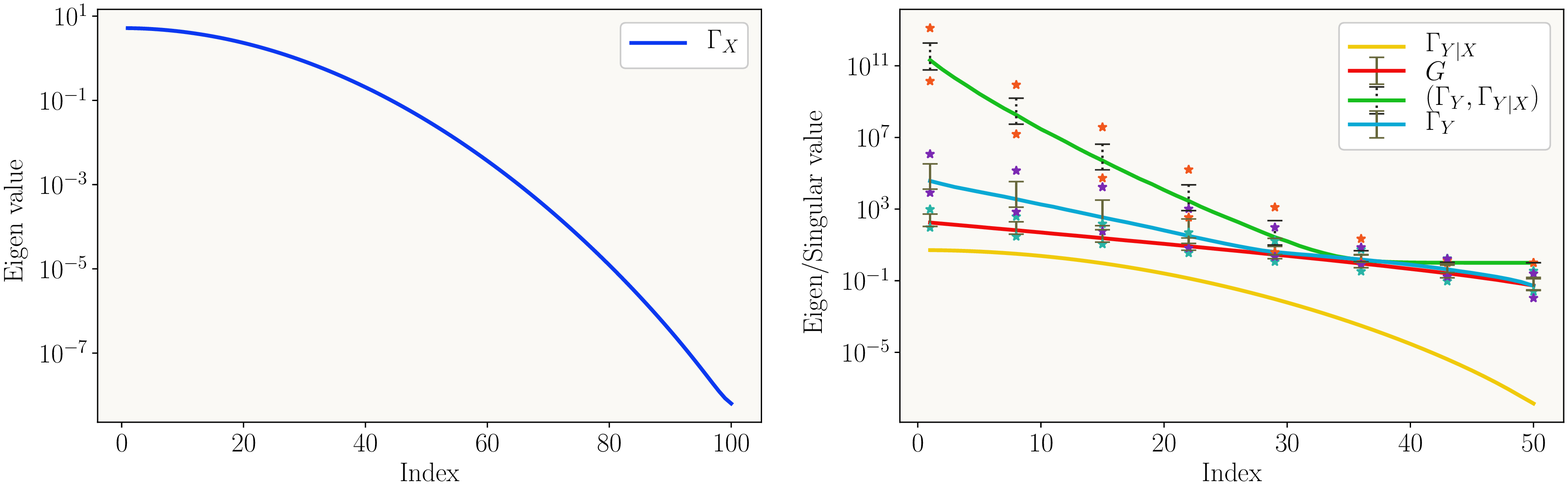}
  	 \caption{Spectrum of the relevant operators of the inverse problem with number of parameters greater than observations. The solid line is the median across 1000 random instances of the forward model. The whiskers capture the interquantile range (10\% to 90\%),  and the $\star$ marks the maximum and minimum eigen/singular value. The prior and observation error covariances are fixed and not random.}
  	\label{fig:spectrum_param_number_greater_than_obs}
\end{figure}

\begin{figure}[h!] 	
  	\centering \includegraphics[width=\textwidth,angle=0]{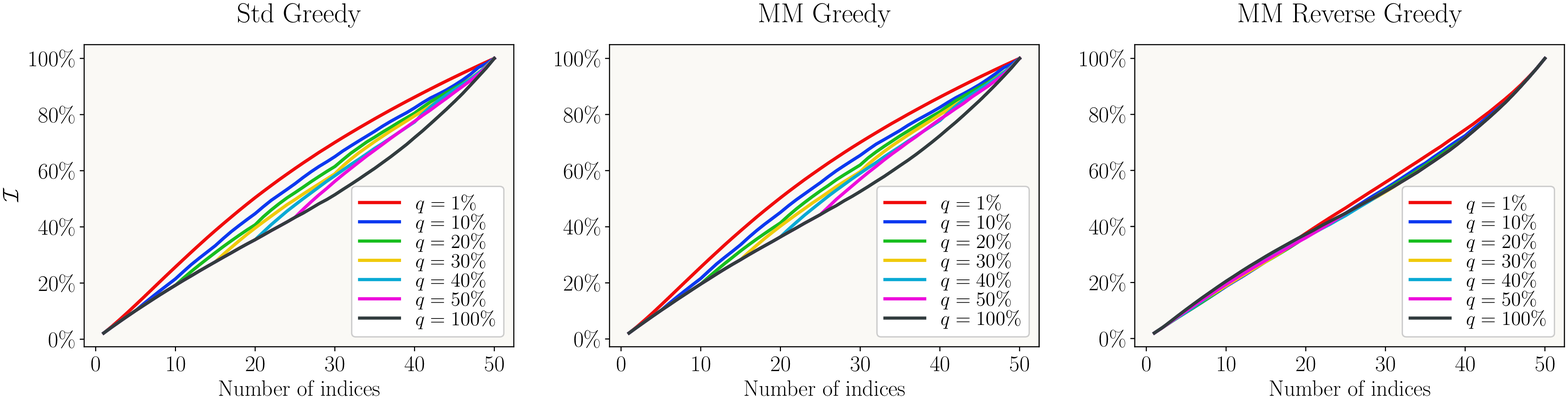}
	\caption{Performance of each greedy heuristic for different batch sizes for the case of inverse problem with number of parameters greater than observations. The batch size ranges from single index selection to one shot approach. The solid line is the median across 1000 random instances for the forward model.}
  	\label{fig:batch_effect_subplot_param_number_greater_than_obs}
\end{figure}

\begin{figure}[h!] 	
  	\centering \includegraphics[width=\textwidth,angle=0]{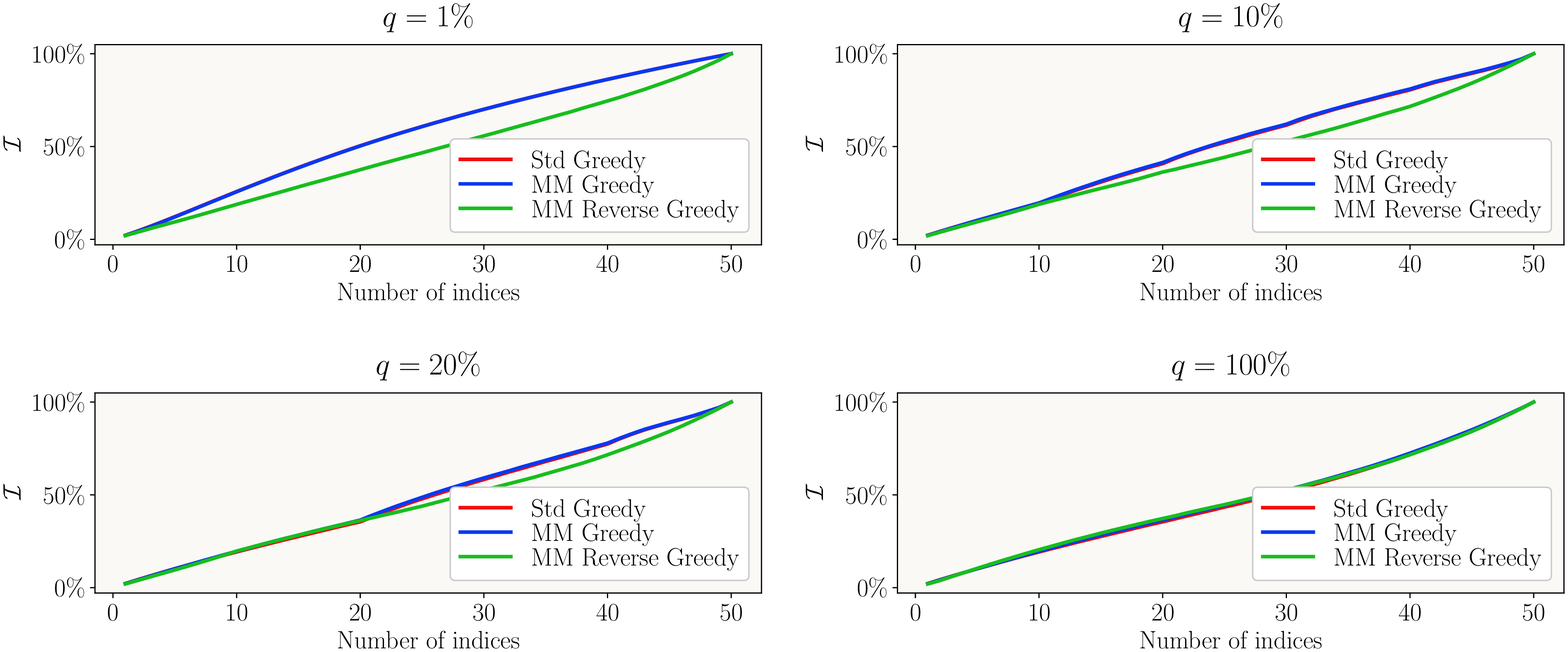}
	\caption{A comparative study of the greedy heuristics for four different batch sizes for the case of inverse problem with number of parameters greater than observations. The solid line is the median across 1000 random instances for the forward model.}
  	\label{fig:compare_algorithms_4subplot_param_number_greater_than_obs}
\end{figure}

\begin{figure}[h!] 	
  	\centering \includegraphics[width=\textwidth,angle=0]{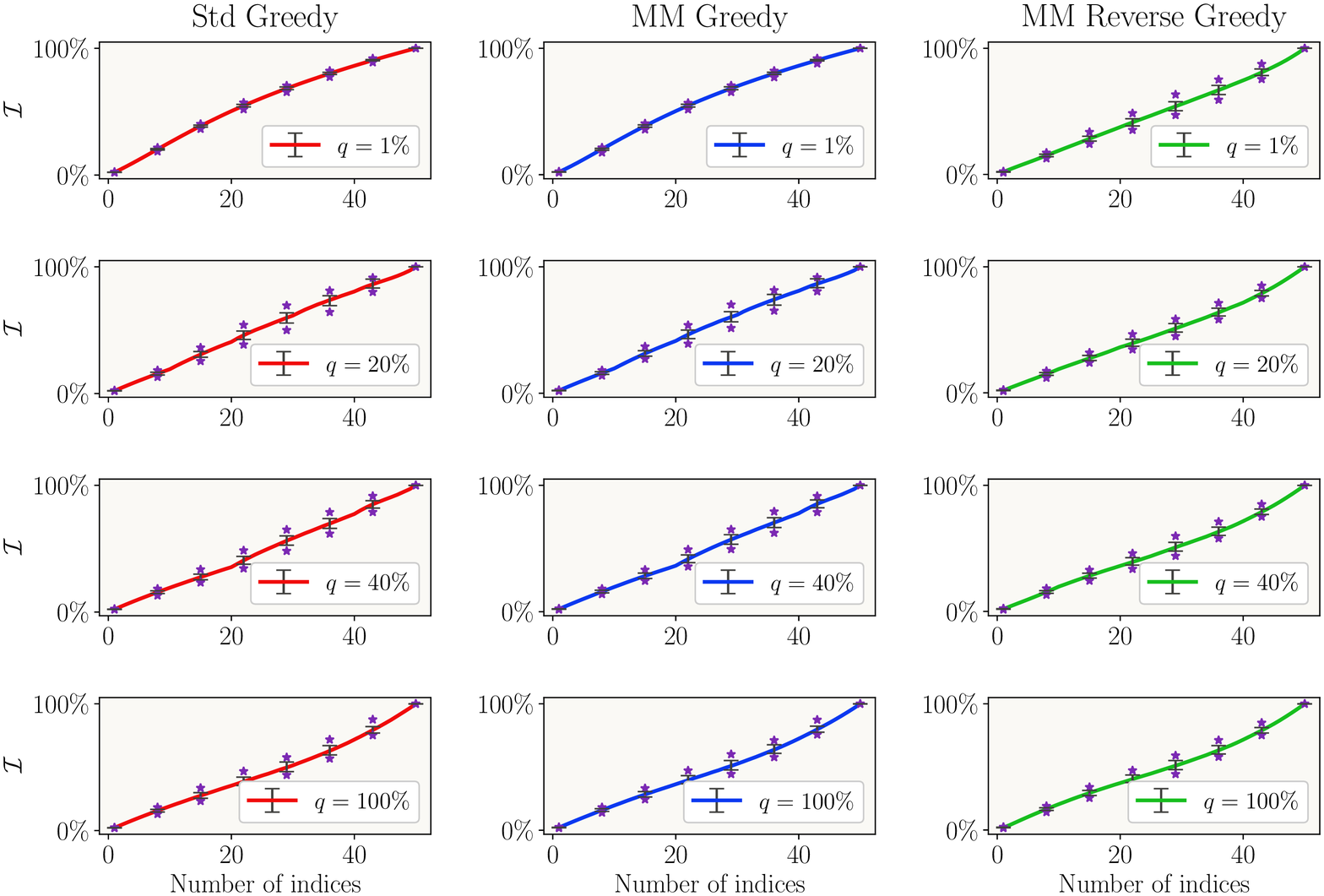}
	\caption{Performance of the greedy heuristics for different batch sizes across all 1000 random instances of the forward model for the case of inverse problem with number of parameters greater than observations. The solid line is the median; the whiskers capture the interquantile range (10\% to 90\%),  and the $\star$ marks the maximum and minimum mutual information captured.}
  	\label{fig:random_sampling_error_4x3subplot_param_number_greater_than_obs}
\end{figure}

\begin{figure}[h!] 	
  	\centering \includegraphics[width=\textwidth,angle=0]{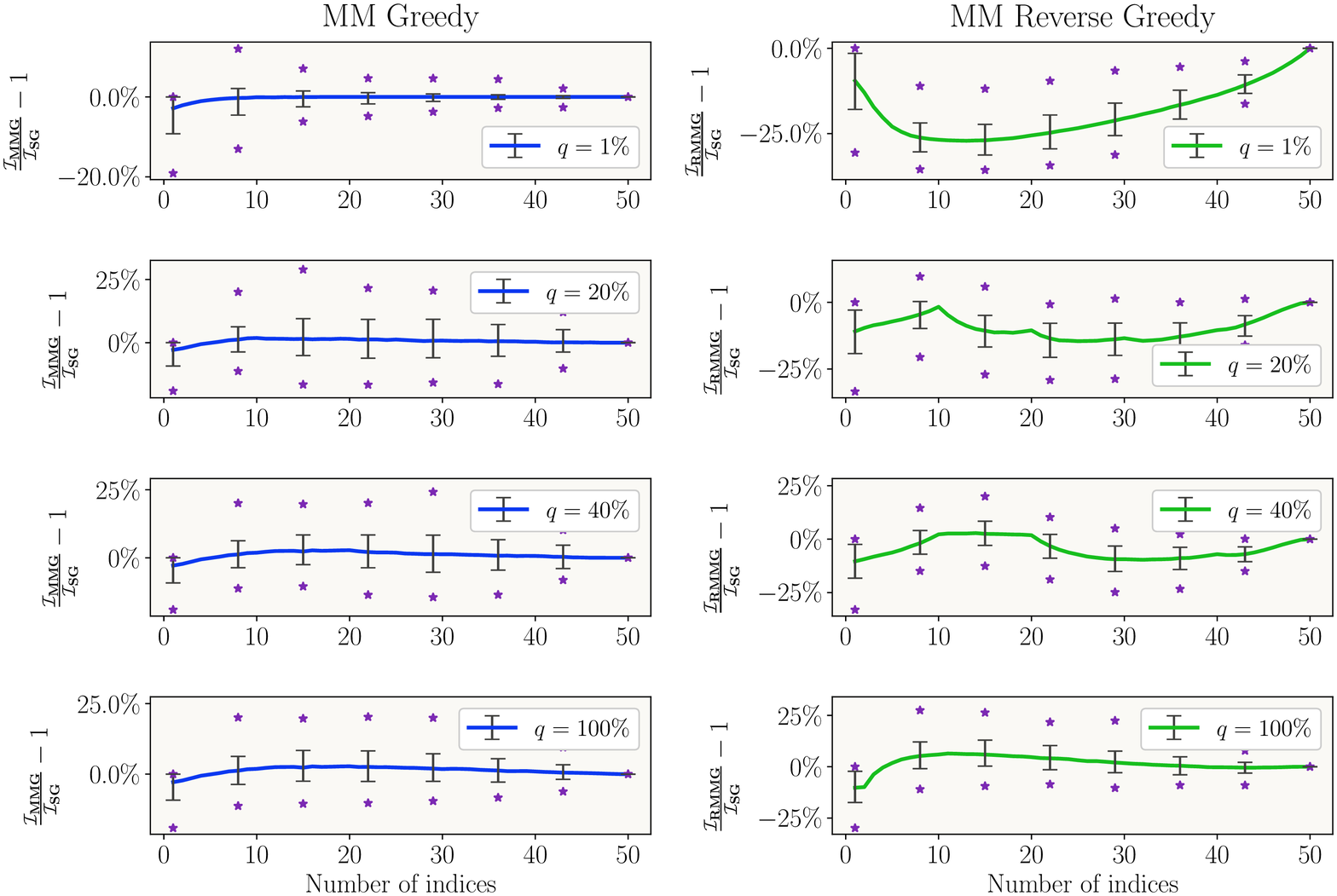}
  	 \caption{MM based batch greedy approaches compared against the standard batch greedy heuristic for different batch sizes across all 1000 random instances of the forward model for the case of inverse problem with number of parameters greater than observations. The solid line is the median; the whiskers capture the interquantile range (10\% to 90\%),  and the $\star$ marks the maximum and minimum mutual information captured.}
  	\label{fig:diff_MI_random_sampling_error_4x2subplot_param_number_greater_than_obs}
\end{figure}


\end{document}